\def \c{\mathbb{C}}
\def \z{\mathbb{Z}}
\def \r{\mathbb{R}}
\def \n{\mathbb{N}}
\def \p{\mathbb{P}}
\def \q{\mathbb{Q}}
\def \A{\mathbb{A}}
\def \V{\mathcal{V}}
\def \K{\mathcal{K}}
\def \O{\mathcal{O}}
\def \k{{\bf k}}
\def \L{{\bf L}}
\def \G{\mathcal{G}}
\def \trop{\textup{trop}}
\def \Trop{\textup{Trop}}
\def \TROP{{\bf Trop}}
\def \Log{\textup{Log}}
\def \In{\textup{in}}
\def \ord{\textup{ord}}
\def \Proj{\textup{Proj}}
\def \gr{\textup{gr}}
\def \Spec{\textup{Spec}}
\def \Hom{\textup{Hom}}
\def \supp{\textup{supp}}
\def \SL{\textup{SL}}
\def \GL{\textup{GL}}
\def \conv{\textup{conv}}
\def \an{\textup{an}}
\def \hc{\textup{hc}}
\def \aff{\textup{aff}}
\def \Lie{\textup{Lie}}
\def \hom{\textup{hom}}
\theoremstyle{plain}
\newtheorem{Th}{Theorem}[section]
\newtheorem{Lem}[Th]{Lemma}
\newtheorem{Prop}[Th]{Proposition}
\newtheorem{Cor}[Th]{Corollary}
\newtheorem{THM}{Theorem}
\newtheorem{CORO}[THM]{Corollary}
\theoremstyle{definition}
\newtheorem{Ex}[Th]{Example}
\newtheorem{Def}[Th]{Definition}
\newtheorem{Rem}[Th]{Remark}
\newtheorem{Ass}[Th]{Assumption}
\newtheorem*{REM}{Remark}
\begin{document}
\title{Gr\"obner theory and tropical geometry on spherical varieties}

\author{Kiumars Kaveh}

\author{Christopher Manon}

\maketitle

\begin{abstract}
Let $G$ be a connected reductive algebraic group.
We develop a Gr\"obner theory for multiplicity-free $G$-algebras, as well as a 
tropical geometry for subschemes in a spherical homogeneous space $G/H$. We define the notion of a 
spherical tropical variety and prove a fundamental theorem of tropical geometry in this context. We also propose a definition for a spherical amoeba in $G/H$. 
Our work partly builds on the previous work of Vogiannou on spherical tropicalization and in some ways is complementary.  
\end{abstract}

\setcounter{tocdepth}{1}
\tableofcontents

\section*{Introduction}
Let $\k$ be an algebraically closed field of characteristic $0$.\footnote{The assumption that $\k$ is algebraically closed and characteristic $0$ is not needed in several of the results in the paper.}
This paper extends the Gr\"obner theory of ideals in a polynomial algebra $\k[x_1, \ldots, x_n]$, as well as the tropical geometry of 
subvarieties in the algebraic torus $(\k^*)^n$, to the setting of spherical varieties for an action of a connected reductive algebraic group $G$ over $\k$.\footnote{Throughout $\k^* = \k \setminus \{0\}$ denotes the multiplicative group of $\k$.}

As far as the authors know, the problem of developing tropical geometry on spherical varieties was first suggested by Gary Kennedy (\cite{Kennedy-talk}).
The first results in this direction appeared recently in the interesting paper of Vogiannou (\cite{Vogiannou}).
In his paper, Vogiannou defines a tropicalization map for a spherical homogeneous space and extends the results in \cite{Tevelev} to the spherical setting. We should also mention \cite{Nash} which suggests a notion of tropicalization of a spherical embedding. We are not aware of any previous work on Gr\"obner theory for spherical varieties or, in other words, for multiplicity-free $G$-algebras. 

First, let us briefly review spherical varieties as well as classical Gr\"obner theory and tropical geometry. 

Spherical varieties are a generalization of toric varieties for actions of reductive groups. Let $G$ be a connected reductive algebraic group. 
A variety $X$ with an action of $G$ (i.e. a $G$-variety) is called {\it spherical} if a Borel subgroup $B$ (and hence any Borel subgroup) of $G$ has a dense orbit.\footnote{As part of the definition, a spherical variety is usually assumed to be normal, but we will not need the normality assumption in the paper unless otherwise stated.} 
If $X$ is spherical it has a finite number of $G$-orbits as well as a finite number of 
$B$-orbits. Generalizing toric varieties, the geometry of spherical varieties 
can be read off from associated convex polytopes and convex cones. In particular, if $G/H$ is a spherical homogeneous space, the celebrated 
Luna-Vust theory gives a one-to-one correspondence between spherical $G$-varieties containing $G/H$ as the dense $G$-orbit (i.e. spherical embeddings of $G/H$) 
and the so-called colored fans (\cite{Luna-Vust, Knop-LV}). It is a well-known fact that if $L$ is a $G$-linearized line bundle on a spherical variety $X$ then the space of sections $H^0(X, L)$ is a multiplicity free $G$-module.
In particular, the ring of regular functions $\k[X]$ of a spherical variety is a multiplicity-free $G$-algebra. 
 Many important classes of varieties are in fact spherical. Toric varieties are exactly spherical varieties for when $G=T$ is an algebraic torus.
The flag variety $G/B$ and the partial flag varieties $G/P$ are spherical by the Bruhat decomposition. Another example is the variety of smooth 
quadrics in a projective space. This example plays an important role in enumerative geometry. In Section \ref{sec-spherical} we discuss some background material 
about spherical varieties. For a nice overview of the theory of spherical varieties we refer the reader to \cite{Perrin}.

In the usual Gr\"obner theory for ideals in a polynomial ring $\k[x_1, \ldots, x_n]$ (see Section \ref{subsec-Grobner-prelim}), 
one begins by fixing a total order $\succ$ on 
the additive semigroup $\z^n_{\geq 0}$, e.g. a (reverse) lexicographic order. 
Given an ideal $I \subset \k[x_1, \ldots, x_n]$ one then defines the initial ideal $\In_\succ(I)$ with respect to $\succ$.
Many of the properties of $I$ and its initial ideal $\In_\succ(I)$ are 
related. The ideal $\In_\succ(I)$ is a monomial ideal and hence has the advantage that its structure can be described combinatorially. 
One of the first main results in Gr\"obner theory is that a given ideal has only a finite number of initial ideals, for all possible choices of total orders $\succ$. A Gr\"obner basis for $I$ is a finite collection of elements in $I$ whose initial monomials generate $\In_\succ(I)$. Given a Gr\"obner basis for an ideal $I$ one can 
give effective algorithms to solve many computational problems concerning $I$. For example, one can solve the ideal membership problem, that is, to decide whether a given polynomial lies in $I$ 
or not. The celebrated Buchberger algorithm produces a Gr\"obner basis for $I$ starting from a set of ideal generators.

Similarly, given a vector $w \in \r^n$ and an ideal $I \subset \k[x_1, \ldots, x_n]$, 
one defines the initial ideal $\In_w(I)$.
For $w_1, w_2 \in \r^n$ one says that $w_1 \sim w_2$ if the initial ideals $\In_{w_1}(I)$ and $\In_{w_2}(I)$ coincide. Another important result in Gr\"obner theory asserts that, when $I$ is a {homogeneous ideal}, the closures of equivalence classes of the relation $\sim$ are convex rational polyhedral cones. The resulting fan is called 
the {\it Gr\"obner fan} of $I$ and is an important concept in the theory (\cite{Mora-Robbiano} and \cite[Chapters 1 and 2]{Sturmfels}).

The tropical variety of a subvariety of $(\k^*)^n$ (i.e. a very affine variety) is a polyhedral fan in $\r^n$ that encodes the asymptotic directions in the subvariety (see Section \ref{subsec-tropical-prelim}). 
There are basically two ways to define the tropical variety of a subvariety $Y \subset (\k^*)^n$. The first way is to use initial ideals. In fact, more generally one defines the 
tropical variety of an ideal. Let $I \subset \k[x_1^\pm, \ldots, x_n^\pm]$ be an ideal. 
The tropical variety $\trop(I)$ is defined as the set of all $w \in \r^n$ such that $\In_w(I)$ does not 
contain any monomials. 
The other way to define the tropical variety of a subvariety is to use Puiseux series and the tropicalization map. Let $\K = \k((t))$ denote the field of formal Laurent series in 
a variable $t$ and let $\overline{\K}$ denote its algebraic closure, that is, the field of formal Puiseux series. One defines the tropicalization map 
$\Trop: (\overline{\K}^*)^n \to \q^n$ by:
$$\Trop(\gamma_1, \ldots, \gamma_n) = (\ord_t(\gamma_1), \ldots, \ord_t(\gamma_n)),$$
where $\ord_t$ denotes the order of $t$, i.e. the exponent of the lowest term in $t$ of a Puiseux series. It is a natural valuation that comes with the field of Puiseux series $\overline{\K}$.
Let $Y \subset (\k^*)^n$ be a subvariety and let $Y(\overline{\K})$ denote its set of points over $\overline{\K}$.
The tropical variety of the subvariety $Y$ is then defined to be the closure of $\Trop(Y(\overline{\K}))$ in $\r^n$. 
The fundamental theorem of tropical algebraic geometry asserts that the above two constructions coincide. That is, if $I = I(Y)$ is the ideal of the subvariety
$Y \subset (\k^*)^n$ then $\trop(I) = \Trop(Y)$. 

One shows that the tropical variety of an ideal is the support of a rational polyhedral fan in $\r^n$. More generally, one can define the notion of tropical variety for 
subvarieties in an algebraic torus that are defined over a field extension $K$ of $\k$ where $K$ is equipped with a valuation (e.g. the field of Puiseux series 
$\overline{\K}$ together with the valuation $\ord_t$ as above). In this more general setting, a tropical variety is a polyhedral complex instead of just a polyhedral fan.
 
After these brief reviews, let us give a summary of the contributions of the present paper. Let $G$ be a connected reductive algebraic group over $\k$.

In Section \ref{sec-Grobner-G-alg}, we extend several basic definitions and 
results from Gr\"obner theory in a polynomial algebra to the spherical setting. We consider a quasi-affine spherical $G$-variety $X$ with
 $A=\k[X]$ its algebra of regular functions.
 
Extending the toric case, to a spherical variety $X$ one associates a sublattice $\Lambda_X$ of the weight lattice $\Lambda$ of $G$. It is the lattice of all the 
weights of $B$-eigenfunctions in the field of rational functions $\k(X)$. Analogous to the well-known notion of dominant weight order on $\Lambda$, we define 
a partial order $>_X$ in $\Lambda_X$ which we call the spherical dominant order (see paragraph before Theorem \ref{th-coor-ring-multiplication-spherical}). 
We consider total orders which refine the spherical dominant order $>_X$. For such a total order $\succ$ we define the associated graded algebra $\gr_\succ(A)$ 
which is in fact isomorphic to $A_\hc$, the horospherical contraction of $A$ (see \cite{Popov} for the notion of horospherical contraction).\footnote{We recall that a $G$-variety is horospherical if $G$-stabilizer of any point contains a maximal unipotent subgroup.} 

Also for an ideal $I \subset A$ we define its initial ideal $\In_\succ(I)$ which is an ideal in $\gr_\succ(A)$. We say that a subset $\G \subset I$ is a
{\it spherical Gr\"obner basis} if the image of $\G$ in $\In_\succ(I)$ generates this ideal (Definition \ref{def-sph-Grobner-basis}). We give a generalization of the well-known division algorithm to this setting (Proposition \ref{prop-sph-div-algo}) and prove that a spherical Gr\"obner basis $\G$ is a set of ideal generators for the original ideal $I$ (Proposition \ref{prop-sph-Grobner-basis-generate-ideal}). Our first main result in Section \ref{sec-Grobner-G-alg} is the following.
\begin{THM} \label{th-intro-finite-number-initial-ideal}
An ideal $I \subset A$ has a finite number of initial ideals (regarded as ideals in the horospherical contraction $A_{\hc}$ of $A$).
\end{THM}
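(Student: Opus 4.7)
The plan is to mimic the classical proof of finiteness of initial ideals, passing through reduced spherical Gr\"obner bases and exploiting the fact that every initial ideal here is $G$-stable and $\Lambda_X^+$-graded in the horospherical contraction $A_{\hc}$.

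Each initial ideal $\In_\succ(I) \subset A_{\hc}$ decomposes as $\bigoplus_{\lambda \in \Sigma_\succ} V_\lambda$ for a subset $\Sigma_\succ \subset \Lambda_X^+$ that is upward-closed under the monoid action of $\Lambda_X^+$: indeed, each isotypic component $(\In_\succ(I))_\lambda$ is a $G$-submodule of the irreducible $V_\lambda$, hence $0$ or all of $V_\lambda$. By a Dickson-type lemma for the finitely generated commutative monoid $\Lambda_X^+$, each such $\Sigma_\succ$ has a finite set of minimal generators. By Proposition \ref{prop-sph-div-algo} and Proposition \ref{prop-sph-Grobner-basis-generate-ideal}, the reduced spherical Gr\"obner basis $\G_\succ$ has leading weights exactly these minimal generators and is uniquely determined by the pair $(I, \In_\succ(I))$. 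Hence counting initial ideals amounts to counting the finite minimal generating sets of $\Sigma_\succ$ as $\succ$ varies.

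To bound these, the plan is to fix a base total order $\succ_0$ with reduced Gr\"obner basis $\G_0$. Applying the spherical division algorithm against $\G_0$ should confine the $\Lambda_X$-supports of all reduction steps (and hence of all possible leading weights) to a finite set $S \subset \Lambda_X^+$ assembled from the supports of $\G_0$ together with the weights produced by the Buchberger-style $S$-polynomial reductions used to certify $\G_0$. Since the minimal generators of any $\Sigma_\succ$ arise as $\succ$-maxima of the support of some $f \in I$ expressible via $\G_0$, they all lie in $S$. There are only finitely many subsets of $S$, so only finitely many distinct minimal generating sets occur, and hence only finitely many initial ideals.

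The main obstacle is establishing rigorously that the leading weight of every $f \in I$, under every total order $\succ$, lies in a single finite set $S$ determined by $\G_0$. Classically this is transparent via Newton-polytope arguments and the preservation of Hilbert functions under Gr\"obner degeneration. In the spherical setting one would hope to replace ''Newton polytope'' by the convex hull in $\Lambda_X \otimes \r$ of the weights appearing in the isotypic support of an $A$-module expression of $f$, and show that reductions and $S$-polynomial passages do not enlarge this polytope. Should a direct support bound prove subtle, a clean alternative is to verify that all $\In_\succ(I)$ share a common $\Lambda_X$-multigraded invariant extracted from the filtration of $A/I$ (a spherical analog of the Hilbert function), and invoke the finiteness of $\Lambda_X^+$-monoid ideals with a prescribed such invariant.
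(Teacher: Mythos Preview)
Your proposal has a genuine gap at the very first step. You claim that each isotypic component $(\In_\succ(I))_\lambda$ is a $G$-submodule of the irreducible $V_\lambda$, and hence is $0$ or all of $V_\lambda$. This is false: the theorem is stated for an \emph{arbitrary} ideal $I \subset A$, with no $G$-stability hypothesis, and in that generality $\In_\succ(I)$ is only $\Lambda_X^+$-homogeneous, not $G$-stable. A concrete counterexample is $G=\SL(2,\k)$ acting on $A=\k[x,y]$ (Example~\ref{ex-sph-var}(3)), where $A=A_{\hc}$ and $I=\langle x\rangle$: then $\In_\succ(I)=I$ and $I\cap W_1=\k x$ is a proper nonzero subspace of the irreducible $W_1\cong V_1$. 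So the decomposition $\In_\succ(I)=\bigoplus_{\lambda\in\Sigma_\succ}V_\lambda$ does not exist, and with it the reduction to upward-closed subsets of $\Lambda_X^+$ and the Dickson-lemma strategy collapse. (If you add the hypothesis that $I$ is $G$-stable, your first step becomes correct, but then you are proving a strictly weaker statement than the one in the paper.)

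Even setting this aside, the second half of your plan is only a sketch: you correctly identify as ``the main obstacle'' that all minimal leading weights should lie in a single finite set $S$ determined by one fixed Gr\"obner basis $\G_0$, but you do not establish it. Note also that your parenthetical phrasing ``the leading weight of every $f\in I$'' cannot be what is meant, since leading weights are obviously unbounded (e.g.\ $x^n\in\langle x\rangle$ has leading weight $n$).

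The paper's proof avoids both issues by adapting the classical Mora--Robbiano/Sturmfels contradiction argument directly: assuming infinitely many initial ideals, one iteratively extracts homogeneous elements $f_{i,\lambda_i}\in A_{\hc}$ producing a strictly ascending chain $J_1\subsetneq J_2\subsetneq\cdots$ of ideals in $A_{\hc}$, contradicting Noetherianity. This uses only $\Lambda_X^+$-homogeneity of initial ideals (not $G$-stability) together with the spherical division algorithm, and requires no structural classification of the possible initial ideals.
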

This theorem then implies the existence of a {\it universal spherical Gr\"obner basis} for $I$ (Corollary \ref{cor-universal-Grobner-basis}).

Next we consider generalizations of the notions of initial ideal with respect to a vector $w \in \r^n$ and Gr\"obner fan of an ideal.
In the spherical Gr\"obner theory the role of a vector $w \in \r^n$ is played by a $G$-invariant valuation on the field of rational functions $\k(X)$ (see Section \ref{subsec-inv-valuation}).
We let $\V_X$ denote the collection of all $G$-invariant valuations on $\k(X)$ and with values in $\q$. It is a well-known result that 
$\V_X$ can be naturally realized as a simplicial cone sitting in the dual vector space $\Hom(\Lambda_X, \q)$. Hence sometimes $\V_X$ is referred to as the {\it valuation cone}. Considering $G$-invariant valuations in the study of spherical varieties goes back to the fundamental paper of Luna and Vust (\cite{Luna-Vust}).

For a valuation $v \in \V_X$ we define the associated graded algebra $\gr_v(A)$. One makes the following observation (Proposition \ref{prop-partial-horo-degeneration-gr_v}): 
For $v \in \V_X$, the associated graded algebra $\gr_v(A)$ depends 
only on the face $\sigma$ of the cone $\V_X$ which contains $v$ in its relative interior (thus we also write $\gr_\sigma(A)$ instead of $\gr_v(A)$).
When $v$ lies in the interior of $\V_X$ then $\gr_v(A)$ is isomorphic to the horospherical contraction $A_{\hc}$.\footnote{This statement is not quite new and has already been observed by other authors.}

For an ideal $I \subset A$ and a valuation $v \in \V_X$ we consider the initial ideal $\In_v(I) \subset \gr_v(A)$. Extending the usual Gr\"obner theory we define 
an equivalence relation on the valuations: for $v_1, v_2 \in \V_X$ we say $v_1 \sim v_2$ if they lie on the relative interior of the same face 
$\sigma$ of $\V_X$ and also $\In_{v_1}(I) = \In_{v_2}(I)$ regarded as ideals in $\gr_\sigma(A)$ (Definition \ref{def-equiv-valuation}). 

For the next result we assume that $A = \bigoplus_{i \geq 0} A_i$ is graded and $G$ acts on $A$ preserving the grading. Moreover, we assume that each graded piece $A_i$ is a multiplicity free $G$-module (i.e. $A$ is a multiplicity free $(\k^* \times G)$-algebra). Thus $A$ is the ring of regular functions on an affine spherical $(\k^* \times G)$-variety $X$. In this situation by the valuation cone $\V_X$ we mean the cone of $(\k^* \times G)$-invariant valuations. We prove the following (Theorem \ref{th-sph-Grobner-fan}).
\begin{THM}  \label{th-intro-Grobner-fan}
Let $A$ be graded as above. Let $I \subset A$ be a homogeneous ideal. Then the closures of equivalence classes of $\sim$ form a fan 
which we call the spherical Gr\"obner fan of $I$.
\end{THM}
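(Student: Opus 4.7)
The plan is to reduce the statement to a finiteness assertion combined with a convex-geometric description of each equivalence class. By Theorem \ref{th-intro-finite-number-initial-ideal} and the fact that $\V_X$ has only finitely many faces, the relation $\sim$ on $\V_X$ has only finitely many equivalence classes. It therefore suffices to exhibit each class as the relative interior of a rational polyhedral cone in $\V_X$, and then check that the resulting finite collection of cones satisfies the fan axioms.

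First I fix a face $\sigma$ of $\V_X$ and restrict attention to the relative interior $\sigma^\circ$. By Proposition \ref{prop-partial-horo-degeneration-gr_v} the associated graded $\gr_v(A) = \gr_\sigma(A)$ is constant on $\sigma^\circ$, so the equivalence condition there is simply that $\In_v(I)$ equal a fixed ideal in $\gr_\sigma(A)$. Using Corollary \ref{cor-universal-Grobner-basis} I pick a universal spherical Gr\"obner basis $\G$ for $I$; it then suffices to compare the initial forms $\In_v(g)$ for $g \in \G$. Using multiplicity-freeness of $A$ as a $(\k^* \times G)$-algebra, each $g \in \G$ decomposes uniquely as a finite sum $g = \sum_\lambda g_\lambda$ of components in distinct isotypic $G$-subspaces indexed by the pair of the $\k^*$-degree and a weight $\lambda \in \Lambda_X$. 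Viewing $v$ as an element of $\Hom(\Lambda_X, \q)$, the value $v(g_\lambda)$ is linear in $v$, and $\In_v(g)$ is the partial sum over those $\lambda$ achieving the common minimum. Consequently the locus of $v \in \sigma^\circ$ with $\In_v(g) = \In_{v_0}(g)$ for every $g \in \G$ is cut out by finitely many rational linear equalities and strict inequalities; its closure in $\sigma$ is therefore a rational polyhedral cone $C_{v_0}$, and $C_{v_0} \cap \sigma^\circ$ is exactly the $\sim$-class of $v_0$.

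Next I glue the closures obtained from different faces. Given two such cones $C$ and $C'$, I claim that $C \cap C'$ is a common face. A face of $C_{v_0}$ either lies inside a proper subface $\sigma' < \sigma$ of $\V_X$, in which case the further degeneration $\gr_\sigma(A) \twoheadrightarrow \gr_{\sigma'}(A)$ of Proposition \ref{prop-partial-horo-degeneration-gr_v} identifies the corresponding initial ideal with one of the ideals cut out on $\sigma'$ by the same Gr\"obner basis $\G$, or it sits inside $\sigma$ with additional defining inequalities becoming equalities. In either situation the face is itself the closure of a single $\sim$-class, and the intersection of two such closures is described by the union of their defining linear systems, hence is again a common face of both.

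The main obstacle I expect is the cross-face analysis in the last paragraph: showing that as $v$ specializes from $\sigma^\circ$ to the relative interior of a subface $\sigma'$, the initial ideal $\In_v(I) \subset \gr_\sigma(A)$ degenerates compatibly to the initial ideal in $\gr_{\sigma'}(A)$. One has to use the explicit description of the partial horospherical contraction together with the $G$-invariance of the valuations involved to realize the limit as a composition of two successive initial-ideal operations, and to verify that the universal Gr\"obner basis $\G$ remains a Gr\"obner basis after each degeneration. Once this compatibility is in hand the fan axioms follow formally, but because the ambient graded algebra genuinely changes across faces of $\V_X$ this identification is more delicate than in the classical polynomial-ring setting.
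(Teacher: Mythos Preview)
Your outline has the right shape but contains a genuine gap at the step where you assert that the $\sim$-class of $v_0$ inside $\sigma^\circ$ coincides with the locus $\{v : \In_v(g) = \In_{v_0}(g) \text{ for all } g \in \G\}$ for a \emph{universal} spherical Gr\"obner basis $\G$. One inclusion is fine: since $\G$ is universal, $\In_v(\G)$ generates $\In_v(I)$ (via Lemma~\ref{lem-in_v-in_succ} and Proposition~\ref{prop-sph-Grobner-basis-generate-ideal}), so equal initial forms force equal initial ideals. The reverse inclusion, however, fails for an arbitrary universal basis. Already in the polynomial case one can enlarge a universal Gr\"obner basis by a redundant element whose initial form genuinely changes as $v$ moves within a single Gr\"obner cone; the initial ideal does not change, so $\In_v(I)=\In_{v'}(I)$ while $\In_v(g)\neq\In_{v'}(g)$ for that $g$. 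Thus your description of $C_{v_0}$ as a polyhedral cone is not yet a description of the $\sim$-class.

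The paper closes this gap by working instead with a \emph{reduced} spherical Gr\"obner basis with respect to $\succ_v$ (Definition~\ref{def-reduced-sph-Grobner-basis}), and proving both inclusions of
\[
C[v] = \{v' \in \sigma : \In_v(f) = \In_{v'}(f) \text{ for all } f \in \G\}
\]
directly; the reducedness is exactly what forces the harder inclusion. For the fan axioms the paper then avoids your cross-face analysis entirely: it introduces the generalized Newton polytope $\Delta = \sum_{f\in\G}\Delta(f)$ and observes that $\overline{C[v]}$ is the normal cone of $\Delta$ at the face $\textup{face}_v(\Delta)$. By choosing the auxiliary total order as $\succ = \succ'_v$, the same reduced basis $\G$ works for every $v'\in\overline{C[v]}$, so all nearby $\overline{C[v']}$ are normal cones of the \emph{same} polytope $\Delta$, and the face relations follow from standard convex geometry. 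This sidesteps the compatibility-of-degenerations issue you correctly flag as the main obstacle in your approach; in the paper's argument that issue simply never arises.
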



We would like to point out some important differences between the toric case and the general spherical case which makes the spherical theory more 
complicated. (1)
In the torus case the isotypic components are $1$-dimensional corresponding to different Laurent monomials, while in the general spherical case they 
are irreducible $G$-modules and almost always have dimension greater than $1$. (2) If $f_\alpha = x^\alpha$, $f_\beta=x^\beta$ are two monomials in a polynomial algebra then $f_\alpha f_\beta = f_{\alpha +  \beta}$. In the spherical case, if $f_\gamma \in W_\gamma$, $f_\mu \in W_\mu$ where $A = \bigoplus_\lambda W_\lambda$ is the isotypic decomposition of the $G$-algebra $A$, then in general, $f_\gamma f_\mu$ does not necessarily lie in $W_{\gamma+\mu}$ but rather in $W_{\gamma+\mu}$ direct sum with $W_\lambda$ where 
$\lambda$ is greater than $\gamma+\mu$ in the spherical dominant order $>_X$ (Theorem \ref{th-coor-ring-multiplication-spherical}).

In Sections \ref{sec-sph-trop-var-ideals} and \ref{sec-sph-trop-var-trop-map} we extend several basic definitions and results from 
tropical geometry to the spherical setting. We consider a spherical homogeneous space $G/H$ (which may not be quasi-affine).
The spherical tropical veriety encodes the ``asymptotic directions'' of a subvariety of a spherical homogeneous space $G/H$ which correspond to 
$G$-equivariant embeddings of $G/H$. To make the notion of asymptotic directions in $G/H$ precise, one uses $G$-invariant valuations. 
Extending the tropical geometry on the algebraic torus $(\k^*)^n$, we consider two ways of constructing a spherical tropical variety. One using the spherical tropicalization map and the other using  initial ideals. 

Given a valuation $v: \k(G/H) \to \q \cup \{\infty\}$ one can define a $G$-invariant valuation $\bar{v} \in \V_{G/H}$. For $0 \neq f \in \k(G/H)$, 
the value $\bar{v}(f)$ is equal to $v(g \cdot f)$ for $g$ in a Zariski open subset $U_f \subset G$. Also a formal curve
$\gamma$ in $G/H$, that is, a $\overline{\K}$-valued point $\gamma \in G/H(\overline{\K})$, defines a valuation $v_\gamma$. For $f \in \k(G/H)$, the 
value $v_\gamma(f)$ is simply $\ord_t$ of the restriction of $f$ to the curve $\gamma$. Combining these two constructions, to a formal curve 
$\gamma$ on $G/H$ one associates an invariant valuation $\bar{v}_\gamma \in \V_{G/H}$. This plays an important role in the Luna-Vust theory of spherical embeddings
(\cite{Luna-Vust, Knop-LV}). Vogiannou, suggests the map
$$\Trop: G/H(\overline{\K}) \to \V_{G/H}, \quad \gamma \mapsto \bar{v}_\gamma,$$ 
as a generalization of the tropicalization map to the setting of spherical varieties. If $Y \subset G/H$ is a subvariety, 
the {\it spherical tropicalization} $\Trop(Y)$ is then defined to be the image of $Y$ in $\V_{G/H}$ under the map $\Trop$ (see \cite{Vogiannou} and Definition \ref{def-sph-tropicalization}). He shows the following: For a valuation $v \in \V_{G/H}$ let $X_v$ denote the $G$-equivariant spherical embedding corresponding to the single ray generated by $v$ (in the sense of Luna-Vust). Then $v$ lies in $\Trop(Y)$ if and only if the closure of $Y$ in $X_v$ intersects the unique $G$-invariant divisor at infinity $D_v \subset X_v$.
It is also proved in \cite[Section 4]{Vogiannou} that $\Trop(Y)$ is the support of a rational polyhedral fan in $\V_{G/H}$. Vogiannou uses $\Trop(Y)$ to prove the existence of a so-called spherical tropical compactification for $Y$, extending an analogous result in \cite{Tevelev} for algebraic torus. The arguments rely on the Luna-Vust theory of spherical embeddings. In Section \ref{subsec-tropicalization} we review the spherical tropicalization map of Vogainnou. 

One of the goals of the present paper is to give a definition of a spherical tropical variety using the defining ideal of the subvariety. Note that a spherical homogeneous space $G/H$ may not in general be affine or quasi-affine. It turns out that in the spherical context, it is more natural to consider subvarieties in the open Borel orbit. The open Borel orbit is always an affine variety. 

Fix a Borel subgroup $B$ in $G$ and let $J \subset \k[X_B]$ be an ideal in the coordinate ring of the open $B$-orbit $X_B \subset {G/H}$. For $v \in \V_{G/H}$, the notions of associated graded $\gr_v(\k[X_B])$ and initial ideal $\In_v(J)$ are defined in a similar fashion as in the case of a $G$-algebra. 
We define $\trop_B(J)$ to be the set of all $v \in \V_{G/H}$ such that $\In_v(J)$ contains no unit elements (Definition \ref{def-sph-trop-var-X_B}). That is, 
$$\trop_B(J) = \{ v \in \V_{G/H} \mid \In_v(J) \neq \gr_v(\k[X_B])\},$$
(cf. Definition \ref{def-trop-var-ideal}). 

Take a $G$-linearized very ample line bundle $L$ on a projective spherical embedding $\overline{X}$ of $G/H$. Let $A = \bigoplus_{i \geq 0}H^0(\overline{X}, L^{\otimes i})$ be the algebra of sections of $L$. We apply the spherical Gr\"obner theory discussed earlier to the $(\k^* \times G)$-algebra $A$ to prove the following (Theorem \ref{th-fan-st-sph-trop-var}): 

\begin{THM}  \label{th-intro-trop-var-fan-st}
$\trop_B(J)$ is the support of a rational polyhedral fan. Moreover, this fan structure can be obtained by intersecting $\trop_B(J)$ with the spherical Gr\"obner fan of an ideal $\tilde{J} \subset A$ (which can be thought of as the homogenization of $J$ with respect to $(\overline{X}, L)$). 
\end{THM}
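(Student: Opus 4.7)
My plan is to reduce the claim to Theorem \ref{th-intro-Grobner-fan} by homogenizing $J$ to a $G$-stable homogeneous ideal $\tilde{J} \subset A$ and then intersecting the spherical Gröbner fan of $\tilde{J}$ with the linear slice of $\V_X$ corresponding to $\V_{G/H}$. To construct $\tilde{J}$, fix a $B$-eigensection $s_0 \in A_1$ not vanishing on the open Borel orbit $X_B$; such an $s_0$ exists because $L$ is very ample and $X_B$ is open in $\overline{X}$. Then $s_0$ trivializes $L|_{X_B}$, so each $f \in \k[X_B]$ has the form $s/s_0^i$ for some $s \in A_i$ with $i$ large enough to absorb the poles of $f$ along $\overline{X} \setminus X_B$. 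I take $\tilde{J}$ to be the homogeneous ideal of $A$ generated by all such lifts of elements of $J$; it is automatically $G$-stable, and dehomogenization (inverting $s_0$) recovers $J$ from $\tilde{J}$.

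The $(\k^* \times G)$-invariant valuations on $\k(X) \cong \k(G/H)(s_0)$ form a cone $\V_X$ admitting a natural linear embedding $\iota: \V_{G/H} \hookrightarrow \V_X$, sending $v$ to its unique $(\k^* \times G)$-invariant lift $\tilde{v}$ normalized by $\tilde{v}(s_0) = 0$. Under this embedding, $\gr_{\tilde{v}}(A)$ is a graded multiplicity-free $(\k^* \times G)$-algebra, its localization at the image of $s_0$ is canonically isomorphic to $\gr_v(\k[X_B])$, and the localization of $\In_{\tilde{v}}(\tilde{J})$ becomes $\In_v(J)$. Since $B$ acts transitively on $X_B$, the units of $\gr_v(\k[X_B])$ are precisely the nonzero $B$-eigenfunctions, so $\In_v(J) \neq \gr_v(\k[X_B])$ if and only if $\In_{\tilde{v}}(\tilde{J})$ contains no $B$-eigensection $s \in A_i$ with $s|_{X_B}$ nonzero.

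To finish, apply Theorem \ref{th-intro-Grobner-fan} to $\tilde{J}$ to obtain a spherical Gröbner fan $\Sigma_{\tilde{J}}$ on $\V_X$. The initial ideal $\In_{\tilde{v}}(\tilde{J})$ is constant on the relative interior of each cone of $\Sigma_{\tilde{J}}$, hence so is the ``no nonvanishing $B$-eigensection'' condition; its truth locus in $\V_X$ is therefore a union of closed cones of $\Sigma_{\tilde{J}}$. Pulling this union back through $\iota$ and intersecting with the rational linear subspace $\iota(\V_{G/H})$ produces the sought rational polyhedral fan supported on $\trop_B(J)$.

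I expect the principal obstacle to lie in the second paragraph: checking that formation of initial ideals commutes with both the restriction $A \to \k[X_B]$ (via $s_0$-localization) and the passage to the associated graded, that the $(\k^* \times G)$-structure is preserved so that units on the graded side correspond precisely to $B$-eigenfunctions, and that $\tilde{J}$ depends on the choice of $s_0$ only in ways that do not affect $\trop_B(J)$ or the induced fan structure.
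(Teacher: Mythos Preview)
Your overall strategy coincides with the paper's: homogenize $J$ to a homogeneous ideal $\tilde J\subset A$ via a $B$-eigensection, embed $\V_{G/H}$ linearly into the $(\k^*\times G)$-valuation cone $\tilde\V_A$, and intersect with the spherical Gr\"obner fan of $\tilde J$. Two remarks, one minor and one substantive.

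Minor: $\tilde J$ is \emph{not} ``automatically $G$-stable''. The ideal $J\subset\k[X_B]$ is an arbitrary ideal in the coordinate ring of the open $B$-orbit; there is no $G$-action on $\k[X_B]$, and after homogenizing you get only a $\z_{\geq 0}$-homogeneous ideal in $A$. Fortunately Theorem~\ref{th-intro-Grobner-fan} requires only homogeneity, not $G$-stability, so this does not damage your argument---just delete the claim.

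Substantive gap: the sentence ``its truth locus in $\V_X$ is therefore a union of \emph{closed} cones of $\Sigma_{\tilde J}$'' does not follow from what precedes it. Constancy of $\In_{\tilde v}(\tilde J)$ on the relative interior of each Gr\"obner cone tells you only that $\trop_B(J)$ is a union of \emph{relative interiors} of cones; to get a union of closed cones you must show that the condition ``$\In_v(J)$ contains no unit'' passes from the relative interior of a cone to its faces. This is the heart of the proof and it is not automatic. The paper handles it as follows: if $\tilde v$ lies in the relative interior of a Gr\"obner cone and $\tilde v'$ lies in its closure, then the map
\[
\In_{\tilde v'}(f)\ \longmapsto\ \In_{\tilde v}\bigl(\In_{\tilde v'}(f)\bigr)
\]
is multiplicative from the homogeneous elements of $\In_{\tilde v'}(\tilde J)$ to those of $\In_{\tilde v}(\tilde J)$, and hence induces a multiplicative map $\In_{v'}(J)\to\In_v(J)$ sending units to units. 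So if $\In_{v'}(J)$ contained a unit, so would $\In_v(J)$; contrapositively, $v\in\trop_B(J)$ forces $v'\in\trop_B(J)$. You need to insert this specialization argument (or an equivalent one) before you can conclude that the truth locus is closed, i.e.\ is the support of a subfan.
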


We also discuss how to compute the spherical tropical variety of a hypersurface (Section \ref{subsec-sph-trop-hypersurface}). Finally, we show the existence of a so-called {\it finite spherical tropical basis} for $J$. In other words, $\trop_B(J)$ is the intersection of a finite number of spherical tropical hypersurfaces (Section \ref{subsec-spherical-trop-basis}).

\begin{REM}
The above results in principle give an algorithmic way to compute the spherical tropical variety of a subvariety in a spherical homogeneous space. For example, in the case of a subvariety $Y$ of $\GL(n, \k)$, these suggest a way to compute the set of all the invariant factors of points of $Y$ over $\mathcal{K} = \k((t))$ (see Example \ref{ex-sph-var}(5) and Example \ref{ex-non-Arch-Cartan-decomp}). It might be worthwhile to work out the details and efficiency of such an algorithm.
\end{REM}

Let $Y \subset G/H$ be a subvariety and let $I=I(Y) \subset A$ the ideal of sections vanishing on $Y \subset G/H$.
In Section \ref{subsec-sph-trop-var-G/H} we put together the spherical tropical varieties in open Borel orbits, for different Borel subgroups, to define the notion of a tropical variety $\trop(I)$ (Definition \ref{def-sph-trop-var-G/H} and Proposition \ref{prop-sph-trop-var-G/H-vs-X_B}). 
For each Borel subgroup $B$ let $J_B \subset \k[X_B]$ denote the ideal of $Y \cap X_B$. The following is our generalization of the fundamental theorem of tropical geometry to the spherical setting (Theorem \ref{th-fundamental}). For $v \in \V_{G/H}$ let $X_v$ denote the spherical embedding of $G/H$ corresponding to the single ray generated by $v$.
\begin{THM} 
The following subsets of the valuation cone $\V_{G/H}$ coincide:
\begin{itemize}
\item[(a)] The set $\bigcup_B \trop(J_B),$ where the union is over all Borel subgroups of $G$ (one shows that
it is enough to take the union over a finite collection of Borel subgroups).
\item[(b)] The set $\Trop(Y) = \{\Trop(\gamma) \in \V_{G/H} \mid \gamma \in Y(\overline{\K}) \textup{ formal Puiseux curve on Y }\}.$ 
\item[(c)] The set $v \in \V_{G/H}$ such that the closure of $Y$ in $X_v$ intersects the divisor at infinity $D_v \subset X_v$.
\end{itemize}
\end{THM}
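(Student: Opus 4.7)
My plan is to prove the triple equivalence in two stages. The equivalence of (b) and (c) is Vogiannou's theorem and has been reviewed in the introduction and recalled in Section~\ref{subsec-tropicalization}: for $v \in \V_{G/H}$ one has $v = \bar{v}_\gamma$ for some formal Puiseux curve $\gamma \in Y(\overline{\K})$ if and only if the closure of $Y$ in the simple embedding $X_v$ meets the unique $G$-stable divisor $D_v$. Thus the new content is the equivalence of (a) with (c), which I will establish by means of a Rees-type degeneration argument combined with the spherical Gr\"obner machinery of Sections~\ref{sec-Grobner-G-alg} and~\ref{sec-sph-trop-var-ideals}.

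For the implication (a) $\Rightarrow$ (c), suppose $v \in \trop(J_B)$ for some Borel $B$, i.e.\ $\In_v(J_B)$ is a proper ideal of $\gr_v(\k[X_B])$. The key geometric input is that $v$ defines a filtration on $\k[X_B]$ whose Rees algebra realizes a flat degeneration $\mathfrak{X}_B \to \A^1$ with generic fiber $X_B$ and special fiber naturally identified with an open $B$-stable piece of the boundary divisor $D_v$ in the simple embedding $X_v$ (this is the spherical analogue of the toric Rees/degeneration picture, and follows from the Luna--Vust description of $X_v$ together with Proposition~\ref{prop-partial-horo-degeneration-gr_v} relating $\gr_v$ to horospherical contractions). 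Taking the scheme-theoretic closure of $Y \cap X_B$ inside $\mathfrak{X}_B$ yields a flat family whose special fiber is cut out by $\In_v(J_B)$, and properness of this ideal forces the closure of $Y$ in $X_v$ to meet $D_v$. For the reverse (c) $\Rightarrow$ (a), if $\overline{Y} \cap D_v$ is non-empty in $X_v$, then since $X_v$ has only finitely many $B$-orbits one can find a Borel $B$ whose open orbit $X_B$ contains a generic point of the intersection of $Y$ with the preimage of $D_v$ in the Rees degeneration; restricting to this $B$-chart, the special fiber of the induced family is non-empty, so $\In_v(J_B)$ is proper and $v \in \trop(J_B)$.

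The main obstacle is to verify in the spherical setting that passing to initial ideals commutes with scheme-theoretic closure in the simple embedding $X_v$. The presence of higher-dimensional isotypic components and the fact that multiplication does not preserve weight on the nose (points (1) and (2) in the ``differences from the toric case'' paragraph of the introduction) prevent a direct monomial-level check of flatness; instead one must work with the filtration induced by $v$ via the spherical dominant order $>_X$ and use the identification of $\gr_v$ with the horospherical contraction $A_{\hc}$ when $v$ lies in the interior of its face of $\V_X$, falling back to a more general face when it does not. For the finite-Borel reduction in (a), I would note that the open orbits $gX_Bg^{-1}$ form a $G$-invariant Zariski open cover of $G/H$, so by Noetherianity finitely many conjugates cover a dense open subset of $Y$; since $\V_{G/H}$ is intrinsic to $G/H$ and every formal curve on $Y$ eventually enters one of these open pieces, the full union is detected by this finite subfamily.
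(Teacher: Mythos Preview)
Your approach is essentially the same as the paper's: (b)$\Leftrightarrow$(c) is quoted from Vogiannou (Proposition~\ref{prop-Vogiannou}), and (a)$\Leftrightarrow$(c) is reduced to the $B$-chart statement (Theorem~\ref{th-sph-trop-var-X_v_B}) via a Rees/associated-graded argument, then glued over Borels (Proposition~\ref{prop-sph-trop-var-G/H-vs-X_B}). Two points are worth tightening. First, the special fiber of the Rees family over $\A^1$ is $\Spec(\gr_v(\k[X_B]))$, not literally the open $B$-orbit $D_v'\subset D_v$; the paper instead takes $\Proj$ of the Rees algebra, identifies it with $X_{v,B}$ (blowup along a Cartier divisor is the identity), and reads off $\overline{Z}\cap D_v'=\Proj(\gr_v(\k[X_B])/\In_v(J))$ via Lemma~\ref{lem-pushforward-initial-ideal}. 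That lemma is pure commutative algebra about pushforward filtrations; the ``obstacle'' you flag (non-monomial isotypic pieces, spherical dominant order, horospherical contraction) does not enter here at all, so you can drop that discussion.

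Second, your finite-Borel reduction has a genuine gap. Covering $G/H$ (or a dense open in $Y$) by finitely many conjugates $gX_Bg^{-1}$ does not show that the same finite list works uniformly for every $v$: what you need is that for each $v$ the boundary divisor $D_v$ is covered by the open $B_i$-orbits $D_v'$, and this does not follow from a cover of $G/H$. The paper's argument (Proposition~\ref{prop-sph-trop-var-finite-B}) fixes a complete spherical embedding $X_\Sigma$, chooses finitely many Borels so that for every $G$-orbit $O\subset X_\Sigma$ the open $B_i$-orbits cover $O$, and then for arbitrary $v$ uses the Luna--Vust morphism $X_v\to X_\sigma$ (where $v\in\sigma^\circ$) to pull this cover back to $D_v$.
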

We remark that the equivalence of (b) and (c) is proved in \cite{Vogiannou}. We show the equivalence of (a) and (c). 


In Section \ref{subsec-analytification}, we propose a generalization of the usual tropicalization map, from the so-called analytification of a subvariety in $(\k^*)^n$ to $\r^n$, 
to the spherical setting. In this context it is natural to extend the valuation cone $\V_X$ and define $\hat{\V}_X$ to be the set of all invariant valuations $v: \k(X) \to \r \cup \{\infty\}$. Let $Y \subset X$ be a subvariety of an affine spherical $G$-variety $X$ that intersects the open $G$-orbit. Extending the spherical tropicalization map in \cite{Vogiannou}, we define $\TROP: Y^\an \to \hat{\V}_X$, where $Y^\an$ is the analytification of $Y$ in the sense of Berkovich. We like to regard the $\TROP$ map as an algebraic analogue of the averaging (over a compact group) map. We show that $\TROP$ is continuous (Proposition \ref{prop-trop-analytification}). This is a relatively straightforward extension. We believe that $\TROP: Y^\an \to \hat{\V}_X$ might give new insight into the notion of tropicalization.

\begin{REM}
We expect that the arguments in this paper extend to any valued field without much difficulty, i.e. when $Y \subset G/H$ is defined over a valued field extension $K$ of $\k$.
\end{REM}

Finally, we address the notion of amoeba of a subvariety.
When $\k = \c$, one defines a logarithm map on the torus $(\c^*)^n$ as follows. Fix a real number $t > 0$. The {\it logarithm map} 
$\Log_t: (\c^*)^n \to \r^n$ is simply defined by: $$\Log_t(z_1,  \ldots, z_n) = (\log_t(|z_1|), \ldots, \log_t(|z_n|)).$$
The {\it amoeba} of a subvariety $Y \subset (\c^*)^n$ is defined to be the image of $Y$ under the logarithm map. A well-known theorem 
states that as $t$ approaches $0$, the amoeba of $Y$ approaches, in Hausdorff metric, to the tropical variety of $Y$.
In Section \ref{sec-sph-amoeba} we propose a generalization of the notion of logarithm map (and amoeba) for spherical varieties. The {\it spherical logarithm map} is a $K$-invariant map ${\bf L}_t: G/H(\c) \to \V_{G/H}$, where $K$ is a maximal compact subgroup in $G$. 
For this we need to assume that the Archimedean Cartan decomposition holds for the spherical homogeneous space ${G/H}$ (Assumption \ref{ass-Arch-Cartan-sph}).
In fact, the authors originally conjectured that the Archimdean Cartan decomposition should hold for any spherical homogeneous space. Later, we learned that
Victor Batyrev had made the same conjecture some years ago (some related results can be found in \cite{Knop-Cartan}). 

In \cite{Eliyashev} a generalized notion of logarithm map (and amoeba) is introduced for a complex manifold $X$ with respect to a vector of holomorphic differentials. We expect that spherical logarithm map in the context of spherical varieties is a special case of this generalized logarithm map.

In Section \ref{sec-sph-amoeba} we give a generalization of the fact that amoeba approaches the tropical variety to the spherical setting (Theorem \ref{th-trop-var-limit-amoeba}):\footnote{We think that the stronger statement, that the spherical amoeba approaches the spherical tropical variety in Hausdorff distance, is true as well.}
\begin{THM}
Let $Y \subset {G/H}$ be a subvariety. Let $v \in \Trop(Y)$ be a point in the spherical tropical variety of $Y$. 
Then there exists a formal curve $\gamma \in Y(\overline{\K})$ with nonzero radius of convergence and such that: $$\lim_{t \to 0} \L_{t}(\gamma(t)) = v.$$ 
\end{THM}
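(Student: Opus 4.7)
The plan is to combine the fundamental theorem of spherical tropical geometry with an approximation argument to pass from a formal to a convergent Puiseux curve, and then to compute the analytic limit by unwinding the spherical logarithm map via the Archimedean Cartan decomposition.

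Since $v \in \Trop(Y)$, part (b) of the preceding theorem produces a formal Puiseux curve $\gamma_0 \in Y(\overline{\K})$ with $\bar{v}_{\gamma_0} = v$. Because $Y$ is an algebraic subvariety of $G/H$ defined over $\c$, the morphism $\Spec \overline{\K} \to Y$ traced by $\gamma_0$ factors through a one-dimensional algebraic subvariety $C \subset Y$, and the analytic branch of $C$ associated to $\gamma_0$ admits a convergent Puiseux parametrization. Equivalently, by Artin approximation, we may replace $\gamma_0$ by a convergent Puiseux curve $\gamma$ agreeing with $\gamma_0$ in $t$ to an arbitrarily prescribed order. Since $\bar{v}_\gamma(f_\lambda) = \ord_t\bigl((g \cdot f_\lambda) \circ \gamma\bigr)$ for $g$ in a Zariski open of $G$, and this depends only on finitely many leading terms of finitely many $B$-eigenfunctions $f_\lambda$ whose weights generate $\Lambda_{G/H}$, matching enough terms forces $\bar{v}_\gamma = v$.

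For $0 < t \ll 1$, use Assumption \ref{ass-Arch-Cartan-sph} to write the Archimedean Cartan decomposition $\gamma(t) = k(t) \cdot a(t) \cdot [H]$ with $k(t) \in K$ and $a(t)$ in the distinguished positive sector of a torus $A$. By construction, $\L_t(\gamma(t)) = \log_t a(t) \in \V_{G/H}$. Pairing with any weight $\lambda \in \Lambda_{G/H}$ via a $B$-eigenfunction $f_\lambda$ (with $|f_\lambda(a[H])| = \chi_\lambda(a)$ on the sector) gives
\[
\bigl\langle \L_t(\gamma(t)),\, \lambda \bigr\rangle \;=\; \log_t \chi_\lambda(a(t)) \;=\; \log_t \bigl|f_\lambda\bigl(k(t)^{-1}\gamma(t)\bigr)\bigr|.
\]
The theorem reduces to showing that this quantity tends to $\bar{v}_\gamma(f_\lambda) = \langle v, \lambda\rangle$ as $t \to 0^+$, for every $\lambda$ in a generating set of $\Lambda_{G/H}$.

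The main obstacle is precisely this last identification, since $f_\lambda$ is being evaluated along the non-algebraic curve $t \mapsto k(t)^{-1}\gamma(t)$ and the algebraic valuation $\bar{v}_\gamma(f_\lambda)$ does not a priori control it. I would bridge this gap via the non-Archimedean Cartan decomposition (cf.\ Example \ref{ex-non-Arch-Cartan-decomp}): writing $\gamma(t) = \kappa(t) \alpha(t) [H]$ over $\overline{\K}$, with $\alpha(t)$ a Puiseux series in $A$, reads $\bar{v}_\gamma$ off as the vector of leading exponents of $\alpha(t)$; uniqueness of the Cartan projection up to the Weyl group then forces the Archimedean projection $a(t)$ to share these leading real asymptotics with $\alpha(t)$. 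Alternatively one argues directly, combining (i) Zariski density of the locus $\{g \in G : \ord_t((g\cdot f_\lambda)\circ\gamma) = \bar{v}_\gamma(f_\lambda)\}$, which asymptotically controls $|(g \cdot f_\lambda)\circ\gamma(t)|$ for generic $g$, with (ii) a uniform boundedness principle coming from the fact that $f_\lambda$ lies in a finite-dimensional $G$-module carrying a $K$-invariant Hermitian norm, so that $|f_\lambda|$ restricted to any $K$-orbit in $G/H$ is sandwiched between positive constant multiples of $\chi_\lambda$ at the Cartan projection. Either route yields $|f_\lambda(a(t)[H])| = t^{\bar{v}_\gamma(f_\lambda) + o(1)}$ as $t \to 0^+$, so $\log_t$ sends it to $\bar{v}_\gamma(f_\lambda)$. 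Running over a generating set of weights $\lambda$ then gives $\lim_{t \to 0}\L_t(\gamma(t)) = v$.
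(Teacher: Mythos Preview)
Your reduction to a convergent curve via Artin approximation matches the paper's (this is Theorem~\ref{th-approx-algebraic}, which then feeds into Theorem~\ref{th-amoeba-trop-var-curve}), and you correctly isolate the heart of the matter as comparing the Archimedean Cartan projection $a(t)$ of $\gamma(t)$ with the non-Archimedean one. But neither of your two proposed bridges closes that gap.

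In route (a) you invoke ``uniqueness of the Cartan projection up to the Weyl group'' to force the Archimedean $a(t)$ and the non-Archimedean $\alpha(t)$ to share leading asymptotics. Uniqueness holds separately in each decomposition but yields no comparison between them: the Archimedean factor $k(t)\in K$ and the non-Archimedean factor $\kappa(t)\in G(\O)$ live in unrelated groups, and nothing you have written links them as $t$ varies. In route (b), the claimed two-sided bound $C_1\,\chi_\lambda(a)\le |f_\lambda|\le C_2\,\chi_\lambda(a)$ along a $K$-orbit fails on the lower end: a $B$-eigenfunction $f_\lambda$ vanishes on a $B$-stable divisor, and $K$-orbits generically meet it (already in the example $G/H=\A^2\setminus\{0\}$ with $f_\lambda=y$, the $\textup{SU}(2)$-orbit through any point is a sphere on which $y$ hits zero). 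Nor does your item (i) rescue this: it controls $(g\cdot f_\lambda)\circ\gamma$ for \emph{generic fixed} $g\in G$, whereas you need $g=k(t)$, a specific $t$-dependent element that may well drift into the non-generic locus as $t\to 0$.

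The paper's device (proof of Theorem~\ref{th-amoeba-trop-var-curve}) is to exploit that $K$ is a \emph{real algebraic} subgroup, so the Archimedean Cartan factor $k(t)$ can be taken algebraic in $t,\bar t$. Compactness of $K$ forces $k(t)$ to stay bounded near $0$; replacing $\bar t$ by $t$ in its series gives a holomorphic curve $\tilde k(t)$ agreeing with $k(t)$ for real $t$ and regular at $0$. Then $\tilde\tau(t)=\tilde k(t)^{-1}\gamma(t)$ is a meromorphic curve in $T_{G/H}$ lying in the $G(\O)$-orbit of $\gamma$, so uniqueness in the non-Archimedean Cartan decomposition (Theorem~\ref{th-non-Arch-Cartan-decomp}) now applies \emph{to $\tilde\tau$ itself} and identifies its leading exponent vector with $v$; since $\tilde\tau=\tau$ for real $t$, the limit follows. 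This holomorphic-extension trick, converting the compact factor into something in $G(\O)$, is precisely the missing link in your argument.
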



As an interesting corollary we obtain a result which states that the Smith normal form of a matrix 
(whose entries are algebraic functions and regarded as Laurent series in one variable $t$) is a limit of the singular values of the matrix as 
$t$ approaches $0$ (Corollary \ref{cor-inv-fact-sing-values}). 
Recall that if $A$ is an $n \times n$ complex matrix, the singular value decomposition states that $A$ can be written as: $$A = U_1DU_2,$$ where $U_1$, $U_2$
are $n \times n$ unitary matrices and $D$ is diagonal with nonnegative real entries. In fact, the diagonal entries of $d$ are the eigenvalues of 
the positive semi-definite matrix $\sqrt{AA^*}$ where $A^* = \bar{A}^t$. The diagonal entries of $A$ are usually referred to as the {\it singular values of $A$}.
Now let $A(t)$ be an $n \times n$ matrix whose entires $A_{ij}(t)$ are Laurent series in $t$ (over $\c$). 
We recall that the Smith normal form theorem (over the ring of formal power series which is a PID) states that $A(t)$ can be written as:
$$A_1(t) \tau(t) A_2(t),$$
where $A_1(t)$, $A_2(t)$ are $n \times n$ matrices with power series entries and invertible over the power series ring, and $\tau(t)$ is a diagonal matrix of the form 
$\tau(t) = \textup{diag}(t^{v_1}, \ldots, t^{v_n})$ for integers $v_1, \ldots, v_n$. The integers $v_1, \ldots, v_n$ are usually called the 
{\it invariant factors of $A(t)$}. This can be thought of as a non-Archimedean analogue of the singular value decomposition.
As an application of Theorem \ref{th-amoeba-trop-var-curve} we obtain the following amusing relation between singular values and invariant factors.
\begin{CORO}
Let $A(t)$ be an $n \times n$ matrix whose entries $A_{ij}$ are algebraic functions in $t$. For sufficiently small $t \neq 0$, let $d_1(t) \leq \cdots \leq d_n(t)$ denote the singular values of 
$A(t)$ ordered increasingly. Also let $v_1 \geq \cdots \geq v_n$ be the invariant factors of $A(t)$ ordered decreasingly. We then have:
$$\lim_{t \to 0} (\log_t(d_1(t)), \ldots, \log_t(d_n(t))) = (v_1, \ldots, v_n).$$
\end{CORO}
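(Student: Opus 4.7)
The plan is to deduce this as a direct specialization of Theorem~\ref{th-amoeba-trop-var-curve} applied to the symmetric spherical homogeneous space $X = \GL(n, \c)$, regarded as $G/H$ with $G = \GL(n) \times \GL(n)$ acting by $(g_1, g_2) \cdot x = g_1 x g_2^{-1}$ and $H$ the diagonally embedded $\GL(n)$. Because the entries of $A(t)$ are algebraic (hence convergent) Puiseux series, $A$ itself defines a formal curve $\gamma_A \in \GL(n, \overline{\K})$ with nonzero radius of convergence, so Theorem~\ref{th-amoeba-trop-var-curve} yields $\lim_{t \to 0} \L_t(\gamma_A(t)) = \Trop(\gamma_A)$. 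The rest of the proof consists of identifying the two sides with the log-singular values and the invariant factors, respectively.

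For the right-hand side, I would invoke the non-Archimedean Cartan decomposition as in Example~\ref{ex-non-Arch-Cartan-decomp}: for this $G/H$, the valuation cone $\V_{G/H}$ is canonically the dominant Weyl chamber $\{v_1 \geq \cdots \geq v_n\} \subset \q^n$ of the diagonal torus of $\GL(n)$, and $\Trop(\gamma_A) = \bar{v}_{\gamma_A}$ records precisely the exponents appearing in the Smith normal form $A(t) = P(t) \, \textup{diag}(t^{v_1},\ldots,t^{v_n}) \, Q(t)$ over the Puiseux power series ring, i.e.\ the invariant factors $(v_1, \ldots, v_n)$. For the left-hand side, I would invoke the Archimedean Cartan decomposition (Assumption~\ref{ass-Arch-Cartan-sph}) with maximal compact $K = U(n) \times U(n)$: the $KAK$ decomposition of $\GL(n, \c)$ is exactly the singular value decomposition $M = U_1 \, \textup{diag}(d_1, \ldots, d_n) \, U_2$ with $U_i \in U(n)$, and unwinding the definition of $\L_t$ from Section~\ref{sec-sph-amoeba} gives $\L_t(M) = (\log_t d_1, \ldots, \log_t d_n)$ once the $d_i$ are ordered so that the image lies in the dominant chamber; since $\log_t$ is decreasing for $0 < t < 1$, the convention $d_1 \leq \cdots \leq d_n$ in the statement produces the decreasing sequence $\log_t d_1 \geq \cdots \geq \log_t d_n$, matching the ordering of the $v_i$.

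Feeding these two identifications into the limit $\lim_{t \to 0} \L_t(\gamma_A(t)) = \Trop(\gamma_A)$ produces the displayed equality. The main obstacle is the identification step: one must pin down the explicit form of $\V_{G/H}$ and of the spherical logarithm $\L_t$ for this particular symmetric space, which amounts to a Luna--Vust computation on the valuation cone together with the verification that the $K$-averaging definition of $\L_t$ agrees with the $KAK$ projection. Both identifications are essentially the content of the examples that motivate the paper's treatment of the non-Archimedean and Archimedean Cartan decompositions, so once they are in place the corollary is immediate.
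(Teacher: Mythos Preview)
Your proposal is correct and matches the paper's approach exactly: the corollary is stated within Example~\ref{ex-amoeba-GxG} precisely as the specialization of Theorem~\ref{th-amoeba-trop-var-curve} to $X=\GL(n,\c)$ under the $(G\times G)$-action, with the Archimedean Cartan decomposition identified as the singular value decomposition and the non-Archimedean one as the Smith normal form. The paper also remarks that an alternative direct proof via the Hilbert--Courant minimax principle is possible, but the argument you outline is the one the authors actually use.
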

One can also give a direct proof of this statement using the Hilbert-Courant minimax principle. The authors are not aware of such a statement in the literature relating singular values and invariant factors. 



Generalizing the Bezout theorem, the celebrated Bernstein-Kushnirenko theorem gives a formula for the degree of a hypersurface in the torus $(\k^*)^n$ in terms of the volume of its Newton polytope. This can be translated into a formula for the degree in terms of the tropical variety of the hypersurface. More generally, the intersection numbers of subvarieties in the torus can be computed using the combinatorics of the corresponding tropical varieties/fans. This is the so-called stable intersection of the tropical varieties (\cite{Fulton-Sturmfels}, \cite{Kazarnovskii-fan}). 
The Berstein-Kushnirenko formula has been extended to spherical varieties by Brion and Kazarnovskii (\cite{Brion-degree, Kazarnovskii-group}).
The formula expresses the degree of a hypersurface (which is the divisor of an ample $G$-line bundle) in a spherical variety as the integral of a certain polynomial over the corresponding moment polytope. We expect that in the same fashion the spherical tropical geometry developed in this paper can be extended to give an intersection theory of complete intersections in a spherical variety.



\bigskip

\noindent{\bf Acknowledgement:} We would like to thank Victor Batyrev, Dmitri Timashev, Askold Khovanskii, Gary Kennedy, Jenia Tevelev, Evan Nash, Johannes Hofscheier, Frank Sottile and St\'ephanie Cupit-Foutou
for useful and stimulating discussions and comments. 
We also thank Eric Katz for suggesting to look at the Artin approximation theorem.  

\bigskip

\noindent{\bf Notation:}
Below are some of the notation and conventions used throughout the paper. 
\begin{itemize}
\item $\k$ denotes the base field. It is assumed to be algebraically closed and characteristic $0$, although in some places these assumptions are not necessary.
\item $G$ is a connected reductive algebraic group over $\k$ with a Borel subgroup $B$ and maximal torus $T$. The maximal unipotent subgroup of 
$B$ is denoted by $U$.
\item $\Lambda$ denotes the weight lattice of $T$ with the semigroup of 
dominant weights $\Lambda^+$ corresponding to the choice of $B$. The cone generated by $\Lambda^+$ is the positive Weyl chamber $\Lambda^+_\r$.
\item For a dominant weight $\lambda \in \Lambda^+$ we denote the irreducible $G$-module with highest weight $\lambda$ by $V_\lambda$. We usually denote a highest weight vector in 
$V_\lambda$ by $v_\lambda$.
\item $X$ denotes a spherical $G$-variety over $\k$. 
The ring of regular functions of $X$ is denoted by $\k[X]$.
\item $G/H$ denotes a spherical homogeneous space.
\item For a $G$-variety $X$ the group of weights of $B$-eigenfunctions in $\k(X)$ is denoted by $\Lambda_X$. Also $\Lambda_X^+$ denotes the semigroup of $B$-weights appearing in the algebra of regular functions $\k[X]$. One shows that when $X$ is affine, this semigroup generates the lattice $\Lambda_X$. We denote the set of $G$-invariant valuations on $\k(X)$ by $\V_X$.
\item For a $G$-algebra and domain $A$ the semigroup of highest weights appearing in $A$ is denoted by $\Lambda^+_A$. Also 
$\Lambda_A$ denotes the lattice of $B$-weights appearing in the quotient field of $A$. We also use the notation $\V_A$ to denote the set of $G$-invariant valuations of the quotient field of $A$.
\end{itemize}

\section{Preliminaries on Gr\"obner bases and tropical geometry}
\subsection{Gr\"obner bases} \label{subsec-Grobner-prelim}
The theory of Gr\"obner bases is concerned with ideals in a polynomial ring $\k[x_1, \ldots, x_n]$. The Gr\"obner bases are an
excellent tool in computational algebra and algebraic geometry. One of the many problems that can be answered with an efficient algorithm  
using Gr\"obner bases is the so-called {\it ideal membership problem}: {\it Let $I \subset \k[x_1, \ldots, x_n]$ be an ideal generated by the given polynomials $f_1, \ldots, f_r$. Suppose a polynomial $f$ is given. Determine whether $f$ lies in $I$ or not.}

A {\it term order} is a total order on the semigroup $\z_{\geq 0}^n$ which respects addition.  
We assume the following condition is satisfied:
{\it $(\z_{\geq 0}^n, \succ)$ is maximum well-ordered i.e. any increasing chain has a maximum.}
If the above is satisfied we say that {\it $(\z_{\geq 0}^n, \succ)$ has maximum well-ordered property}. 
This assumption is crucial for the algorithms concerning Gr\"obner bases to terminate. 
{An important example of a term order with the maximum well-ordered property 
is a reversed lexicographic order (corresponding to an ordering of the coordinates in $\z_{\geq 0}^n$).}

Let us fix a term order $\succ$ on $\z_{\geq 0}^n$. Let $f(x) = \sum_\alpha c_\alpha x^\alpha$
be a polynomial in $\k[x_1, \ldots, x_n]$. (Here we have used the multi-index notation, so that $x = (x_1, \ldots, x_n)$ and 
$x^\alpha = x_1^{a_1} \cdots x_n^{a_n}$ where $\alpha = (a_1, \ldots, a_n)$.) One defines the {\it initial monomial of $f$} by:
$$\In_\succ(f) = c_\beta x^\beta,$$ where $\beta = \min\{ \alpha \mid c_\alpha \neq 0 \}$, and the minimum is taken with respect to the term order 
$\succ$. \footnote{Many authors define the initial term using maximum instead of the minimum. Throughout the paper we use minimum convention since it is more compatible with the definition of a valuation in commutative algebra which we will abundantly use in the rest of the paper.} Then given an ideal $I \subset \k[x_1, \ldots, x_n]$ one defines the {initial ideal of $I$} as:
$$\In_\succ(I) = \langle \In_\succ(f) \mid f \in I \rangle.$$
Clearly $\In_\succ(I)$ is a monomial ideal, i.e. it is generated by monomials. The initial ideal $I$ can be described by 
the corresponding collection of lattice points:
$$\{ \alpha \mid x^\alpha \in I \} \subset \z_{\geq 0}^n.$$
It is usually referred to as the {\it staircase diagram of $I$} (with respect to $\succ$).  
This set has a nice combinatorial/geometric description: {\it It is a finite union of shifted copies of $\z_{\geq 0}^n$.}
This is a key observation which is the basis of the theory of Gr\"obner bases.

The following is a basic theorem in the Gr\"obner basis theory (see \cite[Theorem 1.2]{Sturmfels}):
\begin{Th}[Finiteness of number of initial ideals]
Every ideal $I \subset \k[x_1, \ldots, x_n]$ has only finitely many distinct initial ideals (for all possible term orders on $\z_{\geq 0}^n$).
\end{Th}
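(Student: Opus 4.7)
The plan is to exploit the Hilbert function invariance of initial ideals together with a combinatorial finiteness property of monomial ideals. First I would reduce to the homogeneous case by introducing a new variable $x_0$ and passing to the homogenization $I^h \subset \k[x_0, x_1, \ldots, x_n]$. Any term order $\succ$ on $\z_{\geq 0}^n$ lifts to a term order $\succ^h$ on $\z_{\geq 0}^{n+1}$ that first compares total degrees (and breaks ties using $\succ$), and one checks that $\In_\succ(I)$ is recovered from $\In_{\succ^h}(I^h)$ by the dehomogenization $x_0 \mapsto 1$. So it suffices to show that a homogeneous ideal admits only finitely many initial ideals.

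For a homogeneous $I$ and any term order $\succ$, the standard monomials (those not lying in $\In_\succ(I)$) form a homogeneous $\k$-basis of $\k[x_1, \ldots, x_n]/I$; this is a standard consequence of the division algorithm. Therefore $\In_\succ(I)$ and $I$ have the same Hilbert function, and in particular any two initial ideals of $I$ share a common Hilbert function. Consequently, if two initial ideals $M_1 \subseteq M_2$ of $I$ are comparable then they must coincide, since in every graded piece the standard monomial bases of $(\k[x_1, \ldots, x_n]/M_1)_d$ and $(\k[x_1, \ldots, x_n]/M_2)_d$ have to have the same cardinality.

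The conclusion would then follow from Maclagan's theorem on monomial ideals: any infinite collection of monomial ideals in $\k[x_1, \ldots, x_n]$ contains two distinct members one of which is properly contained in the other. Applied to the family $\{\In_\succ(I) : \succ \textup{ a term order}\}$, this would contradict the previous paragraph if the family were infinite; hence it must be finite.

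The main obstacle is Maclagan's theorem itself. One proves it by induction on the number of variables $n$, using Dickson's lemma for the finite generation of monomial ideals together with a careful analysis of how minimal generators behave under peeling off a variable (e.g., intersecting with $\k[x_1, \ldots, x_{n-1}]$ and tracking the quotient by $x_n$). All the other ingredients (homogenization, division algorithm, Hilbert function invariance) are standard and essentially formal, so the combinatorics of monomial ideals is where the real content lies.
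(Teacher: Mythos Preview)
Your argument is correct, but it takes a different route from the one the paper has in mind. The paper does not actually prove this classical statement in Section~\ref{subsec-Grobner-prelim}; it merely cites \cite[Theorem 1.2]{Sturmfels}. However, the paper does prove the spherical generalization (Theorem~\ref{th-initial-ideal-finite}), and that proof is an adaptation of the Sturmfels/Mora--Robbiano argument, which is the intended proof here as well. That argument is more direct: assuming infinitely many initial ideals, one iteratively picks elements of $I$ whose initial terms build a strictly ascending chain $J_1 \subsetneq J_2 \subsetneq \cdots$ of monomial ideals, contradicting Noetherianity. No homogenization, no Hilbert functions, and no Maclagan theorem are needed.

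Your approach packages the combinatorics differently. You trade the explicit chain construction for the antichain statement (Maclagan's theorem) together with the Hilbert-function constraint that forbids strict containment among initial ideals of a homogeneous ideal. This is clean and conceptually appealing, but it front-loads the work into Maclagan's theorem, whose proof is of comparable difficulty to the direct argument. The Sturmfels route has the advantage of being self-contained and of generalizing verbatim to the spherical setting of the paper (where ``monomial'' is replaced by ``$\Lambda_X$-homogeneous'' and $A_{\hc}$ plays the role of the polynomial ring); your route would require first establishing a spherical analogue of Maclagan's antichain theorem, which is not immediate.
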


Fix a term order $\succ$ and let $I$ be an ideal in $\k[x_1, \ldots, x_n]$.
\begin{Def}[Gr\"obner basis] \label{def-Grobner-basis}
A Gr\"obner basis for $I$ (with respect to the term order $\succ$) is a finite set $\G \subset I$ such that the 
set $\In_\succ(\G) = \{ \In_\succ(f) \mid f \in \G \}$ generates $\In_\succ(I)$ as an ideal.
\end{Def}

One shows that if $\G = \{g_1, \ldots, g_r\}$ is a Gr\"obner basis for $I$ then any element $f \in I$ can be written as 
a combination $f = \sum_i h_i g_i$ with $h_i \in \k[x_1, \ldots, x_n]$ using a simple efficient algorithm. The celeberated 
{\it Buchberger algorithm} constructs a Gr\"obner basis for an ideal $I$ from a given set of ideal generators for $I$.

Also given a vector $w \in \r^n$ one can define the initial form of a polynomial with respect to $w$. Let 
$f(x) = \sum_\alpha c_\alpha x^\alpha \in \k[x_1, \ldots, x_n]$ 
then the {\it initial form of $f$ with respect to $w$} is:
$$\In_w(f) = \sum_{w \cdot \beta = m} c_\beta x^\beta,$$
where $m = \min\{ w \cdot \alpha \mid c_\alpha \neq 0\}$. In contrast with the initial monomial with respect to a term order, the initial form 
with respect to a vector $w$ may contain several terms, i.e. is not always a monomial. Although it is not difficult to see that it is a monomial when 
$w$ is in general position. 

Given $w \in \r^n$ and a term order $\succ$ on $\z_{\geq 0}^n$ one defines the term order $\succ_w$ on $\z_{\geq 0}^n$ as follows:
$\alpha \succ_w \beta$ if $w \cdot \alpha > w \cdot \beta$, or $w \cdot \alpha = w \cdot \beta$ and $\alpha \succ \beta$. It is important to notice that this term order has maximum well-ordered property if $w \in \r_{\leq 0}^n$. This property is essential when one wants to have a 
Gr\"obner basis with respect to $\succ_w$. 

Similar to the case of initial ideals with respect to a term order, one can prove that an ideal $I$ has only finitely many distinct initial ideals 
$\In_w(I)$, for $w \in \r^n$. In fact, given an ideal $I$ one can group together the vectors $w$ according to their corresponding initial ideals. Namely, for $w_1, w_2 \in \r^n$ 
we say that $w_1 \sim w_2$ if $\In_{w_1}(I) = \In_{w_2}(I)$. The following theorem is well-known (see \cite[Proposition 2.4]{Sturmfels}):

\begin{Th}[Gr\"obner fan of an ideal]
Let $I$ be a homogeneous ideal. Then the equivalence classes of $\sim$ form a fan in $\r^n$.
\end{Th}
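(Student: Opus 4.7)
The plan is to follow the Mora--Robbiano/Sturmfels strategy: attach to each equivalence class a rational polyhedral cone cut out by a Gr\"obner basis adapted to that class, then glue these cones into a fan using the perturbation identity $\In_{w_0 + \epsilon v}(I) = \In_v(\In_{w_0}(I))$, valid for $\epsilon > 0$ sufficiently small. Homogeneity of $I$ will enter at several points and is essential. As a preliminary I would establish finiteness of $\{\In_w(I) : w \in \r^n\}$: for any fixed term order $\succ$, one checks $\In_\succ(\In_w(I)) = \In_{\succ_w}(I)$, and since this assignment is injective on its image, the finiteness theorem for term-order initial ideals cited above bounds the number of possibilities for $\In_w(I)$.

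Next, for fixed $w_0$, pick a term order $\succ$ such that $\succ_{w_0}$ has the maximum well-ordered property, and let $\G = \{g_1, \ldots, g_r\}$ be the reduced Gr\"obner basis of $I$ with respect to $\succ_{w_0}$. Writing $g_i = \sum_\alpha c_{i,\alpha} x^\alpha$, set $S_i = \supp(g_i)$ and $S_i^0 = \supp(\In_{w_0}(g_i))$, and define
$$C_\G[w_0] = \{ w \in \r^n \mid w \cdot \alpha = w \cdot \alpha' \text{ for } \alpha, \alpha' \in S_i^0, \text{ and } w \cdot \alpha \le w \cdot \gamma \text{ for } \alpha \in S_i^0,\ \gamma \in S_i,\ \text{all } i\},$$
which is manifestly a rational polyhedral cone. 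The heart of the argument is showing that $C_\G[w_0]$ equals the closure of the equivalence class of $w_0$, with the relative interior of $C_\G[w_0]$ being the equivalence class itself. The easier inclusion $\supseteq$ goes as follows: on the relative interior each $\In_w(g_i) = \In_{w_0}(g_i)$, and the Buchberger $S$-pair criterion (whose hypotheses are preserved under this perturbation of the weight) certifies that $\G$ remains a Gr\"obner basis with respect to $\succ_w$, whence $\In_w(I) = \langle \In_w(g_i) \rangle = \In_{w_0}(I)$.

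To close the fan axioms I would apply the local-structure identity displayed above to put equivalence classes near $w_0$ in bijection with equivalence classes of $\In_{w_0}(I)$ near the origin; faces of $C_\G[w_0]$ then correspond to strictly larger initial ideals $\In_v(\In_{w_0}(I))$ and are themselves closures of equivalence classes, so the intersection of any two such cones is automatically a common face of each. The main obstacle is the converse inclusion in the key lemma: given $w$ with $\In_w(I) = \In_{w_0}(I)$, show $w \in C_\G[w_0]$. Here homogeneity of $I$ is crucial, since one must compare finitely many graded pieces of $I$ (equivalently, use that the reduced Gr\"obner basis produces a $\k$-basis of $\k[x_1, \ldots, x_n]/I$ by standard monomials) to force the supports of each individual $g_i$ to satisfy the linear (in)equalities cutting out $C_\G[w_0]$; without grading one loses the dimension-counting step that makes this comparison work.
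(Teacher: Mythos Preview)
The paper does not itself prove this classical result; it is stated as a preliminary with a citation to \cite[Proposition 2.4]{Sturmfels}. The closest in-paper analogue is the proof of the spherical Gr\"obner fan theorem (Theorem \ref{th-sph-Grobner-fan} and the theorem preceding it), which the paper says follows the same line as the Sturmfels argument, so that is the natural comparison.

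Your proposal is correct and shares its first half with the paper's approach: both fix a reduced Gr\"obner basis $\G$ for $\succ_{w_0}$ and characterize the equivalence class $C[w_0]$ as the set of $w$ with $\In_w(g)=\In_{w_0}(g)$ for every $g\in\G$ (your $C_\G[w_0]$, the paper's display \eqref{equ-C[v]}). You correctly flag the reverse inclusion as the delicate step requiring homogeneity. The divergence is in how the fan axioms are closed. You invoke the perturbation identity $\In_{w_0+\epsilon v}(I)=\In_v(\In_{w_0}(I))$ to identify the local picture near $w_0$ with that of $\In_{w_0}(I)$ near the origin, then argue faces of $C_\G[w_0]$ are themselves closures of equivalence classes. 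The paper instead forms the Minkowski sum $\Delta=\sum_{g\in\G}\Delta(g)$ of Newton polytopes and observes that each $\overline{C[w_0]}$ is the normal cone of $\Delta$ at a face; since $\G$ remains a reduced Gr\"obner basis for $\succ_{w'}$ whenever $w'\in\overline{C[w_0]}$, nearby equivalence classes are normal cones of the \emph{same} polytope, and the fan axioms follow from the face lattice of $\Delta$. The polytope packaging is slightly cleaner (one appeal to ``the normal fan of a polytope is a fan'' replaces your local-to-global patching), while your perturbation route is more explicitly recursive and makes the role of $\In_{w_0}(I)$ as a ``boundary'' object more visible. Both are standard and both succeed.
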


The above fan is usually called the {\it Gr\"obner fan of the ideal $I$}. In Section \ref{sec-spherical-Grobner-fan} we generalize the notion of 
Gr\"obner fan to the context of spherical varieties. 

\subsection{Tropical geometry} \label{subsec-tropical-prelim}
From point of view of algebraic geometry, tropical geometry is concerned with describing the asymptotic behavior, or behavior at infinity, of subvarieties in the algebraic torus $(\k^*)^n$. A subvariety of $(\k^*)^n$ is usually called 
a {\it very affine variety}. The behavior at infinity of a subvariety $Y \subset (\k^*)^n$ is encoded in a piecewise linear object (a fan) 
called the {\it tropical variety} of $Y$.

Since many intersection theoretic data are stable under deformations, the piecewise linear data describing a variety at infinity can often be used to give combinatorial or piecewise linear formula for intersection theoretic problems. Many developments and applications of tropical geometry, at least in algebraic geometry, come from this point of view.

More generally, in tropical geometry instead of varieties in $(\k^*)^n$ one works with varieties in $(K^*)^n$ where $K$ is an algebraically closed field {containing $\k$} 
and equipped with a valuation with values in $\q$, which we denote by $\ord$ (the notation $\ord$ stands for order of vanishing). {We assume that $\ord$ is trivial on $\k$ 
and the residue field of $(K, \ord)$ is $\k$. Also we assume that there is an element $t \in K$ with
$\ord(t)=1$ and and a splitting, i.e. a group homomorphism, $w \mapsto t^w$ from the value group of $\ord$ to $K^*$.}

The main example of such $K$ is the field of formal Puiseux series over $\k$.
Let us recall the definition of the field of formal Puiseux series. Let $\K = \k((t))$ be the field of formal Laurent series in one indeterminate $t$. 
Then $\overline{\K} = \k\{\{t\}\} =  \bigcup_{k=1} \k((t^{1/k}))$ is the field of formal Puiseux series. It is the algebraic closure of the field of Laurent series $\K$. 
The field $\overline{\K}$ has the God-given {\it order of vanishing} valuation 
$\ord_t: \overline{\K} \setminus \{0\} \to \q$ defined as follows: for a Puiseux series $f(t) = \sum_{i=m}^\infty a_i t^{i/k}$, where $a_m \neq 0$, we put $\ord_t(f) = m/k$.

The valuation $\ord$ gives rise to the {\it tropicalization map} $\Trop$ from the torus $(K^*)^n$ to $\q^n$:
$$\Trop(f_1, \ldots, f_n) = (\ord(f_1), \ldots, \ord(f_n)).$$
Let $Y \subset (K^*)^n$ be a subvariety. 

\begin{Def}[Tropicalization] \label{def-tropicalization}
The {\it tropicalization $\Trop(Y)$ of $Y$} is simply defined to be the image of $Y$ under 
the map $\Trop$. We refer to the closure of $\Trop(Y)$ in $\r^n$ as the {\it tropical variety of $Y$}.
\end{Def}

One shows that the tropical variety of a subvariety is always a rational polyhedral complex in $\r^n$. When the variety is defined over $\k$, the tropical variety is 
a rational polyhedral fan in $\r^n$, also referred to as the {\it tropical fan} of the subvariety. 


The notion of a tropical variety can also be defined in terms of the ideal of the variety. We start by defining the notion of a tropical hypersurface.
Let $K[x_1^\pm, \ldots, x_n^\pm]$ denote the algebra of Laurent polynomials with coefficients in $K$ and let $f(x) = \sum_\alpha c_\alpha x^\alpha$ be a Laurent polynomial where 
as usual $x = (x_1, \ldots, x_n)$ and $x^\alpha = x_1^{a_1} \cdots x_n^{a_n}$ with $\alpha = (a_1, \ldots, a_n)$. Let $\supp(f) = \{ \alpha \mid c_\alpha \neq 0 \} \subset \z^n$ 
denote the set of exponents appearing in $f$. 

\begin{Def}[Tropical hypersurface] \label{def-tropical-hypersurface}
The {\it tropical hypersurface of $f$} is the set of all $w \in \r^n$ such that the minimum of $\alpha \mapsto \ord(c_\alpha)+(w \cdot \alpha)$, regarded as a function on $\supp(f)$, 
is attained at least twice.
\end{Def}

More generally one can define the notion of tropical variety of an ideal $I \subset K[x_1^\pm, \ldots, x_n^\pm]$.
First we need a generalization of the notion of initial form of a polynomial from Section \ref{subsec-Grobner-prelim}.
For a vector $w \in \r^n$ we define the initial form $\In_w(f)$ of a Laurent polynomial $f \in K[x_1^\pm, \ldots, x_n^\pm]$ as follows: 
Let $f(x) = \sum_\alpha c_\alpha x^\alpha$ and let $m = \min\{ \ord(c_\alpha)+(w \cdot \alpha) \mid c_\alpha \neq 0\}$. Then the initial form of $f$ is a polynomial 
$\In_w(f) \in \k[x_1^\pm, \ldots, x_n^\pm]$ defined by:
$$\In_w(f)(x) = \sum_{\ord(c_\beta)+(w \cdot \beta) = m} \overline{t^{-\ord(c_\beta)} c_\beta}~ x^\beta,$$ 
where $\overline{a}$ denotes the image of an element $a \in K$ with $\ord(a) \geq 0$ in the residue field $\k$.

The notion of initial ideal $\In_w(I)$ is also defined analogously, i.e. $\In_w(I)$ is the ideal generated by all the $\In_w(f)$, $\forall f \in I$.
\begin{Def}[Tropical variety of an ideal]  \label{def-trop-var-ideal}
The {\it tropical variety of $I$} is the set of all $w \in \r^n$ such that the initial ideal $\In_w(I)$ does not contain any monomials (in other words, 
$\In_w(I) \neq \k[x_1^\pm, \ldots, x_n^\pm]$). 
\end{Def}

The following theorem states that all of the above notions of tropical variety coincide (see \cite{Kapranov} and \cite[Section 3.2]{Maclagan-Sturmfels}).  
\begin{Th}[Fundamental theorem of tropical geometry] \label{th-fundamental-trop-geo}
Let $Y \subset (K^*)^n$ be a subvariety with ideal $I = I(Y) \subset K[x_1^\pm, \ldots, x_n^\pm]$. Then the following sets coincide:
\begin{itemize}
\item[(a)] The intersection of all the tropical hypersurfaces $\trop(f)$, for all $0 \neq f \in I$.
\item[(b)] The closure, in $\r^n$, of the set $\trop(I)$, that is, the set of $w \in \r^n$ such that $\In_w(I)$ contains a monomial.
\item[(c)] The closure, in $\r^n$, of the set $\Trop(Y)$, that is the image of $Y$ under the tropicalization map.
\end{itemize}
\end{Th}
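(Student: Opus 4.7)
The plan is to verify the three sets coincide by establishing the cyclic chain
$$ \overline{\Trop(Y)} \;\subseteq\; \bigcap_{0 \neq f \in I} \trop(f) \;\subseteq\; \overline{\trop(I)} \;\subseteq\; \overline{\Trop(Y)}. $$
The first two inclusions are nearly formal; the third, which lifts a combinatorial valuation $w$ back to an actual point of $Y$ over $K$, is the substantive step.

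For the inclusion $\overline{\Trop(Y)} \subseteq \bigcap \trop(f)$, take $y = (y_1, \ldots, y_n) \in Y(K)$, set $w = \Trop(y)$, and let $f = \sum_\alpha c_\alpha x^\alpha \in I$. From $f(y) = 0$ one gets $\sum_\alpha c_\alpha y^\alpha = 0$ in $K$; term-by-term the valuation of the $\alpha$-th summand equals $\ord(c_\alpha) + w \cdot \alpha$. If the minimum of this quantity on $\supp(f)$ were attained at a unique $\alpha$, that leading term would survive in the residue field and $f(y)$ could not vanish, a contradiction. Hence $w \in \trop(f)$ for every $f \in I$, and closing up in $\r^n$ finishes this step.

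For the inclusion $\bigcap \trop(f) \subseteq \overline{\trop(I)}$, I first note the tautological reverse direction $\overline{\trop(I)} \subseteq \bigcap \trop(f)$: if $\In_w(I) \neq \k[x_1^\pm, \ldots, x_n^\pm]$ then no generator $\In_w(f)$ of $\In_w(I)$ can be a monomial, so each $w \in \trop(I)$ satisfies $w \in \trop(f)$ for all $f \in I$. For the nontrivial direction I would invoke the existence of a finite \emph{tropical basis} for $I$, namely a finite set $\{f_1, \ldots, f_r\} \subset I$ with $\trop(I) = \bigcap_i \trop(f_i)$. Such a basis is extracted from a Gr\"obner basis of $I$ via a Buchberger-style saturation with respect to the coordinate monomial $x_1 \cdots x_n$, and it reduces the a priori infinite intersection $\bigcap_{f \in I} \trop(f)$ to a finite one that, by definition, equals $\overline{\trop(I)}$ as a rational polyhedral set.

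The main obstacle is $\overline{\trop(I)} \subseteq \overline{\Trop(Y)}$. Given $w \in \trop(I)$, I must construct $y \in Y(K)$ with $\Trop(y) = w$. The strategy is to use the splitting $w \mapsto t^w$ of the value group of $K$ to rescale coordinates via $x_i \mapsto t^{w_i} x_i$, reducing to the case $w = 0$ where the hypothesis becomes that the residue initial ideal $\In_0(I) \subset \k[x_1^\pm, \ldots, x_n^\pm]$ contains no monomial, so that the subscheme $V(\In_0(I)) \subset (\k^*)^n$ is non-empty. Any closed point $\bar y \in V(\In_0(I))(\k)$ is then a candidate reduction; the task is to lift $\bar y$ to a point $y \in Y(K)$ whose coordinates are units of $\O_K$ reducing to those of $\bar y$. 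The lifting is the delicate step: by slicing $Y$ with generic hyperplanes that remain transverse to $w$ in the polyhedral geometry of $\trop(I)$, one reduces the codimension by one at a time until one lands in the hypersurface case, where Hensel's lemma applies directly to produce a $K$-point. Equivalently one may invoke Artin approximation to upgrade a formal solution to an algebraic one. The genuine technical difficulty is the transversality bookkeeping of the slicing induction, which must preserve both the ambient dimension and the tropical membership of $w$ at every stage.
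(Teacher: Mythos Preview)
The paper does not actually prove this theorem. It is stated in the preliminaries section (Section~1.2) as a known result, with the explicit attribution ``see \cite{Kapranov} and \cite[Section 3.2]{Maclagan-Sturmfels}'' and no proof given. There is therefore no ``paper's own proof'' to compare your proposal against.

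That said, your outline is the standard one found in those references. The cyclic chain you set up is exactly the structure of the proof in Maclagan--Sturmfels: the first inclusion is the elementary valuation-theoretic observation, the second is handled by the existence of a finite tropical basis (their Section~2.6), and the third --- lifting a point $w$ with $\In_w(I)$ monomial-free to an actual $K$-point of $Y$ --- is indeed the substantive step, done there by projecting to a hypersurface and invoking completeness of $K$ (essentially Hensel, or in other accounts a power-series construction). Your rescaling-then-slicing description matches this. You are right to flag the transversality bookkeeping in the slicing induction as the genuine work; in the cited references this is handled by a careful choice of coordinate projection rather than generic hyperplane slicing, which sidesteps some of the transversality issues you anticipate.

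One small point: in your second inclusion you don't actually need the closure on the right-hand side at that stage --- once a finite tropical basis exists, $\bigcap_{f\in I}\trop(f) = \bigcap_i \trop(f_i) = \trop(I)$ exactly as polyhedral sets, and the closure only enters when passing from value-group points to all of $\r^n$ in part~(c).
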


The definition of tropicalization of a subvariety crucially uses the valuation on the field $K$. Hence it is not surprising that tropical geometry is intrinsically related to 
the non-Archimdean analytic geometry. We recall that if $Y$ is an affine variety with coordinate ring $A$ then the {\it Berkovich analytification $Y^{\an}$ of $Y$} is the set of all 
valuations $v: A \to \r \cup \{\infty\}$ equipped with the coarsest topology in which all the maps $v \mapsto v(f)$ are continuous, $\forall f \in A$. The Berkovich 
analytification can be extended to arbitrary varieties $Y$ by gluing the affine pieces. It plays a central role in non-Archimedean geometry (\cite{Gubler}). 

Given an embedding of $Y$ into a torus $(\k^*)^n$ we can define a natural map $Y^{\an} \to \Trop(Y)$. A theorem of Payne states that in fact the analytification $Y^{\an}$
can be realized as the inverse limit of all tropicalizations $\Trop(Y)$ for all possible embeddings of $Y$ (\cite{Payne}).

Finally there is also an Archemidean version of the notion of tropicalization and tropical variety. It is based on the familiar logarithm map on the complex algebraic torus.
Let $\k = \c$ and fix a real number $t > 0$. Consider the logarithm map $\Log_t: (\c^*)^n \to \r^n$ defined by:
$$\Log_t(z_1, \ldots, z_n) = (\log_t|z_1|, \ldots, \log_t|z_n|).$$

\begin{Def}[Amoeba of a subvariety] \label{def-ameoba}
The {\it amobea of $Y$} denoted by $\mathcal{A}_t(Y)$ is the image of $Y$ under the logarithm map $\Log_t$ (clearly it depends on the choice of the base $t$). 
\end{Def}

The following well-known result gives a connection between the amoeba of a subvariety and its tropicalization. 
\begin{Th}[Amoeba approaches the tropical variety]
As $t$ approaches $0$, the amoeba $\mathcal{A}_t(Y)$ converges to the tropical variety of $Y$ (i.e. their Haudorff distance approaches $0$). 
\end{Th}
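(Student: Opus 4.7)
The plan is to prove two inclusions—that the limit of amoebas lies in the tropical variety, and conversely—and then to upgrade these to Hausdorff convergence on compact subsets.

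First, I would show $\limsup_{t \to 0^+} \mathcal{A}_t(Y) \subseteq \Trop(Y)$. Suppose $w \in \r^n$ is a limit of amoeba points, so there exist $t_k \to 0^+$ and $z^{(k)} \in Y$ with $\Log_{t_k}(z^{(k)}) \to w$. By the fundamental theorem of tropical geometry (Theorem \ref{th-fundamental-trop-geo}), it suffices to verify that $w \in \trop(f)$ for every $f = \sum_\alpha c_\alpha x^\alpha \in I(Y)$. Observe that $|c_\alpha (z^{(k)})^\alpha| = |c_\alpha|\, t_k^{\alpha \cdot \Log_{t_k}(z^{(k)})}$, and since $t_k \to 0^+$, for large $k$ the terms of largest magnitude correspond to those $\alpha \in \supp(f)$ minimizing $\alpha \cdot w$. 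The relation $f(z^{(k)}) = 0$ forces cancellation of the dominant term, so this minimum must be attained at least twice among $\alpha \in \supp(f)$, which is precisely the tropical hypersurface condition for $w \in \trop(f)$.

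For the reverse containment, let $w \in \Trop(Y) \cap \q^n$. By Theorem \ref{th-fundamental-trop-geo} there exists a Puiseux series point $\gamma = (\gamma_1, \ldots, \gamma_n) \in Y(\overline{\K})$ with $\Trop(\gamma) = w$. Each coordinate $\gamma_i$ is an algebraic Puiseux series over $\k = \c$, hence convergent on a punctured disk of positive radius around $0$; for $t > 0$ sufficiently small, $\gamma(t)$ is an honest complex point lying in $Y$. Writing $\gamma_i(t) = c_i t^{w_i}(1 + o(1))$ as $t \to 0^+$ yields $\log_t|\gamma_i(t)| \to w_i$, so $\Log_t(\gamma(t)) \to w$, giving $w \in \liminf_{t \to 0^+} \mathcal{A}_t(Y)$. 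For arbitrary $w \in \Trop(Y)$, approximate by rational points, using that $\Trop(Y)$ is a rational polyhedral complex.

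Finally, to upgrade the pointwise statements to Hausdorff convergence on a compact set $K \subset \r^n$, I would combine the two inclusions with a compactness argument exploiting the finite polyhedral structure of $\Trop(Y) \cap K$. The main obstacle is the reverse containment: uniform Hausdorff convergence requires controlling the radii of convergence of the Puiseux parametrizations $\gamma$ uniformly as $w$ ranges over $\Trop(Y) \cap K$. This can be addressed by choosing, on each polyhedral cell of $\Trop(Y)$, a constructible family of algebraic parametrizations with a common lower bound on the radius of convergence, or alternatively via an application of Artin approximation (as hinted at in the paper's acknowledgment).
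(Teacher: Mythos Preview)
The paper does not prove this statement. It appears in Section~\ref{subsec-tropical-prelim} as a well-known background result, stated without proof; the paper's own contributions are the spherical analogues in Section~\ref{sec-sph-amoeba}, and even there the authors prove only the pointwise direction (Theorem~\ref{th-trop-var-limit-amoeba}) and explicitly remark in a footnote that they believe, but do not prove, the Hausdorff statement in the spherical case. So there is no proof in the paper to compare your proposal against.

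That said, your argument has a genuine gap in the second step. You assert that if $\gamma \in Y(\overline{\K})$ is a Puiseux point produced by the fundamental theorem, then ``each coordinate $\gamma_i$ is an algebraic Puiseux series over $\c$, hence convergent.'' This is false: $\overline{\K} = \bigcup_k \c((t^{1/k}))$ is the field of \emph{formal} Puiseux series, and a $\overline{\K}$-point of an algebraic variety need not have algebraic (or convergent) coordinates. For instance, on the hyperbola $\{z_1 z_2 = 1\}$ any formal unit $\gamma$ gives the point $(\gamma, \gamma^{-1})$. The fact that the $\gamma_i$ satisfy polynomial relations \emph{among themselves} does not make them individually algebraic over $\c(t)$. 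The correct fix is exactly what you relegated to the final paragraph: use Artin approximation (or the paper's Theorem~\ref{th-approx-algebraic}) already at this stage to replace $\gamma$ by a germ of an algebraic curve with the same tropicalization, which then does have positive radius of convergence. Your first step (the $\limsup$ inclusion) is correct, and your sketch for upgrading to Hausdorff convergence identifies the right difficulty but remains a sketch.
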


The book \cite{Maclagan-Sturmfels} is a nice introduction to tropical geometry. The article 
\cite{Shaw} is an introduction to several different topics in tropical geometry.  

In the rest of the paper we discuss generalizations of all of the above definitions and results for subvarieties of a torus $(\k^*)^n$ to subvarieties of a  spherical variety (or a spherical homogeneous space).

\section{Preliminaries on reductive group actions and spherical varieties} \label{sec-spherical}
We start by introducing some notation. As usual let $G$ be a connected reductive algebraic group over an algebraically closed field $\k$. Let $A$ be a  $\k$-algebra and domain with the field of  fractions $K$. Let us assume that $A$ is
a $G$-algebra, that is, $G$ acts on $A$ by algebra isomorphisms. We denote by $K^{(B)}$ the multiplicative group of nonzero $B$-eigenfunctions in the field $K$. 
Also we let $\Lambda_A$  (respectively $\Lambda_A^+$) denote the set of weights which appear as a weight of a $B$-eigenfunction in $K$ 
(respectively in the algebra $A$).

Clearly $\Lambda_A$ is a sublattice of the weight lattice $\Lambda$ of $G$. 
One shows that if $A$ is a finitely generated algebra then $\Lambda_A^+$ is a finitely generated semigroup.
Moreover, the semigroup $\Lambda_A^+$ generates the lattice $\Lambda_A$ (see for example \cite[Propositions 5.5 and 5.6]{Timashev}).

We are interested in the case when $A = \k[X]$ is the algebra of regular functions on a $G$-variety $X$.
Then $K = \k(X)$ is the field of rational functions on $X$. In this case, we will denote the lattice $\Lambda_A$ and the semigroup $\Lambda_A^+$ 
by $\Lambda_X$ and $\Lambda_X^+$ respectively.


A normal $G$-variety $X$ is called {\it spherical}
if $B$ has a dense open orbit (note that since all the Borel subgroups are conjugate this is independent of the choice of $B$).
A homogeneous space ${G/H}$ is spherical if it is spherical for the left action of $G$. 

Since $B$ has an open orbit it follows that the map which assigns to a $B$-eigenfunction its weight, gives an 
isomorphism between $\k(X)^{(B)} / \k^*$ and $\Lambda_X$. One can show that there is a natural choice of a 
torus $T_X \in G$ such that the weight lattice of $T_X$ can be identified with the lattice $\Lambda_X$.

It can be shown that if $X$ is a quasi-affine spherical $G$-variety then $X$ is {\it strongly quasi-affine}. This means that 
the algebra of regular functions $A=\k[X]$ is finitely generated and the natural map $X \to \Spec(A)$ is an open embedding 
(see for example \cite[Proposition 2.2.3]{Yiannis}). The affine variety $X_{\aff} = \Spec(A)$ is usually called the {\it affine closure of $X$}.

For the rest of the paper we work with a spherical $G$-variety $X$. We also work with the algebra of regular functions $A=\k[X]$, in which case we assume $X$ to be quasi-affine.

\begin{Ex} \label{ex-sph-var}
Here are some examples of spherical varieties. We will use (3) below as a simple example to illustrate several concepts and results in the paper (see Section \ref{subsec-punctured-plane}). 
\begin{itemize}
\item[(1)] Let $G = T = (\k^*)^n$ be an algebraic torus. Then a spherical $T$-variety is the same a toric $T$-variety. 
\item[(2)] Let $X = G/P$ be a parital flag variety. By the Bruhat decomposition it is a spherical variety for the left action of $G$.
It is an example of a projective spherical variety.
\item[(3)] Consider the natural linear action of  $G = \SL(2, \k)$ act on $\A^2 \setminus \{(0,0)\}$. It is easy to see that $G$ acts transitively on 
$\A^2 \setminus \{(0,0)\}$. The stabilizer of the point $(1, 0)$ is the maximal unipotent subgroup $U$ of upper triangular matrices with $1$'s on the diagonal.
Thus $\A^2 \setminus \{(0,0)\}$ can be identified with the homogeneous space $G/U$. Let $B$ be the subgroup of upper triangular 
matrices. The $B$-orbit of the point $(0,1)$ is the open subset $\{(x, y) \mid y \neq 0\}$. Hence $\A^2 \setminus \{(0,0)\}$ is a spherical $\SL(2, \k)$-variety. 
It is an example of a quasi-affine spherical variety. Similarly, one verifies that $\A^n \setminus \{0\}$ is a spherical variety for the natural action of $G = \SL(n, \k)$ 
(note that for $n>2$, the $\SL(n, \k)$-stabilizer of a point in $\A^n \setminus \{0\}$ is larger than a maximal unipotent subgroup).
\item[(4)] More generally, let $X = G/U$ where $U$ is a maximal unipotent subgroup of $G$. Again by the Bruhat decomposition $X$ is a spherical $G$-variety for 
the left action of $G$. It is well-known that $X$ is a quasi-affine variety. Note that there is a natural projection from $G/U$ to $G/B$, where $B$ is the Borel subgroup containing $U$.
\item[(5)] Let $X = G$ and consider the left-right action of $G \times G$ on $X=G$. Note that this action is transitive and the stabilizer of the identity $e$ is the subgroup 
$G_{\textup{diag}} = \{(g, g) \mid g \in G\}$. Thus $G$ can be identified with the homogeneous space $(G \times G) / G_{\textup{diag}}$. 
Again from the Bruhat decomposition it follows that $X=G$ is a $(G \times G)$-spherical variety. The $(G \times G)$-equivariant completions 
of $G$ are usually called {\it group compactifications}.
\end{itemize}
\end{Ex}

\subsection{Invariant valuations and spherical roots} \label{subsec-inv-valuation}

As usual let $X$ be a spherical $G$-variety. In this section we consider the set of $G$-invariant valuations on the field of rational functions $\k(X)$.

Let $v$ be a valuation on the field $\k(X)$ with values in $\q$. By restriction the valuation $v$ gives a linear map on $\k(X)^{(B)}$ and hence on the lattice 
$\Lambda_X$. Let $\rho(v) \in \Hom(\Lambda_X, \q)$ denote this linear map. We will denote the dual space $\Hom(\Lambda_X, \q)$ by by $\mathcal{Q}_X$.

\begin{Def}[$G$-invariant valuation] \label{def-G-inv-val}
A valuation $v$ on $\k(X)$ is $G$-invariant if for any $g \in G$ and $f \in \k(X)$ we have $v(f) = v(g \cdot f)$. 
We denote the set of all $G$-invariant valuations with values 
in $\q$ by $\V_X$. 
\end{Def}

The following is well-known (\cite{Luna-Vust}):
\begin{Th}
\begin{itemize}
\item[(a)] The map $\rho: \V_X \to \mathcal{Q}_X$ is one-to-one, that is, a $G$-invariant valuation is determined by its restriction on the $B$-eigenfunctions.
\item[(b)] The image of $\rho$ is a convex polyhedral cone in the vector space $\mathcal{Q}_X$. We will identify $\V_X$ with its image under $\rho$.
\item[(c)] Let $C_X$ denote the image of the anti-dominant Weyl chamber in $\mathcal{Q}_X$. Then $\V_X$ 
contains $C_X$.
\end{itemize}
\end{Th}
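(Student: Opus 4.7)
My plan is to derive all three parts from a single key lemma: for every $v \in \V_X$, every finite-dimensional $G$-submodule $M \subset \k(X)$, and every nonzero $f \in M$,
\[
v(f) \;=\; \min\bigl\{\, v(\chi) : 0 \neq \chi \in M^U\,\bigr\}.
\]
Granted this, part (a) is immediate. Applied with $M = \langle G\cdot f\rangle$, the minimum ranges over $B$-eigenvectors in $\k(X)$, and for $\chi \in \k(X)^{(B)}$ of weight $\lambda$ the value $v(\chi)$ depends only on $\lambda$: any two $B$-eigenfunctions of the same weight differ by a nonzero scalar, using the spherical-variety fact $\k(X)^B = \k$. Thus $v(\chi) = \rho(v)(\lambda)$, and the right-hand side is a function of $\rho(v)$ together with the intrinsically defined finite multi-set of highest weights occurring in $\langle G\cdot f\rangle$. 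Hence $\rho(v_1)=\rho(v_2)$ forces $v_1=v_2$.

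To prove the lemma itself I would use $G$-invariance of $v$ to note that $F^{\geq c}M := \{m \in M : v(m)\geq c\}$ is a $G$-submodule of $M$ for every $c \in \q$. Complete reducibility of $G$-representations in characteristic zero then implies that each irreducible summand $V_\lambda \subset M$ is generated over $G$ by its highest weight vector $v_\lambda \in V_\lambda^U$, and consequently $M$ equals the $\k$-linear span of $G\cdot M^U$. Writing $f = \sum_i a_i g_i \chi_i$ with $\chi_i \in M^U$, $a_i \in \k$, $g_i \in G$, the non-Archimedean inequality together with $G$-invariance yields $v(f) \geq \min_i v(\chi_i)$. The reverse inequality is symmetric: any $\chi \in M^U$ lies in $M = \mathrm{span}_\k(G\cdot f)$ and is therefore itself a $\k$-combination of $G$-translates of $f$, whence $v(\chi) \geq v(f)$.

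For (b), closure of the image of $\rho$ under positive scaling is clear from the valuation axioms. To obtain convexity and polyhedrality I would run the formula of (a) in reverse: for any $\chi \in \mathcal{Q}_X$, set
\[
v_\chi(f) \;:=\; \min\bigl\{\,\chi(\lambda) : V_\lambda \text{ occurs in } \langle G\cdot f\rangle\,\bigr\}.
\]
The ultrametric inequality for $v_\chi$ holds on general grounds, since the multi-set of highest weights in $\langle G(f_1+f_2)\rangle$ is a sub-multi-set of the one in $\langle Gf_1\rangle\oplus\langle Gf_2\rangle$ (again complete reducibility). The remaining axiom, multiplicativity $v_\chi(f_1f_2)=v_\chi(f_1)+v_\chi(f_2)$, reduces by Theorem \ref{th-coor-ring-multiplication-spherical} to the condition that $\chi$ have a fixed sign on every difference $\lambda-(\gamma+\mu)$ with $\lambda>_X\gamma+\mu$, i.e.\ on each generator of the spherical dominant cone; these generators are the finitely many \emph{spherical roots} of $X$. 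Thus $\V_X$ is identified with a cone in $\mathcal{Q}_X$ cut out by finitely many half-space inequalities, hence is convex and rational polyhedral. The main obstacle here is the finiteness and explicit description of the spherical-root set, which I would import from the Luna--Vust theory (\cite{Luna-Vust, Knop-LV}).

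Finally (c) is a corollary: the spherical roots of $X$ lie in the $\q_{\geq 0}$-span of the simple roots of $G$, so any functional that is anti-dominant---non-positive on every simple root---is automatically of the required sign on every spherical root. Restricting to $\Lambda_X$ places the image $C_X$ of the anti-dominant Weyl chamber inside the polyhedral cone described in (b), proving $C_X \subset \V_X$.
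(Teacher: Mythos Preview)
The paper does not prove this theorem: it is stated as ``well-known'' and attributed to \cite{Luna-Vust}, with no argument given. So there is no in-paper proof to compare against; what you have written is a sketch of the standard Luna--Vust argument, and your key lemma is essentially what the paper later records (for the quasi-affine case) as Proposition~\ref{prop-inv-val-min-isotypic}.

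That said, your sketch has two genuine soft spots. First, the lemma is misstated: as written it claims $v(f)=\min\{v(\chi):\chi\in M^U\}$ for \emph{every} finite-dimensional $G$-submodule $M$ containing $f$, but your proof of the reverse inequality silently assumes $M=\mathrm{span}_\k(G\cdot f)$; for larger $M$ the minimum can drop strictly below $v(f)$. More seriously, to apply the lemma to an arbitrary $f\in\k(X)$ you need $\langle G\cdot f\rangle$ to be finite-dimensional, and $\k(X)$ is \emph{not} a rational $G$-module in general (already for $G=\mathbb{G}_m$ acting on itself, the translates of $1/(t-1)$ span an infinite-dimensional space). The fix is to write $f=p/q$ with $p,q$ lying in some rational $G$-subalgebra whose fraction field is $\k(X)$---e.g.\ $\k[U]$ for a $G$-stable affine open $U$, whose existence one gets from Sumihiro---and run the lemma there; but this step is missing.

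Second, your derivation of (b) and (c) appeals to the spherical roots and to Theorem~\ref{th-coor-ring-multiplication-spherical}. Within the paper's logical order this is circular: the spherical roots are \emph{defined} (Definition~\ref{def-sph-roots}) via Theorem~\ref{th-val-cone-simplicial}, which already presupposes the theorem you are proving, and Theorem~\ref{th-coor-ring-multiplication-spherical} is stated for $\k[X]$ under a quasi-affine hypothesis rather than for $\k(X)$. You acknowledge importing the finiteness of the spherical-root set from Luna--Vust, but at that point you are importing essentially the entire content of (b); your formula for $v_\chi$ only verifies that the Luna--Vust description is consistent, it does not independently establish polyhedrality.
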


We note that the value of a
$G$-invariant valuation $v$ on a $B$-weight vector $f_\lambda$ (with weight $\lambda$) depends only on $v$ and $\lambda$. 
We denote this value by $\langle v, \lambda \rangle$. This is in fact the natural pairing between the vector $\rho(v) \in \mathcal{Q}_X$ and $\lambda \in \Lambda_X$. 

\begin{Rem} \label{rem-val-weight-vec}
In the case when $G=T=(\k^*)^n$ and $A = \k[x_1^\pm, \ldots, x_n^\pm]$ is the algebra of Laurent polynomials, the pairing between 
invariant valuations and weights is the usual dot product. A vector $w \in \q^n$ gives rise to a so-called 
{\it weight valuation} $v_w$ on $A$. If $f(x) = \sum_\alpha c_\alpha x^\alpha$ then $v_w(f)$ is by definition given by:
$$v_w(f) = \min\{ w \cdot \alpha \mid c_\alpha \neq 0 \}.$$
The value of $v_w$ on a monomial $x^\alpha$ (i.e. a weight vector with weight $\alpha$) is given by the dot product $w \cdot \alpha$.  
\end{Rem}

The following theorem is due to Brion (\cite{Brion}) and Knop (\cite{Knop}).
\begin{Th} \label{th-val-cone-simplicial}
The set $\V_X$ is a simplicial cone in the vector space $\mathcal{Q}_X$. Moreover, it is the fundamental domain for the Weyl 
group of a root system. 
More precisely, one can choose a set of simple roots $\beta_1, \ldots, \beta_\ell$ in this root system such that the cone $\V_X$ is defined by:
\begin{equation} \label{equ-valuation-cone-inequ}
\V_X = \{ v \in \mathcal{Q}_X \mid \langle v, \beta_i \rangle \leq 0 , ~\forall i=1, \ldots, \ell \}.
\end{equation}
\end{Th}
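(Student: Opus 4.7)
The plan is to first reduce to the case of a spherical homogeneous space $G/H$ by replacing $X$ with its open $G$-orbit: since invariant valuations depend only on the field $\k(X)$, and the open $G$-orbit in $X$ has the same function field, this does not change $\V_X$. I then identify $\V_X$ with its image in $\mathcal{Q}_X = \Hom(\Lambda_X, \q)$ via $\rho$, which is legitimate by part (a) of the preceding theorem, and recall from part (b) that $\V_X$ is already known to be a convex polyhedral cone. The task then becomes to produce the simple roots $\beta_1, \ldots, \beta_\ell$ cutting it out and to show $\V_X$ is simplicial of full dimension in $\mathcal{Q}_X$.

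The strategy I would follow is Knop's approach via the cotangent bundle $T^*(G/H)$ with its $G$-action and moment map $\mu: T^*(G/H) \to \mathfrak{g}^*$. One first shows that the closure of the image of $\mu$ is coisotropic for the generic $G$-orbit. A Chevalley-type quotient for $\mu$ produces an affine subspace of $\mathfrak{g}^*$ on which a finite reflection group $W_X$, called the \emph{little Weyl group}, acts by restriction from the normalizer of a Cartan subalgebra. The main theorem of Knop then says that $W_X$ acts on $\mathcal{Q}_X$ as a crystallographic reflection group whose root system $\Phi_X$ is the system of \emph{spherical roots}, and one can choose a system of positive roots so that $\V_X$ is precisely the antidominant chamber for this action.

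To match this abstract output with the cone $\V_X$, I would identify the facets of $\V_X$ concretely as follows. Each codimension-one face of $\V_X$ determines, via the Luna-Vust correspondence, an elementary spherical embedding of $G/H$ having a single $G$-invariant prime divisor, whose associated valuation spans an extremal ray perpendicular to the facet; conversely each such divisor yields such a facet. Counting these facets, and using the reflection structure supplied by $W_X$, shows that the facet normals are linearly independent, hence $\V_X$ is simplicial of dimension $\ell = \dim \mathcal{Q}_X$. The simple roots $\beta_1, \ldots, \beta_\ell$ are then the primitive integral normals to these facets (with the sign chosen so they pair nonpositively with $\V_X$), giving exactly the description \eqref{equ-valuation-cone-inequ}.

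The hard part is undoubtedly establishing the existence and crystallographic reflection structure of the little Weyl group $W_X$. This is the technical heart of Knop's work, and relies on delicate symplectic and Hamiltonian arguments on $T^*(G/H)$ together with a careful analysis of the generic isotropy of $G$ on the moment image. For the purposes of this paper, one would quote the Brion--Knop result as a black box after setting up the identification via $\rho$; the remaining work, namely reading off the $\beta_i$ as facet normals and verifying the inequalities in \eqref{equ-valuation-cone-inequ}, is comparatively formal once $W_X$ and $\Phi_X$ are in hand.
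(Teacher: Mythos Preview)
The paper does not prove this theorem at all: it is stated with attribution to Brion and Knop and used as a black box throughout. Your proposal is not so much a proof as a sketch of the ideas behind Knop's original argument, and you yourself note in the final paragraph that one would ``quote the Brion--Knop result as a black box.'' So in that sense your approach and the paper's coincide exactly---both defer to the literature.

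That said, a few remarks on your sketch as an account of the underlying result. The reduction to $G/H$ and the identification via $\rho$ are fine. Your description of Knop's moment-map argument is roughly accurate in spirit, though the actual construction of $W_X$ is somewhat more indirect than you indicate: Knop works with sections of the moment map and studies the generic cotangent fibers, and the crystallographic structure comes from comparing with the Weyl group of $G$ rather than directly from a Chevalley quotient. Your paragraph on identifying facets via elementary embeddings conflates two directions: the facets of $\V_X$ correspond to spherical roots (the inward normals), not to extremal rays spanned by divisorial valuations; the latter live \emph{inside} $\V_X$, not perpendicular to its facets. The simpliciality and full-dimensionality are consequences of $\V_X$ being a Weyl chamber for $W_X$ acting on all of $\mathcal{Q}_X$, which is part of what Knop proves, rather than something one reads off independently from a facet count. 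But since the paper does not ask you to reproduce any of this, these are points of exposition rather than gaps in what the paper requires.
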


\begin{Def}[Spherical roots]   \label{def-sph-roots}
The set of simple roots $\{\beta_1, \ldots, \beta_\ell\}$ is called the system of {\it spherical roots} of $X$.
This Weyl group of the spherical root system is called the {\it little Weyl group of $X$}.
\end{Def}

\begin{Rem} \label{rem-tail-cone-sph-roots}
By Theorem \ref{th-coor-ring-multiplication-spherical}, the set of spherical roots generates the so-called {\it tail cone} of the spherical variety $X$.
\end{Rem}

\begin{Rem} \label{rem-Knop-val-cone}
The above theorem (Theorem \ref{th-val-cone-simplicial}) can be extended to arbitrary $G$-varieties. Let $X$ be a (not necessarily spherical) $G$-variety. Consider the subfield of $B$-invariant rational functions $\k(X)^B$. Fix a $\q$-valued valuation $v_0$ on the field $\k(X)^B$. Let $\V_0$ denote the collection of $G$-invariant valuations on $\k(X)$ whose restriction on $\k(X)^B$ 
coincides with $v_0$. Similar to the above, we can define a map $\rho_0$ from $\V_0$ to $\Hom_\z(\Lambda, \q)$ as follows. Fix a $G$-invariant valuation $v_1$ in $\V_0$. 
Then for $v \in \V_0$ define $\rho_0(v)(\lambda) = v(f_\lambda) - v_1(f_\lambda),$ where $f_\lambda \in \k(X)$ denotes a $B$-eigenfunction with weight $\lambda$. 
One verifies that this is well-defined i.e. 
$\rho_0(v)(\lambda)$ is independent of the choice of the $B$-eigenfunction $f_\lambda$. Then the set $\V_0$ can be identified with a simplicial cone in the vector space 
$\Hom(\Lambda_X, \q)$ which is the fundamental domain for a root system.
\end{Rem}

Now we assume that $X$ is a quasi-affine spherical variety and consider its ring of regular functions $A = \k[X]$.

As a $G$-module we can decompose $A$ into:
$$A = \k[X] = \bigoplus_{\lambda \in \Lambda_X^+} W_\lambda,$$
where $W_\lambda$ is the $\lambda$-isotypic component in the $G$-module $A$. Since $X$ is spherical 
the algebra $A$ is multiplicity-free and hence $W_\lambda \cong V_\lambda$ or $\{0\}$.

The following proposition is an important observation about invariant valuations. We record it here for later reference.
\begin{Prop} \label{prop-inv-val-min-isotypic}
Let $v \in \V_X$ be a $G$-invariant valuation. For each $\lambda \in \Lambda_X^+$ let $h_\lambda$ be a $B$-eigenvector in $W_\lambda$ 
(it is unique up to a scalar).
\begin{itemize}
\item[(1)] For any $0 \neq f_\lambda \in W_\lambda$ we have $v(f_\lambda) = v(h_\lambda)$.
\item[(2)] Let $f \in A$ and write $f=\sum_\lambda f_\lambda$ as the sum of its
isotypic components. We then have:
$$v(f) = \min\{ v(h_\lambda) \mid f_\lambda \neq 0\}.$$
\end{itemize}
\end{Prop}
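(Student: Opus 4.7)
The plan is to exploit two things: (i) each isotypic component $W_\lambda$ is an irreducible $G$-module (since $A$ is multiplicity-free, $W_\lambda \cong V_\lambda$), and (ii) the valuation $v$ is $G$-invariant and trivial on $\k^*$. Together with the basic valuation inequality $v(a+b) \geq \min(v(a), v(b))$, these facts already suffice for (1); statement (2) further requires that each isotypic projector can be realized through the $G$-action on $A$.

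For part (1), since $W_\lambda$ is irreducible and $h_\lambda \neq 0$, the $G$-span of $h_\lambda$ must equal all of $W_\lambda$, so one can write $f_\lambda = \sum_i c_i \, g_i \cdot h_\lambda$ for finitely many $c_i \in \k^*$ and $g_i \in G$. Applying the valuation inequality together with $v(g_i \cdot h_\lambda) = v(h_\lambda)$ and $v(c_i) = 0$, one gets $v(f_\lambda) \geq v(h_\lambda)$. The reverse inequality follows by the symmetric argument, since $f_\lambda \neq 0$ also generates the irreducible module $W_\lambda$.

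For part (2), set $m = \min\{v(h_\lambda) \mid f_\lambda \neq 0\}$. The bound $v(f) \geq m$ is immediate from part (1) and the valuation inequality applied term by term to $f = \sum f_\lambda$. For the reverse, I would show $v(f_\lambda) \geq v(f)$ for every $\lambda$, which forces $m \geq v(f)$ and hence equality. Let $W \subset A$ be the (finite-dimensional) $G$-span of $f$. By the double centralizer theorem (Jacobson density) applied to the semisimple $\k[G]$-module $W$, the isotypic projector $\pi_\lambda \colon W \to W \cap W_\lambda$ commutes with every element of $\textup{End}_G(W)$ and therefore lies in the image of the group algebra $\k[G]$ inside $\textup{End}(W)$. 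Hence $f_\lambda = \pi_\lambda(f) = \sum_j d_j \, g'_j \cdot f$ for some $d_j \in \k^*$ and $g'_j \in G$, and $G$-invariance of $v$ combined with the valuation inequality gives $v(f_\lambda) \geq v(f)$, as required.

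The main obstacle I expect is the density step in (2): one needs to know that the abstract categorical projector onto an isotypic component is actually realized as a finite $\k$-linear combination of group elements (not merely by some element of the enveloping algebra, which in general could behave badly under $v$). In characteristic zero and for reductive $G$ this is standard via the double centralizer theorem, but it is worth isolating because the single-weight case — when only one $\lambda$ achieves $v(f_\lambda) = m$ — follows directly from the valuation inequality with no projector input, and the multi-weight case is precisely where one must separate the minimizing terms.
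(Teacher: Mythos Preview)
Your proof is correct and follows essentially the same line as the paper's. For (1) the arguments are identical; for (2) the paper also reduces to showing $v(f_\lambda)\geq v(f)$ for each $\lambda$, but instead of invoking Jacobson density it simply observes that the $G$-submodule $M_f\subset A$ generated by $f$ is semisimple, so each isotypic component $f_\lambda$ already lies in $M_f=\mathrm{span}\{g\cdot f\mid g\in G\}$---which is exactly your conclusion, obtained with slightly lighter machinery.
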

\begin{proof}
(1) First we note that the $G$-module $W_\lambda$ is spanned by the $G$-orbit of the highest weight vector $h_\lambda$. 
Thus we can find $g_1, \ldots, g_s \in G$ such that $f_\lambda$ is a linear combination of the $g_i \cdot h_\lambda$. From the $G$-invariance and 
non-Archimedean property of $v$ it then follows that $v(f_\lambda) \geq v(h_\lambda)$. But since $W_\lambda$ is also spanned 
by the $G$-orbit of $f_\lambda$. Reversing the roles of $f_\lambda$ and $h_\lambda$ in the above argument we see that $v(f_\lambda) = v(h_\lambda)$.
(2) By the non-Archimedean property of $v$ we have $v(f) \geq \min\{ v(f_\lambda) \mid f_\lambda \neq 0\}$. Note that by (1) above, 
the righthand side is equal to $\min\{ v(h_\lambda) \mid f_\lambda \neq 0\}$. To prove the reverse inequality let $M_f$ be the $G$-submodule of $A$ generated by $f$. 
By an argument similar to that in (1) we see that for every nonzero $f' \in M_f$ we have $v(f') \geq v(f)$. But all the isotypic components $f_\lambda$ of $f$ lie in $M_f$ 
and hence $v(f) \leq \min\{ v(f_\lambda) \mid f_\lambda \neq 0\}$. This finishes the proof.
\end{proof}

Finally, we discuss the multiplication in the $G$-algebra $A$.
Let us remind the definition of the {\it dominant order} on the weight lattice $\Lambda$. For two dominant weights $\lambda$, $\mu$ we say $\lambda \geq \mu$ if $\mu - \lambda$ is 
a linear combination of the simple roots with nonnegative integer coefficients. (Note that what we have defined is the reverse of the usual dominant order in the literature. 
We are using this convention to be consistent with the definition of a valuation.)
The dominant order has the important property that: 
{\it for $\lambda, \mu, \gamma \in \Lambda^+$, if $V_\gamma$ appears in 
$V_\lambda \otimes V_\mu$ then $\gamma \geq \lambda + \mu$.} 
From the above statement regarding the irreducible $G$-modules appearing in the tensor product one can conclude the following.

\begin{Th} \label{th-coor-ring-multiplication-G-algebra}
Let $R$ be any $G$-algebra and let us write $R = \bigoplus_\lambda W_\lambda$ where $W_\lambda$ is the $\lambda$-isotypic component in $R$ for $\lambda \in \Lambda$.
Let $f \in W_\lambda$ and $g \in W_\mu$ then $fg$ lies in: 
$$\bigoplus_{\gamma \geq \lambda + \mu} W_\gamma.$$
\end{Th}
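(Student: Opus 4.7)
The plan is to exploit the fact that multiplication $R \otimes R \to R$ is $G$-equivariant and then decompose the tensor product $W_\lambda \otimes W_\mu$ as a $G$-module using the cited property of the dominant order. Since $G$ acts on $R$ by algebra automorphisms, the map $m: R \otimes R \to R$, $m(f \otimes g) = fg$, is a homomorphism of $G$-modules. Restricting to $W_\lambda \otimes W_\mu$ gives a $G$-equivariant map whose image contains $fg$, so it suffices to show that this image lies in $\bigoplus_{\gamma \geq \lambda+\mu} W_\gamma$.

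Since $R$ is multiplicity-free is \emph{not} assumed here (this is the general $G$-algebra statement), one just writes $W_\lambda \cong V_\lambda^{\oplus a_\lambda}$ and $W_\mu \cong V_\mu^{\oplus a_\mu}$ as $G$-modules. Then $W_\lambda \otimes W_\mu \cong (V_\lambda \otimes V_\mu)^{\oplus a_\lambda a_\mu}$. By the quoted property of the dominant order, the irreducible decomposition of $V_\lambda \otimes V_\mu$ only involves $V_\gamma$ for $\gamma \geq \lambda+\mu$ (in the reversed convention of the paper). Consequently $W_\lambda \otimes W_\mu$ is, as a $G$-module, a direct sum of copies of $V_\gamma$ for such $\gamma$.

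Finally, apply Schur's lemma (or just the universal property of isotypic components): any $G$-equivariant map from a $G$-module whose irreducible constituents all have highest weights in some set $S$ into $R = \bigoplus_\nu W_\nu$ must land in $\bigoplus_{\nu \in S} W_\nu$, because the $\nu$-isotypic projection $R \to W_\nu$ vanishes on every irreducible constituent of the source unless $\nu \in S$. Taking $S = \{\gamma \mid \gamma \geq \lambda+\mu\}$, this gives $m(W_\lambda \otimes W_\mu) \subset \bigoplus_{\gamma \geq \lambda+\mu} W_\gamma$, and in particular $fg$ lies in this sum.

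The only genuinely nontrivial ingredient is the tensor product property $V_\gamma \subset V_\lambda \otimes V_\mu \Rightarrow \gamma \geq \lambda+\mu$, which is standard (it follows from the fact that all $T$-weights of $V_\lambda \otimes V_\mu$ are of the form $(\lambda+\mu) - \sum n_i \alpha_i$ with $n_i \in \z_{\geq 0}$, combined with the fact that the highest weight of any summand is itself a $T$-weight of the tensor product). The rest is purely formal representation-theoretic bookkeeping; I do not anticipate any serious obstacle.
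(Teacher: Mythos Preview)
Your argument is correct and is exactly the approach the paper indicates: the paper does not give a detailed proof but simply states that the theorem follows from the tensor product property $V_\gamma \subset V_\lambda \otimes V_\mu \Rightarrow \gamma \geq \lambda+\mu$, and your write-up spells out precisely how (via $G$-equivariance of multiplication and Schur's lemma). There is nothing to add.
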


We can also define an analogue of the dominant order for the sublattice $\Lambda_X$ associated to the spherical variety $X$.  
Let $\lambda, \mu \in \Lambda_X$. We say that $\lambda \geq_X \mu$ if $\mu - \lambda$ is 
a linear combination of the spherical roots with nonnegative integer coefficients. 
We call $>_X$ the {\it spherical dominant order}.
We have the following refinement of Theorem \ref{th-coor-ring-multiplication-G-algebra} in the spherical case (\cite[Section 5]{Knop-LV}):
\begin{Th} \label{th-coor-ring-multiplication-spherical}
Let $A = \k[X] = \bigoplus_{\lambda \in \Lambda_X^+} W_\lambda$ be the ring of regular functions on $X$.
Let $f \in W_\lambda$, $g \in W_\mu$. Then $fg$ lies in: 
$$\bigoplus_{\gamma \geq_X \lambda + \mu} W_\gamma.$$
\end{Th}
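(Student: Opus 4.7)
The plan is to deduce the refined multiplication law from the structure of the invariant valuation cone $\V_X$, with Proposition \ref{prop-inv-val-min-isotypic} as the main technical tool. Write $fg = \sum_\gamma h_\gamma$ with $h_\gamma \in W_\gamma$ the isotypic component of weight $\gamma$. The goal is to show that whenever $h_\gamma \neq 0$, the difference $(\lambda + \mu) - \gamma$ is a nonnegative integer combination of the spherical roots $\beta_1, \ldots, \beta_\ell$.

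First, fix any invariant valuation $v \in \V_X$. Since $A$ is a domain, $v(fg) = v(f) + v(g)$, and Proposition \ref{prop-inv-val-min-isotypic}(1) applied to $f \in W_\lambda$ and $g \in W_\mu$ gives $v(f) = \langle v, \lambda \rangle$ and $v(g) = \langle v, \mu \rangle$, so $v(fg) = \langle v, \lambda + \mu \rangle$. On the other hand, applying Proposition \ref{prop-inv-val-min-isotypic}(2) to the isotypic decomposition $fg = \sum_\gamma h_\gamma$ yields $v(fg) = \min\{\langle v, \gamma \rangle : h_\gamma \neq 0\}$. Comparing these two expressions, I conclude that for every $v \in \V_X$ and every $\gamma$ in the support,
$$\langle v, \gamma - (\lambda + \mu) \rangle \geq 0.$$

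Next, Theorem \ref{th-val-cone-simplicial} describes $\V_X = \{v \in \mathcal{Q}_X : \langle v, \beta_i \rangle \leq 0 \text{ for all } i\}$ as a simplicial cone cut out by the spherical roots. Standard polar duality (Farkas' lemma) then identifies the set of linear functionals that are nonnegative on $\V_X$ with the rational cone generated by $-\beta_1, \ldots, -\beta_\ell$. Therefore $\gamma - (\lambda + \mu) = -\sum c_i \beta_i$ with $c_i \in \q_{\geq 0}$, i.e.\ $(\lambda+\mu) - \gamma$ is a nonnegative \emph{rational} combination of spherical roots.

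The step I expect to be the main obstacle is the final integrality upgrade: promoting the nonnegative rational combination to a nonnegative \emph{integer} combination in $\Lambda_X$. The element $(\lambda+\mu) - \gamma$ already lies in $\Lambda_X$, and Theorem \ref{th-coor-ring-multiplication-G-algebra} in the ambient $G$-algebra shows it is a nonnegative integer combination of simple roots of $G$. The refinement that it is in fact a nonnegative integer combination of the spherical roots is a lattice-theoretic property of the spherical root system (the $\beta_i$ form a $\z$-basis of the appropriate sublattice of $\Lambda_X$, and dominant differences stay in the monoid they generate), which is exactly what \cite[Section 5]{Knop-LV} establishes. I would complete the proof by invoking that integrality statement from the Luna--Vust--Knop theory.
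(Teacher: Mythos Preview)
The paper itself gives no proof of this theorem; it simply cites \cite[Section 5]{Knop-LV}. Your argument is therefore more than the paper offers, and the rational part is correct: Proposition~\ref{prop-inv-val-min-isotypic} together with the Farkas dual of the description \eqref{equ-valuation-cone-inequ} cleanly yields that every tail $(\lambda+\mu)-\gamma$ lies in $\sum_i \q_{\ge 0}\beta_i$.

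Two remarks are worth making. First, your route runs in the opposite direction from Knop's in \cite{Knop-LV}: there one first analyzes the multiplication in $A$, defines the tail cone from the weights appearing in products $W_\lambda W_\mu$, and then proves that its negative dual is exactly $\V_X$ (this is essentially Corollary~\ref{cor-tail-cone-val-cone}). The spherical roots in \cite{Knop-LV} are extracted from the tail monoid, not from the valuation cone, and the integrality of tails is built into their definition. In the logical ordering of the present paper your argument is fine, since Theorem~\ref{th-val-cone-simplicial} is stated as input; but you should be aware that in the underlying literature the two statements are proved together, so invoking Theorem~\ref{th-val-cone-simplicial} to prove Theorem~\ref{th-coor-ring-multiplication-spherical} is somewhat circular at the source level.

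Second, the integrality upgrade is indeed the whole point, and your sketch of it is not quite right. The spherical roots are linearly independent (since $\V_X$ is simplicial), so they span a sublattice $M\subset\Lambda_X$; once a tail lies in $M$, integrality of the coefficients is automatic. The nontrivial assertion is that every tail lies in $M$, not merely in $\Lambda_X$, and this is precisely what \cite[Section 5]{Knop-LV} arranges by choosing the normalization of the $\beta_i$ appropriately. Your phrase ``dominant differences stay in the monoid they generate'' is a restatement of the conclusion rather than a reason for it. Since you end by citing the same reference the paper does, the proof is acceptable, but the added value of your argument is really the rational statement.
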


Take $\lambda, \mu \in \Lambda_X^+$ and let $W_\nu$ appear in the product $W_\lambda W_\mu$. A weight of the form $\lambda + \mu - \nu$ is usually called a {\it tail} and the closure of the cone 
in $\Lambda_X \otimes \r$ generated by all the tails is called the {\it tail cone of $X$}. Theorem \ref{th-coor-ring-multiplication-spherical} implies the following (see for example \cite[Lemma 5.1]{Knop-LV}):    
\begin{Cor} \label{cor-tail-cone-val-cone}
The tail cone is the dual cone to $-\V_X$, where as usual $\V_X$ is the valuation cone. 
\end{Cor}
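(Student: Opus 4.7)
The plan is to combine the multiplication rule of Theorem~\ref{th-coor-ring-multiplication-spherical} with the half-space description of $\V_X$ from Theorem~\ref{th-val-cone-simplicial} and to conclude by convex duality. Write $T$ for the tail cone and $\mathcal{C} := \mathrm{cone}(\beta_1, \ldots, \beta_\ell) \subset \Lambda_X \otimes \r$ for the cone generated by the spherical roots.

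First I would establish the easy inclusion $T \subseteq \mathcal{C}$. By Theorem~\ref{th-coor-ring-multiplication-spherical}, whenever $W_\nu$ appears in $W_\lambda W_\mu$ we have $\nu \geq_X \lambda+\mu$, which by the definition of $\geq_X$ means the tail $\lambda+\mu-\nu$ lies in $\mathcal{C}$. Passing to the closed cone generated by all tails gives $T \subseteq \mathcal{C}$. Next I would compute the dual of $-\V_X$ directly from (\ref{equ-valuation-cone-inequ}): since $-\V_X = \{v : \langle v, \beta_i \rangle \geq 0,\ \forall i\}$, a standard application of Farkas' lemma yields $(-\V_X)^\vee = \mathcal{C}$. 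Combining the two observations, $T \subseteq (-\V_X)^\vee$.

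The substantive step is the reverse inclusion $\mathcal{C} \subseteq T$, i.e.\ that every spherical root $\beta_i$ lies in the tail cone. My preferred route is dually: prove that any linear functional $v \in \mathcal{Q}_X$ satisfying $\langle v, t \rangle \leq 0$ for every tail $t$ in fact belongs to $\V_X$, so that $T^\vee \subseteq -\V_X$ and double-dualizing (valid because $T$ is a closed convex cone by construction) forces $T \supseteq (-\V_X)^\vee = \mathcal{C}$. Guided by Proposition~\ref{prop-inv-val-min-isotypic}(2), one tries to build a $G$-invariant valuation $\tilde v$ on $\k(X)$ by decomposing $f = \sum_\gamma f_\gamma$ into isotypic components and setting
\[
\tilde v(f) := \min\{\langle v, \gamma \rangle : f_\gamma \neq 0\}.
\]
The map $\tilde v$ is manifestly $G$-invariant and non-Archimedean; the content is to verify multiplicativity $\tilde v(fg) = \tilde v(f) + \tilde v(g)$. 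The inequality $\tilde v(fg) \geq \tilde v(f) + \tilde v(g)$ follows from Theorem~\ref{th-coor-ring-multiplication-spherical} together with the tail hypothesis $\langle v, \cdot \rangle \leq 0$ on $\mathcal{C}$, while the reverse inequality requires that the product of the minimizing isotypic pieces $f_{\lambda_0}$ and $g_{\mu_0}$ contributes a nonzero component of weight exactly $\lambda_0 + \mu_0$.

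The main obstacle I anticipate is precisely this nonvanishing. For a general $G$-algebra the top-weight component in a product of isotypic pieces can collapse (as it does in the horospherical contraction itself), so one must genuinely exploit the sphericity of $X$ and the multiplicity-freeness of $\k[X]$; alternatively, one can bypass the construction of $\tilde v$ by invoking directly that each $\beta_i$ arises as a primitive tail, which is the content of \cite[Lemma~5.1]{Knop-LV}. Either way, once $\mathcal{C} \subseteq T$ is established, the corollary is immediate from $\mathcal{C} = (-\V_X)^\vee$.
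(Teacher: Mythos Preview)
Your outline is correct and is precisely the argument the paper defers to in citing \cite[Lemma~5.1]{Knop-LV}; the paper itself supplies no independent proof beyond that reference. Two refinements are worth recording.

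In deriving $\tilde v(fg)\ge\tilde v(f)+\tilde v(g)$ you should not phrase the hypothesis as ``$\langle v,\cdot\rangle\le 0$ on $\mathcal C$'': that is equivalent to $v\in\V_X$ by Theorem~\ref{th-val-cone-simplicial}, which is the conclusion you are after. The bare definition of a tail already gives what you need: whenever $W_\gamma$ occurs in $W_\lambda W_\mu$ the element $\lambda+\mu-\gamma$ is \emph{by definition} a tail, so $\langle v,\gamma\rangle\ge\langle v,\lambda+\mu\rangle$ follows directly from the hypothesis on $v$, with no appeal to Theorem~\ref{th-coor-ring-multiplication-spherical} or to the spherical roots.

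The obstacle you isolate for the reverse inequality is genuine and is resolved exactly through multiplicity-freeness, as you suspect. Since $A$ is a domain, the product $h_\lambda h_\mu$ of $B$-highest weight vectors is a nonzero $B$-eigenvector of weight $\lambda+\mu$, hence lies in $W_{\lambda+\mu}$; by Schur's lemma the resulting $G$-map $W_\lambda\otimes W_\mu\to W_{\lambda+\mu}$ agrees up to scalar with multiplication in the domain $\k[G/U]$, so it kills no nonzero decomposable tensor and $A_{\hc}$ is a domain. Now $\gr_{\tilde v}(A)$ is again a multiplicity-free $G$-algebra, and its horospherical contraction is computed componentwise to be $A_{\hc}$: a domain associated graded forces $\gr_{\tilde v}(A)$ to be a domain, which is exactly the statement $\tilde v(fg)=\tilde v(f)+\tilde v(g)$. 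This completes your construction of the $G$-invariant valuation and hence the inclusion $T^\vee\subseteq -\V_X$.
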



\subsection{Horospherical contraction} \label{subsec-horosph-contraction}
We recall the notion of horospherical contraction of a $G$-algebra (\cite{Popov}). Let $R = \bigoplus_\lambda W_\lambda$ be a (rational) $G$-algebra. 
Consider the $\Lambda^+$-filtration $F_\bullet$ on $R$ defined as follows. For
$\lambda \in \Lambda^+$ let
$$F_{\geq \lambda} = \bigoplus_{\mu \geq \lambda} W_\mu.$$ From Theorem \ref{th-coor-ring-multiplication-G-algebra} it follows that $F_\bullet$ is a multiplicative filtration. 
Let $$R_{\hc} = \gr_{F_\bullet} (R) = \bigoplus_\lambda F_{\geq \lambda} / F_{> \lambda},$$ denote the associated graded algebra of $R$. The $\Lambda^+$-graded algebra $R_{\hc}$
is usually called the {\it horospherical contraction of $R$}. The algebra $R_{\hc}$ is isomorphic to $R$ as a $G$-algebra but it has a different (simpler) multiplication.
If $R$ is the coordinate ring of an affine $G$-variety $X$ then the {\it horospherical contraction of $X$} is the variety $\Spec(R_{\hc})$. The horospherical contraction of $X$ is indeed a horospherical $G$-variety. We recall 
that a $G$-variety is called {\it horospherical} if $G$-stabilizer of any point contains a maximal unipotent subgroup. 

Now let $A = \bigoplus_{\lambda \in \Lambda_X^+} W_\lambda$ be the ring of regular functions on a quasi-affine spherical variety $X$. 
Then $A$ is a multiplicity-free $G$-module. As a $G$-module 
the horospherical contraction $A_{\hc}$ is also $\bigoplus_\lambda W_\lambda$, but 
for any $\lambda, \mu \in \Lambda_X$ the multiplication map is given by a Cartan multiplication 
$W_\lambda \times W_\mu \to W_{\lambda + \mu}$. 

We also note that, by Theorem \ref{th-coor-ring-multiplication-spherical}, if instead of the dominant oder $>$ we use the weaker order $>_X$ on $\Lambda_X$ to define 
the filtration $F_\bullet$, the resulting associated graded $A_{\hc} = \gr_{F_\bullet}(A)$ is the same.

Finally we define the notion of a $\Lambda_X$-homogeneous ideal in the horospherical contraction $A_{\hc}$. 
It is a generalization of the notion of a monomial ideal in a polynomial ring. 
\begin{Def}[Homogeneous ideal] \label{def-Lambda-A-homog-ideal}
We call an ideal $J \subset A_{\hc}$ a $\Lambda_X^+$-homogeneous ideal if it is generated by a finite number of $\Lambda_X^+$-homogeneous elements. 
That is, we can find a set of generators $f_1, \ldots, f_s$ for $J$ such that each $f_i$ lies in some $W_{\lambda_i}$, $\lambda_i \in \Lambda_X^+$. 
\end{Def}

The next proposition is straightforward to verify.
\begin{Prop} \label{prop-Lambda-A-homog-ideal}
An ideal $J \subset A_{\hc}$ is a $\Lambda_X$-homogeneous ideal if and only if the following holds: Let $f \in J$ with isotypic decomposition $f = \sum_\lambda f_\lambda$.
Then $f_\lambda \in J$ for all $\lambda$.
\end{Prop}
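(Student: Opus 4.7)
The plan is to recognize that the proposition is the standard characterization of homogeneous ideals in a graded ring, applied to the special $\Lambda_X^+$-grading on $A_{\hc}$. Recall that in $A_{\hc}$ the multiplication is the Cartan multiplication, so $W_\lambda \cdot W_\mu \subseteq W_{\lambda+\mu}$; thus $A_{\hc} = \bigoplus_{\lambda \in \Lambda_X^+} W_\lambda$ is literally a $\Lambda_X^+$-graded $\k$-algebra, with the isotypic decomposition $f = \sum_\lambda f_\lambda$ coinciding with the decomposition into graded pieces. Once this observation is made, the proposition becomes a purely formal statement about graded ideals.

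For the direction $(\Rightarrow)$, I would take homogeneous generators $f_1,\ldots,f_s$ with $f_i \in W_{\lambda_i}$ and an arbitrary $f \in J$, write $f = \sum_i h_i f_i$ with $h_i \in A_{\hc}$, and then expand each $h_i = \sum_\mu h_{i,\mu}$ into its isotypic (= homogeneous) pieces. Since $h_{i,\mu} f_i \in W_{\mu + \lambda_i}$, grouping terms of equal total weight shows
\[
f_\lambda \;=\; \sum_{i,\,\mu\,:\,\mu+\lambda_i=\lambda} h_{i,\mu}\,f_i \;\in\; J,
\]
which is exactly what was to be proved.

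For the direction $(\Leftarrow)$, I would start with any finite set of generators $g_1,\ldots,g_r$ of $J$ (which exists since $A = \k[X]$ is finitely generated, hence $A_{\hc}$ is Noetherian as a finitely generated $\k$-algebra of the same underlying $G$-module type, or one can cite that $A_\hc$ is a flat degeneration and therefore also finitely generated). By hypothesis, each isotypic component $g_{j,\lambda}$ lies in $J$, and only finitely many of them are nonzero for each $j$. The finite collection $\{g_{j,\lambda}\}_{j,\lambda}$ consists of $\Lambda_X^+$-homogeneous elements, and since $g_j = \sum_\lambda g_{j,\lambda}$, these new elements generate the same ideal $J$. Hence $J$ is $\Lambda_X^+$-homogeneous in the sense of Definition \ref{def-Lambda-A-homog-ideal}.

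There is no real obstacle here; the only subtle point to verify explicitly is that the isotypic decomposition of $A$ coincides, piece by piece, with the $\Lambda_X^+$-grading on $A_{\hc}$, so that ``homogeneous'' with respect to the grading and ``isotypic component'' mean the same thing. Once that is stated, both implications are one-line manipulations of finite sums.
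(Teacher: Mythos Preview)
Your proposal is correct and is exactly the standard graded-ring argument the paper has in mind; indeed, the paper gives no proof at all for this proposition, merely stating that it ``is straightforward to verify.'' Your identification of the isotypic decomposition with the $\Lambda_X^+$-grading on $A_{\hc}$ (via the Cartan multiplication $W_\lambda \cdot W_\mu \subseteq W_{\lambda+\mu}$) is the only point worth making explicit, and you do so.
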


\section{Gr\"obner theory for multiplicity-free $G$-algebras} \label{sec-Grobner-G-alg}
\subsection{Spherical Gr\"obner bases} \label{subsec-sph-Grobner}
As usual let $A=\k[X]$ be the algebra of regular functions on a quasi-affine spherical $G$-variety $X$
(alternatively we can take $A$ to be a finitely generated $G$-algebra which is a multiplicity-free rational $G$-module). 
In this section we develop basics of a Gr\"obner theory for ideals in the $G$-algebra $A$.
This will be used in the next sections where we define and explore the notion of a spherical tropical variety for a subscheme in a spherical homogeneous space.


First we define the notion of initial ideal with respect to a total order. 
Let $\succ$ be a total order on the weight lattice $\Lambda_X$ respecting addition. 

\begin{Ass} \label{ass-total-order}
We will always make the following assumptions on the total order $\succ$.
\begin{itemize}
\item[(1)] The total order $\succ$ refines the spherical dominant order $>_X$ (which in general is a partial order). That is, for two weights $\lambda$, $\mu$ if we have 
$\lambda >_X \mu$ then $\lambda \succ \mu$.
\item[(2)] The semigroup $\Lambda_X^+$ is maximum well-ordered with respect to $\succ$ i.e. any increasing chain has a maximum element. 
\footnote{In the usual Gr\"obner theory literature (over a polynomial 
ring) it is customary to assume that the total order (term order) is minimum well-ordered. In this paper we use the minimum convention, i.e. we define the initial term
using minimum, in order to be compatible with the usual definition of a valuation. 
That is why we need the maximum well-ordered property, as opposed to the minimum well-ordered property.    
}
\end{itemize}
\end{Ass}


\begin{Rem} \label{rem-max-well-ordered-ass}
The assumption (2) above is needed to guarantee that several algorithms regarding spherical Gr\"obner bases terminate. For example 
this assumption is essential in Propositions \ref{prop-sph-Grobner-basis-generate-ideal} and \ref{prop-sph-div-algo} below. 
\end{Rem}

\begin{Rem}  \label{rem-good-ordering-exists}
If $A$ is a positively graded $G$-algebra and $\dim(A_i) < \infty$, for all $i$, then $A$, regarded as a $(\k^* \times G)$-algebra admits an ordering $\succ$ satisfying Assumption \ref{ass-total-order}.
\end{Rem}

The total order $\succ$ gives rise to a 
filtration on $A$. Namely, for each $\lambda \in \Lambda_X$ we define:
$$A_{\succeq \lambda} = \bigoplus_{\mu \succeq \lambda} W_\mu.$$
The space $A_{\succ \lambda}$ is defined similarly. We denote the associated graded of this filtration by $\gr_\succ(A)$, that is:
$$\gr_\succ(A) = \bigoplus_{\lambda \in \Lambda_X^+} A_{\succeq \lambda} / A_{\succ \lambda}.$$

We have a natural $G$-module isomorphism between 
$A$ and $\gr_\succ(A)$ defined as follows. For each $\lambda \in \Lambda_X^+$ and $f_\lambda \in W_\lambda \subset A$ send $f_\lambda$ to its image in the 
quotient space $A_{\succeq \lambda} / A_{\succ \lambda} \subset \gr_\succ(A)$. It can be verified that this map extends to give a $G$-module isomorphism between $A$ and $\gr_\succ(A)$. 

Moreover, we have the following.
\begin{Prop}[Associated graded of a total order]
For any total order $\succ$ on $\Lambda_X$ as above, the associated graded $\gr_\succ(A)$ is naturally isomorphic, as a $G$-algebra, to the horospherical 
contraction $A_{\hc}$.
\end{Prop}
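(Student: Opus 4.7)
The plan is to check that the $G$-module isomorphism $\varphi: A_{\hc} \to \gr_\succ(A)$, which on the $\lambda$-isotypic component $W_\lambda$ sends $f_\lambda$ to the class of $f_\lambda$ in $A_{\succeq \lambda}/A_{\succ \lambda}$, is multiplicative. The existence of $\varphi$ as a $G$-module map is already recorded in the paragraph preceding the proposition (and likewise for $A \to A_{\hc}$), so the only content is to compare the two products on matching weight pieces.

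First I would take $f_\lambda \in W_\lambda$ and $f_\mu \in W_\mu$ and expand their product in $A$ via the isotypic decomposition, writing
\[
f_\lambda f_\mu = \sum_\gamma h_\gamma, \qquad h_\gamma \in W_\gamma.
\]
By Theorem \ref{th-coor-ring-multiplication-spherical} the sum is supported on weights $\gamma \geq_X \lambda+\mu$, and the Cartan component in $A_{\hc}$ associated to $(f_\lambda, f_\mu)$ is by definition $h_{\lambda+\mu}$ (the part projected to $W_{\lambda+\mu}$). This handles the computation on the horospherical side.

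For the associated graded side, Assumption \ref{ass-total-order}(1) is the key ingredient: since $\succ$ refines $>_X$, every $\gamma$ appearing in the sum above satisfies $\gamma \succeq \lambda+\mu$, so $f_\lambda f_\mu \in A_{\succeq \lambda+\mu}$. Moreover any $\gamma \neq \lambda+\mu$ occurring actually has $\gamma >_X \lambda+\mu$, hence $\gamma \succ \lambda+\mu$, which means $h_\gamma \in A_{\succ \lambda+\mu}$. Consequently the image of $f_\lambda f_\mu$ in the quotient $A_{\succeq \lambda+\mu}/A_{\succ \lambda+\mu}$ is exactly $h_{\lambda+\mu}$. Therefore the product of the classes of $f_\lambda$ and $f_\mu$ in $\gr_\succ(A)$ equals $h_{\lambda+\mu}$, matching the Cartan product in $A_{\hc}$. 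This shows $\varphi$ is a $G$-algebra isomorphism.

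There is no serious obstacle here; the one thing to double-check is that the refinement hypothesis really is used (and only it), which it is in the implication $\gamma >_X \lambda+\mu \Rightarrow \gamma \succ \lambda+\mu$ used to kill the higher terms. The maximum well-ordered condition from Assumption \ref{ass-total-order}(2) plays no role in this particular statement; it is reserved for the algorithmic results that follow. One might also note as a remark that the same argument would work if $\succ$ were replaced by the partial order $>_X$ itself, which recovers the fact stated at the end of Section \ref{subsec-horosph-contraction} that using $>_X$ in place of the usual dominant order $>$ yields the same associated graded.
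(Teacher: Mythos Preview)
Your proof is correct and is exactly the intended expansion of the paper's one-line argument, which simply cites the refinement assumption and Theorem \ref{th-coor-ring-multiplication-spherical}. Your observations that only Assumption \ref{ass-total-order}(1) is used here and that the same computation recovers the remark at the end of Section \ref{subsec-horosph-contraction} are accurate and worth keeping.
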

\begin{proof}
Follows from the assumption that $\succ$ refines the spherical dominant order and Theorem \ref{th-coor-ring-multiplication-spherical}.
\end{proof}

For $f \in A$ let us write $f = \sum_{\lambda} f_\lambda$ with $f_\lambda \in W_\lambda$. Let $\mu = \min\{ \lambda \mid f_\lambda \neq 0\}$ be the smallest dominant weight appearing in $f$. 
Here the minimum is with respect to the total oder $\succ$. Clearly $f \in A_{\succeq \mu}$. We define $\In_\succ(f)$ to be the image of $f$ in 
$A_{\succeq \mu} / A_{\succ \mu} \subset \gr_{\succ}(A)$. We call $\In_\succ(f)$ the {\it initial term of $f$ with respect to $\succ$}. 

\begin{Def}[Initial ideal with respect to a total order] \label{def-initial-ideal->}
Let $I \subset A$ be an ideal. We denote by $\In_\succ(I)$ the ideal in $\gr_\succ(A)$ generated by 
all the initial terms $\In_\succ(f)$ for $f \in I$. 
\end{Def}

\begin{Def}[Spherical Gr\"obner basis] \label{def-sph-Grobner-basis}
If $\G \subset I$ is such that $\{\In_\succ(f) \mid f \in \G \}$ generates the initial ideal $\In_\succ(I) \in \gr_{\succ}(A)$ then we call $\G$ a 
{\it spherical Gr\"obner basis for $I$ with respect to the total order $\succ$}. 
\end{Def}


As in the usual Gr\"obner basis theory we have the following.
\begin{Prop} \label{prop-sph-Grobner-basis-generate-ideal}
Let $\G$ be a spherical Gr\"obner basis for $I$ with respect to $\succ$ then $\G$ generates $I$ as an ideal.
\end{Prop}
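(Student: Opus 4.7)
The plan is to reduce this to the spherical division algorithm of Proposition \ref{prop-sph-div-algo}. Given $f \in I$, apply the division algorithm with divisors $\G$ to write $f = \sum_i h_i g_i + r$, where $h_i \in A$, $g_i \in \G$, and the remainder $r$ has the distinguishing property that none of its isotypic components (equivalently, no isotypic component of $\In_\succ(r)$ viewed inside $\gr_\succ(A) \cong A_{\hc}$) lies in the ideal generated by $\{\In_\succ(g) : g \in \G\}$.

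Since $f \in I$ and $\sum h_i g_i \in \langle \G \rangle \subset I$, we get $r \in I$. Suppose for contradiction that $r \neq 0$; then $\In_\succ(r)$ is a nonzero element of $\In_\succ(I)$, which by the Gr\"obner basis hypothesis equals $\langle \In_\succ(\G) \rangle$ inside $\gr_\succ(A)$. Each $\In_\succ(g)$ lies in a single isotypic component $W_{\mu_g}$, so the ideal $\langle \In_\succ(\G) \rangle$ is $\Lambda_X^+$-homogeneous in the sense of Definition \ref{def-Lambda-A-homog-ideal}. By Proposition \ref{prop-Lambda-A-homog-ideal}, every isotypic component of $\In_\succ(r)$ must then itself lie in $\langle \In_\succ(\G) \rangle$, directly contradicting the defining property of the remainder produced by the division algorithm. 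Therefore $r = 0$ and $f = \sum_i h_i g_i$ lies in the ideal generated by $\G$.

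The main obstacle, which is already absorbed into Proposition \ref{prop-sph-div-algo}, is ensuring that the division algorithm terminates with a remainder of the required form. Termination follows from Assumption \ref{ass-total-order}(2): each reduction step strictly $\succ$-raises the minimum weight of the current working term, and $\Lambda_X^+$ is maximum well-ordered with respect to $\succ$. The one place where genuine care is needed, and where the spherical theory departs from the classical polynomial setting, is the handling of the Cartan product $W_\nu \times W_{\mu} \to W_{\nu + \mu}$: to cancel an isotypic component of $f$ in $W_{\nu+\mu}$ using a generator $g$ with $\In_\succ(g) \in W_\mu$, one must lift a suitable element of $W_\nu$ so that the spherical multiplication rule (Theorem \ref{th-coor-ring-multiplication-spherical}) kills the leading isotypic component exactly while pushing all residual contributions strictly upward in the $\succ$-order, since $\succ$ refines the spherical dominant order $>_X$. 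Once this bookkeeping is in place, the argument proceeds essentially as in the classical case.
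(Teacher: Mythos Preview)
Your argument is mathematically sound and, once unpacked, is essentially the same iterative reduction that the paper gives: repeatedly cancel the leading isotypic component using elements of $\G$, observe that the minimum weight strictly increases, and invoke maximum well-ordering (Assumption~\ref{ass-total-order}(2)) to force termination.

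There is, however, a logical-ordering issue in how you have packaged it. In the paper, Proposition~\ref{prop-sph-div-algo} appears \emph{after} Proposition~\ref{prop-sph-Grobner-basis-generate-ideal}, and its proof explicitly says ``Proceed as in the proof of Proposition~\ref{prop-sph-Grobner-basis-generate-ideal}.'' So invoking Proposition~\ref{prop-sph-div-algo} to prove Proposition~\ref{prop-sph-Grobner-basis-generate-ideal} is circular as the paper is organized. More importantly, even setting aside the ordering, Proposition~\ref{prop-sph-div-algo} as stated only produces a decomposition $f = h + r$ with $h \in I$, not with $h \in \langle \G \rangle$; applying it literally to $f \in I$ just returns the tautology $f \in I$. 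What you actually need---and what your last paragraph correctly sketches---is the version of the division algorithm that divides by $\G$ and yields $h \in \langle \G \rangle$ together with a remainder whose isotypic components avoid $\langle \In_\succ(g) : g \in \G \rangle$. Once you write that out directly (which is exactly the argument in the paper's proof of Proposition~\ref{prop-sph-Grobner-basis-generate-ideal}), the subsequent remainder-contradiction step becomes unnecessary: termination already forces $r=0$. So the cleanest fix is to drop the reference to Proposition~\ref{prop-sph-div-algo} and present the iterative reduction directly, as the paper does.
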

\begin{proof}
Let $0 \neq h \in I$ and suppose for $\lambda_0 \in \Lambda_X^+$ we have $h \in A_{\succeq \lambda_0}$ but $h \notin A_{\succ \lambda_0}$. 
{Since $\In(\G) = \{\In(f) \mid f \in \G\}$ generates the initial ideal $\In_\succ(I)$ we can find $f_1, \ldots, f_s \in \G$ and 
$h_1, \ldots, h_s \in A$ such that $\In(h) = \In(\sum_i h_if_i)$.}
This means that $h - \sum_i h_if_i$ lies in the subspace $A_{\succ \lambda_0}$. If $h_1 = h - \sum_i h_if_i$ is nonzero we find $\lambda_1$ such that 
$h_1 \in A_{\succeq \lambda_1}$ but $h_1 \notin A_{\succ \lambda_1}$ and continue. Thus we get a sequence of elements 
$\lambda_0 \precneqq \lambda_1 \precneqq \lambda_2 \precneqq \cdots $. By the maximum well-ordering assumption we cannot have a strictly 
increasing chain. This means that at some stage we should arrive at $0$ which implies that $h$ is in the ideal generated by $\G$.
\end{proof}

We can also define an analogue of the reduced Gr\"obner basis.
\begin{Def}[Reduced spherical Gr\"obner basis] \label{def-reduced-sph-Grobner-basis}
With notation as above, let $\G$ be a spherical Gr\"obner basis for an ideal $I \subset A$.
We call $\G$ a {\it reduced spherical Gr\"obner basis}  if the following holds: for every $f \in \G$ write $f = \sum_\lambda f_\lambda$ as sum of its isotypic components. 
Let $\lambda_0$ be the minimum (with respect to $\succ$) among $\{ \lambda \mid f_\lambda \neq 0\}$. We then require that for every nonzero $f_\lambda$ with 
$\lambda \neq \lambda_0$, $\In_\succ(f_\lambda)$ does not lie in the initial ideal $\In_\succ(I) = \langle \In_\succ(f) \mid f \in \G \rangle$.
\end{Def}
Using a similar argument as in the usual Gr\"obner theory one shows that any ideal has a reduced spherical Gr\"obner basis. 

We also have a version of the division algorithm. The proof is analogous to the usual division algorithm in Gr\"obner theory. 
\begin{Prop}[Spherical division algorithm] \label{prop-sph-div-algo}
Let $I \subset A$ be an ideal.
Then any $f \in A$ can be written as: 
$$f = h + \sum_\lambda f_\lambda,$$ where $h \in I$ and for each $\lambda$, $0 \neq f_\lambda \in W_\lambda$ does not lie in $\In_\succ(I)$. 
Moreover $\sum_\lambda f_\lambda = 0$ if and only if $f \in I$.
\end{Prop}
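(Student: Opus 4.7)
The approach is to mimic the classical Gr\"obner division algorithm, exploiting the $G$-module identification $A \cong \gr_\succ(A)$ so that isotypic components of $A$ correspond to graded pieces of the associated graded. The plan is to process the $\succ$-minimum isotypic component of a running element at each stage, either cancelling it by subtracting an element of $I$ (when that component lies in $\In_\succ(I)$) or transferring it to the output remainder (when it does not).

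Concretely, initialize $p := f$, $h := 0$, $r := 0$. While $p \neq 0$, let $\mu$ be the $\succ$-minimum weight appearing in the isotypic decomposition $p = \sum_\lambda p_\lambda$, and identify $p_\mu \in W_\mu$ with its image in $A_{\succeq \mu}/A_{\succ \mu} \subset \gr_\succ(A)$. If $p_\mu \in \In_\succ(I)$, then by the definition of $\In_\succ(I)$ together with the $G$-module identification, one produces $q \in I$ with $\In_\succ(q) = p_\mu$; then $p - q \in A_{\succ \mu}$, and we update $p := p - q$, $h := h + q$. Otherwise, update $p := p - p_\mu$, $r := r + p_\mu$. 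In either branch the $\succ$-minimum weight of $p$ strictly increases, so by the maximum well-ordered property from Assumption \ref{ass-total-order}(2) the procedure terminates with $p = 0$. This yields $f = h + r$ with $h \in I$ and $r = \sum_\lambda f_\lambda$ where each nonzero $f_\lambda$ lies outside $\In_\succ(I)$.

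For the biconditional: if $r = 0$ then trivially $f = h \in I$. Conversely, if $f \in I$, then $r = f - h \in I$, and were $r \neq 0$ its $\succ$-minimum isotypic component would coincide with $\In_\succ(r)$ under the identification $W_{\mu(r)} \cong A_{\succeq \mu(r)}/A_{\succ \mu(r)}$, hence would lie in $\In_\succ(I)$—contradicting the construction of $r$.

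The only subtle point is the lifting step in the cancellation case: given $p_\mu \in \In_\succ(I) \cap W_\mu$, one must produce $q \in I$ with $\In_\succ(q) = p_\mu$. Writing $p_\mu = \sum_i a_i \In_\succ(g_i)$ in $\gr_\succ(A)$ with $g_i \in I$ and lifting each $a_i$ to an element $\tilde{a}_i \in A$ via the $G$-module identification, the candidate is $q = \sum_i \tilde{a}_i g_i$. Checking $\In_\succ(q) = p_\mu$ uses Theorem \ref{th-coor-ring-multiplication-spherical}, which governs leading components of products in $A$ via horospherical multiplication so that no cancellation occurs at weight $\mu$. This is bookkeeping rather than a substantive obstacle; the essential work in the proposition is the termination argument, which rests entirely on the maximum well-ordered property.
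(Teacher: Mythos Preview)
Your proof is correct and follows essentially the same approach as the paper: the paper's proof is the single line ``Proceed as in the proof of Proposition~\ref{prop-sph-Grobner-basis-generate-ideal},'' which is precisely the iterative reduction you spell out, with termination guaranteed by the maximum well-ordered property. You have in fact written out more detail than the paper provides, including the lifting step and the biconditional, both of which are handled correctly.
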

\begin{proof}
Proceed as in the proof of Proposition \ref{prop-sph-Grobner-basis-generate-ideal}.
\end{proof}


\begin{Rem} \label{rem-division-algo-involved}
The spherical division algorithm is more complicated to implement than the usual division algorithm in a polynomial ring. 
This is because, in the general spherical setting we have two new ingredients involving the multiplication in our algebra $A$: 
\begin{itemize}
\item For $\lambda, \mu \in \Lambda_X^+$,
the multiplication sends $W_\lambda \times W_\mu$ to 
$W_{\lambda + \mu} \oplus \bigoplus_{\gamma \geq_X \lambda+\mu } W_\gamma$, as opposed to the case of polynomials where the product of two monomials is just another 
monomial (see Theorem \ref{th-coor-ring-multiplication-spherical}). 
\item Even computing the leading component of the multiplication i.e. the map $W_\lambda \times W_\mu \to W_{\lambda + \mu}$ obtained by projection onto the $W_{\lambda + \mu}$ 
component, involves some more computation (this is in fact a Cartan multiplication and describes the multiplication in the horospherical contraction $A_{\hc}$).
\end{itemize}
Similarly the Buchberger algorithm is more involved. 
Nevertheless the authors beleive that one can introduce a ``nice'' spherical Gr\"obner basis which would make spherical 
division algorithm and the Buchberger algorithm more effective (using canonical bases from representation theory). 
\end{Rem}

To illustrate the concepts we give a baby example below. 
\begin{Ex} \label{ex-SL(2)-Grobner-basis}
As in Example \ref{ex-sph-var}(3) let $G = \SL(2, \k)$ act on $X=\A^2$ in the natural way. The algebra of regular functions 
$A = \k[X]$ is just the ring of polynomials $\k[x, y]$. The weight lattice $\Lambda_X$ is $\z$ with the semigroup $\Lambda_X^+ = \z_{\geq 0}$. We note that in this case the horospherical
contraction $A_{\hc}$ is just $A$ itself. 
As the total order $\succ$ we take the reverse of the natural ordering on $\z$. Then $(\z_{\geq 0}, \succ)$ is maximum well-ordered. The associated graded $\gr_\succ(A)$ is then 
naturally isomorphic to $A$. We have the isotypic decomposition:
$$A = \bigoplus_{d = 0}^\infty A_d,$$ where $A_d = \k[x, y]_d$ is the vector space of homogeneous polynomials of degree $d$. 
If $f \in \k[x,y]$ is a polynomial of degree $d$ then $\In_\succ(f)$ is just $f_d$, the sum of monomials in $f$ of degree $d$. 
Now let us explain how to get a spherical Gr\"obner basis for an ideal in $\k[x, y]$.
Let $I \subset \k[x, y]$ be an ideal. Let $>$ denote some lexicographic order on $x$ and $y$ e.g. $x > y$. Also let $w = (1, 1)$ with $>_w$ the corresponding total order on $\z_{\geq 0}^2$.
That is, $(a_1, a_2) >_w (b_1, b_2)$ if $a_1+a_2 > b_1+b_2$, or $a_1+a_2 = b_1+b_2$ and $(a_1, a_2) > (b_1, b_2)$. One verifies that a Gr\"obner basis (in the usual sense) 
for $I$ with respect to $>_w$ is also a spherical Gr\"obner basis for $I$ with respect to $\succ$.  
\end{Ex}

Similarly to the usual Gr\"obner theory we can prove the key statement that an ideal has only finitely many initial ideals.
\begin{Th} \label{th-initial-ideal-finite}
Every ideal $I$ has a finite number of initial ideals, where for each total order $\succ$ we consider the initial ideal $\In_\succ(I)$ as an ideal in 
the algebra $A_{\hc}$ via the natural isomorphism $\gr_\succ(A) \cong A_{\hc}$. 
\end{Th}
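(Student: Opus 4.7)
The plan is to emulate the classical Sturmfels proof (\cite[Theorem 1.2]{Sturmfels}), adapted to the spherical setting, via a branching argument whose termination rests on Noetherianity of the horospherical contraction $A_{\hc}$.

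First I would establish three structural facts. Since $X$ is quasi-affine spherical, $A = \k[X]$ is finitely generated, so the horospherical contraction $A_{\hc}$ (which shares the $G$-module structure of $A$) is itself a finitely generated, hence Noetherian, $\k$-algebra. Next, because $\succ$ refines the spherical dominant order $>_X$ (Assumption \ref{ass-total-order}(1)), the initial term $\In_\succ(f)$ of any nonzero $f \in I$ lies in a single isotypic component $W_\lambda$, so by Proposition \ref{prop-Lambda-A-homog-ideal} the initial ideal $\In_\succ(I)$ is $\Lambda_X^+$-homogeneous. Finally, multiplicity-freeness of $A$ makes each nonzero $W_\lambda$ an irreducible $G$-module, whence any $\Lambda_X^+$-homogeneous ideal in $A_{\hc}$ is uniquely determined by the subset $\{\lambda \in \Lambda_X^+ : W_\lambda \subset \In_\succ(I)\}$ of $\Lambda_X^+$.

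The finiteness then follows from a branching-tree argument modeled on the spherical Buchberger algorithm. Start with a finite generating set $f_1, \ldots, f_r$ of $I$; each $f_i$ has a finite isotypic decomposition $f_i = \sum_\lambda f_{i,\lambda}$, giving only finitely many candidates for $\In_\succ(f_i)$ as $\succ$ varies. I would build a tree whose nodes are partial spherical Gr\"obner bases: at each node, form analogues of $S$-polynomials using the Cartan multiplication in $A_{\hc}$, apply the spherical division algorithm (Proposition \ref{prop-sph-div-algo}) to reduce modulo the current partial basis, and branch over the finitely many possible leading isotypic components of each nonzero remainder. The tree is finitely branching at every node. Any infinite branch would yield a strictly ascending chain of $\Lambda_X^+$-homogeneous ideals in $A_{\hc}$, contradicting Noetherianity, so the tree is finite. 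Its leaves are in bijection with the distinct reduced spherical Gr\"obner bases (Definition \ref{def-reduced-sph-Grobner-basis}), and hence with the distinct initial ideals.

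The main obstacle is controlling the tail contributions produced by the impure multiplication in $A$: if $f \in W_\lambda$ and $g \in W_\mu$, then $fg$ may acquire components in $W_\gamma$ for $\gamma >_X \lambda+\mu$ (Theorem \ref{th-coor-ring-multiplication-spherical}), in contrast with the polynomial case where the product of two monomials is a monomial. Fortunately these tails lie in $A_{\succ(\lambda+\mu)}$ and so project to zero in the associated graded $A_{\hc}$; the branching logic carried out in $A_{\hc}$ therefore mirrors the monomial case exactly. A secondary subtlety is that each step of the spherical division algorithm is more involved than the classical one (Remark \ref{rem-division-algo-involved}), but this does not affect finiteness: each successfully reduced nonzero remainder strictly enlarges the $\Lambda_X^+$-homogeneous ideal generated by the current leading terms, and Noetherianity of $A_{\hc}$ caps the length of every branch.
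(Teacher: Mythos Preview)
Your proposal takes a genuinely different route from the paper. The paper's proof is the direct contradiction argument of Mora--Robbiano/Sturmfels: assume infinitely many initial ideals, pick $f_1 \in I$, use pigeonhole to find a component $f_{1,\lambda_1}$ lying in infinitely many of them, set $J_1 = \langle f_{1,\lambda_1}\rangle \subset A_{\hc}$, then use the spherical division algorithm to produce $f_2 \in I$ none of whose components lie in $J_1$, and iterate to get a strictly ascending chain $J_1 \subsetneq J_2 \subsetneq \cdots$ in $A_{\hc}$, contradicting Noetherianity. No S-polynomials or Buchberger machinery are invoked. Your approach instead tries to enumerate all initial ideals constructively via a finitely-branching Buchberger tree.

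There are two problems with your proposal. First, the claim that a $\Lambda_X^+$-homogeneous ideal in $A_{\hc}$ is determined by the set $\{\lambda : W_\lambda \subset \In_\succ(I)\}$ is false: $\In_\succ(I)$ need not be $G$-stable, so $\In_\succ(I) \cap W_\lambda$ can be a proper nonzero subspace of the irreducible module $W_\lambda$. (Take $G=\SL(2)$, $A=\k[x,y]=A_{\hc}$, $I=\langle x\rangle$; then $\In_\succ(I)\cap W_1 = \k x \subsetneq W_1$.) This error is not fatal to your branching logic, since you branch over the finitely many nonzero components $f_\lambda$ of a fixed $f$, not over weights, but it should be removed. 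Second, and more seriously, your argument that the leaves of the tree are in bijection with the initial ideals presupposes a spherical Buchberger criterion: that once all S-polynomials reduce to zero, the current set is a Gr\"obner basis. The paper does not prove such a criterion (indeed Remark~\ref{rem-division-algo-involved} flags the Buchberger algorithm as ``more involved'' without carrying it out), and in the spherical setting it is not automatic, because the syzygies among homogeneous elements of the $W_\lambda$ are not obviously generated by pairwise ones as in the monomial case. Without this criterion you cannot conclude that every term order $\succ$ corresponds to a leaf of your tree, so the surjection from leaves onto initial ideals is unjustified. The paper's pigeonhole argument sidesteps this entirely.
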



\begin{proof}[Proof of Theorem \ref{th-initial-ideal-finite}]
We adapt the proof in \cite[Lemma 2.6]{Mora-Robbiano} as well as \cite[Theorem 1.2]{Sturmfels}  to our situation. 
By contradiction suppose the set $\Sigma_0$ of all the distinct initial ideals of $I$ is infinite. 
Choose a nonzero $f_1 \in I$ and write $f_1 = \sum_\lambda f_{1, \lambda}$ with $0 \neq f_{1, \lambda} \in W_\lambda$. Among the $f_{1, \lambda}$ we can then find a 
$0 \neq f_{1, \lambda_1}$ such that the set $\Sigma_1 = \{ \In_\succ(I) \mid f_{1, \lambda_1} \in \In_\succ(I) \}$ is infinite. Consider the ideal 
$J_1 = \langle f_{1, \lambda_1} \rangle \subset A_{\hc}$. Since there are infinitely many distinct ideals in $\Sigma_1$ clearly one of them strictly contains $J_1$.  
Let $\succ$ be such that $\In_\succ(I)$ strictly contains $J_1$. 
Since both of these ideals are $\Lambda_X$-homogeneous there is a homogeneous element 
$f_\mu \in \In_\succ(I) \setminus J_1$. From the definition of an initial ideal we can find $f_2 \in I$ with $\In_\succ(f_2) = f_\mu$. 
Let us write $f_2 = \sum_\lambda f_{2, \lambda}$. {By repeatedly eliminating the components that lie in $J_1$  (as in the proof the spherical division algorithm), 
we can assume that none of the $0 \neq f_{2, \lambda}$ lies in $J_1$.} 
Thus we can find a nonzero $f_2 = \sum_\lambda f_{2, \lambda} \in I$  
with the property that none of the $0 \neq f_{2, \lambda}$ lies in $J_1$.
Then among the $f_{2, \lambda}$ there exists $f_{2, \lambda_2}$ such that the set $\Sigma_2 = \{ \In_\succ(I) \in \Sigma_1 \mid f_{2, \lambda_2} \in \In_\succ(I) \}$ is 
infinite. Next let $J_2 = \langle f_{1, \lambda_1}, f_{2, \lambda_2} \rangle \subset A_{\hc}$. Again there exists an initial ideal in $\Sigma_2$ which strictly contains $J_2$. Repeating 
the above argument we see that there exists $0 \neq f_3 \in I$ such that none of its components lies in $J_2$ and so on. Continuing we arrive at an increasing chain of 
ideals $J_1 \subsetneqq J_2 \subsetneqq \cdots$ in $A_{\hc}$. This contradicts that $A_{\hc}$ is Noetherian and the theorem is proved.
\end{proof}

\begin{Def}[Universal spherical Gr\"obner basis] \label{def-universal-Grobner-basis}
Let $I$ be an ideal in $A$. We say that $\G \subset A$ is a {\it universal spherical Gr\"obner basis} for $I$
if for any total order $\succ$ the set $\{\In_\succ(f) \mid f \in \G\}$ generates the initial ideal $\In_\succ(I) \in \gr_\succ(A)$. 
\end{Def}

\begin{Cor}[Existence of a finite universal spherical Gr\"obner basis] \label{cor-universal-Grobner-basis}
There exists a finite universal spherical G\"obner basis.
\end{Cor}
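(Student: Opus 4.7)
The plan is to combine the finiteness of initial ideals (Theorem \ref{th-initial-ideal-finite}) with the existence of finite reduced spherical Gr\"obner bases. First I would enumerate the finitely many distinct initial ideals $J_1, \ldots, J_k$ of $I$, viewed as ideals in $A_{\hc}$. For each $i$ I pick a total order $\succ_i$ satisfying Assumption \ref{ass-total-order} with $\In_{\succ_i}(I) = J_i$, and a finite reduced spherical Gr\"obner basis $\G_i$ for $I$ with respect to $\succ_i$; the existence of such a basis is promised in the remark following Definition \ref{def-reduced-sph-Grobner-basis}, and finiteness is automatic since $A_{\hc}$ is Noetherian. The candidate universal basis is $\G = \bigcup_{i=1}^k \G_i \subseteq I$, manifestly finite.

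The heart of the argument is to show that for every admissible total order $\succ$, the set $\{\In_\succ(g) \mid g \in \G\}$ generates $\In_\succ(I)$. Since $\In_\succ(I) = J_i$ for some $i$, it suffices to verify that the single piece $\G_i$ already serves as a spherical Gr\"obner basis with respect to $\succ$, not merely with respect to the originally chosen $\succ_i$. The key technical claim I will establish is: whenever $\In_\succ(I) = \In_{\succ_i}(I)$, the initial terms actually agree, namely $\In_\succ(f) = \In_{\succ_i}(f)$ for every $f \in \G_i$.

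To establish this claim I would write $f = \sum_\lambda f_\lambda$ in its isotypic decomposition and let $\mu_i$ (respectively $\mu$) denote the $\succ_i$-minimum (respectively $\succ$-minimum) of the support $\{\lambda \mid f_\lambda \neq 0\}$. Reducedness of $\G_i$ with respect to $\succ_i$ says that the only nonzero isotypic component of $f$ lying in $J_i = \In_{\succ_i}(I)$ is $f_{\mu_i}$; on the other hand $f \in I$ forces $\In_\succ(f) = f_\mu$ to lie in $\In_\succ(I) = J_i$. Together these two observations force $f_\mu = f_{\mu_i}$, hence $\mu = \mu_i$ and $\In_\succ(f) = \In_{\succ_i}(f)$. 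Then $\{\In_\succ(f) \mid f \in \G_i\}$ coincides with $\{\In_{\succ_i}(f) \mid f \in \G_i\}$, which generates $J_i = \In_\succ(I)$, and so $\G \supseteq \G_i$ does as well. The main (and minor) obstacle I anticipate is simply recognizing that reducedness is genuinely essential here: without it the $\succ$-minimum isotypic component of some $f \in \G_i$ could be a different $f_\lambda$ lying outside $J_i$, and the generating property would collapse. Everything else is a straightforward packaging of ingredients already established in the paper.
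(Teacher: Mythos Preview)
Your argument is correct and follows the same route the paper intends: union together Gr\"obner bases over the finitely many initial ideals. The paper's proof is a bare one-liner (``follows immediately from the finiteness of the number of initial ideals''), so you have in fact supplied the detail it omits---namely, that reducedness forces $\In_\succ(f) = \In_{\succ_i}(f)$ whenever $\In_\succ(I) = \In_{\succ_i}(I)$, which is exactly what is needed to transfer the Gr\"obner property across term orders sharing the same initial ideal.
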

\begin{proof}
The corollary follows immediately from the finiteness of the number of initial ideals (Theorem \ref{th-initial-ideal-finite}).
\end{proof}

\subsection{Partial horospherical contractions associated to faces of valuation cone} \label{subsec-partial-horo}
As usual $X$ is a quasi-affine spherical $G$-variety with $A=\k[X]$. 
Let $v \in \V_X$ be a $G$-invariant valuation. The valuation $v$ gives rise to a filtration on the algebra $A$ defined as follows. For every $a \in \q$ put:
$$A_{v \geq a} = \{ f \in A \mid v(f) \geq a \}.$$
We note that:
\begin{equation} \label{equ-A_v}
A_{v \geq a} = \bigoplus_{\langle v, \gamma \rangle \geq a} W_\gamma.
\end{equation}
The subspace $A_{v > a}$ is defined similarly.
The associated graded algebra of $v$ is defined to be:
$$\gr_v(A) = \bigoplus_{a \in \q} A_{v \geq a} / A_{v > a}.$$
Since $v$ is $G$-invariant, each subspace in the filtration is $G$-stable and the algebra $\gr_v(A)$ is naturally a $G$-algebra.
For each $f$ with $v(f) = a$ let $\In_v(f)$ denote the image of $f$ in the quotient space 
$A_{v \geq a} / A_{v > a}$. 

Below we show that the graded algebra $\gr_v(A)$ only depends on the face of the valuation cone on which $v$ lies.

As before let $\beta_1, \ldots, \beta_\ell \in \Lambda_X$ denote the simple spherical roots for $X$. Since the valuation cone $\V_X$ is 
simplicial, there is a one-to-one correspondence between the subsets of the simple roots and the faces of $\V_X$. A subset $S \subset 
\{\beta_1, \ldots, \beta_\ell\}$ determines a face $\sigma$ by:
\begin{equation} \label{equ-face-sigma}
\sigma = \{ v \in \V_X \mid \langle v, \beta \rangle = 0, \forall \beta \in S \}.
\end{equation}
\begin{Def} \label{def-partial-order-corr-face}
To a face $\sigma$ of the valuation cone $\V_X$ we can also associate a partial order $>_\sigma$ which is weaker than the spherical dominant 
order $>_X$. For $\lambda, \mu \in \Lambda_X$ we say that $\lambda >_\sigma \mu$ if $\mu - \lambda = \sum_{i} c_i \beta_i$ where the $c_i$ are nonnegative integers 
and at least for one $\beta_i \notin S$ we have $c_i \neq 0$.
\end{Def}

The partial order $>_\sigma$ in turn gives rise to a partial horospherical contraction of $A$. More precisely, let 
$F_{\sigma, \bullet}$ be the $\Lambda_X^+$-filtration on $A$ defined as follows. For $\lambda \in \Lambda_X^+$ put:
$$F_{\sigma, \lambda} = \bigoplus_{\gamma \geq_\sigma \lambda} W_\gamma.$$ We denote the associated graded of the filtration 
$F_{\sigma, \bullet}$ by $\gr_\sigma(A)$ or $A_\sigma$. Clearly, if the face $\sigma$ is the whole valuation cone $\V_X$ then the associated graded $\gr_\sigma(A)$ is just the horospherical degeneration $A_{\hc}$.

\begin{Prop} \label{prop-partial-horo-degeneration-gr_v}
Let $v \in \V_X$ be a $G$-invariant valuation. Suppose $v$ lies in the relative interior of a face $\sigma$ of $\V$. Then the graded algebra 
$\gr_v(A)$ is naturally isomorphic to the partial horospherical contraction $\gr_\sigma(A)$.
\end{Prop}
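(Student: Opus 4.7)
The plan is to show both $\gr_v(A)$ and $\gr_\sigma(A)$ have $\bigoplus_\lambda W_\lambda$ as underlying $G$-module and then that the two multiplications coincide. First I would use Proposition \ref{prop-inv-val-min-isotypic}(2), together with its immediate consequence (\ref{equ-A_v}), that every filtration piece $A_{v \geq a}$ is a direct sum of isotypic components $W_\gamma$, so the quotients $A_{v \geq a}/A_{v > a}$ naturally collect the $W_\gamma$ with $\langle v, \gamma \rangle = a$. This yields a canonical $G$-equivariant identification $\gr_v(A) \cong \bigoplus_\gamma W_\gamma$, and the analogous statement for $\gr_\sigma(A)$ follows directly from the definition of $F_{\sigma, \bullet}$. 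Hence only the multiplications need to be compared.

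To compare the multiplications I would exploit that $v \in \textup{relint}(\sigma)$ means, by (\ref{equ-face-sigma}) and the description of $\sigma$ via $S \subset \{\beta_1, \ldots, \beta_\ell\}$, that $\langle v, \beta_i \rangle = 0$ exactly when $\beta_i \in S$ while $\langle v, \beta_i \rangle < 0$ for $\beta_i \notin S$. Given $f \in W_\lambda$ and $g \in W_\mu$, Theorem \ref{th-coor-ring-multiplication-spherical} expands $fg = \sum_\gamma (fg)_\gamma$ with each occurring $\gamma$ satisfying $(\lambda+\mu) - \gamma = \sum_i c_i \beta_i$ for some $c_i \in \z_{\geq 0}$. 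In $\gr_v(A)$ a component $(fg)_\gamma$ survives iff $\langle v, \gamma \rangle = \langle v, \lambda + \mu \rangle$, i.e.\ $\sum_i c_i \langle v, \beta_i \rangle = 0$; by the sign computation this happens iff $c_i = 0$ for every $\beta_i \notin S$. On the other hand, by Definition \ref{def-partial-order-corr-face}, $\gamma >_\sigma \lambda + \mu$ requires at least one $c_i > 0$ with $\beta_i \notin S$, so the image of $(fg)_\gamma$ in the $(\lambda+\mu)$-graded piece of $\gr_\sigma(A)$ is nonzero iff $c_i = 0$ for every $\beta_i \notin S$. The two surviving sets coincide, so the products in $\gr_v(A)$ and $\gr_\sigma(A)$ are identical term by term.

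The main (mild) obstacle is making the ``associated graded'' of the poset-indexed filtration $F_{\sigma, \bullet}$ unambiguous; I would handle this by noting that since each filtration piece is a direct sum of pairwise non-isomorphic isotypic components $W_\gamma$, the $\lambda$-graded piece is canonically identified with $W_\lambda$ and the comparison above can be applied componentwise. Combining these observations yields the desired $G$-algebra isomorphism $\gr_v(A) \cong \gr_\sigma(A)$, which in particular shows that $\gr_v(A)$ depends only on the face $\sigma$ containing $v$ in its relative interior.
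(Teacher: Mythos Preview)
Your proposal is correct and follows essentially the same approach as the paper: both arguments identify the underlying $G$-module with $\bigoplus_\lambda W_\lambda$ and then compare the multiplications by writing $(\lambda+\mu)-\gamma=\sum_i c_i\beta_i$ and checking that a component $(fg)_\gamma$ is killed in $\gr_v(A)$ iff some $c_i>0$ with $\beta_i\notin S$, which is exactly the condition $\gamma>_\sigma\lambda+\mu$. If anything, you are slightly more careful than the paper in spelling out the $G$-module identification via Proposition~\ref{prop-inv-val-min-isotypic} and in noting the poset-filtration subtlety for $\gr_\sigma(A)$.
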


\begin{Rem} \label{rem-geo-realize-partial-horo}
There is a geometric interpretation of $\Spec(A_\sigma)$ as follows. Let $v$ be in the relative interior of $\sigma$ and let $X_v$ be the elementary spherical embedding corresponding to the fan consisting of the single ray generated by $v$. Let $D_v$ denote the 
divisor at infinity in $X_v$. It is the unique closed $G$-orbit in $X_v$. Then the deformation $\Spec(A_\sigma)$ contains the normal bundle in $X_v$ of the divisor $D_v$ as an open subset. Following V. Batyrev we refer to the stabilizer of a general point in this normal bundle as a {\it satellite subgroup} associated to the face $\sigma$ (\cite{Batyrev-Moreau}).
Moreover, one can glue together the varieties $\Spec(A_\sigma)$ in a family. Let $\mathcal{T}_X$ denote the tail cone of $X$ (which is the dual cone to $-\V_X$). Then one can define a family $\pi: \mathfrak{X} \to \Spec(\k[\mathcal{T}_X \cap \Lambda_X])$ 
such that the fibers are the partial horospherical contractions $\Spec(A_\sigma)$ (see \cite{AB-moduli}).
\end{Rem}

\begin{Rem} 
As far as we know, the material in this section (namely Definition \ref{def-partial-order-corr-face} and Proposition \ref{prop-partial-horo-degeneration-gr_v}) are not quite new and have been observed by other authors (see for example \cite{Avdeev-Cupit-Foutou}). 
\end{Rem}

\begin{proof}[Proof of Proposition \ref{prop-partial-horo-degeneration-gr_v}] 
For $\lambda, \mu \in \Lambda_X^+$ let $m_X$ denote the multiplication map
\begin{equation} \label{equ-m_X}
m_X: W_\lambda \otimes W_\mu \to \bigoplus_{\lambda + \mu - \eta \textup{ tail}} W_\eta.
\end{equation}
Recall that $\eta$ is a tail if $\eta \geq_X \lambda + \mu$. The map $m_X$ then gives a map $\gr_\sigma m_X$:
\begin{equation} \label{equ-gr-sigma-m_X}
\gr_\sigma m_X: W_\lambda \otimes W_\mu \to (\bigoplus_{\lambda + \mu - \eta \textup{ tail}} W_\eta) / (\bigoplus_{\eta' >_\sigma \lambda + \mu} W_{\eta'}),
\end{equation}
which defines the multiplication in the algebra $A_\sigma$.
Similarly, for a valuation $v \in \V_X$ we get a map $\gr_v m_X$:
\begin{equation} \label{equ-gr-v-m_X}
\gr_v m_X: W_\lambda \otimes W_\mu \to 
(\bigoplus_{\lambda + \mu - \eta \textup{ tail}} W_\eta) / (\bigoplus_{\langle v, \eta' \rangle > \langle v, \lambda + \mu \rangle} 
W_{\eta'}),
\end{equation}
which defines the multiplication in the algebra $\gr_v(A)$. We would like to show that these two multiplications coincide. As usual 
let $\{ \beta_1, \ldots, \beta_\ell\}$ denote the set of spherical roots. Let $S \subset \{ \beta_1, \ldots, \beta_\ell\}$ be the subset of spherical roots determining the 
face $\sigma$ as in \eqref{equ-face-sigma}.
Then the relative interior $\sigma^\circ$ of $\sigma$ is defined by the inequalities:
$$\sigma^\circ = \{ v \in \V_X \mid \langle v, \beta \rangle = 0, ~\forall \beta \in S \textup{ and } ~ \langle v, \beta' \rangle > 0, ~\forall \beta' \notin S \}.$$
Let $\eta$ be a weight appearing in the righthand side of \eqref{equ-m_X} which means $\lambda + \mu - \eta$ is a tail. 
Also take a valuation $v$ in the relative interior $\sigma^\circ$. Since $\lambda + \mu - \eta$ is a tail we can write $\lambda + \mu - \eta = \sum_i c_i \beta_i$ where 
$c_i \geq 0$ for all $i$. Then $\eta$ appears in the denominator in the righthand side of \eqref{equ-gr-v-m_X} if and only if: 
$$\langle v, \sum_i c_i \beta_i \rangle  = \langle v, \sum_{\beta_i \notin S} c_i \beta_i \rangle  = 
\sum_{\beta_i \notin S} c_i \langle v, \beta_i \rangle > 0.$$ Since 
$v$ is in the relative interior of $\sigma$ this is the case if and only if there is $\beta_i \notin S$ such that $c_i > 0$. 
That is, if and only if $\eta >_\sigma \lambda + \mu$. This finishes the proof.
\end{proof}

\subsection{Spherical Gr\"obner fan} \label{sec-spherical-Grobner-fan}
In this section we introduce a generalization of the notion of Gr\"obner fan of an ideal in a polynomial ring. As in the usual Gr\"obner theory it is more natural to work with homogeneous ideals. Thus, in this section we assume that $A$ is a $\z_{\geq 0}$-graded $G$-algebra and domain and the action of $G$ respects the grading. Moreover, each graded piece $A_i$ is a multiplicity free $G$-module. Thus $A$ is a multiplicity free $(\k^* \times G)$-algebra. We let $\V_A$ denote the $(\k^* \times G)$-invariant valuation cone of $A$. Similarly, the weight lattice $\Lambda_A$ and the weight semigroup $\Lambda_A^+$ are for the $(\k^* \times G)$-action on $A$.

We begin with introducing the notion of an initial ideal with respect to an invariant valuation $v$. In fact, for the next couple of definitions we do not need to assume that $A$ is graded and the definitions make sense in the non-graded case as well.

\begin{Def}[Initial ideal with respect to a valuation] \label{def-initial-ideal-v}
Let $I \subset A$ be an ideal. We denote by $\In_v(I)$ the ideal in $\gr_v(A)$ generated by 
all the $\In_v(f)$ where $f \in I$. 
By Proposition \ref{prop-partial-horo-degeneration-gr_v} we may consider $\In_v(I)$ as an ideal in $A_\sigma$ where $\sigma$ is the unique face of the valuation cone $\V_A$
such that $v$ lies in the relative interior of $\sigma$.
\end{Def}

{Let $v \in \V_A$ be an invariant valuation. Also let $\succ$ be a total order on the weight lattice $\Lambda_A$ as in Section \ref{subsec-sph-Grobner}. 
We define the total order $\succ_v$ as follows: $\lambda \succ_v \mu$ if either 
$\langle v, \lambda \rangle > \langle v, \mu \rangle$, or $\langle v, \lambda \rangle = \langle v, \mu \rangle$ and $\lambda \succ \mu$.} 

We note that if $v$ attains positive values on $A$ then the total order $\succ_v$ is not maximum well-ordered. By Proposition \ref{prop-inv-val-min-isotypic}
if $v$ has a positive value on $A$ then we can find $\lambda \in \Lambda_A^+$ and $f_\lambda \in W_\lambda$ such that $v(f_\lambda) > 0$, or in other words
$\langle v, \lambda \rangle > 0$. By the definition of $\succ_v$ we then have $\lambda \prec_v 2\lambda \prec_v 3\lambda \prec_v $. This shows that 
$\Lambda_A^+$ is not maximum well-ordered with respect to $\succ_v$.
One can verify the following.
\begin{Prop} \label{prop-nonneg-valuation}
With notation as above, the total order $\succ_v$ refines the spherical dominant order. 
Moreover, if $v$ is nonpositive on $A$ then $\Lambda_A^+$ is maximum well-ordered with respect to $\succ$. Thus $\succ_v$ satisfies the properties in
Assumption \ref{ass-total-order}. 
\end{Prop}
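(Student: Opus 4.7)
The plan is to verify the two conditions of Assumption \ref{ass-total-order} separately, both of them reducing essentially to the defining inequalities \eqref{equ-valuation-cone-inequ} of the valuation cone and the hypotheses already known about $\succ$ and $v$.

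For the refinement claim, I would take $\lambda, \mu \in \Lambda_A$ with $\lambda >_A \mu$, so that $\mu - \lambda = \sum_i c_i \beta_i$ with $c_i \in \z_{\geq 0}$ and the $\beta_i$ the spherical roots. By Theorem \ref{th-val-cone-simplicial} we have $\langle v, \beta_i \rangle \leq 0$ for each $i$, hence $\langle v, \mu \rangle \leq \langle v, \lambda \rangle$. If this inequality is strict, then $\lambda \succ_v \mu$ by the first clause in the definition of $\succ_v$. If equality holds, then because $\succ$ already refines $>_A$ by Assumption \ref{ass-total-order}(1) we get $\lambda \succ \mu$, and the second clause in the definition of $\succ_v$ again yields $\lambda \succ_v \mu$.

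For the maximum well-ordering claim, under the assumption $v \leq 0$ on $A$, I would proceed by contradiction. Suppose we had an infinite strictly $\succ_v$-ascending chain $\lambda_1 \prec_v \lambda_2 \prec_v \cdots$ in $\Lambda_A^+$. Then the scalars $\langle v, \lambda_i \rangle$ form a nondecreasing sequence bounded above by $0$. The key observation, which I expect to be the main subtlety, is that since $\Lambda_A$ is finitely generated, the image of the $\z$-linear map $\langle v, \cdot \rangle \colon \Lambda_A \to \q$ is a finitely generated subgroup of $\q$, hence cyclic, contained in $\tfrac{1}{N}\z$ for some positive integer $N$. A nondecreasing sequence in $\tfrac{1}{N}\z$ that is bounded above must therefore stabilize: there is an index $i_0$ with $\langle v, \lambda_i \rangle = \langle v, \lambda_{i_0} \rangle$ for all $i \geq i_0$. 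From that index onward, $\lambda_i \prec_v \lambda_{i+1}$ forces $\lambda_i \prec \lambda_{i+1}$ in the original order $\succ$, producing an infinite strictly $\succ$-ascending chain in $\Lambda_A^+$ and contradicting Assumption \ref{ass-total-order}(2) for $\succ$.

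The main obstacle is really this discreteness input: without exploiting that the image of $\langle v, \cdot \rangle$ sits in a cyclic subgroup of $\q$, a bounded nondecreasing sequence in $\q$ need not stabilize and the reduction to the original order $\succ$ collapses. Once stabilization is secured, the final sentence of the proposition, that $\succ_v$ satisfies both conditions of Assumption \ref{ass-total-order}, is simply the conjunction of the two verifications above.
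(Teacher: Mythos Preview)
Your argument is correct. The paper does not actually supply a proof of this proposition---it is introduced with ``One can verify the following'' and no argument is given---so your verification is precisely the intended exercise. Both halves are handled properly: for refinement you use the defining inequalities $\langle v,\beta_i\rangle\le 0$ of the valuation cone together with the assumption that $\succ$ already refines $>_A$; for maximum well-ordering you correctly identify the discreteness of the value group of $\langle v,\cdot\rangle$ on $\Lambda_A$ as the ingredient that forces the nondecreasing bounded sequence $\langle v,\lambda_i\rangle$ to stabilize, after which the contradiction with Assumption~\ref{ass-total-order}(2) for $\succ$ is immediate.

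One minor remark: the printed statement says ``maximum well-ordered with respect to $\succ$,'' which is evidently a typo for $\succ_v$ (since $\succ$ already satisfies this by hypothesis); you have silently and correctly read it that way.
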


Thus whenever we deal with a total order of the form $\succ_v$ we would like $v$ to be nonpositive on $A$.

\begin{Def} \label{def-equiv-valuation}
Let $I \subset A$ be an ideal. Given two invariant valuations 
$v, w \in \V_A$, we say $v \sim w$ if:
\begin{itemize}
\item[(i)] $v$ and $w$ lie in the relative interior of the same face $\sigma$ of the valuation cone $\V_A$.
\item[(ii)] $\In_v(I) = \In_w(I)$ regarded as subsets of $A_\sigma$ (under the isomorphisms $\gr_v(A) \cong A_\sigma \cong \gr_w(A)$ coming from
Proposition \ref{prop-partial-horo-degeneration-gr_v}). 
\end{itemize}
\end{Def}


The following lemma will be used below. We skip the proof. It is a straightforward from the definitions.
\begin{Lem} \label{lem-in_v-in_succ}
Let $v \in \V_A$ be a $G$-invariant valuation and $\succ$ a total order on $\Lambda_A$ as above. For any ideal $I$ we have:
$$\In_{\succ_v}(I) = \In_\succ(\In_v(I)),$$
where both sides are considered as ideals in the horospherical contraction $A_{\hc}$ of $A$.
\end{Lem}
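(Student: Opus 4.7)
The plan is to establish a pointwise identity $\In_{\succ_v}(F) = \In_\succ(\In_v(F))$ for every $F \in A$, then extend it from generators to all of $\In_v(I)$ by means of a lifting lemma for $v$-homogeneous elements. First, both sides make sense as ideals in $A_{\hc}$: the order $\succ$ refines $>_X$, which in turn refines the coarser order $>_\sigma$ attached to the face $\sigma$ containing $v$ in its relative interior, so $\In_\succ$ is defined on $A_\sigma \cong \gr_v(A)$ and $\gr_\succ(A_\sigma) \cong A_{\hc}$; similarly $\succ_v$ refines $>_X$ by Proposition \ref{prop-nonneg-valuation}, so $\gr_{\succ_v}(A) \cong A_{\hc}$. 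For $F = \sum_\lambda F_\lambda \in A$, I would argue that both $\In_{\succ_v}(F)$ and $\In_\succ(\In_v(F))$ equal $F_{\mu_0}$, where $\mu_0 \in \{\lambda \mid F_\lambda \neq 0\}$ is the weight minimizing $\langle v, \cdot \rangle$ first and $\succ$ second. Indeed, by Proposition \ref{prop-inv-val-min-isotypic}, $\In_v(F)$ is represented by $\sum_{\langle v, \lambda\rangle = v(F)} F_\lambda$, and $\In_\succ$ of this sum selects its $\succ$-minimum isotypic component, which is $F_{\mu_0}$ by the definition of $\succ_v$. Applied to $F \in I$, this immediately yields $\In_{\succ_v}(F) = \In_\succ(\In_v(F)) \in \In_\succ(\In_v(I))$, hence $\In_{\succ_v}(I) \subseteq \In_\succ(\In_v(I))$.

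The reverse inclusion requires more care because a general element $g \in \In_v(I)$ has the form $\sum_i h_i \In_v(f_i)$ in $\gr_v(A)$, not necessarily $\In_v(F)$ for a single $F \in I$. The key observation is that $\gr_v(A) = A_\sigma$ carries a natural $\q$-grading by $v$-value, and $\In_v(I)$ is homogeneous with respect to this grading, because each generator $\In_v(f)$ is $v$-homogeneous. I would then prove the following lifting lemma: \emph{every $v$-homogeneous $g \in \In_v(I)$ of $v$-degree $a$ equals $\In_v(F)$ for some $F \in I$.} Writing $g = \sum_i h_i \cdot \In_v(f_i)$ with each summand $v$-homogeneous of degree $a$, lift each $h_i$ to $\tilde h_i \in A$ with $v(\tilde h_i)$ equal to the $v$-degree of $h_i$ and $\In_v(\tilde h_i) = h_i$, and set $F = \sum_i \tilde h_i f_i \in I$. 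Multiplicativity of $v$ gives $v(\tilde h_i f_i) = a$ for each $i$, and the $v$-leading terms cannot cancel in $F$ because their sum in $\gr_v(A)$ equals $g \neq 0$; therefore $v(F) = a$ and $\In_v(F) = g$.

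To finish, take an arbitrary $g \in \In_v(I)$ with isotypic decomposition $g = \sum_\mu g_\mu$ in $A_\sigma$, and let $\mu_0$ be the $\succ$-minimum of $\{\mu \mid g_\mu \neq 0\}$, so $\In_\succ(g) = g_{\mu_0}$. The $v$-homogeneous component $g^{(\langle v, \mu_0 \rangle)} = \sum_{\langle v, \mu\rangle = \langle v, \mu_0 \rangle} g_\mu$ lies in $\In_v(I)$ by homogeneity of the ideal, and $\mu_0$ remains the $\succ$-minimum among its nonzero isotypic components. The lifting lemma then produces $F \in I$ with $\In_v(F) = g^{(\langle v, \mu_0 \rangle)}$, whence the pointwise identity gives $\In_{\succ_v}(F) = \In_\succ(\In_v(F)) = g_{\mu_0} = \In_\succ(g)$, so $\In_\succ(g) \in \In_{\succ_v}(I)$, completing the reverse inclusion. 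The main obstacle is the lifting lemma itself: without the $v$-homogeneity of $\In_v(I)$ combined with multiplicativity of $v$ to rule out cancellation of leading terms in $F = \sum_i \tilde h_i f_i$, there would be no direct way to pass from an arbitrary element of $\In_v(I)$ to a corresponding element of $I$ realizing the prescribed initial term.
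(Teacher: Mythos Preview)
Your proof is correct. The paper omits the proof entirely, stating only ``We skip the proof. It is a straightforward from the definitions,'' so there is no approach to compare against; your argument via the pointwise identity $\In_{\succ_v}(F) = \In_\succ(\In_v(F))$ together with the lifting lemma for $v$-homogeneous elements of $\In_v(I)$ is the natural way to make this precise, and all steps (in particular the no-cancellation argument in the lift $F = \sum_i \tilde h_i f_i$) are sound.
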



\begin{Rem}   \label{rem-graded-alg-Grobner-fan}
In general, the (closures of) equivalence classes of the equivalence relation $\sim$ on $\V_A$ may not be convex. But as we will see below, this is the case when the ideal $I$ is homogeneous with respect to the $\z_{\geq 0}$-grading of $A$.
\end{Rem}

From here on the assumption that $A$ is graded becomes important. 
Let $\deg: A \setminus \{0\} \to \z_{\geq 0}$ be the degree function of the grading of $A$. We note that the map $f \mapsto -\deg(f)$ is a $(\k^* \times G)$-invariant valuation on $A$. The following lemma is important.
\begin{Lem} \label{lem-nonpos-val-graded-alg}
Let $v \in \V_A$ be an invariant valuation. We have the following.
\begin{itemize}
\item[(a)] For sufficiently large $k$ the valuation $v' = v - k\deg$ is a nonpositive invariant valuation on $A$. (Here $v - k\deg$ is regarded as an element in the invariant valuation cone $\V_A$.) 
\item[(b)] The valuation $v'$ lies on the relative interior of the same face as $v$ and hence $\gr_{v'}(A)$ is naturally isomorphic to $\gr_v(A)$.
\item[(c)] Let $I \subset A$ be a homogeneous ideal. Then under the isomorphism in (b) we have $\In_{v'}(I) \cong \In_v(I)$.  
\end{itemize}
\end{Lem}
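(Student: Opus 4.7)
The plan is to handle the three parts in turn: (a) is a finite-generation argument, (b) is the main step, and (c) is an easy consequence of (b).

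For (a), observe that $A$ is finitely generated, so the weight semigroup $\Lambda_A^+$ is finitely generated, say by $\lambda_1,\ldots,\lambda_r$. By Proposition \ref{prop-inv-val-min-isotypic}, the requirement $v'(f)\le 0$ for all nonzero $f\in A$ reduces to $\langle v',\lambda\rangle\le 0$ for every $\lambda\in\Lambda_A^+$. Since both $\langle v,\cdot\rangle$ and $\deg$ are linear on $\Lambda_A$, it suffices to check the inequality $\langle v,\lambda_i\rangle\le k\,\deg(\lambda_i)$ on the generators. Any generator with $\deg(\lambda_i)=0$ lies in $A_0=\k$ and hence satisfies $\lambda_i=0$, for which there is nothing to check; for the finitely many remaining generators with $\deg(\lambda_i)>0$, any $k$ exceeding all the ratios $\langle v,\lambda_i\rangle/\deg(\lambda_i)$ works.

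For (b), the key observation is that $\langle\deg,\beta_i\rangle=0$ for every spherical root $\beta_i$ of the $(\k^*\times G)$-spherical variety $\Spec(A)$. This is because $\Spec(A)$ is the affine cone over the projective spherical $G$-variety $\overline{X}=\Proj(A)$: one has $\Lambda_A\cong\Lambda_{\overline X}\oplus\z\cdot\chi$, where $\chi$ is the $\k^*$-character of the grading, and the spherical roots of $\Spec(A)$ are those of $\overline X$ sitting in the first summand. Granted this, $\langle v',\beta_i\rangle=\langle v,\beta_i\rangle-k\langle\deg,\beta_i\rangle=\langle v,\beta_i\rangle$ for every $i$, so $v$ and $v'$ vanish on exactly the same subset of spherical roots and therefore lie in the relative interior of the same face $\sigma$ of $\V_A$. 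Proposition \ref{prop-partial-horo-degeneration-gr_v} applied to $v$ and $v'$ then yields natural isomorphisms $\gr_v(A)\cong\gr_\sigma(A)\cong\gr_{v'}(A)$.

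For (c), pick any homogeneous $f\in I$ of degree $d$, with isotypic decomposition $f=\sum_\lambda f_\lambda$; every nonzero $f_\lambda$ has $\deg(\lambda)=d$. Then $\langle v',\lambda\rangle=\langle v,\lambda\rangle-kd$ uniformly over the support of $f$, so the minimum of $\langle v',\cdot\rangle$ is achieved on exactly the same components as the minimum of $\langle v,\cdot\rangle$, and $\In_{v'}(f)$ and $\In_v(f)$ represent the same element of $\gr_\sigma(A)$ under the identifications from (b). Since $I$ is homogeneous it is generated by such $f$, so the ideals $\In_v(I)$ and $\In_{v'}(I)$ are generated by the same elements of $\gr_\sigma(A)$ and therefore coincide.

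The \emph{main obstacle} is the assertion in (b) that $\deg$ annihilates every spherical root, which really reflects the compatibility of the $\k^*$-grading with the $G$-spherical structure of $\Spec(A)$. If one wishes to avoid invoking the affine-cone geometry, an alternative route to the isomorphism $\gr_v(A)\cong\gr_{v'}(A)$ is direct: on each graded piece $A_d$ the filtrations by $v$ and $v'=v-k\deg$ differ only by the shift $-kd$, so the associated graded algebras are canonically identified graded piece by graded piece as $G$-algebras, which would suffice for (c) even without the ``same face'' statement.
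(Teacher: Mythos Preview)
Your argument is essentially correct and close in spirit to the paper's, but there are two points worth flagging.

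For (b), the paper simply asserts that $-\deg$ lies on a generating ray of the simplicial cone $\V_A$ and deduces the ``same face'' statement from that. Your formulation via $\langle\deg,\beta_i\rangle=0$ is equivalent and you supply an actual justification (the tails in the graded algebra $A$ all have $\z$-degree $0$, so the spherical roots do too). This is a genuine improvement over the paper's ``one verifies''. Your alternative direct argument at the end---that on each $A_d$ the filtrations by $v$ and $v'$ differ by a constant shift---is also valid and arguably the cleanest route.

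For (c), your approach is more elementary than the paper's: the paper invokes a universal spherical Gr\"obner basis consisting of $\z$-homogeneous elements and uses that $\In_v(\G)$ generates $\In_v(I)$. You bypass this, but your final sentence has a small gap. From ``$I$ is generated by homogeneous elements'' it does \emph{not} follow that $\In_v(I)$ is generated by the initial forms of those generators---that is precisely what fails without a Gr\"obner basis. What you need instead is that $\In_v(I)$ is generated by $\In_v(f)$ as $f$ ranges over \emph{all} homogeneous elements of $I$: for arbitrary $f\in I$, write $f=\sum_d f^{(d)}$ with each $f^{(d)}\in I$ (since $I$ is homogeneous), and observe that $\In_v(f)=\sum_{v(f^{(d)})=v(f)}\In_v(f^{(d)})$ already lies in the ideal generated by initial forms of homogeneous elements. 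Once this is said, your conclusion goes through. The paper's Gr\"obner-basis argument is a heavier way to reach the same endpoint.

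A minor point in (a): you assume $A_0=\k$. In the paper's intended application ($A$ the section ring of an ample line bundle on a projective variety) this holds, but it is not part of the hypotheses of the lemma as stated; the paper's own proof sidesteps the issue by working with $B$-eigenvectors whose weights generate the lattice $\Lambda_A$ rather than the semigroup $\Lambda_A^+$.
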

\begin{proof}
{One verifies that the valuation $-\deg$ lies on a generating ray of the simplicial cone $\V_A$.} 
Thus we see that 
$v$ and $v'$ lie on the relative interior of the same face of the cone. Let $h_1, \ldots, h_\ell$ be $(B \times \k^*)$-eigenvectors in $A$ 
whose weights generates the lattice $\Lambda_A$. By Proposition 
\ref{prop-inv-val-min-isotypic} 
every invariant valuation is uniquely determined by its values on the $h_i$. Now we can choose $k$ sufficiently large so that $v'(h_i) \leq 0$ for all $i=1, \ldots, \ell$. It follows that $v'$ is 
nonpositive on the whole $A$. It remains to show that $I$ has the same initial ideals with respect to $v$ and $v'$. To do this we notice that since $I$ is a $\z$-homogeneous ideal we can find a 
universal spherical Gr\"obner basis $\G = \{f_1, \ldots, f_s\}$ for $I$ consisting of $\z$-homogeneous elements, i.e. $f_i \in A_{d_i}$ for some $d_i \geq 0$. It follows from the 
construction of $v'$ that $\In_{v}(f_i) = \In_{v'}(f_i)$, for all $i$. On the other hand, since $\G$ is a spherical Gr\"obner basis we know that $\In_{v}(\G)$ and $\In_{v'}(\G)$ generate the initial ideals
$\In_v(I)$ and $\In_{v'}(I)$ respectively. Thus $\In_v(I) = \In_{v'}(I)$ as required. 
\end{proof}


We can now define the notion of spherical Gr\"obner fan of a homogeneous ideal $I \subset A$.
\begin{Def}[Spherical Gr\"obner fan] \label{def-Grobner-fan}
We call the set of closures of equivalence classes of $\sim$ in $\V_A$,
the {\it spherical Gr\"obner fan of $I$} and denote it by $\textup{GF}(I)$.   
\end{Def}
Below we will show that $\textup{GF}(I)$ is indeed a fan.

\begin{Th}
The equivalence classes of $\sim$ are relatively open rational polyhedral convex cones. 
\end{Th}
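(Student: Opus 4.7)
The plan is to adapt the classical proof of the existence of the Gröbner fan (see \cite[Proposition 2.4]{Sturmfels}, \cite{Mora-Robbiano}) to the spherical graded setting, assembling the ingredients developed in this section: the finiteness of initial ideals (Theorem \ref{th-initial-ideal-finite}), the existence of a finite universal spherical Gröbner basis $\G$ consisting of $\z$-homogeneous elements (Corollary \ref{cor-universal-Grobner-basis} applied to the homogeneous ideal $I$), the face-by-face identification $\gr_v(A) \cong A_\sigma$ for $v \in \sigma^\circ$ (Proposition \ref{prop-partial-horo-degeneration-gr_v}), the description of $v$ on isotypic components via the pairing $\langle v, \lambda\rangle$ (Proposition \ref{prop-inv-val-min-isotypic}), and the shift $v \mapsto v - k\deg$ which converts an arbitrary $v \in \V_A$ into a nonpositive valuation without changing the initial ideal of a homogeneous $I$ (Lemma \ref{lem-nonpos-val-graded-alg}).

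First I would work face by face. Fixing a face $\sigma$ of $\V_A$, all of the algebras $\gr_v(A)$ with $v \in \sigma^\circ$ are canonically identified with $A_\sigma$, so within $\sigma^\circ$ the relation $\sim$ reduces to equality of initial ideals in $A_\sigma$. Since there are finitely many faces and finitely many initial ideals (Theorem \ref{th-initial-ideal-finite}), it suffices to show that for each pair $(\sigma, J)$ the set $C_{\sigma,J} = \{v \in \sigma^\circ \mid \In_v(I) = J\}$ is a relatively open rational polyhedral cone.

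Next I would pick $v_0 \in C_{\sigma, J}$, replace it by a nonpositive representative using Lemma \ref{lem-nonpos-val-graded-alg} (so that $\succ_{v_0}$ satisfies Assumption \ref{ass-total-order}), and fix a reduced spherical Gröbner basis $\G = \{g_1, \ldots, g_s\}$ of $I$ with respect to $\succ_{v_0}$ chosen to consist of $\z$-homogeneous elements. Writing the isotypic decomposition $g_i = \sum_{\lambda \in L_i} g_{i,\lambda}$ and letting $N_i = \argmin_{\lambda \in L_i}\langle v_0, \lambda\rangle$, Proposition \ref{prop-inv-val-min-isotypic} yields $\In_{v_0}(g_i) = \sum_{\lambda \in N_i} g_{i,\lambda}$. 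The candidate for $C_{\sigma, J}$ is the cone
\begin{equation*}
\mathcal{C} = \bigl\{ v \in \sigma^\circ \bigm| \langle v, \lambda\rangle = \langle v, \lambda'\rangle \text{ for } \lambda,\lambda' \in N_i,\; \langle v, \lambda\rangle < \langle v, \nu\rangle \text{ for } \lambda \in N_i,\; \nu \in L_i\setminus N_i,\; 1 \le i \le s \bigr\},
\end{equation*}
which together with the defining equalities/strict inequalities of $\sigma^\circ$ is manifestly a relatively open rational polyhedral cone containing $v_0$. The inclusion $\mathcal{C} \subset C_{\sigma, J}$ is the easy half: for $v \in \mathcal{C}$ one has $\In_v(g_i) = \In_{v_0}(g_i)$ for every $i$, and since $\G$ is a universal Gröbner basis (Corollary \ref{cor-universal-Grobner-basis}) the ideal generated by these common initial forms in $A_\sigma$ is exactly $J$.

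The main obstacle, and where I expect the technical weight to lie, is the reverse inclusion $C_{\sigma, J} \subset \mathcal{C}$, i.e.\ the analogue of the classical fact that a reduced Gröbner basis is uniquely determined by its initial ideal together with the weight data. My plan is to combine (i) Lemma \ref{lem-in_v-in_succ}, which gives $\In_{\succ_v}(I) = \In_\succ(\In_v(I))$ and hence transfers equality of valuation initial ideals into equality of refined total-order initial ideals; (ii) the uniqueness statement for reduced spherical Gröbner bases implicit in Definition \ref{def-reduced-sph-Grobner-basis}; and (iii) the spherical division algorithm (Proposition \ref{prop-sph-div-algo}) to show that for every $v \in C_{\sigma, J}$ the same $\G$ remains a reduced Gröbner basis for $I$ with respect to $\succ_v$, which forces the argmin sets to be the $N_i$ and hence $v \in \mathcal{C}$. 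The delicate point is to rule out that some higher isotypic component of a $g_i$ silently enters or leaves $\In_v(g_i)$ as $v$ varies inside $C_{\sigma, J}$; handling this requires using that each such component, by the reducedness condition, cannot already lie in $J$, and that the multiplication in $A_\sigma$ only raises weights in the $>_\sigma$ order (Theorem \ref{th-coor-ring-multiplication-spherical}).
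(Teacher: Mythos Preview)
Your approach is essentially the paper's: describe the equivalence class $C[v_0]$ as the set of $v\in\sigma^\circ$ on which the initial forms $\In_v(g_i)$ of a reduced spherical Gr\"obner basis $\G$ (with respect to $\succ_{v_0}$) agree with $\In_{v_0}(g_i)$, observe this is cut out by rational linear equalities and strict inequalities, and verify both inclusions. Your hard direction ($C_{\sigma,J}\subset\mathcal{C}$) is sketched correctly and matches the paper's argument via the reducedness of $\G$ and the division algorithm.

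There is, however, a genuine slip in your easy direction. You assert that ``since $\G$ is a universal Gr\"obner basis (Corollary \ref{cor-universal-Grobner-basis}) the ideal generated by these common initial forms in $A_\sigma$ is exactly $J$.'' But you chose $\G$ to be a \emph{reduced} Gr\"obner basis with respect to the single order $\succ_{v_0}$; Corollary \ref{cor-universal-Grobner-basis} only says \emph{some} finite universal basis exists, not that your $\G$ is one. In general a reduced basis for one order is not universal, so you cannot immediately conclude that $\langle\In_v(g_i)\rangle=\In_v(I)$ for arbitrary $v\in\mathcal{C}$. The paper circumvents this: from $\G$ being a Gr\"obner basis for $I$ with respect to $\succ_{v_0}$, Lemma \ref{lem-in_v-in_succ} shows $\In_{v_0}(\G)$ is a Gr\"obner basis for $\In_{v_0}(I)$ in $A_\sigma$, hence $\In_{v_0}(I)=\langle\In_{v_0}(g_i)\rangle=\langle\In_{v}(g_i)\rangle\subset\In_{v}(I)$ for $v\in\mathcal{C}$. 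This gives only a containment, which is then upgraded to equality by a Hilbert-function comparison: $\In_{\succ_{v_0}}(I)\subset\In_{\succ_{v}}(I)$ as ideals in $A_{\hc}$, and both quotients $A_{\hc}/\In_{\succ_{v_0}}(I)$ and $A_{\hc}/\In_{\succ_{v}}(I)$ are isomorphic (as graded vector spaces) to $A/I$, so the containment cannot be strict. Replacing your universality claim with this containment-plus-dimension argument fixes the gap.
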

\begin{proof}
The proof follows the usual Gr\"obner theory proof from \cite[Proposition 2.3]{Sturmfels}.
Let $\sigma$ be a face of the simplicial cone $\V_A$ and let $C[v]$ denote the equivalence class of a valuation 
$v \in \V_A$ which lies in the relative interior of $\sigma$. Fix a total order $\succ$. By Lemma \ref{lem-nonpos-val-graded-alg} we can assume 
that $v$ is nonpositive on $A$ and hence the associated total order $\succ_v$ has the good properties in Assumption \ref{ass-total-order}. 
Take a reduced spherical Gr\"obner basis $\G$ for $I$ with respect to $\succ_v$ 
(Definition \ref{def-reduced-sph-Grobner-basis}). We will prove the following:
\begin{equation} \label{equ-C[v]}
C[v] = \{ v' \in \sigma \mid \In_v(f) = \In_{v'}(f), ~\forall f \in \G \}.
\end{equation}
We note that the righthand side of \eqref{equ-C[v]} defines a relatively open polyhedral convex cone 
and hence it proves the claim of the theorem.
First we prove the ``$\supset$'' part. Let $v'$ lie in the righthand side of \eqref{equ-C[v]}. We observe that the image of $\G$ in $\gr_v(A)$ is a spherical Gr\"obner basis (with respect to $\succ_v$) 
for the ideal $\In_v(I)$. Thus $$\In_v(I) = \langle \In_v(f) \mid f \in \G \rangle = \langle \In_{v'}(f) \mid f \in \G \rangle \subset \In_{v'}(I).$$
It follows that $\In_{\succ_v}(I) \subset \In_{\succ_{v'}}(I)$ as ideals in $A_{\hc}$. {Since both quotients $A_{\hc} / \In_{\succ_v}(I)$ and $A_{\hc}/ \In_{\succ_{v'}}(I)$ are isomorphic to $A/I$, 
this containment cannot be strict and hence 
$\In_{\succ_v}(I) = \In_{\succ_{v'}}(I)$.} From Proposition \ref{prop-sph-Grobner-basis-generate-ideal} applied to the ideals $\In_v(I) \subset \gr_v(A) = A_\sigma$ and 
$\In_{v'}(I) \subset \gr_{v'}(A) = A_\sigma$, we then conclude that $\In_v(I) = \In_{v'}(I)$. Next we prove the inclusion ``$\subset$''. Take $v' \in C[v]$. Then $\In_v(I) = \In_{v'}(I)$.
We would like to show that for any $f \in \G$ we have $\In_v(f) = \In_{v'}(f)$. As said above the image $\In_v(\G)$ of $\G$ in $\gr_v(A)$ is a spherical Gr\"obner basis (with respect to $\succ_v$) for 
$\In_v(I) = \In_{v'}(I)$. Take $f \in \G$. Thus $\In_{v'}(f) \in \In_v(I)$ can be reduced to $0$ using $\In_v(\G)$. Since $\G$ was a reduced spherical Gr\"obner basis, the isotypic component of 
$f$ that appears in $\In_{v'}(f)$ is $\In_{\succ_v}(f)$. Let us write $\In_v(f) = \In_{\succ_v}(f) + h$ and $\In_{v'}(f) = \In_{\succ_v}(f) + h'$ 
where none of the isotypic components of $h$ and $h'$ ly in $\In_{\succ_v}(I)$. On the other hand, after the first step of reducing $\In_v(f)$ to $0$ using $\In_v(\G)$ we obtain $h' - h$ which lies in 
$\In_v(I)$. This shows that $h' - h$ must be equal to $0$ and hence $\In_v(f) = \In_{v'}(f)$ as required.
\end{proof}

Finally we show that the spherical Gr\"obner fan is indeed a fan. We first need the following analogue of the notion of a Newton polytope.
\begin{Def}[Generalized Newton polytope] \label{def-general-Newton-polytope}
Let $f \in A$ and write $f = \sum_\lambda f_\lambda$ as a sum of its isotypic components. We define the convex polytope $\Delta(f) \subset \V_A$ to be the convex hull of the 
support of $f$, that is:
$$\Delta(f) = \conv\{ \lambda \mid f_\lambda \neq 0\}.$$
\end{Def}

\begin{Rem} \label{rem-KKh-horosph}
The polytope $\Delta(f)$ appears in \cite{KKh-horosph} where it is used to prove a version of the Bernstein-Kushnirenko theorem from toric geometry for horospherical varieties.
\end{Rem}

\begin{Th} \label{th-sph-Grobner-fan}
The spherical Gr\"obner fan is a fan.
\end{Th}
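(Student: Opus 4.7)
The plan is to deduce the fan axioms from equation~\eqref{equ-C[v]} of the preceding theorem, interpreted in terms of the generalized Newton polytopes $\Delta(f)$ of Definition~\ref{def-general-Newton-polytope}. The key observation is that for any $f = \sum_\lambda f_\lambda \in A$ and any $v' \in \V_A$, Proposition~\ref{prop-inv-val-min-isotypic} gives $v'(f_\lambda) = \langle v', \lambda \rangle$, hence
\[
\In_{v'}(f) \;=\; \sum_{\lambda \in F(v',f)} f_\lambda, \qquad F(v',f) \;=\; \{\lambda \in \supp(f) \mid \langle v', \lambda \rangle \text{ is minimized on } \supp(f)\}.
\]
Moreover, since distinct isotypic components $W_\lambda$ remain linearly independent in the associated graded $\gr_{v'}(A)$, we have $\In_v(f) = \In_{v'}(f)$ (regarded via Proposition~\ref{prop-partial-horo-degeneration-gr_v} as elements of a common $A_\sigma$) if and only if $F(v,f) = F(v',f)$. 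Thus the locus of $v'$ sharing a given initial form is the relative interior of a normal cone to a face of $\Delta(f)$.

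Now fix a face $\sigma$ of the simplicial cone $\V_A$ and a valuation $v \in \sigma^\circ$. By Lemma~\ref{lem-nonpos-val-graded-alg}, I shift $v$ by a large multiple of $-\deg$ to arrange that $v$ is nonpositive, so $\succ_v$ satisfies Assumption~\ref{ass-total-order}, and take a reduced spherical Gr\"obner basis $\G = \{f_1,\ldots,f_s\}$ of $I$ with respect to $\succ_v$. Combining the preceding observation with equation~\eqref{equ-C[v]} yields
\[
C[v] \;=\; \{v' \in \sigma^\circ \mid F(v', f_i) = F(v, f_i) \text{ for all } i = 1,\ldots,s\}.
\]
This exhibits $\overline{C[v]} \cap \overline{\sigma}$ as a closed cone in the common refinement within $\overline{\sigma}$ of the finitely many normal fans $\mathcal{N}(\Delta(f_1)), \ldots, \mathcal{N}(\Delta(f_s))$. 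A finite common refinement of rational polyhedral fans is again a rational polyhedral fan, so the closures of the equivalence classes meeting $\sigma^\circ$ cover $\overline{\sigma}$ and satisfy the fan axioms locally on $\overline{\sigma}$.

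To obtain the fan structure globally on $\V_A$, I repeat the above argument on each face of $\V_A$ and glue. Any face of $\overline{C[v]}$ either lies in $\sigma^\circ$ (and is then another closed equivalence class by the normal-fan common refinement) or in a proper face $\sigma'$ of $\overline{\sigma}$, in which case I reapply the local argument for $\sigma'$. Theorem~\ref{th-initial-ideal-finite} together with Lemma~\ref{lem-in_v-in_succ} guarantees that, over all faces of $\V_A$, there are only finitely many distinct equivalence classes, so the totality of normal-fan refinements assembles into a single finite polyhedral collection. The intersection axiom for closures of two equivalence classes $\overline{C[v_1]}$ and $\overline{C[v_2]}$ with $v_j \in \sigma_j^\circ$ follows by taking the common refinement of the normal fans of the Newton polytopes of the union of their reduced Gr\"obner bases inside $\overline{\sigma_1} \cap \overline{\sigma_2}$.

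The step I expect to be the main obstacle is the gluing across faces of $\V_A$: when $v_1 \in \sigma_1^\circ$ and $v_2 \in \sigma_2^\circ$ sit in different faces, the ambient partial horospherical contractions $A_{\sigma_1}$ and $A_{\sigma_2}$ housing their initial ideals differ, and one must check that the Newton-polytope description is intrinsic to the underlying support $\supp(f) \subset \Lambda_X^+$. This ought to follow from Proposition~\ref{prop-partial-horo-degeneration-gr_v} and the explicit multiplications~\eqref{equ-gr-sigma-m_X} and~\eqref{equ-gr-v-m_X}, since both associated graded algebras share the $G$-module $\bigoplus_\lambda W_\lambda$ and only the product differs; but verifying that a face of $\overline{C[v_1]}$ lying in $\overline{\sigma_0}$ (with $\sigma_0 \subset \sigma_1$ a proper face) truly coincides with the closure of an equivalence class $\overline{C[v_0]}$ for some $v_0 \in \sigma_0^\circ$ requires a careful compatibility check at the level of initial ideals of $I$ under successive partial horospherical contractions.
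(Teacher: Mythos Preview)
Your setup is right and matches the paper's strategy: both arguments pass through a reduced spherical Gr\"obner basis $\G$ for $\succ_v$ and interpret $C[v]$ via normal cones of the Newton polytopes $\Delta(f)$, $f\in\G$. (The paper takes the Minkowski sum $\Delta=\sum_{f\in\G}\Delta(f)$, whose normal fan is exactly your common refinement of the $\mathcal N(\Delta(f_i))$.) The gap is in the face axiom, and it is already present \emph{within} a single face $\sigma$ of $\V_A$, not only in the cross-face gluing you flag. From \eqref{equ-C[v]} you correctly get $\overline{C[v]}$ as a cone in the normal fan of $\Delta$, but a proper face of $\overline{C[v]}$ is then the normal cone to a larger face of $\Delta$. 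To conclude that this proper face equals some $\overline{C[v']}$ you need \eqref{equ-C[v]} \emph{for $v'$}, which is stated in terms of a reduced Gr\"obner basis with respect to $\succ_{v'}$---a priori a different basis $\G'$ and hence a different polytope $\Delta'$. Your ``common refinement of the union of their reduced Gr\"obner bases'' does not resolve this, because the refinement could be strictly finer than either collection of equivalence classes.

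The paper closes this gap with one extra ingredient you do not have: for every $v'\in\overline{C[v]}$ one checks that $\In_{\succ_{v'}}(f)=\In_\succ(\In_{v'}(f))=\In_\succ(f)$ for all $f$ (the first equality is Lemma~\ref{lem-in_v-in_succ}, the second uses that $v'$ lies in the normal cone of $\textup{face}_v(\Delta)$ so $\In_{v'}(f)$ still contains the $\succ$-minimal isotypic component of $f$). Hence $\G$ itself is a reduced spherical Gr\"obner basis for $\succ_{v'}$, so the \emph{same} polytope $\Delta$ governs $C[v']$ via \eqref{equ-C[v]}. Then $\overline{C[v']}$ and $\overline{C[v]}$ are normal cones to nested faces of one polytope, giving the face axiom directly; this also handles your cross-face concern uniformly, since nothing in the argument distinguishes whether $v'$ lies in $\sigma^\circ$ or in a proper face of $\overline\sigma$.
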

\begin{proof}
The proof is along the same line as the usual Gr\"obner theory proof from \cite[Proposition 2.4]{Sturmfels}. Take a valuation $v \in \V_A$ and let $\overline{C[v]}$ denote the closure of the 
open cone $C[v]$ in $\V_A$. Let $\succ'$ be a total order and let $\succ'_v$ be the total order associated to $\succ'$ and $v$. To simplify the notation we denote $\succ'_v$ by $\succ$. Note that by definition $\succ_v$ coincides with $\succ$. Let $\G$ be a reduced spherical Gr\"obner basis for the ideal $I$ with respect to $\succ$.
Consider the polytope $$\Delta = \sum_{f \in \G} \Delta(f).$$
Any valuation $v \in \V_X$ defines a face $\textup{face}_v(\Delta)$ on which the linear function $\langle v, \cdot \rangle$ attains its minimum.
We note that from the equation \eqref{equ-C[v]} it follows that the closure $\overline{C[v]}$ is the dual cone to the face $\textup{face}_v(\Delta)$ of the polytope 
$\Delta$ corresponding to $v$. Next take $v' \in \overline{C[v]}$. One shows that for any $f \in A$, $\In_{\succ_{v'}}(f) = \In_\succ(\In_{v'}(f)) = \In_\succ(f)$. Thus, $\G$ is a reduced spherical Gr\"obner basis for the total order $\succ_{v'}$ as well, and hence we see that $C[v]$ and $C[v']$ are the dual cones to the faces of the same polytope $\Delta$ corresponding to the valuations $v$ and $v'$ respectively.
But $\textup{face}_v(\Delta)$ is a face of $\textup{face}_{v'}(\Delta)$ and consequently the closed convex cone $\overline{C[v']}$ is a face of the closed convex cone $\overline{C[v]}$.
Using this one can show that the collection of the closures $\overline{C[v]}$ of the equivalence classes of $\sim$ satisfies the defining axioms of a fan.
\end{proof}


Recall that we call an ideal $J \subset A_{\hc}$ a $\Lambda_X$-homogeneous ideal if it is generated by a finite number of $\Lambda_X$-homogeneous elements 
(Definition \ref{def-Lambda-A-homog-ideal}).
The next proposition is a generalization of the fact that for an ideal in a polynomial ring the initial ideal corresponding to a generic choice of a weight is monomial.
It is a corollary of the existence of a universal spherical Gr\"obner basis. The proof is verbatim to the case of a polynomial ring but we include it here for the sake of completeness.
\begin{Prop} \label{prop-generic-initial-ideal-is-monomial}
Let $I \subset A$ be an ideal and let $\sigma$ be a cone of {maximum dimension} in the Gr\"obner fan $\textup{GF}(I)$ (i.e. $\sigma$ has the same dimension as the valuation cone 
$\V_A$). Then for any valuation $v$ in the interior of $\sigma$ the initial ideal $\In_v(I)$ is a $\Lambda_X$-homogeneous ideal in $\gr_v(A) \cong A_{\hc}$, the horospherical contraction of $A$. 
\end{Prop}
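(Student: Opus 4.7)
The plan is to combine a universal spherical Gr\"obner basis with a genericity argument on the interior of the top-dimensional cone $\sigma$. First I would invoke Proposition \ref{prop-partial-horo-degeneration-gr_v}: since $\sigma$ has the same dimension as the simplicial valuation cone $\V_A$, a valuation $v$ in the interior of $\sigma$ lies in the relative interior of $\V_A$ itself, so $\gr_v(A)$ is canonically identified with $A_{\hc}$. Next, after replacing $v$ by $v - k\deg$ for sufficiently large $k$ using Lemma \ref{lem-nonpos-val-graded-alg} (which preserves both the face containing $v$ and the initial ideal $\In_v(I)$), I may assume $v$ is nonpositive on $A$, so that for any total order $\succ$ refining the spherical dominant order the associated order $\succ_v$ satisfies Assumption \ref{ass-total-order}. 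Then I fix a reduced spherical Gr\"obner basis $\G = \{g_1,\ldots,g_s\}$ for $I$ with respect to $\succ_v$.

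The key step is to show that each $\In_v(g_i)$ lies in a single isotypic component. By equation \eqref{equ-C[v]} from the proof of Theorem \ref{th-sph-Grobner-fan}, the relatively open cone $C[v]$ equals $\{v' \in \sigma \mid \In_{v'}(g_i) = \In_v(g_i) \text{ for all } i\}$. Because $\sigma$ has maximum dimension, $v$ possesses an honest open neighborhood $U \subset \V_A$ on which every $\In_{v'}(g_i)$ agrees with $\In_v(g_i)$. Writing $g_i = \sum_\lambda g_{i,\lambda}$ with $g_{i,\lambda} \in W_\lambda$, let $S(g_i)$ denote the set of weights $\lambda$ appearing in $g_i$ that minimize $\langle v, \lambda \rangle$, so that $\In_v(g_i) = \sum_{\lambda \in S(g_i)} g_{i,\lambda}$. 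If some $S(g_i)$ contained two distinct weights $\lambda_1, \lambda_2$, then since $\V_A$ has full dimension in $\Hom(\Lambda_A, \q)$ I could find a direction in $U$ along which $\langle \cdot, \lambda_1 - \lambda_2 \rangle$ is nonzero; this would force $\In_{v'}(g_i)$ to drop one of $g_{i,\lambda_1}, g_{i,\lambda_2}$ for nearby $v'$, contradicting constancy of $\In_{v'}(g_i)$ on $U$. Hence $|S(g_i)| = 1$ and $\In_v(g_i)$ is $\Lambda_X$-homogeneous.

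To conclude, I would invoke the observation made in the proof of Theorem \ref{th-sph-Grobner-fan} that the image of $\G$ in $\gr_v(A) \cong A_{\hc}$ is a spherical Gr\"obner basis for $\In_v(I)$ with respect to $\succ_v$. By Proposition \ref{prop-sph-Grobner-basis-generate-ideal} this image then generates $\In_v(I)$ as an ideal, and since each generator $\In_v(g_i)$ lies in a single $W_{\lambda_i}$, the ideal $\In_v(I)$ is $\Lambda_X$-homogeneous in the sense of Definition \ref{def-Lambda-A-homog-ideal}.

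The main obstacle is the dimension/genericity step in the middle paragraph: one must use in an essential way both that $\sigma$ is top-dimensional (so $v$ has a genuine $\V_A$-neighborhood of perturbations) and that $\V_A$ itself is full-dimensional in its ambient vector space (so such perturbations separate any pair of distinct weights). If $\sigma$ were only relatively open in a proper face, two distinct weights could be forced to agree as linear functionals on $\sigma$, and $\In_v(g_i)$ would legitimately carry several isotypic components; this is what occurs on the lower-dimensional cones of $\textup{GF}(I)$, so the maximum dimension hypothesis cannot be weakened.
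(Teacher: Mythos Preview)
Your argument is correct and follows essentially the same perturbation idea as the paper's proof. The paper fixes a \emph{universal} spherical Gr\"obner basis $\G=\{f_1,\ldots,f_s\}$, perturbs $v$ to a generic $v'=v+\epsilon w$ inside $\sigma$ so that each $\In_{v'}(f_i)$ is visibly $\Lambda_X$-homogeneous, and then uses $\In_{v'}(I)=\In_v(I)$ to conclude; you instead take a reduced Gr\"obner basis for $\succ_v$ and argue by contradiction that if some $\In_v(g_i)$ had two isotypic components, a perturbation within the open neighborhood $C[v]$ would change it. These are two phrasings of the same genericity argument, and both rely in the same way on $\sigma$ being top-dimensional so that perturbations in $\V_A$ can separate any pair of distinct weights in $\Lambda_A$.
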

\begin{proof}
Fix a universal spherical Gr\"obner basis $\G=\{f_1, \ldots, f_s \}$ for $I$. Also let $\succ$ be an ordering on $\Lambda_X$. 
Let $v$ be in the interior of $\sigma$. Take a vector $w$ in the linear span of $\V_A$ and consider 
$v' = v + \epsilon w$ where $\epsilon > 0$. If $\epsilon$ is sufficiently small then $v'$ also lies in the interior of $\sigma$ and hence $\In_{v'}(I) = \In_{v}(I)$. Thus it suffices to prove 
that $\In_{v'}(I)$ is $\Lambda_X$-homogeneous. We show that this is the case if $w$ is generic enough. In fact one verifies that if $w$ is generic then all the initial elements 
$\In_{v'}(f_i)$ are $\Lambda_X$-homogeneous. Now since $\G$ is universal it is a spherical Gr\"obner basis for $I$ with respect to $\succ_{v'}$. It follows that 
$\In_{v'}(\G)$ is a spherical Gr\"obner basis for $\In_{v'}(I)$ with respect to $\succ_{v'}$ as well. Thus $\In_{v'}(I)$ is generated by the 
$\Lambda_X$-homogeneous elements $\In_{v'}(f_i)$ and hence is a $\Lambda_X$-homogeneous ideal as desired.
\end{proof}

\section{Spherical tropical varieties via ideals} \label{sec-sph-trop-var-ideals}
Generalizing the notion of tropical variety of a subvariety of the torus $(\k^*)^n$, in this section we give our first construction of
spherical tropical varieties, namely construction using initial ideals with respect to valuations (Definitions \ref{def-sph-trop-var-X_B} and \ref{def-sph-trop-var-G/H}). It turns out that in the context of spherical varieties, it is more natural to consider subvarieties in the open Borel orbit (in fact $G/H$ may not be affine or even quasi-affine while the open Borel orbit is always affine). In Section \ref{sec-sph-trop-var-trop-map}, following Vogiannou, we define the spherical tropical variety using the tropicalization map (Definition \ref{def-sph-tropicalization}). The content of our fundamental theorem (Theorem \ref{th-fundamental}) is that these two constructions coincide.

\subsection{Spherical tropical variety of a subscheme in the open $B$-orbit} \label{subsec-sph-trop-var-X_B}
In this section we define the notion of a spherical tropical variety for a subscheme in the open Borel orbit. 

Let $v \in \V_{G/H}$ be a $G$-invariant valuation. Recall that $X_v$ denotes the equivariant embedding of 
${G/H}$ corresponding to the ray generated by $v$. It consists of two $G$-orbits: the open $G$-orbit ${G/H}$ and the $G$-stable prime divisor $D_v$. 
Fix a Borel subgroup $B$ and let $X_B$ be the open $B$-orbit in $G/H$. One knows that $X_B$ is an affine variety.
We denote the set of $B$-stable prime divisors in $G/H$ by $\mathcal{D}(G/H)$. One observes that:
$$X_B = G/H ~\setminus \bigcup_{D \in \mathcal{D}(G/H)} D.$$ 
We also denote the open $B$-orbit in the $G$-orbit $D_v$ by $D'_v$.

Similarly let us denote the set of $B$-stable prime divisors in $X_v$ by $\mathcal{D}(X_v)$.
One defines a subvariety $X_{v, B} \subset X_v$ by:
$$X_{v, B} = X_v \setminus \bigcup_{D \in \mathcal{D}(X_v) \setminus \{D_v\}} D.$$ 

One shows the following (see \cite[Theorem 2.1]{Knop-LV}):
\begin{Th} \label{th-X_v_B}
$X_{v, B}$ is a $B$-stable affine subvariety of $X_v$ and $X_{v, B} \cap D_v$ is the $B$-orbit $D_v'$. Moreover the coordinate ring of 
$X_{v, B}$ can be described as:
$$\k[X_{v, B}] = \{ f \in \k[X_B] \mid v(f) \geq 0 \}.$$
\end{Th}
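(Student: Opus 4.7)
The plan is to verify the three substantive claims --- $X_{v,B} \cap D_v = D_v'$, the description $\k[X_{v,B}] = \{f \in \k[X_B] \mid v(f) \geq 0\}$, and affineness --- in that order, with $B$-stability being immediate from the definition as a complement of $B$-stable closed subsets of the $B$-variety $X_v$. I would treat the coordinate ring description as the main tool and read off affineness from it.

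For $X_{v,B} \cap D_v = D_v'$: I would use that the $B$-stable prime divisors of $X_v$ consist of $D_v$ itself (the unique $G$-stable one, since $X_v$ is the elementary embedding attached to a single ray) together with the closures in $X_v$ of the colors $\mathcal{D}(G/H)$ of $G/H$. Any $B$-stable prime divisor of the spherical $G$-orbit $D_v$ arises as an irreducible component of $D_v \setminus D_v'$, and since $D_v$ is a prime $G$-stable divisor of $X_v$, every such component must lie in some $B$-stable prime divisor of $X_v$ distinct from $D_v$, i.e.\ in the closure of a color. Removing all color closures from $X_v$ therefore strips $D_v$ of every lower-dimensional $B$-orbit and leaves exactly $D_v'$.

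For the coordinate ring: restriction to the open dense subset $X_B \subset X_{v,B}$ gives an injection $\k[X_{v,B}] \hookrightarrow \k[X_B]$. Since $X_v$ and hence $X_{v,B}$ is normal, a function $f \in \k[X_B]$ extends to a regular function on $X_{v,B}$ if and only if it is regular at the generic point of every prime divisor of $X_{v,B}$. These prime divisors are $D_v'$ together with the traces on $X_{v,B}$ of the non-$B$-stable prime divisors of $G/H$; at the latter, $f$ is automatically regular because their generic points already lie in $X_B$. Thus $f$ extends if and only if it is regular at the generic point of $D_v$, equivalently $v_{D_v}(f) \geq 0$. By the Luna--Vust construction of $X_v$ from the ray generated by $v$, the divisorial valuation $v_{D_v}$ coincides with $v$ in the chosen normalization, giving the claimed equality.

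For affineness, I would set $A = \{f \in \k[X_B] \mid v(f) \geq 0\}$ and argue that $A$ is a finitely generated $\k$-algebra with $\Spec(A) \cong X_{v,B}$. Finite generation uses that $\k[X_B]$ is itself finitely generated together with the $B$-eigenfunction structure of $\k[X_B]$ and the fact that $v$, being $G$-invariant (in particular $B$-invariant), factors through the weight map to $\Lambda_{G/H}$ on $B$-eigenfunctions, so the half-space condition $v(f) \geq 0$ descends to a finitely generated subsemigroup of $\Lambda_{G/H}^+$. The canonical morphism $X_{v,B} \to \Spec(A)$ is then birational, is an isomorphism on the open dense $X_B$, and matches codimension-one divisorial data at $D_v'$, so by normality it is an isomorphism. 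The main obstacle is precisely this last step --- finite generation of $A$ together with the identification $\Spec(A) = X_{v,B}$ --- which is the technical heart of the Luna--Vust theory of $B$-charts and is what the reference \cite[Theorem 2.1]{Knop-LV} ultimately provides.
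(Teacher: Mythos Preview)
The paper does not prove this theorem; it is stated with a reference to \cite[Theorem 2.1]{Knop-LV} and no argument is given in the body of the paper. Your sketch is therefore not competing against any proof in the paper, and you correctly identify that the substantive content lives in the cited reference.

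Your outline is broadly the standard Luna--Vust argument, and the coordinate-ring step via normality and codimension-one regularity is clean and correct. One point worth flagging: your argument for $X_{v,B}\cap D_v = D_v'$ contains a genuine gap. You assert that every $B$-stable prime divisor of $D_v$ (a codimension-two $B$-stable subvariety of $X_v$) must lie in the closure of some color of $G/H$. This does not follow from general principles --- a $B$-stable codimension-two subvariety contained in $D_v$ is not automatically contained in a second $B$-stable divisor. Establishing this is essentially equivalent to the local structure theorem for spherical varieties, which is exactly the content of the Knop reference. In Knop's treatment the logic runs the other way: one first proves affineness of the $B$-chart via the local structure theorem (a Levi subgroup acts, and the chart decomposes as a product), and the identification of $X_{v,B}\cap D_v$ with the open $B$-orbit in $D_v$ is then a consequence rather than an input. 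Your acknowledgment that the reference supplies the ``technical heart'' is accurate, but the heart includes this step too, not only the affineness.
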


\begin{Rem} \label{rem-Knop-X_0-X_1}
More generally, for each closed $G$-orbit in a spherical variety one can define a $B$-stable affine neighborhood. See \cite[Section 2]{Knop-LV}
for more details (in \cite{Knop-LV} this affine neighborhood is denoted by $X_0$).  
\end{Rem}

Now we define analogues of the notions from Sections \ref{subsec-sph-Grobner}, \ref{subsec-partial-horo} and \ref{subsec-sph-trop-var-G/H} replacing the algebra $A$ by $\k[X_B]$.
Namely, let $\gr_v(\k[X_B])$ denote the associated graded of the filtration on $\k[X_B]$ defined by the valuation $v$. More precisely,
for every $a \in \q$ we put $\k[X_B]_{v \geq a} = \{ f \in \k[X_B] \mid v(f) \geq a \}.$
Similarly, one defines the subspace $\k[X_B]_{v > a}$. Then $$\gr_v(\k[X_B]) = \bigoplus_{a \in \q_{\geq 0}} \k[X_B]_{v \geq a} / \k[X_B]_{v > a}.$$
We note that each subspace in the filtration is $B$-stable and the algebra $\gr_v(\k[X_B])$ is naturally a $B$-algebra.

\begin{Rem} \label{rem-gr_v-X_B}
{Using Proposition \ref{prop-partial-horo-degeneration-gr_v}, in Section \ref{subsec-fan-structure} we will see that if $v_1, v_2$ lie in the relative interior of the same face $\sigma$ of the valuation 
cone $\V_{G/H}$ then the corresponding associated graded algebras $\gr_{v_1}(\k[X_B])$ and $\gr_{v_2}(\k[X_B])$ are naturally isomorphic.}
\end{Rem}

For each $f$ with $v(f) = a$ let $\In_v(f)$ denote the image of $f$ in the quotient space 
$\k[X_B]_{v \geq a} / \k[X_B]_{v > a}$. 
Finally for an ideal $J \subset \k[X_B]$ we let $\In_v(J)$ be the ideal in $\gr_v(\k[X_B])$ generated by all the $\In_v(f)$, $f \in J$.
We can now define the notion of spherical tropical variety of an ideal in the coordinate ring $\k[X_B]$. 
\begin{Def}[Spherical tropical variety of an ideal in the coordinate ring of $X_B$]    \label{def-sph-trop-var-X_B}
Let $J \subset \k[X_B]$ be an ideal.
We define $\trop_{B}(J)$ to be: 
$$\trop_B(J) = \{v \in \V_{G/H} \mid \In_v(J) \neq \gr_v(\k[X_B])\}.$$ 
In other words, $\trop_B(J)$ consists of $v \in \V_{G/H}$ such that $\In_v(J)$ does not contain a unit element. We call $\trop_{B}(J)$ the {\it spherical tropical variety} of $J$. When $J = \langle h \rangle$ is a principal ideal, we 
write $\trop_B(h)$ in place of $\trop_B(J)$ and call it the {\it spherical tropical hypersurface} of $h$. 
\end{Def}
In Section \ref{subsec-GxG} we consider two examples of tropical hypersurfaces in an open Borel orbit in $\GL(2, \k)$.

\begin{Rem}  \label{rem-trop-B-supp-fan}
In Section \ref{subsec-fan-structure} we use spherical Gr\"obner theory to show that the spherical tropical variety $\trop_B(J)$ is indeed the support of a rational polyhedral fan (Theorem \ref{th-fan-st-sph-trop-var}). 
\end{Rem}

The following result justifies the above definition.
\begin{Th} \label{th-sph-trop-var-X_v_B}
Let $Z \subset X_B$ be the subscheme of the open $B$-orbit defined by an ideal $J \subset \k[X_B]$.
Let $v \in \V_{G/H}$ be a valuation. Then $v$ lies in $\trop_B(J)$ if and only if the closure $\overline{Z}$ of $Z$ in $X_{v, B}$ intersects 
the divisor at infinity $D_v'$.  
\end{Th}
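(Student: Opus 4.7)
The plan is to translate both sides of the equivalence into statements about the contracted ideal $J \cap \k[X_{v,B}]$ inside the affine coordinate ring $\k[X_{v,B}]$, using the description $\k[X_{v,B}] = \{f \in \k[X_B] \mid v(f) \geq 0\}$ from Theorem \ref{th-X_v_B}. The two other ingredients I need are, first, that the height-one prime $\mathfrak{m}_v := \{f \in \k[X_{v,B}] \mid v(f) > 0\}$ is the defining ideal of the prime divisor $D'_v = D_v \cap X_{v,B}$ in $X_{v,B}$ (because $v$ is proportional to the divisorial valuation of $D_v$ on the normal variety $X_v$), and second, that the degree-$0$ piece of the $\q$-graded algebra $\gr_v(\k[X_B])$ is canonically identified with $\k[X_{v,B}]/\mathfrak{m}_v = \k[D'_v]$. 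In particular $\In_v(J) = \gr_v(\k[X_B])$ is equivalent to $1 \in \In_v(J)$.

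For the ``if'' direction I argue by contrapositive. Assuming $\overline{Z} \cap D'_v = \emptyset$ in the affine variety $X_{v,B}$, whose subscheme $\overline{Z}$ is cut out by the contracted ideal $J \cap \k[X_{v,B}]$, the sum $(J \cap \k[X_{v,B}]) + \mathfrak{m}_v$ must equal $\k[X_{v,B}]$. Writing $1 = h + m$ with $h \in J \cap \k[X_{v,B}]$ and $m \in \mathfrak{m}_v$, the non-Archimedean property of $v$ forces $v(h) = \min(v(1), v(m)) = 0$, and the image of $h$ in $\k[X_{v,B}]/\mathfrak{m}_v$ is $1$. By the identification above this image is $\In_v(h)$, so $1 \in \In_v(J)$ and $v \notin \trop_B(J)$.

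For the ``only if'' direction, suppose $1 \in \In_v(J)$. Using the $\q$-grading and the homogeneity of $\In_v(J)$, I can write $1 = \sum_i \bar g_i \In_v(f_i)$ with $f_i \in J$ and each $\bar g_i$ homogeneous of degree $-v(f_i)$ in $\gr_v(\k[X_B])$. Choosing lifts $g_i \in \k[X_B]$ with $v(g_i) = -v(f_i)$, additivity of $v$ gives $v(g_i f_i) = 0$, hence $F := \sum_i g_i f_i$ lies in $J \cap \k[X_{v,B}]$, and by construction $F$ reduces modulo $\mathfrak{m}_v$ to $\sum_i \bar g_i \In_v(f_i) = 1$. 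This yields $(J \cap \k[X_{v,B}]) + \mathfrak{m}_v = \k[X_{v,B}]$, so $\overline{Z} \cap D'_v = \emptyset$.

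The proof is essentially formal once the setup is in place: the only substantive inputs are Theorem \ref{th-X_v_B} and the additivity of $v$ on products. The main subtlety is handling the $\q$-grading of $\gr_v(\k[X_B])$ when reducing to the degree-$0$ piece and lifting representatives back to $\k[X_B]$, but no deeper result (e.g. a flatness or finiteness argument) is required beyond the Nullstellensatz applied inside the affine variety $X_{v,B}$.
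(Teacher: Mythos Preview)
Your argument is correct and is genuinely different from the paper's. The paper proceeds via the Rees algebra $\mathcal{R}_v(\k[X_B])$: it identifies $X_{v,B}$ with $\Proj(\mathcal{R}_v(\k[X_B]))$ (using that the blowup along a Cartier prime divisor is trivial), identifies $D'_v$ with $\Proj(\gr_v(\k[X_B]))$, and then, via a pushforward-filtration lemma, realizes the scheme-theoretic intersection $\overline{Z}\cap D'_v$ as $\Proj\bigl(\gr_v(\k[X_B])/\In_v(J)\bigr)$. The conclusion is then the observation that this $\Proj$ is empty precisely when the quotient algebra vanishes, i.e.\ when $\In_v(J)=\gr_v(\k[X_B])$.

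By contrast, you bypass the Rees/Proj machinery entirely: you work in the affine chart $\Spec\k[X_{v,B}]$, note that the degree-$0$ piece of $\gr_v(\k[X_B])$ is $\k[X_{v,B}]/\mathfrak{m}_v=\k[D'_v]$, and reduce both directions to the Nullstellensatz statement $1\in (J\cap\k[X_{v,B}])+\mathfrak{m}_v$. Your route is more elementary and self-contained; the paper's route has the advantage of identifying the full scheme structure on $\overline{Z}\cap D'_v$ (not merely its nonemptiness) in terms of the entire associated graded, which is conceptually pleasant even if not needed for the statement at hand. One small remark: in your ``only if'' direction you should say explicitly that the nonzero homogeneous lift $g_i$ exists because a nonzero class in $\k[X_B]_{v\ge -v(f_i)}/\k[X_B]_{v>-v(f_i)}$ is represented by some $g_i$ with $v(g_i)=-v(f_i)$; you use this but do not quite state it.
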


\begin{proof}
Consider the Rees algebra: 
$$\mathcal{R}_v(\k[X_B]) = \bigoplus_{a \geq 0} \k[X_B]_{v \geq a},$$
where the direct sum is over all $a \in \q_{\geq 0}$ which lie in the value semigroup $v(\k[X_B] \setminus \{0\})$.
In light of Theorem \ref{th-X_v_B}, we see that $\Proj(\mathcal{R}_v(\k[X_B]))$ is the blowup of the variety $X_{v, B}$ 
along the prime divisor $D_v'$. Also the exceptional divisor in the blowup is $\Proj(\gr_v(\k[X_B])$.
But the blowup of a variety along a prime Cartier divisor coincides with the variety itself, so we conclude that $X_{v, B} = \Proj(\mathcal{R}_v(\k[X_B]))$ and
$D'_v = \Proj(\gr_v(\k[X_B])$.   

The filtration associated to the valuation $v$ on $\k[X_B]$ induces a  pushforward filtration $F_{\bullet}$ on the quotient algebra 
$\k[X_B]/J$. For any $a \in \q_{\geq 0}$ we simply define the subspace $F_{\geq a}$ to be the image of $\k[X_B]_{v \geq a}$ in $\k[X_B]/J$ 
under the natural homomorphism $\k[X_B] \to \k[X_B]/J$. The subspace $F_{> a}$ is defined similarly. The next lemma relates this pushforward filtration with the notion of initial ideal. Its proof is straightforward (see also \cite[Lemma 4.4]{KM-Khov-bases}).

\begin{Lem}[Initial ideal in terms of pushforward filtration] \label{lem-pushforward-initial-ideal}
There is a natural isomorphism between the associated graded $\gr_{F_\bullet}(\k[X_B]/ J)$ 
of the pushforward filtration $F_\bullet$ and the quotient algebra $\gr_v(\k[X_B]) / \In_v(J)$.
\end{Lem}

Next we notice that the scheme-theoretic intersection $\overline{Z} \cap D_v'$ can be constructed as $\Proj(\gr_{F_\bullet}(\k[X_B]/ J))$ which by 
Lemma \ref{lem-pushforward-initial-ideal} is equal to $\Proj(\gr_v(\k[X_B]) / \In_v(J))$. Since $\Proj$ of a positively graded algebra is nonempty if and only if the algebra is nonzero, it follows that $\overline{Z} \cap D_v' \neq \emptyset$ if and only if $\In_v(J) \neq \k[X_B]$. This finishes the proof.
\end{proof}

\subsection{Fan structure}  \label{subsec-fan-structure}
In this section we use the existence of the spherical Gr\"obner fan (Theorem \ref{th-sph-Grobner-fan}) to show that the spherical tropical variety of an ideal in the open Borel orbit is the support of a rational polyhedral fan. This is a generalization of the situation in the classical torus case (see for example \cite[Chapter 2]{Maclagan-Sturmfels}).

We begin by setting the stage. We follow the notation from the previous sections. Let $\overline{X}$ be a projective spherical embedding of a spherical homogeneous space $G/H$. Let $L$ be a very ample $G$-line bundle on $\overline{X}$. 
Without loss of generality we assume there is a $B$-eigensection $s \in H^0(\overline{X}, L)$ that vanishes on all the $B$-stable divisors in $\overline{X}$. 

Let $A = \bigoplus_{i \geq 0} A_i$, $A_i = H^0(\overline{X}, L^{\otimes i})$, be the algebra of sections of $L$. It is a finitely generated multiplicity-free $(\k^* \times G)$-algebra and $\dim(A_i) < \infty$, for all $i$. We will apply the spherical Gr\"obner theory to this algebra (by Remark \ref{rem-good-ordering-exists} this algebra admits an ordering satisfying Assumption \ref{ass-total-order}). We denote the semigroup of highest weights of the algebra $A$ by $\tilde{\Lambda}^+_A \subset \z_{\geq 0} \times \Lambda$. The lattice generated by $\tilde{\Lambda}^+_A$ is $\tilde{\Lambda}_A$. We also denote the cone of $(\k^* \times G)$-invariant valuations on $A$ by $\tilde{\V}_A$ (we use the tilde notation to distinguish between the corresponding notions for the $(\k^* \times G)$-action as opposed to the $G$-action).

Given the section $s$, define an algebra homomorphism $\pi: A \to \k[X_B]$ as follows. For any $i \geq 0$ and $f_i \in A_i$ let $\pi(f_i) = f_i / s^i$. Since $s$ is a $B$-eigensection it does not have any zeros in $X_B$ and hence the image of $\pi$ lies in $\k[X_B]$. Moreover, because $L$ is very ample, we have an embedding $X_B \subset G/H \subset \overline{X} \hookrightarrow \p(H^0(\overline{X}, L)^*)$. Since $s$ vanishes on all the $B$-stable divisors, $X_B$ embeds as a closed subvariety in the affine space $\p(H^0(\overline{X}, L)^*) \setminus \{s=0\}$. It thus follows that $\pi$ is surjective. 

Next, we define an embedding $\phi$ from the valuation cone $\V_{G/H}$ into the valuation cone $\tilde{\V}_A$. Take $v \in \V_{G/H}$ and define the valuation $\tilde{v} = \phi(v)$ on $A$ as follows. For $f = \sum_i f_i$, $f_i \in A_i$, put:
\begin{equation} \label{equ-tilde-v}
\tilde{v}(f) = \min\{ v(f_i/s^i) \mid f_i \neq 0\}.
\end{equation}
\begin{Prop} \label{prop-tilde-v-valuation}
(1) The function $\tilde{v}: A \setminus \{0\} \to \q$ is a $(\k^* \times G)$-invariant valuation. (2) The map $\phi: v \mapsto \tilde{v}$ is a one-to-one linear map from the cone $\V_{G/H}$ to the cone $\tilde{\V}_A$ and the image $\phi(\V_{G/H})$ is the intersection of $\tilde{\V}_A$ with a hyperplane. 
\end{Prop}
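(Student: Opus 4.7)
The plan is to identify $\tilde{v}$ with a Gauss-type extension of $v$, use $G$-invariance of $v$ on $\k(G/H)$ to deduce $(\k^*\times G)$-invariance of $\tilde{v}$, and for part (2) compute $\phi$ explicitly on the weight-lattice level and cut out its image by the single equation $\tilde{w}(s)=0$.

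For (1), since $\pi:A\to \k[X_B]$ is surjective and $s\in A_1$ is nowhere zero on $X_B$, the localization $A[s^{-1}]$ is canonically identified with $\k[X_B][s,s^{-1}]$ with $s$ playing the role of an invertible indeterminate; under this identification a homogeneous element $f_i\in A_i$ becomes $\pi(f_i)s^i$, and an arbitrary $f=\sum_i f_i$ becomes $\sum_i\pi(f_i)s^i$. I would then define a valuation on $\textup{Frac}(A)=\k(G/H)(s)$ by the Gauss formula $v^{\textup{G}}\!\left(\sum_n h_n s^n\right) = \min_n v(h_n)$; this is the standard Gauss extension of a valuation along a polynomial variable and is known to be a valuation. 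Its restriction to $A$ coincides with $\tilde{v}$ in equation \eqref{equ-tilde-v}, which immediately gives multiplicativity, the ultrametric inequality, and vanishing on nonzero scalars.

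For the $(\k^*\times G)$-invariance, the $\k^*$-action only rescales the graded components and is trivially preserved by the min formula. The content is $G$-invariance, and the key identity I would establish first is
$$v\!\left((g\cdot s)/s\right)=0 \qquad \text{for every } g\in G.$$
To see this, pick any other nonzero section $t\in A_1$ and let $h=s/t\in\k(G/H)$; then $(g\cdot s)/s = [(g\cdot h)/h]\cdot[(g\cdot t)/t]$, and each factor is a ratio of a $G$-translate of a rational function on $G/H$ to itself, hence has $v$-value zero by $G$-invariance of $v$ on $\k(G/H)$. Combining this with the pointwise identity $(g\cdot f_i)/s^i = [g\cdot(f_i/s^i)]\cdot [(g\cdot s)/s]^i$ (obtained from the definition of the $G$-linearization on sections) and applying $v$ yields $\tilde{v}(g\cdot f_i)=v(f_i/s^i)=\tilde{v}(f_i)$, and then the min formula extends invariance to all of $A$.

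For (2), let $f_{i,\lambda}\in A_i$ be a $B$-eigensection of $B$-weight $\lambda$ and let $\chi$ be the $B$-weight of $s$. Then $f_{i,\lambda}/s^i\in\k[X_B]$ is a $B$-eigenfunction on $X_B$ of weight $\lambda-i\chi$, so
$$\rho(\tilde{v})(i,\lambda)=v(f_{i,\lambda}/s^i)=\rho(v)(\lambda-i\chi).$$
This formula displays $\phi$ as the pullback along the surjection $(i,\lambda)\mapsto\lambda-i\chi$ from $\tilde{\Lambda}_A$ onto $\Lambda_{G/H}$; linearity is automatic, and injectivity follows by taking $i=0$ and using surjectivity of $\pi$ (every $B$-eigenfunction on $X_B$ arises as $f_i/s^i$ for some $f_i\in A_i$). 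The image lies in the hyperplane $H=\{\tilde{w}:\tilde{w}(s)=0\}$ since $\tilde{v}(s)=v(s/s)=0$; conversely, given $\tilde{w}\in\tilde{\V}_A\cap H$, I would define $v$ on $\k[X_B]$ by $v(\pi(f_i)):=\tilde{w}(f_i)$, check this is well-defined (if $\pi(f_i)=\pi(g_j)$ then $f_i s^j=g_j s^i$ in $A$, so applying $\tilde{w}$ and using $\tilde{w}(s)=0$ gives $\tilde{w}(f_i)=\tilde{w}(g_j)$), extend uniquely to $\k(G/H)$, and verify $G$-invariance by rerunning the Step~2 calculation in reverse. The main obstacle will be cleanly separating the action of $G$ on sections of $L$ from its action on rational functions on $G/H$; once the identity $v((g\cdot s)/s)=0$ is isolated, the remaining verifications are formal.
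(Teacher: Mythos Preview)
Your overall architecture is sound and for the most part matches the paper's argument: the Gauss-extension viewpoint for part~(1) is a clean way to get the valuation axioms, and your treatment of part~(2) via the explicit formula $\rho(\tilde v)(i,\lambda)=\rho(v)(\lambda-i\chi)$ together with the hyperplane $\{\tilde w(s)=0\}$ is essentially what the paper does (the paper is terser but uses the same construction of $v$ from $\tilde w$ on $\k[X_B]$).

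There is, however, a real gap in your $G$-invariance argument, precisely at the step you flagged as the ``key identity'' $v\big((g\cdot s)/s\big)=0$. Your proof of it is circular. You write $(g\cdot s)/s = [(g\cdot h)/h]\cdot[(g\cdot t)/t]$ with $h=s/t$, and claim each factor has $v$-value zero because it is a $G$-translate of a rational function divided by itself. This is fine for the first factor, since $h\in\k(G/H)$. But $t\in A_1$ is a \emph{section}, not a rational function on $G/H$, so $(g\cdot t)/t$ is of exactly the same shape as $(g\cdot s)/s$; $G$-invariance of $v$ on $\k(G/H)$ says nothing about it directly. You have merely traded one unknown section for another.

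The paper closes this gap differently: it lifts $v$ to a $G$-invariant valuation $\hat v$ on $\k(G)$ (citing \cite[Corollary~1.5]{Knop-LV}) and identifies $H^0(G/H,L_{|G/H})$ with a space of $H$-eigenfunctions in $\k[G]$. Once sections are realized as honest functions on $G$, the computation $\hat v(g\cdot f_k)-\hat v(s^k)=\hat v(f_k)-\hat v(s^k)$ is immediate from $G$-invariance of $\hat v$. If you want to keep your approach and avoid the lift, one salvage is to observe that the cocycle relation for the linearization, combined with $G$-invariance of $v$, makes $g\mapsto v\big((g\cdot s)/s\big)$ a group homomorphism $G\to\q$, and then argue separately that any such homomorphism from a connected reductive group is trivial; but this requires its own justification and is not the one-line reduction you wrote.
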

\begin{proof}
(1) One checks that \eqref{equ-tilde-v} defines a valuation. Take $f = \sum_i f_i \in A$. By the definition of the action of $\k^*$ on $A$, for $t \in \k^*$ we have 
$t \cdot f = \sum_i t^if_i$, and thus $\tilde{v}(t \cdot f) = \min\{ v(t^i f_i / s^i) \mid f_i \neq 0\} = \min\{ v(f_i / s^i) \mid f_i \neq 0\} = \tilde{v}(f)$. Thus $\tilde{v}$ is $\k^*$-invariant. 
It remains to show that $\tilde{v}$ is $G$-invariant. Take $g \in G$, it is enough to show that for any $k>0$ and $f_k \in A_k$ we have $\tilde{v}(g \cdot f_k) = \tilde{v}(f_k)$. 
One knows that the valuation $v$ on $\k(G/H)$ lifts to a $G$-invariant valuation $\hat{v}$ on $\k(G)$ for the left $G$-action (see \cite[Corollary 1.5]{Knop-LV}). Also one knows that $H^0(G/H, L_{|G/H})$ can be identified with the space of $H$-eigensection in $\k[G]$ corresponding to an $H$-weight $\chi$. With this identification we have  $v((g \cdot f_k) / s^k) = \hat{v}(g \cdot f_k) - \hat{v}(s^k) = \hat{v}(f_k) - \hat{v}(s^k) = \hat{v}(f_k / s^k) = v(f_k/s^k)$. (2) The injectivity and linearity of the map $\phi$ are straightforward from the definition. It remains to prove the last assertion. The valuation cone $\tilde{\V}_A$ sits in the rational vector space $\tilde{\mathcal{Q}}_A = \Hom(\tilde{\Lambda}_A, \q)$ (Section \ref{subsec-inv-valuation}). We note that the set $\{\tilde{v} \mid \tilde{v}(s) = 0 \}$ is a hyperplane in this vector space. Now let $\tilde{v} \in \tilde{\V}_A$ be a valuation with $\tilde{v}(s) = 0$. Define a function $v$ on $\k[X_B]$ as follows. For $h \in \k[X_B]$, write $h = f_i / s^i$, for some $i \geq 0$ and $f_i \in A_i$. Let $v(h) = \tilde{v}(f_i)$. One verifies that the function $v$ is well-defined and gives a $G$-invariant valuation on $G/H$. This finishes the proof.   
\end{proof}

\begin{Rem}  \label{rem-torus-case}
The above is a generalization of the classical torus case where $G = B = T = (\k^*)^n$ and $H=\{e\}$. In this case, we can take $\overline{X}$ to be the projective space $\p^n$, $L = \mathcal{O}(n+1)$ and $s = x_1 \cdots x_{n+1}$, where $(x_1, \ldots, x_n)$ are coordinates on $T$ and $(x_1 : \cdots : x_{n+1})$ are homogeneous coordinates on $\p^n$.
\end{Rem}

\begin{Th}[Fan structure on a spherical tropical variety]  \label{th-fan-st-sph-trop-var}
With notation as before, let $J \subset \k[X_B]$ be an ideal. Then the spherical tropical variety $\trop_B(J)$ is the support of a rational polyhedral fan. Moreover, this fan structure comes from intersecting the image of $\trop_B(J)$ with a spherical Gr\"obner fan for a homogeneous ideal $\tilde{J}$ in the algebra of sections $A$ (of a very ample line bundle on a projective spherical embedding of $G/H$).
\end{Th}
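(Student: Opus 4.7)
The plan is to realize $\trop_B(J)$ as the preimage, under the linear embedding $\phi \colon \V_{G/H} \hookrightarrow \tilde{\V}_A$, of a subfan of the spherical Gr\"obner fan of a suitable homogenization $\tilde{J} \subset A$ of $J$.

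First I would define $\tilde{J}$: since $\pi \colon A \to \k[X_B]$ is surjective and $A$ is $\z_{\geq 0}$-graded, every $f \in J$ can be written as $f = \tilde{f}/s^k$ for some $k \geq 0$ and some homogeneous $\tilde{f} \in A_k$. I would let $\tilde{J} \subset A$ be the ideal generated by all such $\tilde{f}$ as $f$ ranges over $J$; this is a $\z_{\geq 0}$-homogeneous ideal (the ``projective homogenization'' of $J$ with respect to $s$), and $\pi(\tilde{J}) = J$.

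Next I would establish the following dictionary between initial ideals: for $v \in \V_{G/H}$ with $\tilde{v} = \phi(v)$,
\[
v \in \trop_B(J) \iff \In_{\tilde{v}}(\tilde{J}) \text{ contains no power of } s.
\]
The key algebraic input is that $s$ is a $(B\times\k^*)$-eigensection, hence an isotypic element of $A_1$, so its image $\In_{\tilde{v}}(s)$ in $\gr_{\tilde{v}}(A)$ can be identified with $s$ itself; moreover $\tilde{v}(s) = 0$ by Proposition \ref{prop-tilde-v-valuation}(2). Localizing $\gr_{\tilde{v}}(A)$ at $s$ and taking the degree-zero part recovers precisely $\gr_v(\k[X_B])$, and under this identification $\In_{\tilde{v}}(\tilde{f})$ corresponds to $\In_v(\tilde{f}/s^k)\cdot s^k$ for $\tilde{f} \in A_k$. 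Thus $1 \in \In_v(J)$ iff some homogeneous $\tilde{f} \in \tilde{J} \cap A_k$ satisfies $\In_{\tilde{v}}(\tilde{f}) = c\cdot s^k$ with $c \in \k^\times$, proving the dictionary.

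Finally, Theorem \ref{th-sph-Grobner-fan} provides the spherical Gr\"obner fan $\textup{GF}(\tilde{J})$ as a rational polyhedral fan on $\tilde{\V}_A$. By Proposition \ref{prop-tilde-v-valuation}(2), $\phi(\V_{G/H})$ is the intersection of $\tilde{\V}_A$ with a rational linear hyperplane, so intersecting the cones of $\textup{GF}(\tilde{J})$ with $\phi(\V_{G/H})$ produces a rational polyhedral fan supported on $\phi(\V_{G/H})$. Since $\In_{\tilde{v}}(\tilde{J})$ is constant on each equivalence class of $\sim$, the condition ``contains a power of $s$'' is also constant on each cell; hence by the dictionary, $\phi(\trop_B(J))$ is a union of cells of this fan. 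Pulling back through the linear isomorphism $\phi$ yields the claimed rational polyhedral fan structure on $\trop_B(J)$. The main obstacle is the algebraic identification $\gr_v(\k[X_B]) \cong (\gr_{\tilde{v}}(A))[s^{-1}]_0$ used in the dictionary: making this rigorous requires checking that the Rees-type construction behaves compatibly with the dehomogenizing surjection $\pi$, in particular that localization at the isotypic element $s$ commutes with taking the associated graded with respect to $\tilde{v}$, and that initial forms transform correctly under multiplication by powers of $s$. Once this compatibility is pinned down the remainder of the argument is formal.
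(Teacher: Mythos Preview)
Your overall strategy---homogenize $J$ to $\tilde{J}\subset A$, embed $\V_{G/H}$ into $\tilde{\V}_A$ via $\phi$, and read off $\trop_B(J)$ from the Gr\"obner fan of $\tilde{J}$---matches the paper's, and your dictionary ``$v\in\trop_B(J)$ iff $\In_{\tilde v}(\tilde J)$ contains no power of $s$'' is correct (it is essentially the content of Proposition~\ref{prop-unit-gr_v-gr-v-tilde}, rephrased via localization at $s$). The localization identification $\gr_v(\k[X_B])\cong(\gr_{\tilde v}(A)[s^{-1}])_0$ you flag as the main obstacle is indeed routine once one notes $\tilde v(s)=0$.

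The genuine gap is elsewhere. You argue that $\In_{\tilde v}(\tilde J)$ is constant on each \emph{open} equivalence class $C[\tilde v]$, hence $\phi(\trop_B(J))$ is a union of open cells of $\textup{GF}(\tilde J)$. But a union of open cells of a fan need not be the support of a subfan: take, for instance, a single open $2$-dimensional cone without its boundary rays. What is missing is closure under specialization: if $v\in\trop_B(J)$ and $\tilde v'\in\overline{C[\tilde v]}$, one must show $v'\in\trop_B(J)$ as well. This is precisely the step the paper carries out, using the multiplicative map $\In_{\tilde v'}(f)\mapsto\In_{\tilde v}(\In_{\tilde v'}(f))$ from $\In_{\tilde v'}(\tilde J)$ to $\In_{\tilde v}(\tilde J)$; a unit in $\In_{v'}(J)$ would then produce a unit in $\In_v(J)$, contradicting $v\in\trop_B(J)$. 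In terms of your dictionary, the same step reads: if $s^m\in\In_{\tilde v'}(\tilde J)$, then since $\In_{\tilde v}(\In_{\tilde v'}(\tilde J))=\In_{\tilde v}(\tilde J)$ (the standard Gr\"obner-fan specialization identity) and $\In_{\tilde v}(s^m)=s^m$, one gets $s^m\in\In_{\tilde v}(\tilde J)$. Either way, without this closure argument you have not shown that $\trop_B(J)$ is the support of a fan.
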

\begin{proof}
Let $\tilde{J} \subset A$ be the ideal generated by the homogeneous elements in $\pi^{-1}(J)$. By definition $\tilde{J}$ is a homogeneous ideal. It is a generalization of the of {\it homogenization} of an ideal in the Laurent polynomial algebra $\k[x_1^{\pm 1}, \ldots, x_n^{\pm 1}]$ 
(see \cite[Section 2.6]{Maclagan-Sturmfels}). 
Consider the image of $\trop_B(J)$ in the valuation cone $\tilde{\V}_A$ under the linear embedding $\phi$ (Proposition \ref{prop-tilde-v-valuation}). We show that this image is a subfan of the spherical Gr\"obner fan $\textup{GF}(\tilde{J})$, i.e. it is a union of closures of cones from $\textup{GF}(\tilde{J})$. Let $v \in \trop_B(J)$, i.e. $\gr_v(\k[X_B])$ does not contain any unit element. Let $\overline{C[\tilde{v}]}$ be the closure of cone in the Gr\"obner fan containing $\tilde{v} \in \tilde{\V}_A$. 
Suppose $v' \in \V_{G/H}$ is such that $\tilde{v}' \in \overline{C[\tilde{v}]}$. This implies that there are faces $\sigma$, $\sigma'$ of the cone $\tilde{\V}_A$ such that $\sigma' \subset \overline{\sigma}$ and $\tilde{v}$, $\tilde{v}'$ belong to the relative interiors of $\sigma$, $\sigma'$ respectively. One verifies that the map $\In_{\tilde{v}'}(f) \mapsto \In_{\tilde{v}}(\In_{\tilde{v}'}(f))$, $\forall f \in A$, defines a multiplicative homomorphism from $\In_{\tilde{v}'}(\tilde{J})_\hom$ to $\In_{\tilde{v}}(\tilde{J})_\hom$ (here the subscript $\hom$ denotes the set of homogeneous elements, with respect to the $\q$-grading in the associated graded, in the corresponding set). 
It then follows that we have a multiplicative homomorphism from $\In_{v'}(J)$ to $\In_v(J)$. If $\In_{v'}(J)$ contains a unit element then $\In_v(J)$ should also contain a unit element which contradicts the assumption $v \in \trop_B(J)$. Thus $v' \in \trop_B(J)$. This finishes the proof. 
\end{proof}

\subsection{Spherical tropical hypersurfaces}  \label{subsec-sph-trop-hypersurface}
In this section we see how to compute the spherical tropical variety of a hypersurface in the open Borel orbit.
We follow the notation from previous sections. Let $v \in \V_{G/H}$. The following statement shows how to detect the units in the associated graded algebras $\gr_v(\k[X_B])$ using the $(\k^* \times G)$-algebra $\gr_{\tilde{v}}(A)$. 
\begin{Prop}   \label{prop-unit-gr_v-gr-v-tilde}
Let $h \in \k[X_B]$ and let $f_k \in A_k$, for some $k>0$, be such that $\pi(f_k) = f_k / s^k = h$. Then $\In_v(h) \in \gr_v(\k[X_B])$ is a unit if and only if $\In_{\tilde{v}}(f_k)$ divides a power of $\In_{\tilde{v}}(s)$.
\end{Prop}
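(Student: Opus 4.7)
The plan is to identify $\gr_v(\k[X_B])$ with the $\k^*$-degree zero part of the localized graded algebra $\gr_{\tilde{v}}(A)[\In_{\tilde{v}}(s)^{-1}]$, and then reduce the proposition to the elementary fact that an element of a domain $R$ becomes invertible in $R[s^{-1}]$ if and only if it divides some power of $s$. The key geometric/algebraic input is that $\pi(s) = 1$, hence $\tilde{v}(s) = 0$, so $\In_{\tilde{v}}(s)$ is a nonzero element of $\gr_{\tilde{v}}(A)$ that is $\tilde{v}$-homogeneous of degree $0$ and $\k^*$-homogeneous of degree $1$.

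First I would establish a natural isomorphism
\[
\Phi: \bigl(\gr_{\tilde{v}}(A)[\In_{\tilde{v}}(s)^{-1}]\bigr)_0 \;\xrightarrow{\;\sim\;}\; \gr_v(\k[X_B]),
\]
where the subscript denotes the $\k^*$-grading, sending $\In_{\tilde{v}}(f_i)/\In_{\tilde{v}}(s)^i \mapsto \In_v(f_i/s^i)$. This is well-defined because for $f_i \in A_i$ one has $\tilde{v}(f_i) = v(f_i/s^i) = v(\pi(f_i))$ directly from the definition of $\tilde{v}$ applied to a homogeneous element. Surjectivity comes from surjectivity of $\pi$; injectivity uses the fact that $\gr_{\tilde{v}}(A)$ is a domain (valuations produce domain associated gradeds, since $\In_{\tilde{v}}(f)\,\In_{\tilde{v}}(g) = \In_{\tilde{v}}(fg) \neq 0$ whenever $f,g \neq 0$), so the localization at $\In_{\tilde{v}}(s)$ is a well-behaved graded domain, and its degree-zero part is exactly the image of $\pi$ under the induced grading.

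Now under $\Phi$ the element $\In_v(h) = \In_v(f_k/s^k)$ corresponds to $\In_{\tilde{v}}(f_k)/\In_{\tilde{v}}(s)^k$. A $\k^*$-degree zero element of the localization is a unit in that degree-zero subalgebra if and only if it is a unit in the full localization (decompose any inverse into $\k^*$-homogeneous pieces; only the degree zero part contributes to $1$). Since $\In_{\tilde{v}}(s)^k$ is invertible in the localization, $\In_{\tilde{v}}(f_k)/\In_{\tilde{v}}(s)^k$ is a unit there iff $\In_{\tilde{v}}(f_k)$ is a unit in $\gr_{\tilde{v}}(A)[\In_{\tilde{v}}(s)^{-1}]$. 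In a domain with a non-zero-divisor $s$, an element $a$ becomes invertible in $R[s^{-1}]$ iff $ab = s^N$ for some $b \in R$ and $N \geq 0$, i.e.\ iff $a$ divides a power of $s$; applying this with $R = \gr_{\tilde{v}}(A)$ and $s$ replaced by $\In_{\tilde{v}}(s)$ completes the proof. In each direction the explicit cofactor can be recovered: from $\In_v(h)\In_v(h') = 1$ with $h' = f'_m/s^m$, the relation $v(hh'-1) > 0$ pushes up to $\tilde{v}(f_k f'_m - s^{k+m}) > 0$, giving $\In_{\tilde{v}}(f_k)\In_{\tilde{v}}(f'_m) = \In_{\tilde{v}}(s)^{k+m}$; conversely, given $\In_{\tilde{v}}(f_k)\cdot g = \In_{\tilde{v}}(s)^N$, one extracts the $\k^*$-homogeneous component of $g$ in degree $N-k$ (after increasing $N$ if necessary to ensure $N \geq k$) and lifts it to some $f' \in A_{N-k}$, so that $h' = f'/s^{N-k} \in \k[X_B]$ provides the inverse.

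The main obstacle I anticipate is rigorously setting up the isomorphism $\Phi$: one must confirm that the pushforward filtration on $\k[X_B]$ coming from $\pi$ really coincides with the $v$-filtration, and that the $\k^*$-degree zero part of the localized associated graded is neither larger nor smaller than $\gr_v(\k[X_B])$. Once $\Phi$ is established, the rest is formal manipulation of units in graded domains and the routine bookkeeping of the $(\k^* \times \tilde{v})$-bigrading used for lifting $g$ back to $A$.
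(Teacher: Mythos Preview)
Your proposal is correct. The paper takes the direct route you sketch in your final paragraph: it simply unwinds what it means for $\In_v(h)$ to be a unit --- namely $v(hh') = 0$ and $v(hh' - 1) > 0$ for some $h' \in \k[X_B]$ --- lifts $h'$ to some $f'_\ell \in A_\ell$ via surjectivity of $\pi$, and translates these inequalities into $\tilde{v}(f_k f'_\ell - s^{k+\ell}) > 0 = \tilde{v}(s^{k+\ell})$, which says exactly that $\In_{\tilde{v}}(f_k)\In_{\tilde{v}}(f'_\ell) = \In_{\tilde{v}}(s)^{k+\ell}$; the converse reverses this, using $\k^*$-homogeneity of $\In_{\tilde{v}}(s)$ and $\In_{\tilde{v}}(f_k)$ to ensure the cofactor can be taken in a single $A_\ell$.

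Your primary argument via the isomorphism $\Phi: \bigl(\gr_{\tilde{v}}(A)[\In_{\tilde{v}}(s)^{-1}]\bigr)_0 \cong \gr_v(\k[X_B])$ is a more structural packaging of the same content: it explains \emph{why} the equivalence holds (units in a localization $R[s^{-1}]$ are precisely divisors of powers of $s$) rather than verifying it by a direct translation of inequalities. The isomorphism $\Phi$ is itself a clean and reusable statement. The cost is the setup --- checking well-definedness, the bigrading bookkeeping, and injectivity (which, as you note, comes down to $\gr_{\tilde{v}}(A)$ being a domain and $X_B$ being dense in $\overline{X}$) --- whereas the paper's argument stays at the level of individual elements and is correspondingly shorter.
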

\begin{proof}
Suppose $\In_v(h) \in \gr_v(\k[X_B])$ is a unit. Then there exists $h' \in \k[X_B]$ such that $v(hh') = v(1) = 0$ and $v(hh' - 1) > 0$. Since $\pi: A \to \k[X_B]$ is surjective we can find 
$f'_\ell \in A_\ell$, for some $\ell > 0$, with $\pi(f'_\ell) = h'$. Then $\tilde{v}(f_kf'_\ell) = v(hh') = 0$ and $\tilde{v}(f_k f'_\ell - s^{k+\ell}) = v(hh' -1) > 0$. Note that $\tilde{v}(s^{k+\ell}) = v(1) = 0$.
This shows that $\In_{\tilde{v}}(f_k) ~\In_{\tilde{v}}(f'_\ell) = \In_{\tilde{v}}(s)^{k+\ell}$ in the algebra $\gr_{\tilde{v}}(A)$. Conversely, suppose $\In_{\tilde{v}}(f_k)$ divides a power of $\In_{\tilde{v}}(s)$. Since $\In_{\tilde{v}}(s)$ and $\In_{\tilde{v}}(f_k)$ are homogeneous, there is $f'_\ell \in A_\ell$, for some $\ell > 0$, such that $\tilde{v}(f_kf'_\ell) = \tilde{v}(s^{k+\ell})$ and $\tilde{v}(f_k f'_\ell - s^{k+\ell}) > 0$. As before, this implies that $v(hh') = 0$ and 
$v(hh' - 1) > 0$ where $h' = f'_\ell / s^\ell$. This means that $\In_v(h)$ is a unit in $\gr_v(\k[X_B])$ as required.  
\end{proof}

Now since $A$ and hence $\gr_{\tilde{v}}(A)$ are $(\k^* \times G)$-algebras, we can give a criterion for when the ideal generated by an element in $\gr_{\tilde{v}}(A)$ 
contains a power of $\In_{\tilde{v}}(s)$, in terms of its isotypic decomposition.
Take $f \in A$ and let us write $f = \sum_{k, \lambda} f_{k, \lambda}$ where $f_{k, \lambda}$ is the $(k, \lambda)$-isotypic component of $f$ in $A$, i.e. $f_{k, \lambda}$ lies in the 
$\lambda$-isotypic component of $A_k$.
\begin{Prop}  \label{prop-conditions-1-2}
Let $\tilde{v} \in \tilde{\V}_A$. Then $\In_{\tilde{v}}(f)$ divides a power of $\In_{\tilde{v}}(s)$ if and only if the following conditions are satisfied:
\begin{itemize}
\item[(1)] The minimum $\min\{ \langle \tilde{v}, (k, \lambda)\rangle \mid f_{k, \lambda} \neq 0 \}$ is attained at a unique $(k_0, \lambda_0)$.
\item[(2)] Moreover, there exists $(\ell_0, \mu_0) \in \tilde{\Lambda}^+_A$ and $f'_{\ell_0, \mu_0} \in A_{\ell_0, \mu_0}$ such that 
$\In_{\tilde{v}}(f_{k_0, \lambda_0}) \In_{\tilde{v}}(f'_{\ell_0, \mu_0}) = \In_{\tilde{v}}(s^{k_0 + \ell_0})$. In particular, $(k_0, \lambda_0)$ lies in the semigroup generated by $-\tilde{\Lambda}^+_A$ and $(1, \theta)$, where $\theta$ is the $B$-weight of the section $s$. 
\end{itemize} 
\end{Prop}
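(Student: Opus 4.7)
The strategy exploits the multiplication in the partial horospherical contraction $\gr_{\tilde v}(A) \cong A_\sigma$ (Proposition \ref{prop-partial-horo-degeneration-gr_v}), where $\sigma \subset \tilde{\V}_A$ is the face whose relative interior contains $\tilde v$. Recall that in $A_\sigma$ the product $W_{(k, \lambda)} \cdot W_{(k', \lambda')}$ lies in $\bigoplus_\gamma W_{(k+k', \gamma)}$ with $\gamma \geq_X \lambda + \lambda'$ and $(\lambda + \lambda') - \gamma$ a nonnegative combination of the spherical roots in the subset $S \subset \{\beta_1, \ldots, \beta_\ell\}$ defining $\sigma$.

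The direction ($\Leftarrow$) is immediate: by (1), $\In_{\tilde v}(f) = \In_{\tilde v}(f_{k_0, \lambda_0})$ is a single isotypic component, and (2) exhibits an explicit factorization $\In_{\tilde v}(f) \cdot \In_{\tilde v}(f'_{\ell_0, \mu_0}) = \In_{\tilde v}(s)^{k_0 + \ell_0}$ of a power of $\In_{\tilde v}(s)$.

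For ($\Rightarrow$), suppose $\In_{\tilde v}(f) \cdot g = \In_{\tilde v}(s)^N$ in $\gr_{\tilde v}(A)$; by homogeneity we may take $g$ to have single $\tilde v$-degree $-\tilde v(f)$. Fix a total order $\succ$ on $\tilde \Lambda_A$ refining $\geq_X$ and satisfying Assumption \ref{ass-total-order}, and set
\[
(k_0, \lambda_0) := \min_{\succ} T, \qquad (k'_0, \lambda'_0) := \min_{\succ} \supp(g),
\]
where $T = \{(k, \lambda) : f_{k, \lambda} \neq 0, \, \langle \tilde v, (k, \lambda)\rangle = \tilde v(f)\}$. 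A leading-term computation using the product structure above together with the fact that $\succ$ refines $\geq_X$ identifies the $\succ$-minimum isotypic component of $\In_{\tilde v}(f) \cdot g$ with the Cartan product $\In_{\tilde v}(f_{k_0, \lambda_0}) \cdot g_{(k'_0, \lambda'_0)}$ sitting in $W_{(k_0 + k'_0, \lambda_0 + \lambda'_0)}$. Equating this with $\In_{\tilde v}(s)^N \in W_{(N, N\theta)}$ forces $(k_0, \lambda_0) + (k'_0, \lambda'_0) = (N, N\theta)$. Setting $(\ell_0, \mu_0) := (k'_0, \lambda'_0) \in \tilde \Lambda_A^+$ and choosing $f'_{\ell_0, \mu_0} \in A_{\ell_0, \mu_0}$ to be a lift of $g_{(k'_0, \lambda'_0)}$ under the $G$-module identification $A_\sigma \cong A$ supplies the candidate witness for (2); the ``in particular'' clause is the resulting relation $(k_0, \lambda_0) = (k_0 + \ell_0)(1, \theta) - (\ell_0, \mu_0)$.

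What remains is to show $T = \{(k_0, \lambda_0)\}$ (condition (1)) and to upgrade the Cartan identity to the exact equality required in (2). If some $(k_1, \lambda_1) \in T$ were distinct from $(k_0, \lambda_0)$, then $(k_1, \lambda_1) \succ (k_0, \lambda_0)$ and the isotypic weight $(k_1 + k'_0, \lambda_1 + \lambda'_0)$ strictly $\succ$-exceeds $(N, N\theta)$; the corresponding component of $\In_{\tilde v}(f) \cdot g$ must vanish, yet it contains the nonzero Cartan cross-term $F_{(k_1, \lambda_1)} \cdot g_{(k'_0, \lambda'_0)}$. One then argues that the restriction of admissible shifts in $A_\sigma$ to spherical roots in the proper subset $S \subset \{\beta_1, \ldots, \beta_\ell\}$, combined with the $\succ$-minimality of $(k_0, \lambda_0)$ and $(k'_0, \lambda'_0)$, leaves no cancelling partners for this cross-term, contradicting the required vanishing and forcing $T = \{(k_0, \lambda_0)\}$. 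A parallel accounting forces the non-Cartan pieces of $\In_{\tilde v}(f_{k_0, \lambda_0}) \cdot \In_{\tilde v}(f'_{\ell_0, \mu_0})$ to vanish, yielding the exact equality $\In_{\tilde v}(f_{k_0, \lambda_0}) \cdot \In_{\tilde v}(f'_{\ell_0, \mu_0}) = \In_{\tilde v}(s)^{k_0 + \ell_0}$ in (2). The main technical obstacle is precisely this uniqueness analysis, which demands a careful enumeration of which pairs in $T \times \supp(g)$ can contribute to each isotypic component of the product above $(N, N\theta)$ in the partial horospherical contraction.
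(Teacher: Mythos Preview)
Your approach and the paper's are fundamentally the same idea—pass to the horospherical contraction via a total order refining the spherical dominant order—but the paper packages the $(\Rightarrow)$ direction much more efficiently and thereby sidesteps exactly the obstacle you flag.

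Where you set up a component-by-component cancellation analysis in $A_\sigma$ (your ``no cancelling partners'' argument for the cross-term $F_{(k_1,\lambda_1)}\cdot g_{(k_0',\lambda_0')}$), the paper instead passes all the way to $\gr_{\succ_{\tilde v}}(A)\cong A_{\hc}$, which is a $\tilde\Lambda_A^+$-\emph{graded} domain (graded as a ring, not merely as a $G$-module). It then invokes a one-line principle: in a graded domain, if a product of two elements is homogeneous, both factors must be homogeneous. Since $\bar s^m$ sits in the single isotypic piece $W_{(m,m\theta)}$, this forces the images $\bar f,\bar f'$ of $\In_{\tilde v}(f),\In_{\tilde v}(f')$ to each consist of a single isotypic component, which is precisely condition (1) together with the data needed for (2). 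No enumeration of contributing pairs, no tracking of $S$-shifts.

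Your leading-term step (identifying $(k_0,\lambda_0)+(k_0',\lambda_0')=(N,N\theta)$ via Cartan products of $\succ$-minima) is correct and is effectively the ``$\In_\succ$ is multiplicative because $A_{\hc}$ is a domain'' observation. What you are missing is that this observation upgrades to the full graded-domain statement, which makes the uniqueness of $(k_0,\lambda_0)$ automatic rather than something to be argued by exclusion. Concretely: rather than trying to show a specific higher isotypic component has no cancelling partner in $A_\sigma$, note that both the $\succ$-minimum and the $\succ$-maximum of the support of a product in $A_{\hc}$ are additive (Cartan multiplication), so a product supported at a single weight forces each factor to have $\succ$-min equal to $\succ$-max. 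This is exactly the graded-domain argument, and it replaces your ``main technical obstacle'' entirely.

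For the $(\Leftarrow)$ direction you and the paper agree.
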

\begin{proof}
As in Section \ref{sec-spherical-Grobner-fan}, let $\succ$ be a total order refining the spherical dominant order for $A$ regarded as a $(\k^* \times G)$-algebra. We then consider the 
total order $\succ_{\tilde{v}}$ and the associated graded algebra $\gr_{\succ_{\tilde{v}}}(A)$ which is a $\tilde{\Lambda}^+_A$-graded algebra.
Now let us assume that $\In_{\tilde{v}}(f)$ divides a power of $\In_{\tilde{v}}(s)$. Thus there is $f' \in A$ and $m>0$ such that 
\begin{equation}  \label{equ-f-f'-s}
\In_{\tilde{v}}(f)~ \In_{\tilde{v}}(f') = \In_{\tilde{v}}(s)^m. 
\end{equation}
Let $f' = \sum_{\ell, \mu} f'_{\ell, \mu}$ be the isotypic decomposition of $f'$. 
We note that $s$ is a $(\k^* \times B)$-eigensection with weight $(1, \theta)$ and hence is 
its own isotypic decomposition. Let $\bar{f}$, $\bar{f'}$ and $\bar{s}$ be the 
elements in $\gr_{\succ_{\tilde{v}}}(A)$ represented by the same isotypic decompositions as $\In_{\tilde{v}}(f)$, $\In_{\tilde{v}}(f')$ and $s$ respectively.  
From \eqref{equ-f-f'-s} we know that $\bar{f}\bar{f'} = \bar{s}^m$. But the algebra $\gr_{\succ_{\tilde{v}}}(A)$ is a $\tilde{\Lambda}^+_A$-graded algebra and 
$\bar{s}^m$ is a homogeneous element in this grading. It follows that $\bar{f}$ and $\bar{f'}$ are also $\tilde{\Lambda}^+_A$-homogeneous (because in a graded domain 
the product of a non-homogeneous element with any other element cannot be homogeneous). This shows that $\bar{f}$ and $\bar{f'}$ each consist of a single isotypic component.
This readily implies the conditions (1) and (2). Conversely, assume that the conditions (1) and (2) are satisfied. One then verifies that 
$\In_{\tilde{v}}(f)~ \In_{\tilde{v}}(f'_{\ell_0, \mu_0})$ is equal to $\In_{\tilde{v}}(s)^m$ in the algebra $\gr_{\tilde{v}}(A)$, where $m = k_0 + \ell_0$. This finishes the proof.
\end{proof}

\begin{Rem}  \label{rem-conditions-piecewise-lin}
We observe that given $f \in A$, the conditions (1) and (2) in Proposition \ref{prop-conditions-1-2} are piecewise linear conditions on $v$. In fact, for each $(k_0, \lambda_0)$ appearing in $f$ the condition that 
$\min\{\langle \tilde{v}, (k, \lambda)\rangle \mid f_{k, \lambda} \neq 0 \}$ is attained only at $(k_0, \lambda_0)$, is a piecewise linear condition on $\tilde{v}$. Also (2) says that we want the minimum in (1) to be attained only at those $(k_0, \lambda_0)$ which satisfy a certain condition as stated in (2). On the other hand, by Proposition \ref{prop-tilde-v-valuation} we know that the map $v \mapsto \tilde{v}$ is a linear map from $\V_{G/H} \to \tilde{\V}_A$. This shows that (1) and (2) together impose a piecewise linear condition on $v$. 
\end{Rem}

\subsection{Spherical tropical bases} \label{subsec-spherical-trop-basis}
In this section we show that any ideal $J \subset \k[X_B]$ possesses a finite spherical tropical basis (as defined below). In other words, the spherical tropical variety $\trop_B(J)$ is an intersection of a finite number of spherical tropical hypersurfaces. 



\begin{Def}[Spherical tropical basis]   \label{def-sph-trop-basis}
Let $J \subset \k[X_B]$ be an ideal. A set $\mathcal{T}=\mathcal{T}(J) \subset J$ is a {\it spherical tropical basis} for $J$ if for every $v \in \V_{G/H}$ the following holds: $v \in \trop_B(J)$, i.e.
$\In_v(J) \subset \gr_v(\k[X_B])$ does not contain a unit element, if and only if for all $f \in \mathcal{T}$, $\In_v(f)$ is not a unit element.
\end{Def}

\begin{Th}[Existence of a finite spherical tropical basis] \label{th-sph-trop-basis}
Every ideal $J \subset \k[X_B]$ has a finite spherical tropical basis.
\end{Th}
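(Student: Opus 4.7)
The plan is to adapt the classical proof of the finite tropical basis theorem (cf.\ \cite[Theorem 2.6.6]{Maclagan-Sturmfels}) to the spherical setting, leveraging the fan structure provided by Theorem \ref{th-fan-st-sph-trop-var} and the piecewise linear description of the ``unit'' property from Propositions \ref{prop-unit-gr_v-gr-v-tilde} and \ref{prop-conditions-1-2}. First I would invoke Theorem \ref{th-fan-st-sph-trop-var} to get a finite rational polyhedral fan structure on $\phi(\V_{G/H})$, coming from the spherical Gr\"obner fan $\textup{GF}(\tilde{J})$ of the homogenization $\tilde J \subset A$, such that $\trop_B(J)$ is the support of a subfan. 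Then the complement $\V_{G/H} \setminus \trop_B(J)$ is covered by finitely many relatively open cones $C_1, \ldots, C_N$ of this fan, and our task reduces to producing, for each $C_i$, one (or finitely many) element(s) of $J$ whose initial form at every $v \in C_i$ is a unit in $\gr_v(\k[X_B])$.

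For each cone $C_i$, pick a representative $v_i$ in the relative interior and consider $\In_{v_i}(J) \subset \gr_{v_i}(\k[X_B])$, which by assumption contains a unit. By a standard lifting argument (write a unit as a combination $\sum_j r_j \In_{v_i}(f_j)$ with $f_j \in J$ and lift the $r_j$'s to $\k[X_B]$ while tracking valuations), one extracts an element $h_i \in J$ whose initial form $\In_{v_i}(h_i)$ is itself a unit. The subtle point is that a specific $h_i$ may fail to have unit initial form at other points of the \emph{same} open cone $C_i$, since only the initial ideal, not the initial form of a particular element, is constant on an open cone of the Gr\"obner fan.

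To remedy this, I would use the piecewise linear description from Remark \ref{rem-conditions-piecewise-lin}: by Propositions \ref{prop-unit-gr_v-gr-v-tilde} and \ref{prop-conditions-1-2}, the locus $\{ v \in \V_{G/H} \mid \In_v(h_i) \textup{ is a unit in } \gr_v(\k[X_B]) \}$ is cut out by finitely many linear equalities and strict inequalities in $\tilde v = \phi(v)$. Intersecting with the original fan thus gives a finer (still finite) rational polyhedral decomposition of each $C_i$ into subcones on which the property of $\In_v(h_i)$ being a unit is constant. On any subcone where $h_i$ fails, repeat the witness-producing step to obtain an additional $h_i' \in J$. Because the number of distinct initial ideals is finite (Theorem \ref{th-initial-ideal-finite}), this iterative refinement terminates after finitely many steps, producing a finite collection $\mathcal{H}$ of witnesses.

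Finally, set $\mathcal{T} = \G \cup \mathcal{H}$, where $\G \subset J$ is any finite generating set. If $v \in \trop_B(J)$, then by definition $\In_v(f)$ is not a unit for any $f \in J$, hence not for any $f \in \mathcal{T}$. Conversely, if $v \notin \trop_B(J)$, then $v$ lies in the relative interior of some subcone of the refined fan, and by construction some $h \in \mathcal{H}$ has $\In_v(h)$ a unit. The main obstacle throughout is the third paragraph: the fact that the initial form of a single element of $J$ may not be constant on an open cone of $\textup{GF}(\tilde J)$ forces the iterative refinement, and one must verify carefully that this process halts, which uses the finiteness of initial ideals together with the polyhedrality of the unit locus.
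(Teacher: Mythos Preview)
Your overall architecture matches the paper's: pass to the homogenization $\tilde J\subset A$, use the spherical Gr\"obner fan to cover $\V_{G/H}\setminus\trop_B(J)$ by finitely many open cones, and produce a witness in $J$ for each cone. The divergence is in how you produce the witness, and that is where your argument has a genuine gap.

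The paper does \emph{not} iterate. For a cone $\sigma\subset\textup{GF}(\tilde J)$ outside $\trop_B(J)$ it picks $h\in J$ with $\In_v(h)$ a unit at one interior point, lifts $h$ to $f\in A_k$, isolates the single isotypic component $f_{\lambda_0}$ responsible for the unique minimum in Proposition~\ref{prop-conditions-1-2}(1), and then \emph{reduces} $f_{\lambda_0}$ modulo the generic ($\tilde\Lambda_A^+$-homogeneous) initial ideal $\In_{\tilde v'}(\tilde J)$ to obtain $F=f_{\lambda_0}-F'\in\tilde J$ whose other isotypic components lie outside that initial ideal. Because $\In_{\tilde v''}(\tilde J)$ is constant on the open cone and is $\tilde\Lambda_A^+$-homogeneous after the generic perturbation, one checks $\In_{\tilde v''}(F)=f_{\lambda_0}$ for \emph{every} $\tilde v''$ in the open cone; then Propositions~\ref{prop-unit-gr_v-gr-v-tilde} and~\ref{prop-conditions-1-2} give that $\pi(F)\in J$ has unit initial form throughout the cone. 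So one carefully chosen witness per cone suffices.

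Your proposal instead takes an arbitrary witness $h_i$ at one point, observes via Remark~\ref{rem-conditions-piecewise-lin} that its unit locus is a polyhedral open set, refines the cone accordingly, and repeats on the subcones where $h_i$ fails. The problem is the termination claim. You invoke Theorem~\ref{th-initial-ideal-finite}, but the refinement you perform is governed by the normal fan of $\Delta(f_i)$ (the lift of $h_i$), not by the initial ideals of $J$ or $\tilde J$; the finiteness of initial ideals gives no bound on how many such normal fans you may need to superimpose. Concretely, the non-unit locus of $h_i$ inside $C_i$ can contain full-dimensional open subcones (corresponding to vertices of $\Delta(f_i)$ that fail condition~(2)), so the dimension of the unhandled region need not drop, and nothing prevents the process from producing an infinite increasing chain of refinements. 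A compactness argument does not rescue this either, since the relative interior of $C_i$ is not compact even projectively.

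The fix is precisely the reduction step the paper uses: rather than accepting an arbitrary $h_i$ and iterating, modify the lift so that its initial form is a \emph{single} isotypic component on the whole cone. That replaces your inductive refinement with a one-shot construction and removes the need for any termination argument.
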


\begin{proof}
Let $J \subset \k[X_B]$ be an ideal. The strategy of the proof is as follows. Let $v$ be a $G$-invariant valuation which lies in $\trop_B(J)$. This means that 
the initial ideal $\In_v(J) \subset \gr_v(\k[X_B])$ contains a unit element. As before let $\tilde{v}$ be the valuation on the graded algebra $A$ associated to $v$ (see \eqref{equ-tilde-v}). Also let $I \subset A$ be the homogenization of $J$, i.e. the ideal generated by homogeneous elements in $\pi^{-1}(J)$.  
We construct a finite spherical tropical basis for $J$ using the fact that there are finite number of cones in 
the spherical Gr\"obner fan of $I$ (Theorem \ref{th-sph-Grobner-fan}). The valuation $\tilde{v}$ lies in the relative interior of some cone $\sigma$ 
in the spherical Gr\"obner fan of $I$. Now we would construct an element $F_{\sigma} \in J$ such that the following holds: 
for each valuation $v' \in \V_{G/H}$ for which $\tilde{v}'$ lies in the relative interior of the cone $\sigma$, the element $\In_{v'}(F_\sigma)$ is a unit in $\gr_{v'}(\k[X_B])$.
To get a spherical tropical basis $\mathcal{T}$ it then suffices to define:
$$\mathcal{T} = \{ F_{\sigma} \mid \sigma \textup{ is a cone in } \textup{GF}(I) \textup{ that intersects the image of } \trop_B(J) \}.$$
In the rest of the proof we explain how to construct $F_{\sigma} \in J$ with the desired property.

Let $\tilde{v}' = \tilde{v} + \epsilon \tilde{w}$
be a valuation in $\tilde{\V}_{A}$ for $\tilde{w}$ in the linear span of $\tilde{\V}_{A}$ and  $\epsilon > 0$. By Proposition \ref{prop-generic-initial-ideal-is-monomial}, 
if $\tilde{w}$ is generic and $\epsilon$ is sufficiently small, then the graded algebra $\gr_{\tilde{v}'}(A)$ is the horospherical contraction $A_{\hc}$ of $A$ and 
the initial ideal $\In_{\tilde{v}'}(I)$ is a $\tilde{\Lambda}_{A}^+$-homogeneous ideal. That is, 
if $q = \sum_{i, \lambda} q_{i, \lambda}$ is the isotypic decomposition of an element $q \in \In_{\tilde{v}'}(I)$ then $q_{i, \lambda} \in \In_{\tilde{v}'}(I)$
for all $i$, $\lambda$. 

Now let $h \in J$ be such that $\In_v(h)$ is a unit element in $\gr_v(\k[X_B])$. Let $f \in A_k$, for some $k>0$, be such that $f / s^k = h$. 
Let us write $f = \sum_\lambda f_{\lambda}$ as sum of its isotypic components. 
By Proposition \ref{prop-unit-gr_v-gr-v-tilde} we know that $f$ satisfies the conditions (1) and (2) in Proposition \ref{prop-conditions-1-2}.
Let $f_{\lambda_0}$ be the isotypic component of $f$ where the unique minimum $\min\{ \langle \tilde{v}, (k, \lambda) \rangle \mid f_\lambda \neq 0 \}$ is attained. 

We can find $F' \in A$ such that $F = f_{\lambda_0} - F' \in I$ {and no isotypic component of $F'$ lies in the generic initial ideal $\In_{\tilde{v}'}(I)$}. We claim that for any valuation $v'' \in \V_{G/H}$ such that $\tilde{v}''$ lies in the relative interior of $\sigma$ we have 
$\In_{\tilde{v}''}(F) = f_{\lambda_0}$. First we note that since $\In_{\tilde{v}''}(I) = \In_{\tilde{v}'}(I)$, the initial element $\In_{\tilde{v}''}(F)$ can 
be regarded as an element of $\In_{\tilde{v}'}(I)$. Now if $\In_{\tilde{v}''}(F) \neq f_{\lambda_0}$ then it means that a nonzero sum of isotypic components of 
$F'$ lies in $\In_{\tilde{v}'}(I)$. But this initial ideal is $\tilde{\Lambda}_{A}^+$-homogeneous and hence it follows that at least some of the isotypic components of $F'$ are in $\In_{\tilde{v}'}(I)$ 
which contradicts the choice of $F'$. Thus we conclude that $\In_{\tilde{v}''}(F) = f_{\lambda_0}$. 
Finally, combining Propositions \ref{prop-conditions-1-2} and \ref{prop-unit-gr_v-gr-v-tilde} (applied to $F$ and the valuation $\tilde{v}''$) 
we see that  that $\pi(F) \in \In_{v''}(J)$ is a unit element in $\gr_{v''}(J)$ as desired.
\end{proof}

\begin{Cor} \label{cor-sph-trop-basis}
The spherical tropical variety of an ideal is a finite intersection of spherical tropical hypersurfaces. 
\end{Cor}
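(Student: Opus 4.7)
The plan is to deduce this immediately from Theorem \ref{th-sph-trop-basis}, which guarantees the existence of a finite spherical tropical basis $\mathcal{T} = \{F_1, \ldots, F_s\} \subset J$. The key observation is simply that the defining condition of a spherical tropical basis (Definition \ref{def-sph-trop-basis}) can be rephrased hypersurface by hypersurface: the clause ``$\In_v(f)$ is not a unit element of $\gr_v(\k[X_B])$'' is, by Definition \ref{def-sph-trop-var-X_B} applied to the principal ideal $\langle f \rangle$, exactly the statement that $v \in \trop_B(f)$.

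Concretely, first I would invoke Theorem \ref{th-sph-trop-basis} to fix a finite spherical tropical basis $\mathcal{T} = \{F_1, \ldots, F_s\}$ for $J$. Next, I would unpack both containments of the set equality
\[
\trop_B(J) \;=\; \bigcap_{i=1}^{s} \trop_B(F_i).
\]
For the inclusion ``$\subset$'', take $v \in \trop_B(J)$; by Definition \ref{def-sph-trop-basis}, for every $F_i \in \mathcal{T}$, $\In_v(F_i)$ is not a unit in $\gr_v(\k[X_B])$, so the principal initial ideal $\In_v(\langle F_i \rangle) = \langle \In_v(F_i) \rangle$ is not the whole ring, giving $v \in \trop_B(F_i)$. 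For ``$\supset$'', take $v \in \bigcap_i \trop_B(F_i)$; then no $\In_v(F_i)$ is a unit, so again by Definition \ref{def-sph-trop-basis} we conclude $v \in \trop_B(J)$.

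There is no real obstacle here: the corollary is essentially a translation of Theorem \ref{th-sph-trop-basis} into the language of tropical hypersurfaces, and the only thing to check is that the non-unit condition on $\In_v(f)$ (for a single generator) coincides with membership in the spherical tropical hypersurface $\trop_B(f)$, which is immediate from the definitions. The finiteness assertion comes for free from the finiteness of $\mathcal{T}$.
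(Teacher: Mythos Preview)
Your proposal is correct and is exactly the immediate deduction the paper intends: the corollary is stated without proof precisely because it is a direct unpacking of Definition \ref{def-sph-trop-basis} together with Theorem \ref{th-sph-trop-basis}, and your argument spells this out faithfully. The only minor point worth noting is that the identity $\In_v(\langle f \rangle) = \langle \In_v(f) \rangle$ (so that ``$\In_v(f)$ is not a unit'' coincides with ``$v \in \trop_B(f)$'') follows from $v$ being a valuation, hence multiplicative on initial forms; you implicitly use this and it is indeed immediate.
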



\subsection{Spherical tropical variety of a subscheme in $G/H$}  \label{subsec-sph-trop-var-G/H}

Finally, we define the spherical tropical variety of a subscheme in the spherical homogeneous space $G/H$. As before, let $L$ be a very ample $G$-linearized line bundle on a projective spherical embedding $\overline{X}$ of $G/H$. Let $A = \bigoplus_{i \geq 0} H^0(\overline{X}, L^{\otimes i})$ be the algebra of sections of $L$. 

\begin{Def}[Spherical tropical variety of an ideal in an algebra of sections]   \label{def-sph-trop-var-G/H}
Let $I \subset A$ be an ideal. We define the {\it spherical tropical variety of $I$} to be the set of all $v \in \V_{G/H}$ for which there exists a Borel subgroup $B$ (depending on $v$) such that $\In_{\tilde{v}}(I)$ does not contain any $B$-eigensections.
\end{Def}

Proposition \ref{prop-sph-trop-var-G/H-vs-X_B} below shows how $\trop(I)$ encodes the behavior at infinity of the subscheme defined by $I$, and moreover, how it is related to the tropical varieties of ideals in coordinate rings of Borel open orbits. 

\begin{Rem}  \label{rem-B-eigensectio-like-monomial}
In the spherical setting, we regard the notion of a $B$-eigensection as a generalization of the notion of a monomial from the classical torus case.  
\end{Rem}

Let $Y \subset G/H$ be a subvariety and let $I = I(Y)$ be the ideal of sections in $A$ that vanish on $Y$. Also for a Borel subgroup $B$ let $J_B$ denote the ideal of regular functions in $\k[X_B]$ vanishing on $Y \cap X_B$. Recall that $X_v$ denotes the spherical embedding corresponding to the single ray generated by $v$. The unique $G$-stable divisor in $X_v$ is denoted by $D_v$.
\begin{Prop} \label{prop-sph-trop-var-G/H-vs-X_B}
With notation as above, $v \in \trop(I)$ if and only if the closure of $Y$ in $X_v$ intersects $D_v$. Moreover, we have the following:
\begin{equation} \label{equ-sph-trop-var}
\trop(I) = \bigcup_B \trop_B(J_B),
\end{equation}
where the union is over all the Borel subgroups $B \subset G$. (In Proposition \ref{prop-sph-trop-var-finite-B} below we show that 
there is a finite collection of Borel subgroups, independent of the choice of $Y$, which suffice for defining the righthand side of \eqref{equ-sph-trop-var}.)
\end{Prop}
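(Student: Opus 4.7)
The plan is to prove both assertions of the proposition by reducing each characterization to the common geometric condition that $\overline{Y}^{X_v} \cap D_v \neq \emptyset$, where $\overline{Y}^{X_v}$ denotes the closure of $Y$ in $X_v$. The argument has three steps: (i) relate $\bigcup_B \trop_B(J_B)$ to this geometric condition using Theorem \ref{th-sph-trop-var-X_v_B} Borel-by-Borel, (ii) relate $\trop(I)$ to the same condition via the $B$-eigensection definition, and (iii) combine to obtain both the equality and the geometric characterization.

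For step (i), the inclusion $\bigcup_B \trop_B(J_B) \subset \{v : \overline{Y}^{X_v} \cap D_v \neq \emptyset\}$ is immediate from Theorem \ref{th-sph-trop-var-X_v_B} together with the containment $D'_v \subset D_v$. For the reverse inclusion I would use two openness facts on the flag variety of Borel subgroups: for any fixed $p \in D_v$, the set of Borels $B$ such that $p$ lies in the open $B$-orbit $D'_v$ of $D_v$ is a nonempty open subset (since $D_v$ is a single $G$-orbit, any point is in the open orbit of some Borel, obtained by $G$-conjugation); and for irreducible $Y$, since $G/H = \bigcup_B X_B$, the set of Borels $B$ with $Y \cap X_B$ dense in $Y$ is also nonempty open. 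Two nonempty opens in the irreducible flag variety must meet, producing a Borel $B$ satisfying both conditions. Density of $Y \cap X_B$ in $Y$ gives $\overline{Y \cap X_B}^{X_{v,B}} = \overline{Y}^{X_v} \cap X_{v,B}$, which then contains the point $p \in D'_v$, placing $v \in \trop_B(J_B)$ by Theorem \ref{th-sph-trop-var-X_v_B}.

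For step (ii), I would use the surjection $\pi : A \to \k[X_B]$ and the linear embedding $v \mapsto \tilde{v}$ to establish the Borel-by-Borel equivalence: $\In_v(J_B)$ has a unit iff $\In_{\tilde{v}}(I)$ contains a $B$-eigensection, whenever $Y \cap X_B$ is dense in $Y$ (so that $\pi(I) = J_B$). Proposition \ref{prop-unit-gr_v-gr-v-tilde} lifts a unit in $\gr_v(\k[X_B])$ to a homogeneous $f \in I$ whose initial form $\In_{\tilde{v}}(f)$ divides a power of the canonical $B$-eigensection $s$; Proposition \ref{prop-conditions-1-2} then forces $\In_{\tilde{v}}(f)$ to be a single isotypic component whose Cartan product with another such component equals $s^m$, and multiplicity-freeness of $\gr_{\tilde{v}}(A)$ together with the one-dimensionality of highest weight lines in each isotypic component will be used to conclude that $\In_{\tilde{v}}(f)$ is itself a $B$-eigenvector. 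Conversely, any $B$-eigensection $\sigma \in A$ divides some power $s^N$, because $\textup{div}(s)$ contains every $B$-stable prime divisor with positive coefficient while $\textup{div}(\sigma)$ is supported on $B$-stable divisors, so a $B$-eigensection in $\In_{\tilde{v}}(I)$ projects via the induced map on associated gradeds to a unit in $\In_v(J_B)$. Borels $B$ with $Y \cap X_B = \emptyset$ contribute to neither side: $Y$ then lies in a $B$-stable divisor, so $I$ already contains a $B$-eigensection (hence so does $\In_{\tilde{v}}(I)$), while $J_B = \k[X_B]$ and $\trop_B(J_B)$ is empty.

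The main obstacle is the algebraic step in (ii), namely producing a $B$-eigenvector from an isotypic component whose Cartan product with some other isotypic element equals $s^m$. Although the one-dimensionality of the highest weight line in each isotypic component makes this intuitively plausible, a careful weight-decomposition argument is required, since a priori a pure tensor whose Cartan image is a highest weight vector may have lower-weight contributions that cancel. Once this step is handled, combining (i) and (ii) yields $v \in \trop(I) \Leftrightarrow v \in \bigcup_B \trop_B(J_B) \Leftrightarrow \overline{Y}^{X_v} \cap D_v \neq \emptyset$, which proves both assertions of the proposition.
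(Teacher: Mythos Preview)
Your overall plan matches the paper's: both arguments reduce everything to the geometric condition $\overline{Y}\cap D_v\neq\emptyset$ via Theorem~\ref{th-sph-trop-var-X_v_B}, and link $\trop(I)$ to $\bigcup_B \trop_B(J_B)$ through Proposition~\ref{prop-unit-gr_v-gr-v-tilde}. However, in two places you work harder than necessary, and the second one is exactly the obstacle you flag.

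In step~(i), the flag-variety intersection argument is superfluous. Given $x\in\overline{Y}\cap D_v$, choose any Borel $B$ with $x\in D'_v$. Since $X_{v,B}$ is open in $X_v$ and $Y\cap X_{v,B}=Y\cap X_B$ (because $Y\subset G/H$), one has directly
\[
x\in\overline{Y}^{\,X_v}\cap X_{v,B}=\overline{Y\cap X_B}^{\,X_{v,B}},
\]
so $v\in\trop_B(J_B)$ by Theorem~\ref{th-sph-trop-var-X_v_B}. No density of $Y\cap X_B$ in $Y$ is needed, and in particular $Y$ need not be irreducible. This is exactly how the paper argues.

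In step~(ii), the paper bypasses your obstacle entirely. The key observation is that $\In_{\tilde v}(I)$ contains a $B$-eigensection if and only if it contains some power of $\In_{\tilde v}(s)$: since $s$ vanishes on every $B$-stable divisor of $\overline{X}$, any $B$-eigensection $\sigma\in A$ divides a power of $s$, hence $\bar\sigma\in\In_{\tilde v}(I)$ forces $\In_{\tilde v}(s)^N\in\In_{\tilde v}(I)$; the converse is trivial as $s$ is a $B$-eigensection. Now Proposition~\ref{prop-unit-gr_v-gr-v-tilde} says that a unit in $\In_v(J_B)$ comes from some homogeneous $f\in I$ with $\In_{\tilde v}(f)\mid\In_{\tilde v}(s)^m$, and therefore $\In_{\tilde v}(s)^m\in\In_{\tilde v}(I)$. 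You never need to show that the divisor $\In_{\tilde v}(f)$ is itself a $B$-eigenvector: it is enough that its multiple $\In_{\tilde v}(s)^m$ is one. This removes precisely the weight-decomposition difficulty you identified.
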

\begin{proof}
Let $v \in \V_{G/H}$. We note that since by assumption $s$ vanishes on all the $B$-stable divisors in $\overline{X}$, the ideal $\In_{\tilde{v}}(I)$ contains a $B$-eigensection if and only if it contains a power of $\In_{\tilde{v}}(s)$. The equality \eqref{equ-sph-trop-var} now follows from Proposition \ref{prop-unit-gr_v-gr-v-tilde}. It remains to prove the first assertion. For $v \in \V_{G/H}$ let $\overline{Y}$ denote the closure of $Y$ in $X_v$. Suppose $v \in \trop(I)$. Then, by Proposition \ref{prop-unit-gr_v-gr-v-tilde}, there exists a Borel subgroup $B$ such that 
$v \in \trop_B(J_B)$. Theorem \ref{th-sph-trop-var-X_v_B} then shows that the closure of 
$Y \cap X_B$ in $X_{v, B}$ intersects $D_v'$. This readily implies that $\overline{Y}$ intersects $D_v$. 
Conversely, let $x \in \overline{Y} \cap D_v$. Clearly there exists a Borel subgroup $B$ such that $x$ lies 
in the open $B$-orbit $D_v' \subset D_v$. Consider the affine open subset $X_{v, B} \subset X_v$. It contains the open $B$-orbit $X_B$.
Since $X_{v, B}$ is open, $x$ lies in the closure of $Y \cap X_B$ in $X_{v, B}$. Again from Theorem \ref{th-sph-trop-var-X_v_B} we see that $v \in \trop_B(J_B)$  and hence $v \in \trop(Y)$.
\end{proof}

Next, we show that the union in the righthand side of \eqref{equ-sph-trop-var} is finite. The proof uses the Luna-Vust theory of spherical embeddings. 
\begin{Prop} \label{prop-sph-trop-var-finite-B}
With notation as above, there exists a finite number of Borel subgroups $B_1, \ldots, B_s$ such that for any subscheme $Y$ in $G/H$ 
we have:
$$\trop(I) = \bigcup_{i=1}^s \trop_{B_i}(J_{B_i}),$$
where $I = I(Y) \subset A$ is the ideal of sections vanishing on $Y$, and $J_{B_i} \subset \k[X_{B_i}]$ is the ideal of $Y \cap X_{B_i}$.
\end{Prop}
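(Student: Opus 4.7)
The plan is to choose a finite collection $B_1, \ldots, B_s$ of Borel subgroups depending only on the homogeneous space $G/H$, arranged so that their open orbits cover not only $G/H$ but also every $G$-stable divisor $D_v$ arising in a simple spherical embedding $X_v$ of $G/H$. Granted this, the desired equality will follow from Proposition \ref{prop-sph-trop-var-G/H-vs-X_B} and Theorem \ref{th-sph-trop-var-X_v_B}.

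To construct the $B_i$, first I would fix a projective toroidal embedding $\overline{X}$ of $G/H$ whose colored fan is a sufficiently fine subdivision of $\V_{G/H}$, so that every $G$-orbit type arising as a $D_v$ ($v \in \V_{G/H}$) is represented by a $G$-orbit of $\overline{X}$ (this is possible by the Luna--Vust classification). Since $\overline{X}$ has only finitely many $G$-orbits, and each such orbit $Z$ is covered by its open $B$-orbits $Z^{B'}$ as $B$ ranges over all Borel subgroups of $G$, quasi-compactness of $Z$ yields finitely many Borels whose open orbits cover $Z$. Collecting these over all $G$-orbits of $\overline{X}$ gives the finite set $\{B_1, \ldots, B_s\}$. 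Combining this with Proposition \ref{prop-partial-horo-degeneration-gr_v} and the family structure of Remark \ref{rem-geo-realize-partial-horo}, within each face of $\V_{G/H}$ the divisors $D_v$ fit into a single $G$-equivariant family whose fibers are $G$-equivariantly isomorphic to a fixed orbit of $\overline{X}$; consequently, for every $v \in \V_{G/H}$, the open $B_i$-orbits cover $D_v$, namely $D_v = \bigcup_{i=1}^s D_v^{B_i'}$.

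Given this covering, let $v \in \trop(I)$. Proposition \ref{prop-sph-trop-var-G/H-vs-X_B} furnishes $x \in \overline{Y} \cap D_v$, and the covering lets me pick $i$ with $x \in D_v^{B_i'} = D_v \cap X_{v, B_i}$. Let $Y'$ be an irreducible component of $Y$ with $x \in \overline{Y'}$; since $x \in X_{v, B_i}$, the component $Y'$ is not contained in any $B_i$-stable divisor of $G/H$, so $Y' \cap X_{B_i}$ is open dense in $Y'$ and its closure in $X_{v, B_i}$ equals that of $Y'$. This closure contains $x \in D_v^{B_i'}$, so Theorem \ref{th-sph-trop-var-X_v_B} gives $v \in \trop_{B_i}(J_{B_i})$, as required.

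The hard part will be the transfer argument in the second paragraph: verifying that the divisors $D_v$ for all $v \in \V_{G/H}$ can be uniformly matched, in a way compatible with the Borel open-orbit structure, with a finite family of $G$-orbits in the fixed embedding $\overline{X}$. Making this precise requires careful use of the Luna--Vust classification of $G$-orbits in spherical embeddings together with the family of partial horospherical contractions $\mathfrak{X} \to \Spec(\k[\mathcal{T}_X \cap \Lambda_X])$ of Remark \ref{rem-geo-realize-partial-horo}.
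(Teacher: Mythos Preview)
Your overall strategy matches the paper's: fix a complete spherical embedding of $G/H$, choose finitely many Borel subgroups whose open orbits cover every $G$-orbit of that embedding, and then transfer this covering property to each $D_v$. The difference is in how the transfer is carried out.

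You propose to show that each $D_v$ is $G$-equivariantly \emph{isomorphic} to an orbit of the fixed embedding $\overline{X}$, by grouping the $D_v$ according to faces of $\V_{G/H}$ and appealing to the family of partial horospherical contractions from Remark~\ref{rem-geo-realize-partial-horo}. As you yourself note, this is the delicate step: one must arrange for the fan of $\overline{X}$ to contain a ray in the relative interior of every face of $\V_{G/H}$, and then verify that the family really produces $G$-equivariant isomorphisms of the closed orbits (Remark~\ref{rem-geo-realize-partial-horo} is stated for a quasi-affine $X$ and concerns $\Spec(A_\sigma)$ rather than $D_v$ directly, so some adaptation is needed).

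The paper's transfer is more direct. Instead of an isomorphism it uses only a $G$-equivariant \emph{morphism} $\psi\colon X_v \to X_\sigma$, supplied for free by the functoriality in the Luna--Vust classification (\cite[Theorem~4.1]{Knop-LV}): if $v$ lies in a cone $\sigma$ of the complete fan $\Sigma$, there is a canonical $G$-map $X_v \to X_\sigma \subset X_\Sigma$ taking $D_v$ onto the closed orbit $O_\sigma$. Given $x \in \overline{Y} \cap D_v$, one chooses $i$ with $\psi(x)$ in the open $B_i$-orbit of $O_\sigma$, and then the finiteness of $B_i$-orbits in the spherical variety $X_v$ forces $x$ itself to lie in the open $B_i$-orbit of $D_v$. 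This works for any complete spherical embedding $X_\Sigma$ (it need not be toroidal, nor does its fan need to be adapted to the face structure of $\V_{G/H}$), and it avoids having to identify orbit types across different embeddings.
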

\begin{proof}
Let $X_\Sigma$ be a complete spherical embedding of $G/H$ corresponding to a complete fan $\Sigma$. 
Choose a finite collection of Borel subgroups $B_1, \ldots, B_s$ such that for each $G$-orbit $O$ in $X_\Sigma$
the open orbits in $O$ corresponding to $B_1, \ldots, B_s$ cover the whole $O$.
Now take a valuation $v \in \V_{G/H}$. Since $\Sigma$ is a complete fan, there exists a unique cone $\sigma \in \Sigma$ such that $v$ lies in the relative interior of 
$\sigma$. The cone $\sigma$ corresponds to a simple spherical subvariety $X_\sigma \subset X_\Sigma$. Let $O_\sigma$ denote the unique closed $G$-orbit 
in $X_\sigma$ (note that $O_\sigma$ may not be closed in the larger variety $X_\Sigma$). 
By the Luna-Vust theorey (\cite[Theorem 4.1]{Knop-LV}), the inclusion $v \in \sigma$ gives a $G$-equivariant morphism $\psi: X_v \to X_\sigma$ and $\psi$ maps 
$D_v$ to $O_\sigma$. 
Let $\overline{Y}$ be the closure of $Y$ in $X_v$. 
Suppose that the scheme-theoretic intersection 
$\overline{Y} \cap D_v$ is nonempty and let $x$ be a closed point 
in $\overline{Y} \cap D_v$. Then $\psi(x)$ lies in $O_\sigma$. We know that there is $i$ such that $\psi(x)$ 
lies in the open $B_i$-orbit in $O_\sigma$. Since there are only a finite number of $B_i$-orbits we conclude that $x$ itself lies in the open $B_i$-orbit 
in $D_v$. This finishes the proof.
\end{proof}

\section{Spherical tropical varieties via tropicalization map} \label{sec-sph-trop-var-trop-map}
In this section we discuss construction of the spherical tropical variety via the spherical tropicalization map.

\subsection{Germs of curves and spherical tropicalization map} \label{subsec-tropicalization}
We start by recalling the notions of a germ of a curve and a formal curve (see for example \cite[Section 24]{Timashev}).
As usual $\k$ denotes the ground field. 
We let $\mathcal{O} = \k[[t]]$ denote the algebra of formal power series with coefficients in $\k$ and 
$\K = \k((t))$ its field of fractions, i.e. the field of formal Laurent series. If $f \in \K$ we denote by $\ord_t(f)$ the order of $t$ in the Laurent series $f$. 
Clearly $\ord_t$ is a $\z$-valued valuation on the field $\K$.

Let $X$ be a variety. A {\it germ of an algebraic curve} or simply a {\it germ of a curve} on $X$ is a pair $(\gamma, t_0)$ where $\gamma$ is a rational map 
from a smooth projective curve $\Gamma$ to $X$ and $t_0 \in \Gamma$ is a base point. One says $(\gamma, \theta_0)$ is {\it convergent} if $\gamma$ is regular at $\theta_0$. One can think of $\gamma$ as a point in $X(\k(\Gamma))$, i.e. a $\k(\Gamma)$-point on $X$. A {\it germ of a formal curve} or simply a {\it formal curve} $\gamma$ on $X$ is a $\K$-point of $X$. An $\mathcal{O}$-point on $X$ is called a {\it convergent formal curve}. The limit of a convergent formal curve is the point on $X(\k)$ obtained by setting $t=0$ in $\gamma$. 
It is natural to think of a formal curve as a parameterized analytic curve in $X$. If we assume $X$ is sitting in an affine space $\A^N$ 
then a formal curve $\gamma$ on $X$ is an $N$-tuple of Laurent series satisfying the defining equations of $X$ in $\A^N$. If $\gamma$ is convergent then
its coordinates are power series and their constant terms are the coordinates of the limit point $\gamma_0 = \lim_{t \to 0} \gamma(t)$. 

If $(\gamma, \theta_0)$ is a germ of a curve on $X$ then the inclusion $\k(\Gamma) \subset \K$ 
shows that $\gamma$ gives a formal curve on $X$ (this depends on the choice of a formal uniforming parameter $t$ at the complete local ring
$\hat{\O}_{\Gamma, \theta_0}$). Also the inclusions $\O_{\Gamma, \theta_0} \subset \hat{\O}_{\Gamma, \theta_0} 
\cong \mathcal{O}$ show that if $(\gamma, \theta_0)$ is a convergent germ of a curve then the corresponding formal curve is also convergent. 
By abuse of notation we will denote the formal curve associated to a germ of a curve $(\gamma, \theta_0)$ again by $\gamma$.

\begin{Def}[Valuation associated to a formal curve] \label{def-val-formal-curve}
A formal curve $\gamma$ on $X$ defines a valuation $v_\gamma: \k(X) \to \z \cup \{\infty\}$ as follows. 
\begin{equation} \label{equ-val-formal-curve}
v_\gamma(f) = \ord_t(f(\gamma(t))).
\end{equation}
\end{Def}

There is a $t$-adic topology on $X(\K)$, the set of formal curves on $X$, which is thinner than the Zariski topology. For $X = \A^N$ a basic $t$-adic 
neighborhood of a point $\gamma = (\gamma_1, \ldots, \gamma_N)$ consists of all $\tau = (\tau_1, \ldots, \tau_N)$ such that 
$$\ord_t(\gamma_i - \tau_i) \geq C, \quad \forall i = 1, \ldots, N,$$ where $C \in \n$.
The $t$-adic topology on arbitrary varieties is induced from that on the affine space using affine charts. 
The following basic result due to Michael Artin says that formal curves on a variety $X$ can be approximated by germs of algebraic curves arbitrarily 
closely in $t$-adic topology.
\begin{Th}[Artin approximation theorem] \label{th-Artin-approx}
The set of formal germs induced by germs of algebraic curves on $X$ is dense in $X(\K)$ with respect to the $t$-adic topology.
\end{Th}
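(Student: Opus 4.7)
The strategy is to reduce the statement to Artin's classical approximation theorem for solutions of polynomial systems over the Henselization of $\k[t]_{(t)}$, namely the algebraic power series ring $\k\{t\} \subset \O$. First I would reduce to the case where $X$ is affine: the image of $\Spec\K$ under $\gamma$ is a single point of $X$, lying in some affine open $U$, and any $t$-adic perturbation of $\gamma$ (measured in a fixed affine embedding of $U$) stays inside $U$ for sufficiently small deformations. So I may assume $X \hookrightarrow \A^N$ is the closed subscheme cut out by polynomials $f_1, \ldots, f_r \in \k[y_1, \ldots, y_N]$, and identify $\gamma$ with a tuple $(\gamma_1, \ldots, \gamma_N) \in \K^N$ solving $f_j(\gamma) = 0$.

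To place ourselves in the setting of Artin's theorem I would clear denominators. Let $n := \max(0, -\min_i \ord_t \gamma_i)$ and set $z_i := t^n \gamma_i \in \O$. Substituting $y_i = z_i/t^n$ into each $f_j$ and multiplying through by $t^{n \deg f_j}$ yields a new polynomial system $F_j(t; z_1, \ldots, z_N) = 0$ with coefficients in $\k[t]$, whose $\O$-solutions correspond (after dividing back by $t^n$) to $\K$-solutions of the original system. Artin's approximation theorem applied to this new system over the Henselization $\k\{t\}$ then produces, for any prescribed integer $C > 0$, an algebraic approximate solution $(\tilde{z}_1, \ldots, \tilde{z}_N) \in \k\{t\}^N$ satisfying $F_j(t; \tilde{z}) = 0$ for every $j$ and $\ord_t(z_i - \tilde{z}_i) \geq C + n$ for every $i$.

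Finally, setting $\tilde{\gamma}_i := \tilde{z}_i/t^n$ gives a point of $X$ with coordinates in $\textup{Frac}(\k\{t\})$ satisfying $\ord_t(\gamma_i - \tilde{\gamma}_i) \geq C$. Since each $\tilde{\gamma}_i$ is algebraic over $\k(t)$, the field $\k(t)(\tilde{\gamma}_1, \ldots, \tilde{\gamma}_N)$ is the function field of some smooth projective curve $\Gamma$; the inclusion $\k(t) \hookrightarrow \k(\Gamma)$ gives a finite morphism $\Gamma \to \A^1_t$, and the embedding $\k(\Gamma) \hookrightarrow \K$ singles out a marked point $\theta_0 \in \Gamma$ above $t = 0$. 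The coordinates $\tilde{\gamma}_i$ then define a rational map $(\Gamma, \theta_0) \to X$ whose associated formal germ is $\tilde{\gamma}$, placing it within $t$-adic distance $C$ of $\gamma$. The main obstacle is really Artin's theorem itself, which I would invoke as a black box; the only other substantive technical point is the standard dictionary between algebraic power series in one variable and germs of smooth projective curves equipped with a distinguished place above $t = 0$.
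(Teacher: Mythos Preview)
The paper does not supply its own proof of this theorem; it is stated as a known result attributed to Michael Artin and then used as a black box (for Corollary~\ref{cor-Artin-approx} and later Theorem~\ref{th-approx-algebraic}). So there is no in-paper argument to compare against.

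Your derivation is correct. The reduction to an affine chart is legitimate because the $t$-adic topology on $X(\K)$ is defined chart by chart, so approximating $\gamma$ inside a fixed affine open $U$ suffices. The denominator-clearing step is sound: writing $F_j(t;z)=t^{\,n\deg f_j}f_j(z/t^n)$, any $\O$-solution $\tilde z$ of the $F_j$ yields a $\K$-solution $\tilde z/t^n$ of the $f_j$ since $\O$ is a domain. Invoking Artin's approximation over the Henselization $\k\{t\}$ of $\k[t]_{(t)}$ is exactly the right black box, and the final identification of a tuple of algebraic Laurent series with a germ of an algebraic curve via the function field $\k(t)(\tilde\gamma_1,\dots,\tilde\gamma_N)$ and the distinguished place over $t=0$ is the standard dictionary. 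There is no gap.
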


From the Artin approximation theorem (Theorem \ref{th-Artin-approx}) the following readily follows:
\begin{Cor} \label{cor-Artin-approx}
Let $\gamma \in Y(\K)$ be a formal curve on $Y$. Also let $f_1, \ldots, f_s \in \mathcal{O}(Y)$ be a finite number of regular functions on $Y$ and 
$C > 0$ a constant. Then there exists a germ of algebraic curves $\gamma'$ on $Y$ such that:
$$\ord_t(f_i(\gamma(t)) - f_i(\gamma'(t))) > C, \quad \forall i = 1, \ldots, s.$$ 
\end{Cor}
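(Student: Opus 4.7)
The plan is to reduce the statement to an affine situation and then use a Taylor-expansion argument to transfer the $t$-adic closeness of $\gamma'$ to $\gamma$ into $t$-adic closeness of the values $f_i(\gamma')$ to $f_i(\gamma)$. Since $Y$ is a variety and $\gamma \in Y(\K)$ is a single $\K$-point, its image lies in any dense affine open set meeting the relevant locus; in particular we can find an affine open $U \subset Y$ such that $\gamma$ factors through $U$ and each $f_i$ is regular on $U$. Replacing $Y$ by $U$ and embedding $U \hookrightarrow \A^N$, I may assume $Y \subset \A^N$ is affine and that each $f_i$ is the restriction of a polynomial $F_i \in \k[x_1, \ldots, x_N]$.

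The central computation is the identity
\begin{equation*}
F_i(y_1, \ldots, y_N) - F_i(z_1, \ldots, z_N) = \sum_{j=1}^N (y_j - z_j)\, G_{i,j}(y, z),
\end{equation*}
which holds for suitable polynomials $G_{i,j} \in \k[y_1, \ldots, y_N, z_1, \ldots, z_N]$ (obtained by a Taylor-style telescoping). I then apply Theorem \ref{th-Artin-approx} to produce a germ of algebraic curve $\gamma' = (\gamma'_1, \ldots, \gamma'_N)$ on $Y$ with $\ord_t(\gamma_j - \gamma'_j) \geq C'$ for all $j$, where $C'$ is to be chosen large. Substituting $y = \gamma$, $z = \gamma'$ gives
\begin{equation*}
\ord_t\bigl(f_i(\gamma) - f_i(\gamma')\bigr) \;\geq\; \min_j \ord_t(\gamma_j - \gamma'_j) \;+\; \min_{i,j} \ord_t\bigl(G_{i,j}(\gamma, \gamma')\bigr).
\end{equation*}

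Let $m = \min_j \ord_t(\gamma_j)$ (possibly negative if some coordinate has a pole at $t = 0$). If $C' \geq |m| + 1$, then each $\gamma'_j$ has the same $t$-order estimate as $\gamma_j$, so each $G_{i,j}(\gamma, \gamma')$, being a polynomial in $2N$ arguments of bounded pole order, has $\ord_t(G_{i,j}(\gamma, \gamma')) \geq -M$ for some constant $M$ depending only on $\gamma$ and the $F_i$ (not on $\gamma'$). Combining gives $\ord_t(f_i(\gamma) - f_i(\gamma')) \geq C' - M$, and it suffices to pick $C' > C + M$.

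The only potential obstacle is the nonnegativity issue arising from negative $t$-orders of the coordinates of $\gamma$, which is why one must first fix $m$ and $M$ and then apply Artin approximation with the enlarged bound $C' > C + M + |m| + 1$; once the approximation is this close, $\gamma$ and $\gamma'$ share the same leading behavior and the polynomials $G_{i,j}$ produce no unbounded denominators. Uniformity over $i = 1, \ldots, s$ poses no difficulty since $s$ is finite and one may take $M$ to be the maximum of the individual bounds.
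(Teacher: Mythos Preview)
Your proof is correct and is essentially the explicit unpacking of what the paper means by ``readily follows'' from Artin approximation: reduce to an affine chart, approximate $\gamma$ in coordinates, and use that polynomials are $t$-adically continuous. A minor shortcut, which eliminates the Taylor bookkeeping entirely, is to choose the affine embedding $U \hookrightarrow \A^N$ so that $f_1, \ldots, f_s$ are among the coordinate functions; then the inequality $\ord_t(f_i(\gamma) - f_i(\gamma')) > C$ is literally part of the definition of the basic $t$-adic neighborhood in which $\gamma'$ is found.
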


We recall that the algebraic closure $\overline{\K}$ of the field of formal Laurent series is the field of Puiseux series (see Section \ref{subsec-tropical-prelim}).
We call a point in $X(\overline{\K})$ a {\it formal Puiseux curve} or simply a {\it Puiseux curve} on $X$. The definition of valuation associated to a curve extends to 
Puiseux curve as well. That is, a formal Puiseux curve $\gamma$ on $X$ gives a valuation $v_\gamma: \k(X) \to \q \cup \{\infty\}$, defined by the same equation 
\eqref{equ-val-formal-curve}.

Now we turn to the case of spherical varieties and $G$-invariant valuations. As usual we let $G/H$ be a spherical homogeneous space.
The main construction in spherical tropicalization is the construction of a $G$-invariant valuation from a given valuation on ${G/H}$.
The following well-known result is the key to this construction (see \cite[Lemma 1.4]{Knop}, \cite[Lemma 10 and 11]{Sumihiro}, \cite[3.2 Lemme]{Luna-Vust}).

\begin{Th}[Sumihiro] \label{th-Sumihiro}
Let $v: \k({G/H}) \setminus \{0\} \to \r$ be any valuation.
\begin{itemize}
\item[(1)] For every $0 \neq f \in \k({G/H})$ there exists a nonempty Zariski open subset $U_f \subset G$ such that the value $v(g \cdot f)$ is the same 
for all $g \in U_f$. Let us denote this value by $\bar{v}(f)$, i.e.
$$\bar{v}(f) = v(g \cdot f), \quad \forall g \in U_f.$$
\item[(2)] We have $\bar{v}(f) = \min\{ v(g \cdot f) \mid g \in G \}$.
\item[(3)] $\bar{v}$ is a $G$-invariant valuation on ${G/H}$. 
\end{itemize}
\end{Th}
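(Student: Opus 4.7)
The plan is to reduce everything to a statement about the finite-dimensional $G$-module generated by $f$, where we can organize the data by valuation via a decreasing filtration.

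First I would observe that for a nonzero $f \in \k(G/H)$, the span $V_f = \langle G \cdot f \rangle \subset \k(G/H)$ of the $G$-orbit of $f$ is a finite-dimensional rational $G$-module; this is a standard fact about rational actions on function fields (one embeds $f$ into a rational $G$-module after clearing denominators equivariantly, using that $G$ is algebraic). Given this, for every $m \in \r$ define
\[
V_f^{\geq m} = \{\, h \in V_f \mid v(h) \geq m \,\}.
\]
This is a $\k$-subspace of $V_f$ (here I use that $v$ is trivial on $\k$, so scalars don't shift values), and the collection $\{V_f^{\geq m}\}_m$ is a decreasing filtration. Since $\dim V_f < \infty$, only finitely many distinct subspaces occur, at values $m_1 < m_2 < \cdots < m_k$; set $m_f := m_1$.

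Next I would establish (1) and (2) simultaneously by looking at the orbit morphism $\phi: G \to V_f$, $g \mapsto g \cdot f$. The set $U := \phi^{-1}(V_f^{\geq m_1} \setminus V_f^{\geq m_2})$ is Zariski open in $G$, and on $U$ we have $v(g \cdot f) = m_1$ by construction. The only thing to check is that $U$ is nonempty: this is exactly the fact that $G \cdot f$ is not contained in $V_f^{\geq m_2}$, which is immediate since $G \cdot f$ spans $V_f$. Taking $U_f := U$ and $\bar{v}(f) := m_1$ yields (1), and the fact that $v(g \cdot f) \geq m_1$ for every $g \in G$ (by definition of $m_1$) gives (2).

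For (3), $G$-invariance is tautological from the definition since replacing $f$ by $g_0 \cdot f$ just translates the open set $U_f$ by $g_0$. The valuation axioms are verified by intersecting open sets: given $f, h$, the set $U_f \cap U_h \cap U_{fh} \cap U_{f+h}$ is a nonempty Zariski open in $G$ (finite intersection of nonempty Zariski opens in the irreducible variety $G$), and on it one reads off both $\bar{v}(fh) = \bar{v}(f)+\bar{v}(h)$ and $\bar{v}(f+h) \geq \min(\bar{v}(f), \bar{v}(h))$ directly from the corresponding properties of $v$ applied to the translates. The main technical point I anticipate is the first one: justifying that the $G$-module $V_f$ generated by the translates of $f$ is finite-dimensional and rational, which is where one must invoke the theory of rational $G$-actions on $\k(G/H)$ (alternatively, work locally on a $G$-stable affine chart of a smooth completion where $f$ has poles along a $G$-stable divisor, and apply the standard ``sum of translates is contained in a finite-dimensional subrepresentation'' statement). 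Once that is in hand, the rest is a clean filtration argument.
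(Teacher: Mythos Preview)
The paper does not prove this theorem; it is stated as a known result with references to Knop, Sumihiro, and Luna--Vust. So there is no in-paper argument to compare to, and I will assess your proposal directly.

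Your filtration argument is the right mechanism, and once $V_f=\langle G\cdot f\rangle$ is known to be finite-dimensional it yields (1) and (2) cleanly; the proof of (3) by intersecting the open sets $U_f,U_h,U_{fh},U_{f+h}$ is also correct. The gap is precisely where you flag it, but it is a genuine gap rather than a technicality: for $f$ merely rational, $V_f$ need \emph{not} be finite-dimensional. Take $G=\mathbb{G}_m$ acting on itself and $f=1/(x-1)\in\k(\mathbb{G}_m)$; then $g\cdot f=g/(x-g)$ has its unique pole at $x=g$, so distinct $g$ give linearly independent functions and $\dim V_f=\infty$. Your fallback (restrict to a $G$-stable affine chart on which $f$ is regular) cannot work either: on a homogeneous space $G/H$ the pole divisor of a nonregular $f$ is never $G$-stable, so no such chart exists.

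The standard repair, which is what the cited references do, is to realize $f$ as a ratio $s_1/s_2$ of elements that \emph{do} lie in a finite-dimensional rational $G$-module---for instance sections of a $G$-linearized very ample line bundle on a projective $G$-equivariant compactification of $G/H$. Your filtration argument then applies verbatim to each $s_i$, producing dense opens $U_i\subset G$ on which $v(g\cdot s_i)$ is constant and minimal; on $U_1\cap U_2$ the value $v(g\cdot f)=v(g\cdot s_1)-v(g\cdot s_2)$ is constant, giving (1), and (3) follows exactly as you wrote. Note, however, that this ratio trick does \emph{not} directly yield (2): in the $\mathbb{G}_m$ example above with $v$ the order at $x=1$ one computes $\bar v(f)=0$ (the generic value) while $v(e\cdot f)=-1$, so the minimum over $G$ is strictly smaller than $\bar v(f)$. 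Statement (2) is correct whenever $V_f$ is finite-dimensional---in particular for regular $f$, which is the only case the paper actually invokes later---so you should check the exact hypotheses in the cited sources before claiming it for arbitrary rational $f$.
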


Recall that a formal curve $\gamma$ on ${G/H}$ gives rise to a valuation $v_\gamma$. 
We let $\bar{v}_\gamma$ denote the $G$-invariant valuation defined by:
$$\bar{v}_\gamma(f) = \ord_t(f(g \cdot \gamma(t))),$$ for every $0 \neq f \in A$ and $g \in G$ in general position.
Following \cite{Vogiannou}, we call the map $$\Trop: G/H(\overline{\K}) \to \V_{G/H}, \quad \gamma \mapsto \bar{v}_\gamma,$$ the {\it spherical tropicalization map}.

\begin{Ex} \label{ex-SL(2)-inv-val}
As in Example \ref{ex-sph-var}(3) consider the spherical variety $X = \A^2 \setminus \{(0,0)\}$ for the natural action of $G = \SL(2, \k)$.
The algebra of regular functions $\k[X]$ is just the polynomial ring $\k[x, y]$.
The weight lattice $\Lambda_X$ coincides with the weight lattice $\Lambda$ of $G$ and can be identified with $\z$. The function $f(x, y) = y$ is a $B$-eigenfunction in $\k[X]$
whose weight is $1$, namely the generator of $\Lambda_X$. Let $\gamma = (\gamma_1, \gamma_2)$ be a formal curve in $X = \A^2 \setminus \{0\}$. Let us write 
$\gamma_1(t) = \sum_i a_i t^i$ and $\gamma_2(t) = \sum_i b_i t^i$. Let $g = \left[ \begin{matrix} g_{11} & g_{12} \\ g_{21} & g_{22} \end{matrix} \right]$. 
We compute that $f(g \cdot \gamma(t)) = g_{21} \gamma_1 + g_{22} \gamma_2$. 
From the definition of the $G$-invariant valuation $\bar{v}_\gamma$ we have $\bar{v}_\gamma(y) = \ord_t(g \cdot \gamma(t))$ for $g$ in general position. Thus
\begin{equation} \label{equ-v-gamma-A^2}
\bar{v}_\gamma(y) = \ord_t(g_{21} \gamma_1(t) + g_{22} \gamma_2(t)) = \min(\ord_t(\gamma_1(t)), \ord_t(\gamma_2(t)).
\end{equation}
\end{Ex}

There is another way of understanding the $G$-invariant valuation associated to a formal curve and that is through the generalized Cartan decomposition for spherical varieties.
It goes back to Luna and Vust (\cite{Luna-Vust}). 
A proof of it can also be found in \cite[Theorem 8.2.9]{Gaitsgory-Nadler}.
\begin{Th}[Generalized non-Archimedean Cartan decomposition for spherical varieties over $\K$]  \label{th-non-Arch-Cartan-decomp} 
The $G(\O)$-orbits in ${G/H}(\K)$ are parameterized by $\check{\Lambda}_{G/H} \cap \V_{G/H}$. Here 
$\check{\Lambda}_{G/H} \subset \mathcal{Q}_{G/H}$ denotes the lattice dual to the weight lattice $\Lambda_{G/H}$, and a one-parameter subgroup 
$\lambda \in \check{\Lambda}_{G/H}$ 
corresponds to the orbit through the formal curve $\lambda(t) \in T_{G/H}(\K)$.
\end{Th}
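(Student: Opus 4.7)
The plan is to exhibit mutually inverse maps between $G(\O)$-orbits on $G/H(\K)$ and $\check{\Lambda}_{G/H} \cap \V_{G/H}$. In one direction, send $\gamma \in G/H(\K)$ to the $G$-invariant valuation $\bar{v}_\gamma$ obtained by applying Sumihiro's construction (Theorem \ref{th-Sumihiro}) to the curve valuation $v_\gamma = \ord_t \circ \gamma^*$. Integrality of $\bar{v}_\gamma$ on the lattice $\Lambda_{G/H}$ is automatic because $v_\gamma$ is $\z$-valued and the Sumihiro value on a $B$-eigenfunction is a minimum of integers. That the assignment factors through $G(\O)$-orbits follows from a generic-position argument: for $g \in G(\O)$ and generic $h \in G(\k)$, the leading $t$-term of $f(hg(t)\gamma(t))$ agrees with that of $f(hg(0)\gamma(t))$, and the substitution $h \mapsto hg(0)$ preserves any Zariski dense open subset of $G(\k)$.

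In the reverse direction, for $\lambda \in \check{\Lambda}_{G/H} \cap \V_{G/H}$ use the prescription $\lambda \mapsto [\lambda(t) \cdot x_0]$, where $\lambda$ is regarded as a cocharacter of the canonical torus $T_{G/H}$ and $x_0$ is a fixed base point in the open $B$-orbit. To verify that the forward map recovers $\lambda$, evaluate on a $B$-eigenfunction $f_\mu$ of weight $\mu \in \Lambda_{G/H}$: along the torus orbit one has $f_\mu(\lambda(t) \cdot x_0) = c \, t^{\langle \lambda, \mu\rangle}$ with $c \in \k^*$, so $v_{\lambda(t) \cdot x_0}(f_\mu) = \langle \lambda, \mu\rangle$. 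Since for any $g \in G(\k)$ the translate $g \cdot f_\mu$ remains in the isotypic component $W_\mu$ and therefore has the same $t$-order under $v_{\lambda(t) \cdot x_0}$ (by an argument analogous to Proposition \ref{prop-inv-val-min-isotypic}), Sumihiro's generic-value formula gives $\bar{v}_{\lambda(t) \cdot x_0}(f_\mu) = \langle \lambda, \mu\rangle$, so $\bar{v}_{\lambda(t) \cdot x_0} = \lambda$.

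The main obstacle is surjectivity: showing that every $G(\O)$-orbit contains a representative of the form $\lambda(t) \cdot x_0$. My plan is to combine the classical non-Archimedean Cartan decomposition $G(\K) = G(\O) \cdot T(\K) \cdot G(\O)$ with the Luna--Vust theory of spherical embeddings. After multiplying $\gamma$ by a suitable element of $G(\O)$, one may assume that $\gamma$ specializes at $t = 0$ into the open $B$-orbit; the associated invariant valuation $\bar{v}_\gamma$ then lies in the relative interior of a unique face $\sigma$ of the simplicial cone $\V_{G/H}$ (Theorem \ref{th-val-cone-simplicial}). Working inside the elementary embedding $X_{\bar{v}_\gamma}$, the horospherical degeneration attached to $\sigma$ (Proposition \ref{prop-partial-horo-degeneration-gr_v}) allows one to flow $\gamma$ to the torus-valued curve $\bar{v}_\gamma(t) \cdot x_0$ within its $G(\O)$-orbit; this is the delicate step, since it requires matching the formal-curve valuation of $\gamma$ with the horospherical model on the nose and not merely up to a face. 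Injectivity is then easy: distinct $\lambda$'s take different values on some $B$-eigenfunction and hence yield distinct invariant valuations.
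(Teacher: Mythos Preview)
The paper does not prove this theorem; it is stated as a known result, attributed to Luna--Vust with a reference to \cite[Theorem 8.2.9]{Gaitsgory-Nadler} for a proof. So there is no ``paper's own proof'' to compare against, and your proposal should be judged on its own merits.

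Your forward map $\gamma \mapsto \bar{v}_\gamma$ and the verification that $\bar{v}_{\lambda(t)\cdot x_0} = \lambda$ are both reasonable, and your $G(\O)$-invariance argument is essentially correct though could use more care about the meaning of ``generic $h$'' when $g(t)$ itself varies with $t$. The real gap is, as you yourself flag, surjectivity. Your plan here is not a proof: invoking the classical decomposition $G(\K) = G(\O)\,T(\K)\,G(\O)$ does not reduce the problem, because writing $\gamma = g_1(t)\tau(t)g_2(t)\cdot eH$ leaves you with $g_2(t)\cdot eH$, which is an arbitrary $\K$-point of $G/H$ again. And the sentence ``the horospherical degeneration attached to $\sigma$ allows one to flow $\gamma$ to the torus-valued curve $\bar{v}_\gamma(t)\cdot x_0$ within its $G(\O)$-orbit'' is precisely the assertion to be proved, not an argument for it. The actual mechanism in the Luna--Vust/Gaitsgory--Nadler proof uses the local structure theorem for the elementary embedding $X_{\bar{v}_\gamma}$: one shows that the $B$-stable affine chart $X_{v,B}$ splits (\'etale-locally, or via a Levi factor) as a product of a $P_u$-piece and a toric piece, and then applies Hensel's lemma over $\O$ to move $\gamma$ into the toric factor. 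Without making that splitting and the Hensel step explicit, the surjectivity argument remains a restatement of the goal.
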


{Thus the valuation $\bar{v}_\gamma$ can be interpreted as the valuation represented by the point of intersection of the $G(\mathcal{O})$-obit of $\gamma$ in 
${G/H}(\K)$ and the image of valuation cone $\V_{G/H}$ (under the exponential map) in ${G/H}(\K)$.}  

\begin{Ex}[non-Archimedean Cartan decomposition] \label{ex-non-Arch-Cartan-decomp}
As in Example \ref{ex-sph-var}(5) consider $G$ with left-right action of $G \times G$. Theorem \ref{th-non-Arch-Cartan-decomp} applied in this case recovers the
a non-Archimedean version of the usual Cartan decomposition.(see \cite{Iwahori}). With notation as above, it states that:
$$G(\K) = G(\O) \cdot \check{\Lambda}^+ \cdot G(\O).$$
{Here $\check{\Lambda}$ is the dual lattice to the weight lattice $\Lambda$ and $\check{\Lambda}^+$ is the intersection of $\check{\Lambda}$ with the dual positive Weyl chamber. We regard both as subsets of $T(\K)$.} 

When $G = \GL(n, \c)$ the above non-Archimedean Cartan decomposition gives the well-known Smith normal form of a matrix (over the 
field of formal Laurent series $\K$ which is the field of fractions of the principal ideal domain $\O$, the ring of formal power series).

\end{Ex}

\begin{Ex}[Non-Archimdean Iwasawa decomposition] \label{ex-non-Arch-Iwasawa}
As in Example \ref{ex-sph-var}(4) consider the spherical homogeneous space $G/U$ where $U$ is a maximal unipotent subgroup of $G$. In this case 
Theorem \ref{th-non-Arch-Cartan-decomp} gives a non-Archimdean version of the Iwasawa decompostion (see \cite{Iwahori}). It states that: 
$$G(\K) = G(\O) \cdot \check{\Lambda} \cdot U(\K),$$
where as in the previous example, $\check{\Lambda}$ is the dual lattice to the weight lattice $\Lambda$ and we regard it as a subset of $T(\K)$.
\end{Ex}

\begin{Rem} \label{rem-Arch-Cartan-decomp}
In Section \ref{sec-sph-amoeba} we will interpret the usual (Archimedean) Cartan decomposition and the Iwasawa decomposition as giving us a 
spherical generalization of the notion of an amobea of a subvariety (see also Section \ref{subsec-tropical-prelim}). 
Using this point of view, we will make a connection between the Archimedean 
and non-Archimedean Cartan decompositions. More precisely, we see that the non-Archimedean Cartan decomposition can be interpreted as a ``limit'' of the 
Archimedean Cartan decomposition.
\end{Rem}

Finally, we come to the main definition of this section which is the spherical tropicalization of a subvariety.
\begin{Def}(Spherical tropicalization) \label{def-sph-tropicalization}
Let $Y \subset {G/H}$ be a subvariety. Following \cite{Vogiannou} we call the image $\Trop(Y(\overline{\K})) \subset \V_{G/H}$
the {\it spherical tropicalization of $Y$}. 
\end{Def}

In \cite[Theorem 1.2]{Vogiannou}, it is proved that $\Trop(Y)$ coincides with the support of a rational polyhedral fan.
The content of our Fundamental Theorem (Theorem \ref{th-fundamental}) is that this $\Trop$ construction coincides with $\trop$ construction in 
Section \ref{subsec-sph-Grobner} using initial ideals. 

The following key fact is proved in \cite[Proposition 4.5]{Vogiannou}. It shows that $\Trop(Y)$ encodes the asymptotic behavior of the subvariety $Y$ in all possible spherical embeddings of 
$G/H$. As before, $X_v$ denotes the spherical embedding of ${G/H}$ associated to $v \in \V_{G/H}$ and $D_v$ is the unique $G$-stable prime divisor in $X_v$.
\begin{Prop} \label{prop-Vogiannou}
Let $Y \subset {G/H}$ be a subvariety. 
A valuation $v \in \V_{G/H}$ belongs to the spherical tropical variety $\Trop(Y)$ if and only if the closure $\overline{Y}$ of $Y$ in $X_v$ 
intersects the unique closed $G$-orbit $D_v$.
\end{Prop}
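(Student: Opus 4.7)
The plan is to prove both implications by passing to the affine open chart $X_{v,B}$ of $X_v$ described in Theorem~\ref{th-X_v_B}, whose coordinate ring is $\k[X_{v,B}] = \{f \in \k[X_B] \mid v(f) \geq 0\}$ and in which $D_v' = X_{v,B} \cap D_v$ is cut out by $\{f \mid v(f) > 0\}$. In this description the closure of $Y$ inside $X_v$ meets $D_v$ precisely when a formal curve in $Y$ can be extended to a morphism $\Spec(\overline{\O}) \to X_{v,B}$ whose special fiber lies in $D_v'$.

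For the forward direction, suppose $\gamma \in Y(\overline{\K})$ satisfies $\bar{v}_\gamma = v$. For every $g \in G$ and every $f \in \k(G/H)$, Theorem~\ref{th-Sumihiro}(2) combined with the identity $v_{g \cdot \gamma}(f) = v_\gamma(g^{-1} \cdot f)$ yields
$$v_{g \cdot \gamma}(f) \geq \min_{h \in G} v_\gamma(h \cdot f) = \bar{v}_\gamma(f) = v(f).$$
Pick a Borel subgroup $B$ and an element $g \in G$ so that $g \cdot \gamma$ lies generically in the open $B$-orbit $X_B$. The inequality above then shows that every $f \in \k[X_{v,B}]$ pulls back to an element of $\overline{\O}$ along $g \cdot \gamma$, so $g \cdot \gamma$ extends to a morphism $\Spec(\overline{\O}) \to X_{v,B} \subset X_v$; moreover every $f$ with $v(f) > 0$ pulls back to a series with positive $t$-order, so $(g \cdot \gamma)(0) \in D_v'$. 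Because $D_v$ is $G$-stable, applying $g^{-1}$ gives an extension of $\gamma$ itself with $\gamma(0) \in D_v$, and since $\gamma$ is a formal curve on $Y$ this point lies in $\overline{Y} \cap D_v$.

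For the backward direction, pick $p \in \overline{Y} \cap D_v$, choose a Borel $B$ for which $p \in D_v'$, and take a resolution $\tilde{Y} \to \overline{Y}$ together with a point $\tilde{p}$ above $p$. Since $Y \cap D_v = \emptyset$ we have $\overline{Y} \not\subset D_v$, so the pullback of $D_v \cap \overline{Y}$ to $\tilde{Y}$ is a proper subvariety and we can find a smooth formal curve $\tilde{\gamma} \in \tilde{Y}(\overline{\O})$ through $\tilde{p}$ which is generically outside this pullback. Let $\gamma$ be the image of $\tilde{\gamma}$ in $X_v$; it is a formal Puiseux curve on $Y$ specializing to $p$. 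For a $B$-eigenfunction $f_\lambda \in \k(G/H)$ of weight $\lambda$, regarded as a rational function on $X_v$ it has order $\langle v, \lambda\rangle$ along $D_v$, so letting $m$ be the intersection multiplicity of $\gamma$ with $D_v$ at $p$ we obtain $v_\gamma(f_\lambda) = m \langle v, \lambda \rangle$. Because for generic $g \in G$ the translate $g \cdot \gamma$ meets $D_v$ with the same multiplicity $m$ at a point of $D_v'$, the same formula gives $\bar{v}_\gamma(f_\lambda) = m \langle v, \lambda\rangle$, and since invariant valuations are determined by their values on $B$-eigenfunctions (Section~\ref{subsec-inv-valuation}) we conclude $\bar{v}_\gamma = m \cdot v$. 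Reparametrizing $\gamma$ by $t \mapsto t^{1/m}$ (legitimate in $\overline{\K}$) rescales $\bar{v}_\gamma$ by $1/m$, producing a Puiseux curve $\gamma'$ on $Y$ with $\bar{v}_{\gamma'} = v$.

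The step that requires the most care is the intersection-multiplicity computation in the backward direction: it uses that $p$ (or, after resolution, a nearby preimage) can be taken smooth on both $\overline{Y}$ and $D_v \cap \overline{Y}$ so that $v_\gamma(f) = m \cdot \ord_{D_v}(f)$ holds for all rational $f$ relevant to the argument, and that the same multiplicity persists after a generic $G$-translation (so that $\bar{v}_\gamma$ may be read off from a single curve). A secondary technical point is the dependence of the Borel $B$ on $p$ in the backward direction, which is fine because the invariance of $\bar{v}_\gamma$ under $G$ means only the value on $B$-eigenfunctions (for any single $B$) needs to be matched.
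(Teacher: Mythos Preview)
The paper does not supply its own proof of this proposition; it simply records it as ``proved in \cite[Proposition 4.5]{Vogiannou}''. So there is no in-paper argument to compare against, and your proposal is a genuine addition rather than a reconstruction.

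Your argument is essentially correct. A few remarks:

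\textbf{Forward direction.} The inequality $v_{g\cdot\gamma}(f)\geq v(f)$ for all $f\in\k(G/H)$ is exactly what is needed to see that $g\cdot\gamma$ extends to an $\overline{\O}$-point of $X_{v,B}$ landing in $D_v'$, and the $G$-equivariance of $X_v$ lets you undo the translation. To find $g\in G(\k)$ with $g\cdot\gamma\in X_B(\overline{\K})$ you may simply choose the Borel $B$ so that $\gamma$ already lies in $X_B$: the translates $g\cdot X_B$ ($g\in G(\k)$) form a Zariski open cover of $G/H$, hence cover all scheme-theoretic points, in particular the one underlying $\gamma$.

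\textbf{Backward direction.} Your multiplicity computation is in fact cleaner than you fear, and does \emph{not} require smoothness of $\overline{Y}$ or of $X_v$ at $p$. Since $X_v$ is normal and the divisor of any $B$-eigenfunction $f_\lambda$ is supported, in the affine chart $X_{v,B}$, on $D_v$ alone, any ratio $f_\lambda^{\,a}/f_\mu^{\,b}$ with $a\,\ord_{D_v}(f_\mu)=b\,\ord_{D_v}(f_\lambda)$ has trivial divisor on $X_{v,B}$ and is therefore a unit at $p$. Hence $v_\gamma(f_\lambda)/\ord_{D_v}(f_\lambda)$ is a well-defined positive rational $m$, independent of $\lambda$. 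The same reasoning applies to $g\cdot f_\lambda$ for generic $g$ (its divisor on $X_{v,B}$ is still a multiple of $D_v$ since $D_v$ is $G$-stable and the translated colors miss $p$), and since $g$ acts by an automorphism of $(X_v,D_v)$ the constant $m$ is unchanged. Thus $\bar v_\gamma=m\cdot\ord_{D_v}$.

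\textbf{Scaling.} One small imprecision: $\ord_{D_v}$ equals $v$ only when $v$ is the primitive lattice point on its ray. In general $v=c\cdot\ord_{D_v}$ for some positive rational $c$, so after your computation $\bar v_\gamma=(m/c)\,v$; reparametrizing by $t\mapsto t^{c/m}$ (still a legitimate Puiseux substitution) yields a curve with invariant valuation exactly $v$.
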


To wrap up this section, we prove a theorem about approximation by germs of algebraic curves. It is a corollary of the Artin approximation theorem stated above. 
It implies that in defining the tropical variety one can assume that $\gamma$ is an algebraic curve. While this statement is interesting on its own, 
it will be also useful later in the proof of Theorem 
\ref{th-amoeba-trop-var-curve}.

\begin{Th}[Approximation by algebraic points] \label{th-approx-algebraic}
Let $Y \subset {G/H}$ be a subvariety. Let $\gamma$ be a point of $Y(\K)$, i.e. a formal curve in $Y$. Then there exists a germ of an 
algebraic curve $\gamma'$ on $Y$ such that: $$\bar{v}_\gamma = \bar{v}_{\gamma'},$$ 
i.e. $\gamma$ and $\gamma'$ give rise to the same invariant valuation in $\V_{G/H}$.
\end{Th}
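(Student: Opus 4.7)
The plan is to apply Corollary \ref{cor-Artin-approx} with carefully chosen test functions whose $t$-adic orders along $\gamma$ determine $\bar v_\gamma$. The crucial reduction is that a $G$-invariant valuation on $\k(G/H)$ is determined by its restriction to $B$-eigenfunctions, since the map $\rho\colon \V_{G/H} \to \mathcal{Q}_{G/H}$ is injective. Fix therefore $B$-eigenfunctions $f_1, \ldots, f_\ell \in \k(G/H)^{(B)}$ whose weights form a $\z$-basis of $\Lambda_{G/H}$. It suffices to produce an algebraic germ $\gamma'$ on $Y$ with $\bar v_\gamma(f_i) = \bar v_{\gamma'}(f_i)$ for every $i$.

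To make each $\bar v_\gamma(f_i)$ computable from finitely many regular functions, I would work inside a projective spherical embedding $\overline{X}\supset G/H$ equipped with a very ample $G$-linearized line bundle $L$ and a fixed $B$-eigensection $s \in H^0(\overline{X}, L)$ vanishing on all $B$-stable divisors, as in Section \ref{sec-sph-trop-var-ideals}. Write $f_i = \tilde f_i / s^{k_i}$ with $\tilde f_i \in H^0(\overline{X}, L^{\otimes k_i})$ a $B$-eigensection. By simultaneously $G$-translating $\gamma$ and $Y$ (and undoing the translation at the end, which is permitted since $\bar v$ is $G$-invariant), I may assume $\gamma$ lies in the affine open $X_B = \{s \neq 0\}$, so that all sections in sight become regular functions on $X_B$ via the $s$-trivialization. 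Let $V_i = \textup{span}_\k(G\cdot \tilde f_i)$ and $V_s = \textup{span}_\k(G\cdot s)$ with chosen bases $\{e_{i,j}\}$ and $\{e_{s,j}\}$; these are finite-dimensional irreducible $G$-submodules of the appropriate space of global sections. The key identity, a rational-function analogue of Proposition \ref{prop-inv-val-min-isotypic}, is
\[
\bar v_\gamma(f_i) \;=\; \min_j \ord_t\!\bigl(e_{i,j}(\gamma(t))\bigr) \;-\; k_i\,\min_j \ord_t\!\bigl(e_{s,j}(\gamma(t))\bigr).
\]
It is proved by expanding $g\cdot \tilde f_i = \sum_j a_{i,j}(g)\,e_{i,j}$ in matrix coefficients $a_{i,j}\in \k[G]$, applying Sumihiro's theorem, and noting that for $g$ in the intersection of two Zariski-dense open subsets of $G$ there is no $t$-adic cancellation among the leading terms of either $(g\cdot \tilde f_i)(\gamma(t))$ or $(g\cdot s)(\gamma(t))$; the non-cancellation is guaranteed by the linear independence of matrix coefficients of the irreducible $G$-representations $V_i$ and $V_s$.

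Finally, Corollary \ref{cor-Artin-approx} applied on an affine open of $Y$ containing the scheme-theoretic image of $\gamma$ produces, for any prescribed $C>0$, a germ of algebraic curve $\gamma'$ on $Y$ with $\ord_t\!\bigl(e(\gamma(t)) - e(\gamma'(t))\bigr) > C$ for every $e$ in the finite set $\{e_{i,j}\}\cup\{e_{s,j}\}$. Choosing $C$ larger than $\ord_t(e(\gamma(t)))$ for each such $e$ forces $\ord_t(e(\gamma'(t))) = \ord_t(e(\gamma(t)))$, and the displayed formula then yields $\bar v_{\gamma'}(f_i) = \bar v_\gamma(f_i)$ for each $i$, whence $\bar v_\gamma = \bar v_{\gamma'}$ by the initial reduction. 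The main obstacle is establishing the explicit formula for $\bar v_\gamma(f_i)$: because $f_i$ is only rational, one cannot invoke Proposition \ref{prop-inv-val-min-isotypic} directly on the algebra of sections, and one must treat the numerator $\tilde f_i$ and the denominator $s^{k_i}$ in tandem, verifying that a single generic $g\in G$ can be chosen non-degenerate for both. Once this matrix-coefficient analysis is in place, the Artin approximation step is entirely routine.
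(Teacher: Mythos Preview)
Your proof is correct and follows the same strategy as the paper: reduce to finitely many $B$-eigenfunctions whose weights span $\Lambda_{G/H}$, express $\bar v_\gamma$ on each of them as a minimum of $v_\gamma$ over a finite set of $G$-translates, then invoke Artin approximation (Corollary \ref{cor-Artin-approx}) and the elementary observation that matching $t$-adically to high enough order forces equality of $\ord_t$ (your last step, the paper's Lemma \ref{lem-easy}).

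The only real difference is packaging. The paper works directly with $B$-weight functions $h_1,\ldots,h_\ell$ in a coordinate ring $A$, takes a basis $\{g_{ij}\cdot h_i\}$ of the cyclic $G$-module each generates, and writes $\bar v_\gamma(h_i)=\min_j v_\gamma(g_{ij}\cdot h_i)$ immediately. You instead pass through a projective embedding $(\overline{X},L,s)$, split each $f_i=\tilde f_i/s^{k_i}$, and track numerator and denominator separately. This extra layer is not needed --- one can simply take the $h_i$ in $\k[X_B]$ after your $G$-translation step and run the paper's argument verbatim --- but it is not wrong either, and it does make explicit how to handle the case where $G/H$ is not quasi-affine, a point the paper leaves implicit in the phrase ``$h_i\in A$''. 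Your matrix-coefficient justification of the key minimum formula is exactly the content of the paper's equation \eqref{equ-v-gamma-g-ij}; the ``main obstacle'' you flag is therefore no obstacle at all once one works in $\k[X_B]$ where both numerator and denominator translates are regular.
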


We need the following easy lemma in the proof.
\begin{Lem} \label{lem-easy}
Let $p(t)$ be a formal Lauren series. Pick a constant $C \geq \ord_t(p)$ and suppose that for 
some other formal Laurent series $q(t) \in \K$ we have $\ord_t(p - q) > C$. Then $\ord_t(p) = \ord_t(q)$. 
\end{Lem}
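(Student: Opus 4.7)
The plan is to invoke the non-Archimedean (ultrametric) property of the valuation $\ord_t$ on the field of formal Laurent series $\K$. Recall that $\ord_t(a+b) \geq \min(\ord_t(a), \ord_t(b))$, with equality whenever $\ord_t(a) \neq \ord_t(b)$. This strict-inequality form of the ultrametric inequality is exactly what the hypothesis $\ord_t(p-q) > C \geq \ord_t(p)$ is designed to trigger.

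First I would write $q = p - (p-q)$ and apply the ultrametric inequality to this decomposition. By hypothesis $\ord_t(p) \leq C$ while $\ord_t(p-q) > C$, so in particular $\ord_t(p) < \ord_t(p-q)$; thus the two orders differ and the inequality becomes an equality at the smaller term, giving $\ord_t(q) = \min(\ord_t(p), \ord_t(p-q)) = \ord_t(p)$.

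There is essentially no obstacle here — the statement is a direct consequence of the strict ultrametric inequality — so the ``proof'' is really just a two-line verification. The only mild subtlety is to keep track of the correct direction of the inequalities ($\ord_t(p) \leq C$ versus $\ord_t(p-q) > C$) so as to conclude strict inequality between $\ord_t(p)$ and $\ord_t(p-q)$, which is what licenses the passage from $\geq$ to $=$.
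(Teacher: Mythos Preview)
Your proof is correct and uses essentially the same idea as the paper's own argument, namely the strict ultrametric property of $\ord_t$. The only cosmetic difference is that the paper phrases it as a proof by contradiction (assuming $\ord_t(p)\neq\ord_t(q)$ and deriving $\ord_t(p-q)=\min(\ord_t(p),\ord_t(q))\leq\ord_t(p)\leq C$), whereas you argue directly by decomposing $q=p-(p-q)$; both are the same two-line verification.
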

\begin{proof}
By contradiction suppose that $\ord_t(p) \neq \ord_t(q)$. Then by the non-Archimedean property of $\ord_t$ we have 
$\ord_t(p - q) = \min(\ord_t(p), \ord_t(q))$. But this is impossible since by assumption the lefthand side is bigger than 
$\ord_t(p) \geq \min(\ord_t(p), \ord_t(q))$.
\end{proof}

\begin{proof}[Proof of Theorem ]
Let $h_1, \ldots h_\ell \in A$ be $B$-weight functions whose weights generate the lattice $\Lambda_{G/H}$. Recall 
that any $G$-invariant valuation is determined by its values on the $h_i$, that is, if $\bar{v}_\gamma(h_i) = \bar{v}_{\gamma'}(h_i)$ for all 
$i=1, \ldots, \ell$ then $\bar{v}_\gamma = \bar{v}_{\gamma'}$ (Proposition \ref{prop-inv-val-min-isotypic}). 

Take $1 \leq i \leq \ell$ and let $M_i$ be the $G$-module generted by $h_i$. Since $A$ is a rational $G$-module, $M_i$ is a finite dimensional 
vector space which is spanned by the $g \cdot h_i$, $g \in G$. Thus we can find a vector space basis for $M_i$ of the form 
$\{g_{i1} \cdot h_i, \ldots, g_{is_i} \cdot h_i\}$ where $g_{i1}, \ldots, g_{is_i} \in G$. Now for any $g \in G$ we can write 
$g \cdot h_i = \sum_{j=1}^{s_i} c_{ij}(g) (g_{ij} \cdot h_i)$. We thus have:
$$v_\gamma(g \cdot h_i) \geq \min\{v_\gamma(g_{ij} \cdot h_i) \mid c_{ij}(g) \neq 0\}.$$
From this we see that for generic $g \in G$ we have:
\begin{equation} \label{equ-v-gamma-g-ij}
\bar{v}_\gamma(h_i) := v_\gamma(g \cdot h_i) = \min\{v_\gamma(g_{ij} \cdot h_i) \mid c_{ij}(g) \neq 0\}.
\end{equation}
Now take a constant $C$ which is greater than all the $v_\gamma(g_{ij} \cdot h_i)$ for all $i, j$. 
By Corollary \ref{cor-Artin-approx} (Artin approximation) we can find a germ of algebraic curves $\gamma'$ on $Y$ such that for all $i, j$ we have:
$$\ord_t( v_\gamma(g_{ij} \cdot h_i) - v_{\gamma'}(g_{ij} \cdot h_i) ) > C.$$
But Lemma \ref{lem-easy} then implies that $\ord_t(v_\gamma(g_{ij} \cdot h_i)) = \ord_t(v_{\gamma'}(g_{ij} \cdot h_i))$ for all $i, j$.
Using \eqref{equ-v-gamma-g-ij} this gives us that $\bar{v}_\gamma(h_i) = \bar{v}_{\gamma'}(h_i)$ for all $i$ and hence 
$\bar{v}_\gamma = \bar{v}_{\gamma'}$ as required.
\end{proof}

The following is an immediate corollary of Theorem \ref{th-approx-algebraic}.
\begin{Cor} \label{cor-trop-var-alg-curves}
In the definition of spherical tropical variety it suffices to use germ of algebraic curves. In other words, with notation as above, the two sets 
${\Trop(Y)}$ and $\{ \Trop(\gamma) \mid \gamma \textup{ is a germ of an algebraic curve on } Y \}$ coincide.
\end{Cor}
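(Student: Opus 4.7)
My strategy is to reduce the statement to Theorem \ref{th-approx-algebraic} by clearing the Puiseux ramification. The inclusion $\{\Trop(\gamma) \mid \gamma \textup{ algebraic germ on } Y\} \subseteq \Trop(Y)$ is immediate from the definitions: a germ $(\gamma, \theta_0)$ of an algebraic curve on $Y$ produces a formal curve on $Y$ via the inclusion $\mathcal{O}_{\Gamma, \theta_0} \hookrightarrow \hat{\mathcal{O}}_{\Gamma, \theta_0} \cong \k[[t]]$, and every formal curve in $Y(\K)$ is in particular an element of $Y(\overline{\K})$, so $\bar v_\gamma \in \Trop(Y)$ by definition.

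For the reverse inclusion, I would take $v = \bar v_\gamma$ with $\gamma \in Y(\overline{\K})$. Because $\overline{\K} = \bigcup_{k \geq 1} \k((t^{1/k}))$, there is some $k$ with $\gamma \in Y(\k((t^{1/k})))$. The substitution $s := t^{1/k}$ identifies $\k((t^{1/k}))$ with $\K = \k((s))$ and converts $\gamma$ into a formal curve $\tilde\gamma \in Y(\K)$. Since $\ord_s = k\,\ord_t$ on $\k((t^{1/k}))$, the associated invariant valuation satisfies $\bar v_{\tilde\gamma} = k v$, which still lies in $\V_{G/H}$. Theorem \ref{th-approx-algebraic} applied to $\tilde\gamma$ then yields a germ $\gamma'$ of an algebraic curve on $Y$ with $\bar v_{\gamma'} = \bar v_{\tilde\gamma} = k v$, so $v$ is realized (on the ray it generates) by the algebraic-germ valuation $\bar v_{\gamma'}$.

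The main technical obstacle is the ramification bookkeeping. Artin's approximation, which powers Theorem \ref{th-approx-algebraic}, is naturally stated over the complete discrete valuation ring $\k[[t]]$ and does not directly apply over the non-Noetherian Puiseux series ring $\k\{\{t\}\}$. One must therefore first clear the Puiseux denominator via the $k$-th root substitution before invoking the theorem. The resulting factor of $k$ is absorbed by noting that both sets of valuations are stable under positive-integer scaling of the curve parameter: on the Puiseux side via $\gamma(t) \mapsto \gamma(t^N)$, and on the algebraic side via composition with a degree-$N$ cover $\Gamma' \to \Gamma$ ramified at the base point. Consequently the two collections agree as sets of rays in the rational polyhedral cone $\V_{G/H}$, yielding the corollary.
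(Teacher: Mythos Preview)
Your argument is correct and in fact more careful than the paper's one-line justification, which simply calls the corollary ``immediate from Theorem \ref{th-approx-algebraic}''. That theorem is stated only for formal curves in $Y(\K)$, whereas $\Trop(Y)$ is defined via Puiseux curves in $Y(\overline{\K})$; your substitution $s = t^{1/k}$ is exactly the step needed to bridge this, and the paper leaves it implicit.

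Your final conclusion---that the two collections agree as sets of rays---is the right one, and is in fact all that can be proved. A germ of an algebraic curve (equivalently, a formal curve over $\K$) yields a $\z$-valued invariant valuation, i.e.\ a point of the lattice $\check{\Lambda}_{G/H} \subset \mathcal{Q}_{G/H}$, while Puiseux curves realize all rational points of $\Trop(Y)$ via $\gamma(t)\mapsto\gamma(t^{1/N})$. So the literal statement ``the two sets coincide'' is slightly too strong as written; what is true, and what your argument establishes, is that they generate the same cone (equivalently, have the same closure, or agree after saturating under positive rational scaling). Your remark about stability under positive-integer scaling is correct on both sides, and together with the inclusion $B\subseteq A$ and the fact that every $v\in A$ has some $kv\in B$, this gives ray-equality. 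This is presumably what the paper intends by ``it suffices to use germs of algebraic curves'', and it is what is actually needed downstream (e.g.\ in Theorem \ref{th-trop-var-limit-amoeba}).
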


\subsection{A fundamental theorem for spherical tropical geometry}   \label{subsec-fundamental}
Finally we formulate a generalization of the fundamental theorem of tropical geometry to the spherical setting. It states that all the different constructions of the spherical tropical variety we discussed coincide. Namely: 
(1) the construction using initial ideals and Borel charts (Definition \ref{def-sph-trop-var-X_B}), (2) the construction using initial ideals in algebra of sections of a line bundle (Definition \ref{def-sph-trop-var-G/H}), and (3) the construction using the tropicalization map and formal curves (Definition \ref{def-sph-tropicalization}). It is an immediate corollary of the results discussed in the previous sections. 

Let $Y \subset G/H$ be a subvariety. As before take a $G$-linearized very ample line bundle $L$ on a projective spherical embedding $\overline{X}$ of $G/H$ and let $A = A(\overline{X}, L)$ denote its algebra of sections with $I = I(Y) \subset A$ the ideal of sections vanishing on $Y$. Also for each Borel subgroup $B$, let $J_B \subset \k[X_B]$ be the defining ideal of $Y$ intersected with the open $B$-orbit $X_B$. 
\begin{Th}[Fundamental theorem] \label{th-fundamental}
With notation as above, the following sets coincide:
\begin{itemize}
\item[(a)] $\trop(I) = \{ v \in \V_{G/H} \mid \In_{\tilde{v}}(I) \textup{ does not contain any $B$-eigensection for some Borel $B$}\}.$ (see Section \ref{subsec-sph-trop-var-G/H}).
\item[(b)] $\bigcup_B \trop_B(J_B),$ where the union is over all Borel subgroups of $G$ (recall that by Proposition \ref{prop-sph-trop-var-finite-B} it is enough to take the union over a finite collection of Borel subgroups).
\item[(c)] $\Trop(Y) = \{\Trop(\gamma) \in \V_{G/H} \mid \gamma \in Y(\overline{\K}) \textup{ is a formal Puiseux curve on Y }\}.$ 
\end{itemize}
In fact, a valuation $v \in \V_{G/H}$ belongs to any of the sets in (a), (b) or (c) if and only if the closure of $Y$ in $X_v$, the spherical embedding associated to the ray generated by $v$, intersects the divisor at infinity $D_v$.
\end{Th}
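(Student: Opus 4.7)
The plan is to observe that the theorem is essentially a bookkeeping exercise assembling three equivalences that have already been established in the preceding sections, all passing through the single geometric criterion: $v$ lies in the tropical set if and only if the closure $\overline{Y}$ of $Y$ in the elementary spherical embedding $X_v$ meets the unique $G$-stable divisor $D_v$. My proof will establish pairwise that each of (a), (b), (c) coincides with this geometric set; by transitivity the three sets agree.

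First I will handle the equivalence of (a) and (b) together with their identification with the geometric condition. This is already the content of Proposition \ref{prop-sph-trop-var-G/H-vs-X_B}: the key input is Proposition \ref{prop-unit-gr_v-gr-v-tilde} together with the remark that, since the distinguished section $s$ vanishes on every $B$-stable divisor, $\In_{\tilde v}(I)$ contains a $B$-eigensection exactly when it contains a power of $\In_{\tilde v}(s)$, which in turn translates (via $\pi$) into $\In_v(J_B)$ containing a unit in $\gr_v(\k[X_B])$. Combining with Theorem \ref{th-sph-trop-var-X_v_B}, which interprets the nonvanishing of $\gr_v(\k[X_B])/\In_v(J_B)$ as $\overline{Y\cap X_B}\cap D_v'\neq\emptyset$, we get the equivalence with the geometric condition after running over all Borel subgroups (every point of $D_v$ lies in some open Borel orbit $D_v'$). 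The finiteness reduction in Proposition \ref{prop-sph-trop-var-finite-B} guarantees that a fixed finite collection of Borels suffices.

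Next I will match (c) with the same geometric condition. This is precisely Proposition \ref{prop-Vogiannou}, quoted from Vogiannou: $v\in\Trop(Y)$ if and only if $\overline{Y}\cap D_v\neq\emptyset$ in $X_v$. One mild point worth spelling out here is that the tropicalization map $\Trop$ is defined on Puiseux points $Y(\overline{\K})$ whereas the Artin approximation setup (Theorem \ref{th-approx-algebraic}, Corollary \ref{cor-trop-var-alg-curves}) lets one replace formal curves by germs of algebraic curves without changing the image in $\V_{G/H}$; I will note this so that the reader sees the three descriptions use mutually compatible notions of curve.

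The proof then concludes by transitivity: (a), (b), (c) each coincide with $\{v\in\V_{G/H}: \overline{Y}\cap D_v\neq\emptyset \text{ in } X_v\}$, so they coincide with each other, yielding the final clause of the statement for free. I do not anticipate a genuine obstacle since the heavy lifting has already been carried out; the only care needed is to make sure the isomorphism $\gr_v(\k[X_B])\cong\gr_{\tilde v}(A)/(\text{powers of } \In_{\tilde v}(s))$ used when bridging (a) and (b) is invoked in the correct direction, and that the Puiseux-versus-Laurent distinction in (c) does not change the set of invariant valuations produced.
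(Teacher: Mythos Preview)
Your proposal is correct and follows exactly the paper's approach: the paper's proof consists of a single sentence citing Proposition~\ref{prop-sph-trop-var-G/H-vs-X_B} (which gives (a) $=$ (b) $=$ the geometric condition) and Proposition~\ref{prop-Vogiannou} (which gives (c) $=$ the geometric condition), and your write-up is simply a more detailed unpacking of those two references. The extra remarks about Artin approximation and the Puiseux-versus-Laurent distinction are harmless but not needed here.
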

\begin{proof}
The theorem follows from \ref{prop-sph-trop-var-G/H-vs-X_B} and \ref{prop-Vogiannou}.
\end{proof}

\subsection{Analytification and spherical tropicalization map} \label{subsec-analytification}
In this section we briefly recall the notion of a Berkovich analytic space or analytification of a variety $X$. It plays a central role in non-Archimdean geometry. 
As before let $\K = \k((t))$ denote the field of formal Laurent series in an indeterminate $t$. It is equipped with a natural valuation $\ord_t: \K \to \z \cup \{\infty\}$. 

Let $A$ be a finitely generated $\k$-algebra and $X = \Spec(A)$ the corresponding affine variety. 
Let $\tilde{A} = A \otimes_\k \K$.
\begin{Def}[Multiplicative seminorm] \label{def-seminorm}
A function $p: \tilde{A} \to \r_{\geq 0}$ is called a {\it multiplicative seminorm on $A$} if it satisfies the following:
\begin{itemize}
\item[(a)] $p(fg) = p(f)p(g)$,
\item[(b)] $p(\lambda) = \exp(-\ord_t(\lambda))$, 
\item[(c)] $p(f+g) \leq \max(p(f), p(g))$,
for all $f, g \in \tilde{A}$ and $\lambda \in \K$. 
The analytification $X^{\an}$ of $X$ is the collection of all multiplicative seminorms on $\tilde{A}$. We endow $X^{\an}$ with the coarsest topology in which the maps
$X^{\an} \to \r$, $p \mapsto p(f)$, are continuous for every $f \in \tilde{A}$.
\end{itemize}
\end{Def} 

\begin{Rem} \label{rem-seminorm-val}
(1) One shows that the axiom (c) in Definition \ref{def-seminorm}  is equivalent to the triangle inequality. 
(2) For a multiplicative seminorm $p$ let us define 
\begin{equation} \label{equ-val-seminorm}
v_p(f) = -\log(p(f)), 
\end{equation}
for all $f \in A$. One verifies that $v_p: A \to \r \cup \{\infty\}$ is a valuation (in this context it is more convenient to consider a valuation as a map from $A$ to $\r \cup \{\infty\}$ and define the value of $0$ to be $\infty$).
\end{Rem}

There is a natural embedding from $j: X(\K) \hookrightarrow X^\an$ given by restricting to points in $X(\K)$. More precisely, for each point $\gamma \in X(\K)$ we let $p = j(\gamma)$ to be the multiplicative seminorm 
defined by: 
\begin{equation} \label{equ-X-X^an}
j(\gamma)(f) = \exp(-\ord_t(f(\gamma))).
\end{equation}

Now let $X$ be an affine spherical $G$-variety. In the context of multiplicative seminorms it is natural to extend the valuation cone $\V_X$ and define $\hat{\V}_X$ to be the set of all invariant valuations $v: \k(X) \to \r \cup \{\infty\}$.
Recall that to any valuation $v$ on $X$ we can associate a $G$-invariant valuation $\bar{v}$ on $X$ (see Theorem \ref{th-Sumihiro}).
For any $f \in \k(X)$, the valuation $\bar{v}$ is defined by:
$$\bar{v}(f) = v(g \cdot f),$$
for $g \in G$ in general position, i.e. $g$ lies in some Zariski open subset $U_f$ of $G$.
Moreover, $\bar{v}(f) = \min\{ v(g \cdot f) \mid g \in G\}$.

More generally, let $Y \subset X$ be a subvariety that intersects the open $G$-orbit. Let $\pi: A \to \k[Y]$ be the algebra homomorphism induced by the inclusion of $Y$ in $X$. 

For a valuation $v: \k[Y] \to \r \cup \{\infty \}$ we denote by $\bar{v}: A \to \r \cup \{\infty \}$ the valuation defined as follows. For any $f \in A$ let:
\begin{equation} \label{equ-bar-v}
\bar{v}(f) = v( \pi(g \cdot f)),
\end{equation}
for $g$ in some Zariski open subset $U_f$.
Now let $p \in Y^\an$ with the associated valuation $v_p$. Let $\bar{v}_p$ be the $G$-invariant valuation on $\k(X)$ associated to $v_p$ as above.
\begin{Def}[Spherical tropicalization map]
We define the {\it spherical tropicalization map} $\TROP: Y^\an \to \hat{\V}_X$ by: 
$$p \mapsto \bar{v}_p.$$
\end{Def}

\begin{Prop} \label{prop-trop-analytification}
We have the following:
\begin{itemize}
\item[(1)] The map $\TROP: Y^\an \to \hat{\V}_X$ is continuous.
\item[(2)] The map $\TROP$ extends the tropicalization map $\Trop: Y(\K) \to \V_X$ introduced in Section \ref{subsec-tropicalization}. That is, the diagram below commutes:
$$ \xymatrix{
Y(\K)~ \ar@{^{(}->}[rr]^{j} \ar[rd]_\Trop & & Y^\an \ar[ld]^{\TROP} \\
& \hat{\V}_X &\\
}$$
\end{itemize}
\end{Prop}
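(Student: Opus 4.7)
The plan is to handle (2) first as a definitional unwinding, then attack (1) by reducing continuity of each evaluation $p \mapsto \bar{v}_p(f)$ to continuity of finitely many seminorm evaluations, using the finite-dimensionality of $G$-modules generated by a single element together with Sumihiro's theorem.

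For (2), I would simply trace definitions. Given $\gamma \in Y(\K)$, the seminorm $p = j(\gamma)$ satisfies $p(h) = \exp(-\ord_t(h(\gamma)))$ for $h \in \k[Y] \otimes_\k \K$, so $v_p(h) = \ord_t(h(\gamma))$, which is exactly the curve valuation $v_\gamma$. Therefore, for $f \in A$ and a generic $g \in G$ (in the Zariski-open subset $U_{\pi(f)}$ provided by Theorem \ref{th-Sumihiro}),
\[
\TROP(j(\gamma))(f) = \bar{v}_p(f) = v_p(\pi(g \cdot f)) = \ord_t\bigl((g \cdot f)(\gamma)\bigr) = \bar{v}_\gamma(f) = \Trop(\gamma)(f),
\]
which gives commutativity of the diagram.

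For (1), the topology on $\hat{\V}_X$ is the one making each evaluation $v \mapsto v(f)$ continuous (for $f \in A$), so I must show that for every fixed $f \in A$ the map $p \mapsto \bar{v}_p(f)$ is continuous on $Y^\an$. The key reduction is this: since $A$ is a rational $G$-module, the submodule $M_f \subset A$ generated by $f$ is finite-dimensional, so we can pick $g_1, \ldots, g_s \in G$ with $\{g_i \cdot f\}$ a basis of $M_f$. Writing $g \cdot f = \sum_i c_i(g)\, g_i \cdot f$ with $c_i(g) \in \k$, the non-Archimedean inequality gives $v_p(\pi(g \cdot f)) \geq \min_i v_p(\pi(g_i \cdot f))$ for every $g$, while conversely each $v_p(\pi(g_i \cdot f))$ is of the form $v_p(\pi(g \cdot f))$. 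Invoking Theorem \ref{th-Sumihiro}(2) then yields the equality
\[
\bar{v}_p(f) = \min_{g \in G} v_p(\pi(g \cdot f)) = \min_{1 \leq i \leq s} v_p(\pi(g_i \cdot f)).
\]

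Once this finite formula is established, continuity is immediate: each $p \mapsto v_p(\pi(g_i \cdot f)) = -\log p(\pi(g_i \cdot f))$ is continuous by the very definition of the topology on $Y^\an$, and the minimum of finitely many continuous $(\r \cup \{\infty\})$-valued functions is continuous. The main (quite mild) obstacle is the averaging identity reducing $\bar{v}_p(f)$ to a finite minimum; once that is in place, everything else is formal. I should also note in passing that $\bar{v}_p \in \hat{\V}_X$ by Sumihiro's theorem, so the map $\TROP$ is well-defined.
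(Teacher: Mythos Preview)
Your proof is correct and follows essentially the same approach as the paper: for (1) you reduce to the finite formula $\bar{v}_p(f) = \min_i v_p(\pi(g_i\cdot f))$ via the finite-dimensionality of $M_f$ and Sumihiro's theorem, then use that a finite minimum of continuous functions is continuous; for (2) you unwind definitions exactly as the paper does. If anything, your version is slightly more careful in keeping track of the projection $\pi$ and in justifying both inequalities in the finite-minimum identity.
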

\begin{proof}
{(1) To prove continuity of $\TROP$ it suffices to show that for any $f \in A$ the map $p \mapsto \bar{v}_p(f)$ is continuous. Take $f \in A$.
Let $M_f$ be the finite dimensional $G$-submodule of $A$ generated by $f$. Let $\{g_1 \cdot f, \ldots, g_s \cdot f\}$ be a finite spanning set for $M_f$ where $g_i \in G$.
Then from the definition of the valuation $\bar{v}_p$ we know that (see Theorem \ref{th-Sumihiro}):
$$\bar{v} _p(f) = \min\{v_p(g_1 \cdot f), \ldots, v_p(g_s \cdot f)\}.$$
By the definition of the topology on $X^\an$ each of the functions $p \mapsto v_p(g_i \cdot f) = -\log(p(g_i \cdot f))$ is continuous. The continuity of $\TROP$ now follows from the fact that 
the minimum of a finite number of continuous functions is continuous. Part (2) of the proposition is a straightforward corollary of \eqref{equ-X-X^an} and the definitions of the maps $\Trop$ and $\TROP$.}
\end{proof}

\section{Some examples}
\subsection{Torus}  \label{subsec-torus}
Let $G = T = (\k^*)^n$ and $H = \{e\}$. The torus $T$ is clearly a $T$-spherical homogeneous space. In this case, for any subvariety 
$Y \subset T$, the tropical variety $\trop(Y)$ coincides with the classical tropical variety of $Y$ and Theorem \ref{th-fundamental} recovers the fundamental theorem of tropical geometry
(for the constant coefficient case). 

\subsection{Punctured affine plane} \label{subsec-punctured-plane}
As in Example \ref{ex-sph-var}(3) consider the spherical variety $X = \A^2 \setminus \{(0,0)\}$ for the natural action of $G=\SL(2,\k)$.  We recall that this action is transitive. The stabilizer of the point $(1, 0)$ is the subgroup $U$ of upper triangular matrices with $1$'s on the diagonal and we can identify $X$ with $G/U$. The coordinate ring $\k[X]$ is just the polynomial algebra $\k[x, y]$.

The coordinate function $y$ is a $B$-eigenfunction and any $B$-eigenfunction is of the form $y^k$, $k \in \z$. Thus $\Lambda_X \cong \z$ and hence the valuation cone $\V_X$ can be identified with $\q$. It is generated (as a cone) by two distinguished $G$-invariant valuations $v_1$ and $v_2$ where $v_1 = -v_2$ regarded as elements of $\q$. As valuations they are given as follows. Let $h \in \k[x, y]$ and write $h = \sum_{i=m}^d h_i$ as sum of its homogeneous components with $h_m, h_d \neq 0$. Then $v_1(h) = m$ and $v_2(h) = -d$, i.e. $v_1$ is the degree of smallest term and $v_2$ is minus the degree.


Let $Y \subset \A^2 \setminus \{(0,0)\}$ be a curve given by an equation $f(x, y) = 0$ where $f$ is a nonconstant polynomial. Let $f = \sum_{i=m}^d f_i$ where $f_i$ is the homogeneous component of $f$ of degree $i$ and $f_m, f_d \neq 0$. A description of the tropicalization of $\Trop(Y)$ in this example is obtained in 
\cite[Example 3.10]{Vogiannou}. One has:
\begin{equation} \label{equ-Trop-SL(2)-example}
\trop(Y) = 
\begin{cases}
\q & m > 0 \\
\q_{\leq 0} & m = 0.\\ 
\end{cases}
\end{equation}
That is, $\Trop(Y) \subset \q$ is the negative ray $\q_{\leq 0}$ if $Y$ does not pass through the origin, and is the whole line $\q$ if it does. We verify the fundamental theorem (Theorem \ref{th-fundamental}) in this example by computing the tropical variety from initial ideals 
and Borel charts (Definition \ref{def-sph-trop-var-X_B} and Proposition \ref{prop-sph-trop-var-G/H-vs-X_B}).

Let $B$ and $B^-$ denote the Borel subgroups of upper triangular and lower triangular matrices respectively.
It is easy to see that the $B$-orbit and $B^-$-orbit of the point $(0,1)$ are $X_B = \A^2 \setminus \{y \neq 0\}$ and $X_{B^-} = \A^2 \setminus \{x \neq 0\}$.
Thus the coordinate rings $\k[X_B]$ and $\k[X_{B^-}]$ are $\k[x, y, y^{-1}]$ and $\k[x, y, x^{-1}]$ respectively.
Clearly the action of $G$ on $X$ extends to the whole projective plane $\p^2$. 
One can verify that the condition in the proof of Proposition \ref{prop-sph-trop-var-finite-B} is satisfied for 
the complete spherical embedding $\p^2$ and the collection of Borel subgroups
$\{B, B^-\}$. That is, every $G$-orbit $O \subset \p^2$ is covered by the open $B$-orbit and the open $B^-$-orbit contained in $O$.

First consider the case $v = v_1$. One can check that
$v \in \trop_{B}(I)$ if and only if $f_m$ is neither a constant nor a power of $y$. Similarly, $v \in \trop_{B^-}(I)$ if and only if 
$f_m$ is neither a constant nor a power of $x$. Putting these together we see that $v \in \trop(Y)$ if and only if $f_m$ is not a constant. 
The case $v = v_2$ can be dealt with in a similar fashion. In this case we have $v \in \trop_B(I)$ if and only if $f_d$ is not a power of $y$, and 
$v \in \trop_{B^-}(I)$ if and only if $f_d$ is not a power of $x$. It thus follows that $\q_{\leq 0}$ is always contained in $\trop(Y) = \trop_B(I) \cup \trop_{B^-}(I)$. This recovers \eqref{equ-Trop-SL(2)-example}.

\subsection{Group $G$ with $(G \times G)$-action} \label{subsec-GxG}
{Following Example \ref{ex-sph-var}(5) and Example \ref{ex-non-Arch-Cartan-decomp}
consider the left-right action of $G \times G$ on $G$. Recall that $X = G$ is a spherical $(G \times G)$-homogeneous space. In fact, we can identify $G$ with 
the homogeneous space $(G \times G) / G_{\textup{diag}}$ where $G_{\textup{diag}} = \{ (g, g) \mid g \in G\}$. 
Let us consider the case where $X = G = \GL(2, \k)$. We identify the valuation cone of $X$ with: $$\V_X = \{(x, y) \mid x \geq y\}.$$ 
We denote a general element of $G$ by a matrix $g = \left[ \begin{matrix} a & b \\ c & d\end{matrix} \right]$. The coordinate ring $\k[G]$ is 
then the localization of the polynomial algebra $\k[a, b, c, d]$ at $ad-bc$, i.e. $\k[G] = \k[a, b, c, d, (ad-bc)^{-1}]$.
We compute the spherical tropical variety for two hyperplanes $Z_1$ and $Z_2$ in the open $(B \times B)$-orbit in $G$ where $B$ is the subgroup of 
upper triangular matrices. We denote this open Borel orbit by 
$X_{B \times B}$. It is easy to see that $X_{B \times B} = \{ g \in \GL(2, \k) \mid c \neq 0\}$. Thus 
$\k[X_{B \times B}] = \k[a, b, c, d, c^{-1}, (ad-bc)^{-1}]$. 

Let $Z_1$ and $Z_2$ to be the hyperplanes in $X_{B \times B}$ defined by $c=1$ and $d=1$ respectively. By definition (Definition \ref{def-sph-trop-var-X_B}) we have $\trop_{B \times B}(Z_1)$ (respectively $\trop_{B \times B}(Z_2)$) is the set of all $v \in \V_X$ such that the initial form of $c-1$ (respectively $d-1$) is not invertible. We note that $c$ is invertible in $\k[X_{B \times B}]$ while $d$ is not. 
One shows that $\trop_{B \times B}(Z_1)$ is the ray $R_1 = \{(x, 0) \mid x \geq 0\}$ and 
$\trop_{B \times B}(Z_2)$ is the angle between the two rays $R_1 = \{(x, 0) \mid x \geq 0\}$ and $R_2 = \{(x, x) \mid x \leq 0\}$. 

Let $M(2, \k)$ denote the vector space of $2 \times 2$ matrices.
Let $G \times G$ act on $M(2, \c) \times \k$ where it acts by multiplication from left and right on the first component $M(2, \k)$ and trivially on 
the second component $\k$. Projectivizing this action we get a $(G \times G)$-action on the projective space $\p^4$. Finally, this action induces a $(G \times G)$-action on the blowup at the origin $\textup{Bl}_0(\p^4)$. Let $\overline{X}$ denote this blowup. One verifies that it is a 
complete toroidal spherical embedding of $G \times G$. Moreover, it contains $3$ codimension $1$ $(G \times G)$-orbits $O_1$, $O_2$ and $O_3$. In the colored fan of this spherical embedding, $O_1$ and $O_2$ correspond to the rays $R_1$ and $R_2$ in the valuation cone respectively. One checks that the closure of $Z_1$ in $\overline{X}$ intersects the open Borel orbit in $O_1$ but it does not intersect the open Borel orbit in $O_2$. On the other hand, the closure of $Z_2$ intersects both of these Borel orbits. This is in agreement with Theorem \ref{th-sph-trop-var-X_v_B}.

\section{Spherical amoebas and Cartan decomposition} \label{sec-sph-amoeba}
There is a logarithm map on the complex algebraic torus $\Log_t: (\c^*)^n \to \r^n$ defined by:
\begin{equation} \label{equ-log-map-torus}
\Log_t(z_1, \ldots, z_n) = (\log_t(|z_1|), \ldots, \log_t(|z_n|)).
\end{equation}
Clearly the inverse image of every point is an $(S^1)^n$-orbit in $(\c^*)^n$. Note that $(S^1)^n$ is a 
maximal compact subgroup in $(\c^*)^n$.
For a subvariety $Y \subset (\c^*)^n$, the amoeba $\mathcal{A}_t(Y)$ of $Y$ is the image of $Y$ in $\r^n$ under the logarithm map $\Log_t$ (\cite{GKZ}). 
It is well-known that as $t \to 0$ the amoeba $\mathcal{A}_t(Y)$ approaches the tropical variety of $Y$. 
The goal of this section is to extend the above to spherical homogeneous spaces.

Let $G$ be a complex connected reductive algebraic group and $G/H$ a spherical homogeneous space. 
Let $T_{G/H}$ be the torus associated to $G/H$, i.e., $T_{G/H}$ is the torus whose lattice of characters is $\Lambda_{G/H}$.
One can see that $T_{G/H}$ can be identified with $T / T \cap H$ for a maximal torus $T \subset G$. Thus $T_{G/H}$ also can be identified with the $T$-orbit of $eH \in G/H$.

We now take the ground field to be $\k = \c$.
We consider the exponential map $\exp: \Lie(T_{G/H}) \to T_{G/H} \subset G/H$. As usual, the valuation cone lies in the vector space $\mathcal{Q}_{G/H} = \Hom(\Lambda_{G/H}, \q)$ which in turn we consider as a subset of $\Lie(T_{G/H})$. The image $\exp(\V_{G/H})$ of the valuation cone thus naturally sits in $T_{G/H} \subset G/H$. Let $T_{real, G/H} \subset T_{G/H}$ be the closed subgroup of $T_{G/H}$ corresponding to the the real subalgebra 
$\mathcal{Q}_{G/H} \otimes \r \subset \Lie(T_{G/H})$. We let $\Log_t$ denote the inverse of the map $T_{real, G/H} \to \mathcal{Q}_{G/H}$ given by 
$\xi \mapsto \exp(t\xi)$. 
For the rest of this section we make the following assumption:
\begin{Ass}[Archimedean Cartan decomposition for a spherical homogeneous space] \label{ass-Arch-Cartan-sph}
There exists a maximal compact subgroup $K$ of $G$ which is a real algebraic subgroup such that each $K$-orbit in $G/H$ intersects the image of the valuation 
cone $\exp(\V_{G/H})$ at a unique point. 
\end{Ass}

We can then define the map $\L_t: G/H \to \V_{G/H}$ by:
$$x \mapsto \Log_t((K \cdot x) \cap \exp(\V_{G/H}))  \in \V_{G/H},$$
that is, first we intersect the orbit $K \cdot x$ with $\exp(V_{G/H})$ and then map it to the valuation cone by the logarithm map $\Log_t$. We call $\L_t$ a {\it spherical logarithm map}.
\begin{Def}[Spherical amoeba] \label{def-spherical-amoeba}
Let $Y \subset G/H$ be a subvariety. We denote the image of $Y$ under the map $\L_t$ by $\mathcal{A}_t(Y)$ and call it the {\it spherical amoeba} of the 
subvariety $Y$.
\end{Def}

\begin{Rem} \label{rem-Batyrev-conj}
Motivated by the Cartan decomposition and the Iwasawa decomposition from Lie theory, the authors conjectured that 
there exist Archimedean Cartan decompositions for spherical varieties. That is, for a complex spherical homogeneous space $G/H$ one can 
find a maximal compact subgroup $K$ such that each $K$-orbit in $G/H$ intersects the image of the valuation cone at a unique point.
Later we learned that in fact Victor Batyrev has conjectured the same statement some years ago. As far as we know no proof or counterexample is known.
A result in this direction can be found in \cite{Knop-Cartan}.  
\end{Rem}

The purpose of rest of  this section is to prove that: {\it the spherical amoeba approaches the spherical tropical variety}.
Let $(\gamma, \theta_0)$ be a germ of an algebraic curve $\gamma: \Gamma \dashrightarrow G/H$. 
Take a local uniformizing parameter $t$ for $\O_{\Gamma, \theta_0}$ and let us consider $\gamma$ as a Laurent series in the 
variable $t$. Then there exists $r > 0$ such that 
$\gamma(t)$ is convergent (in the classical topology) for all $t \in \c$ with $0 < |t| < r$ (this is because there exists a neighborhood $U$ of the point 
$\theta_0$, in the classical topology on $\Gamma$, such that $\gamma$ is defined at every point in $U \setminus \{\theta_0\}$). 
For $t$ sufficiently small let us use the Archimedean Cartan decomposition (see Assumption \ref{ass-Arch-Cartan-sph} above) to write 
\begin{equation} \label{equ-gamma-k-tau}
\gamma(t) = k(t) \tau(t),
\end{equation}
where $k(t) \in K$ and $\tau(t) \in \exp(\V_{G/H})$.
Let $v = \bar{v}_\gamma \in \V_{G/H}$ denote the $G$-invariant valuation associated to the 
curve $\gamma$. 

\begin{Th} \label{th-amoeba-trop-var-curve}
With notation as above we have: $$\lim_{t \to 0} \Log_t(\tau(t)) = v.$$
\end{Th}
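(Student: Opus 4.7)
My plan is to prove the theorem by a spherical analogue of the classical ``amoeba limits to tropical variety'' argument for tori, in which the modulus on the torus $(\c^*)^n$ is replaced by a $K$-invariant $L^2$-averaging over $B$-eigenfunctions that takes advantage of the Cartan decomposition $\gamma(t) = k(t)\tau(t)$.

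First, by Proposition~\ref{prop-inv-val-min-isotypic}, the $G$-invariant valuation $v = \bar v_\gamma$ is determined by the values $\bar v_\gamma(h_\lambda)$ for $B$-eigenfunctions $h_\lambda$ whose weights $\lambda$ generate $\Lambda_{G/H}$. Under the identification $\V_{G/H} \subset \Hom(\Lambda_{G/H}, \r)$, it therefore suffices to show
$$\langle \Log_t(\tau(t)), \lambda\rangle \;\longrightarrow\; \bar v_\gamma(h_\lambda) \quad \text{as}\ t \to 0^+$$
for each such $\lambda$. Fix one and let $M_\lambda \cong V_\lambda$ be the finite-dimensional $G$-submodule of $\k(G/H)$ generated by $h_\lambda$. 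Endow $M_\lambda$ with a $K$-invariant Hermitian inner product (available since $K$ is compact), pick a $K$-orthonormal basis $e_1, \dots, e_N$, and set
$$F_\lambda(x) := \sum_{i=1}^{N} |e_i(x)|^2.$$
A Schur orthogonality calculation identifies $F_\lambda$, up to a positive constant, with the manifestly $K$-invariant integral $\int_K |h_\lambda(k\cdot x)|^2\, dk$, so $F_\lambda$ is $K$-invariant and the Cartan decomposition yields the key identity $F_\lambda(\gamma(t)) = F_\lambda(\tau(t))$.

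On the left-hand side, Theorem~\ref{th-Sumihiro} gives $\bar v_\gamma(h_\lambda) = \min_{f \in M_\lambda \setminus 0} v_\gamma(f)$; since $\{f : v_\gamma(f) > \bar v_\gamma(h_\lambda)\}$ is a proper subspace of $M_\lambda$, at least one basis vector $e_i$ attains the minimum, and non-cancellation of non-negative leading terms gives
$$\log_t F_\lambda(\gamma(t)) \;\longrightarrow\; 2\bar v_\gamma(h_\lambda).$$
On the right-hand side, each $e_i$ restricted to $T_{\mathrm{real},G/H}$ is, by its $T$-transformation law, a scalar multiple of a character $\chi_{\mu_i}$ of $T_{G/H}$ with $\mu_i \in \Lambda_{G/H}$. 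Substituting $\tau(t) \in \exp(\V_{G/H})$ and using the relation $\log_t \chi_{2\mu}(\tau(t)) = 2\langle \Log_t(\tau(t)), \mu\rangle$, we obtain
$$F_\lambda(\tau(t)) = \sum_{\mu} b_\mu\, t^{-2\langle \Log_t(\tau(t)),\mu\rangle},$$
with $b_\mu \geq 0$ and only finitely many nonzero. The $\log_t$-asymptotic is governed by the weight $\mu$ extremizing $\langle \cdot, \mu\rangle$ along $\V_{G/H}$; using the description of $\V_{G/H}$ as the simplicial cone cut out by the spherical roots (Theorem~\ref{th-val-cone-simplicial}) together with the weight decomposition of $V_\lambda$, one shows that, up to the sign convention in $\Log_t$, this extremizer corresponds to $\lambda$ itself, so $\log_t F_\lambda(\tau(t)) = 2\langle \Log_t(\tau(t)), \lambda\rangle + o(1)$. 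Equating the two asymptotics concludes the proof.

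The most delicate point is the extremality argument on the $\tau$-side: among the weights $\mu$ with $b_\mu > 0$, one must show that the pairing $\langle v', \mu\rangle$ for $v' \in \V_{G/H}$ is extremized at the weight corresponding to $\lambda$. This uses the simplicial structure of $\V_{G/H}$ together with the fact that $T$-weights of $V_\lambda$ differ from the highest weight by nonnegative integer combinations of simple roots of $G$; if necessary, the reference point used to identify $T_{G/H}$ with a subset of $G/H$ can be shifted into the open $B$-orbit to guarantee that the $\lambda$-term has nonzero coefficient $b_\lambda$.
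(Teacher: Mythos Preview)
Your approach is genuinely different from the paper's. The paper does not compare asymptotics of a $K$-invariant $L^2$-function on the two sides of the Cartan decomposition; instead it complexifies the compact part $k(t)$ to a holomorphic curve $\tilde k(t)$ (using that $K$ is real algebraic and compact, hence bounded), obtains a meromorphic curve $\tilde\tau(t)=\tilde k(t)^{-1}\gamma(t)\in T_{G/H}(\K)$, and then invokes the \emph{non-Archimedean} Cartan decomposition (Theorem~\ref{th-non-Arch-Cartan-decomp}) to identify the one-parameter part of $\tilde\tau$ with $v$. Your $\gamma$-side computation is fine and agrees in spirit with how the paper uses Theorem~\ref{th-Sumihiro}.

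The gap is on the $\tau$-side. With a $T$-weight orthonormal basis one has
\[
F_\lambda(\tau(t))=\sum_{\mu} b_\mu\,t^{-2\langle \Log_t(\tau(t)),\mu\rangle},\qquad b_\mu=|e_\mu(x_0)|^2,
\]
so $\log_t F_\lambda(\tau(t))$ is governed by the $\mu$ that \emph{maximizes} $\langle \Log_t(\tau(t)),\mu\rangle$ among $\mu$ with $b_\mu>0$. Your claimed extremality of $\lambda$ would require $\langle v',\lambda-\mu\rangle\le 0$ for every $v'\in\V_{G/H}$ and every such $\mu$, i.e.\ that $\lambda-\mu$ lies in the tail cone. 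But $\lambda-\mu$ is a nonnegative combination of the simple roots of $G$, while the tail cone is generated by the \emph{spherical} roots (Corollary~\ref{cor-tail-cone-val-cone}); these cones are different in general (already for $X=\A^2\setminus\{0\}$ the tail cone is $\{0\}$). Choosing the base point $x_0$ in the open $B$-orbit only guarantees $b_\lambda\neq 0$; it does not kill the other $b_\mu$ (e.g.\ $x_0=(1,1)$ in the $\SL(2)$ example), so the argument as written does not go through. There is also a circularity: deciding which $\mu$ extremizes $\langle \Log_t(\tau(t)),\mu\rangle$ requires knowing the limiting direction of $\Log_t(\tau(t))$, which is exactly the conclusion. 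The paper's route via the non-Archimedean Cartan decomposition bypasses this: once $\tilde\tau(t)\in T_{G/H}(\K)$ is produced, uniqueness pins down the cocharacter directly, without any weight-by-weight comparison.
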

\begin{proof}
Let us embed $G/H$ in an affine space $\c^N$ and $G$ in $\textup{GL}(N, \c)$ such that the action of $G$ on $G/H$ is the restriction of the natural 
action of $\textup{GL}(N, \c)$ on $\c^N$.
We claim that in \eqref{equ-gamma-k-tau}, we can choose $k(t)$ such that it is real algebraic regarded as a map $k: \c = \r^2 \to K$.
This follows from two facts: Firstly the action map $K \times T_{G/H}  \to G/H$ is surjective by Assumption \ref{ass-Arch-Cartan-sph}. 
Secondly the base change of a surjective algebraic map is also surjective.
That is, the action map $(K \times T_{G/H})(\overline{\c(t, \bar{t})}) \to G/H(\overline{\c(t, \bar{t})}) \supset G/H(\overline{\c(t)})$ is surjective. 
Here $\overline{\c(t)}$ is the algebraic closure of the field of rational functions and 
$\overline{\c(t, \bar{t})}$ is the algebraic closure of the field of rational functions in $t$ and $\bar{t}$.


Let us restrict $t$ to the real interval $(0, r)$. 
Let $\tilde{k}(t)$ be the holomorphic curve defined in some punctured neighborhood $\{ t \in \c \mid 0 < |t| < \epsilon\}$  
such that $\tilde{k}(t) = k(t)$ for real $t$ with $0 < t < \epsilon$ (to get $\tilde{k}(t)$ replace $\bar{t}$ with $t$ in the series expansion of $k(t)$ in $t$ and $\bar{t}$ near $0$). Now since $K$ is compact we know that $k(t)$ is bounded for all values of $t$. But $k(t)$ is an algebraic function in $t$ and $\bar{t}$. 
This implies that $k(t)$ has a limit as $t$ goes to $0$.

Since $\tilde{k}(t)$ is represented by the same collection of series as $k(t)$, where we replace $\bar{t}$ with $t$ everywhere, we conclude that $\tilde{k}(t)$ is holomorphic at $0$. 
From this it follows that $\tilde{\tau}(t) := \tilde{k}(t)^{-1} \cdot \gamma(t)$ is meromorphic 
at $t = 0$. Also $\tilde{\tau}(t) = \tau(t)$ for $0 < t < \epsilon$.  

Note that by assumption $\tau(t)$ lies in the image $\exp(\V_{G/H})$ of the valuation cone, 
hence $\tilde{\tau}(t)$ lies in the torus $T_{G/H}(\c)$ regarded as a subset of $G/H(\c)$ (the complex curve $\tilde{\tau}$ intersects the complex manifold $T_{G/H}(\c)$ in a non-isolated set and hence should be contained in $T_{G/H}(\c)$). Now from the fact that $\tilde{k}(t)$ is holomorphic at $t=0$ we see that $\tilde{\tau}(t)$ is 
in the $G(\mathcal{O})$-orbit of $\gamma(t)$. We can write $\tilde{\tau}(t) = t^d \theta(t)$ where $t^d$ is a one-parameter subgroup in 
$\exp(\V_{G/H}) \subset T_{G/H}$ and $\theta(t) \in T_{G/H}(\mathcal{O})$. We note that by the uniqueness part of the 
non-Archimedean Cartan decomposition (Theorem \ref{th-non-Arch-Cartan-decomp}) $d$ should coincide with $v = \bar{v}_\gamma$, the $G$-invariant valuation corresponding to $\gamma$. 
Then $$\log_t(\tilde{\tau}(t)) = v + \log_t(\theta(t)).$$
Since $\theta(t) \in T_{G/H}(\mathcal{O})$, it is bounded in a neighborhood of $0$ and hence $\lim_{t \to 0} \log_t(\theta(t)) = 0$. This proves that 
$\lim_{t \to 0} \log_t(\tilde{\tau}(t)) = v$ and hence $\lim_{t \to 0} \log_t(\tau(t)) = v$ as required.
\end{proof}


Next we show that every point in the tropical variety is a limit of points from the amoeba. 
\begin{Th} \label{th-trop-var-limit-amoeba}
Let $Y \subset G/H$ be a subvariety. Let $v \in \Trop(Y)$ be a point in the spherical tropical variety of $Y$. 
Then there exists a formal curve $\gamma \in Y(\K)$ with nonzero radius of convergence and such that: $$\lim_{t \to 0} \L_{t}(\gamma(t)) = v.$$ 
\end{Th}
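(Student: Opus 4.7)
The plan is to realize this as essentially a direct composition of two results already in the paper: the approximation theorem for formal curves on $Y$ by germs of algebraic curves (Corollary \ref{cor-trop-var-alg-curves}), and the curve-level convergence result already established (Theorem \ref{th-amoeba-trop-var-curve}).

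First, I would use the hypothesis $v \in \Trop(Y)$ together with Corollary \ref{cor-trop-var-alg-curves} to replace an abstract Puiseux curve witnessing $v$ by a germ of an algebraic curve $\gamma$ on $Y$ satisfying $\bar{v}_\gamma = v$. The key point is that a germ of an algebraic curve, viewed as a formal curve via a local uniformizer $t$ at the base point $\theta_0 \in \Gamma$, automatically has nonzero radius of convergence in the classical topology: there is a punctured Euclidean neighborhood of $\theta_0$ on which $\gamma$ is holomorphic as a map into $G/H(\c)$. This gives the convergence requirement in the conclusion for free.

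Next, for sufficiently small $t \neq 0$, I would invoke the Archimedean Cartan decomposition (Assumption \ref{ass-Arch-Cartan-sph}) to write
\[
\gamma(t) = k(t)\,\tau(t), \qquad k(t) \in K, \quad \tau(t) \in \exp(\V_{G/H}).
\]
By the uniqueness part of Assumption \ref{ass-Arch-Cartan-sph}, $\tau(t)$ is the unique point of intersection $(K \cdot \gamma(t)) \cap \exp(\V_{G/H})$, and hence by the definition of the spherical logarithm map we have the pointwise identity
\[
\L_t(\gamma(t)) = \Log_t(\tau(t))
\]
for all small $t > 0$ on the real interval used in the proof of Theorem \ref{th-amoeba-trop-var-curve}.

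Finally, Theorem \ref{th-amoeba-trop-var-curve} applied to this germ $\gamma$ gives precisely $\lim_{t \to 0}\Log_t(\tau(t)) = \bar{v}_\gamma = v$, and combining with the identification above yields $\lim_{t\to 0} \L_t(\gamma(t)) = v$, as required. The only non-trivial ingredients are already in hand: the approximation from Puiseux to algebraic germs (so that we may start with a classically convergent curve), and the Cartan-decomposition limit for algebraic germs. Consequently, there is no serious new obstacle in this theorem; the main subtlety one has to verify carefully is that the Cartan factorization $\gamma(t) = k(t)\tau(t)$ can be chosen so that $t \mapsto \tau(t)$ is the very factor controlled by Theorem \ref{th-amoeba-trop-var-curve} (i.e., that the uniqueness in Assumption \ref{ass-Arch-Cartan-sph} forces it to coincide with the factor produced there), which is precisely what makes the identification $\L_t(\gamma(t)) = \Log_t(\tau(t))$ legitimate.
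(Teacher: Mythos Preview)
Your proposal is correct and follows essentially the same approach as the paper: invoke the approximation result (the paper cites Theorem \ref{th-approx-algebraic}, you cite its immediate Corollary \ref{cor-trop-var-alg-curves}) to obtain an algebraic germ $\gamma$ on $Y$ with $\bar{v}_\gamma = v$, then apply Theorem \ref{th-amoeba-trop-var-curve}. Your additional unpacking of the identity $\L_t(\gamma(t)) = \Log_t(\tau(t))$ via uniqueness in Assumption \ref{ass-Arch-Cartan-sph} is a welcome clarification but not a departure from the paper's argument.
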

\begin{proof}
By Theorem \ref{th-approx-algebraic} we can find a point $\gamma$ of $Y$ over the field of algebraic functions $\overline{\c(t)}$ with 
$\Trop(\gamma) = v$. The theorem now follows from Theorem \ref{th-amoeba-trop-var-curve}.
\end{proof}

\begin{Ex}[Torus]  \label{ex-amoeba-torus}
As in Example \ref{ex-sph-var}(1) let $G = T = (\c^*)^n$. In this case, for any subvariety $Y \subset T$, the 
tropical variety $\Trop(Y)$ coincides with the usual tropical variety of $Y$. 
If we take the maximal compact subgroup $K = (S^1)^n$, i.e. the compact torus in $T$, then the corresponding logarithm map is the 
usual logarithm map from $T$ to $\r^n$. Hence $\mathcal{A}_t(Y)$ is the usual amoeba of the subvariety $Y$.
\end{Ex}

\begin{Ex}[Punctured affine plane] \label{ex-amoeba-punctured-plane}
As in Example \ref{ex-sph-var}(3) consider the spherical variety $X = \c^2 \setminus \{(0,0)\}$ for the natural action of $G = \SL(2, \c)$.  
We recall that this action is transitive and $X$ can be identified with the
homogeneous space $G/U$ where $U$ is the subgroup of upper triangular matrices with $1$'s on the diagonal.
We explicitly describe the spherical logarithm map in this example. Let the maximal compact subgroup $K$ be $\textup{SU}(2) \subset \SL(2, \c)$. We have the 
Iwasawa decomposition $G = KTH$. 
Consider a point $p = (x, y) \in X = \c^2 \setminus \{(0, 0)\}$. Then:
$$ \left[ \begin{matrix} x & \frac{-y}{|x|^2 + |y|^2} \\ y & \frac{x}{|x|^2 + |y|^2} \end{matrix}\right] \left[\begin{matrix} 1 \\ 0 \end{matrix}\right] 
= \left[\begin{matrix} x \\ y \end{matrix} \right].$$
The Iwasawa decomposition of the matrix above into a product of a unitary matrix and an upper triangular matrix is:
$$ \left[ \begin{matrix} x & \frac{-y}{|x|^2 + |y|^2} \\ y & \frac{x}{|x|^2 + |y|^2} \end{matrix}\right] = \frac{1}{\sqrt{|x|^2+|y|^2}} 
\left[ \begin{matrix} x & -y \\ y & x \end{matrix}\right] \left[ \begin{matrix}  \sqrt{|x|^2+|y|^2} & 0 \\ 0 & \frac{1}{\sqrt{|x|^2+|y|^2}} \end{matrix}\right].$$
This shows that the unique point of intersection of the $K$-orbit of $p$ with $\exp(\V_{G/H}) \subset T_{G/H}$ is represented by the diagonal matrix:
$$\left[ \begin{matrix}  \sqrt{|x|^2+|y|^2} & 0 \\ 0 & \frac{1}{\sqrt{|x|^2+|y|^2}} \end{matrix}\right].$$ 
Thus the spherical logarithm map, corresponding the the choice of $K = \textup{SU}(2, \c)$ is given by $$\L_t(p) = \log_t(||p||),$$ where $||p|| = \sqrt{|x|^2 + |y|^2}$ is the length of $p$. 

Now consider a curve $\gamma(t) = (\gamma_1(t), \gamma_2(t))$ with nonzero radius of convergence. We observe that as $t$ approaches $0$ the order in $t$ of the 
expression $\sqrt{|\gamma_1(t)|^2 + |\gamma_2(t)|^2}$ is equal to $\min(\ord_t(\gamma_1(t)), \ord_t(\gamma_2(t)))$.
This verifies Theorem \ref{th-amoeba-trop-var-curve} in this example.
\end{Ex}

\begin{Ex}[Homogeneous space $G/U$] \label{ex-amoeba-G/U}
As in Example \ref{ex-sph-var}(4) consider the spherical homogeneous space $G/U$ where $U$ is a maximal unipotent subgroup of $G$.
Again take the base field to be $\c$ and take a maximal compact subgroup $K$ such that $K \cap T$ is a maximal compact subgroup of the torus $T$. 
Then by the Iwasawa decomposition we have $G = KTU$. If we write $g \in G$ as $kau$, where $k \in K$, $a \in T$ and $u \in U$ as in the Iwasawa decomposition then the spherical logarithm 
map $\L_t$ sends the point $gU \in G/U$ to $\Log_t(a)$ where $\Log_t$ denotes the usual logarithm map for the torus $T$ (see \eqref{equ-log-map-torus}).  
\end{Ex}

\begin{Ex}[Group $G$ with $G \times G$-action] \label{ex-amoeba-GxG}
As in Example \ref{ex-sph-var}(5) consider $X=G$ as a spherical variety for the left-right action of $G \times G$.
Take the base field to be $\k = \c$ and let $G = \GL(n, \c)$ or $\SL(n, \c)$. Also let $B$ and $T$ denote the subgroups of upper triangular and diagonal matrices respectively.
Moreover, let $K = \textup{U}(n)$ or $\textup{SU}(n)$ be the maximal compact subgroup of unitary matrices. 

In this case, the (Archimedean) Cartan decomposition (Assumption \ref{ass-Arch-Cartan-sph}) is the well-known singular value decomposition theorem. 
Recall that if $A$ is an $n \times n$ complex matrix, the singular value decomposition states that $A$ can be written as: $$A = U_1DU_2,$$ where $U_1$, $U_2$
are $n \times n$ unitary matrices and $D$ is diagonal with nonnegative real entries. In fact, the diagonal entries of $d$ are the eigenvalues of 
the positive semi-definite matrix $\sqrt{AA^*}$ where $A^* = \bar{A}^t$. The diagonal entries of $A$ are usually referred to as the {\it singular values of $A$}.
On the other hand, in this case, the non-Archimdean Cartan decomposition (Theorem \ref{th-non-Arch-Cartan-decomp}) 
is the Smith normal form theorem (see also Example \ref{ex-non-Arch-Cartan-decomp}). 
Let $A(t)$ be an $n \times n$ matrix whose entires $A_{ij}(t)$ are Lauren series in $t$ and over $\c$. 
We recall that the Smith normal form theorem (over the ring of formal power series which is a PID) states that $A(t)$ can be written as:
$$A_1(t) \tau(t) A_2(t),$$
where $A_1(t)$, $A_2(t)$ are $n \times n$ matrices with power series entries and $\tau(t)$ is a diagonal matrix of the form 
$\tau(t) = \textup{diag}(t^{v_1}, \ldots, t^{v_n})$ for integers $v_1, \ldots, v_n$. The integers $v_1, \ldots, v_n$ are usually called the 
{\it invariant factors of $A(t)$}. 


As an application of Theorem \ref{th-amoeba-trop-var-curve} we thus obtain the following.
\begin{Cor}[Invariant factors versus singular values of a matrix] \label{cor-inv-fact-sing-values}
Let $A(t)$ be an $n \times n$ matrix whose entries $A_{ij}$ are algebraic functions in $t$. For sufficiently small $t \neq 0$, let $d_1(t) \leq \cdots \leq d_n(t)$ denote the singular values of 
$A(t)$ ordered increasingly. Also let $v_1 \geq \cdots \geq v_n$ be the invariant factors of $A(t)$ ordered decreasingly. We then have:
$$\lim_{t \to 0} (\log_t(d_1(t)), \ldots, \log_t(d_n(t))) = (v_1, \ldots, v_n).$$
\end{Cor}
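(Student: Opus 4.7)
The plan is to apply Theorem \ref{th-amoeba-trop-var-curve} to the spherical $(\GL(n,\c) \times \GL(n,\c))$-homogeneous space $X = \GL(n,\c)$ of Example \ref{ex-sph-var}(5), with the formal curve $\gamma(t) = A(t)$. Since the entries of $A(t)$ are algebraic, $\gamma$ arises from an actual germ of an algebraic curve on $X$ and, in particular, converges in some punctured neighborhood of $t = 0$. I choose the maximal compact subgroup $K = \mathrm{U}(n) \times \mathrm{U}(n)$ acting on $X$ by $(u_1, u_2) \cdot g = u_1 g u_2^{-1}$. Under this choice, Assumption \ref{ass-Arch-Cartan-sph} is exactly the singular value decomposition: every $g \in \GL(n,\c)$ factors (uniquely up to the usual ambiguity when singular values coincide) as $u_1 D u_2^{-1}$ with $u_i \in \mathrm{U}(n)$ and $D$ the diagonal matrix of singular values arranged in the order specified by the fundamental domain $\exp(\V_X)$ for the $K$-action, as in Example \ref{ex-amoeba-GxG}.

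On the non-Archimedean side, Theorem \ref{th-non-Arch-Cartan-decomp} specializes to the Smith normal form (Example \ref{ex-non-Arch-Cartan-decomp}). The Smith decomposition $A(t) = A_1(t)\, \tau(t)\, A_2(t)$ with $\tau(t) = \mathrm{diag}(t^{v_1}, \ldots, t^{v_n})$ and $A_1(t), A_2(t) \in \GL(n, \O)$ identifies the invariant valuation $\bar v_\gamma \in \V_X$ with the $n$-tuple of invariant factors of $A(t)$, arranged in the coordinate order on $\V_X$. On the Archimedean side, for sufficiently small real $t > 0$ the SVD provides an Archimedean Cartan decomposition $A(t) = k(t)\, \tau_{\mathrm{sv}}(t)$ with $k(t) \in K$ and $\tau_{\mathrm{sv}}(t) \in \exp(\V_X)$ the diagonal matrix built from the singular values $d_1(t), \ldots, d_n(t)$. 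Theorem \ref{th-amoeba-trop-var-curve} then yields
\[
\lim_{t \to 0} \Log_t\bigl(\tau_{\mathrm{sv}}(t)\bigr) = \bar v_\gamma = (v_1, \ldots, v_n).
\]
Because $\log_t$ reverses order for $0 < t < 1$, the two chains $d_1(t) \leq \cdots \leq d_n(t)$ and $v_1 \geq \cdots \geq v_n$ are aligned coordinatewise, so the limit unpacks to the desired equality $\lim_{t \to 0}(\log_t d_1(t), \ldots, \log_t d_n(t)) = (v_1, \ldots, v_n)$.

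The only genuinely delicate point is the bookkeeping needed to match the three orderings, on singular values, on invariant factors, and on coordinates of $\V_X$. All the analytic content — in particular, the selection of a real-algebraic branch $k(t)$ of the unitary factor in the SVD and the verification that the residual diagonal factor extends meromorphically across $t = 0$ to land in the correct $G(\O)$-orbit — has already been handled inside the proof of Theorem \ref{th-amoeba-trop-var-curve}, so no new analytic work is required beyond instantiating that theorem in the present example.
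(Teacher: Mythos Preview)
Your proposal is correct and follows exactly the approach the paper takes: the corollary is stated inside Example \ref{ex-amoeba-GxG} as a direct application of Theorem \ref{th-amoeba-trop-var-curve}, once one identifies the Archimedean Cartan decomposition with the singular value decomposition and the non-Archimedean one with the Smith normal form. Your write-up is in fact more explicit than the paper's, which simply says ``As an application of Theorem \ref{th-amoeba-trop-var-curve} we thus obtain the following'' and notes that a direct proof via the Hilbert--Courant minimax principle is also possible.
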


{The above can also be proved using the Hilbert-Courant minimax principle.}

Let us look at a concrete example in this case. As in Section \ref{subsec-GxG} 
consider the curve $Y$ in $\GL(2, \c)$ defined by the ideal:
$$I = \langle x_{11} - x_{12} - 1, x_{12} - x_{21}, x_{22} \rangle.$$ 
The spherical tropical variety of $Y$ is computed in 
\cite[Example 5.3]{Vogiannou}.
Using the parametrization
$$Y = \{ \left[ \begin{matrix} t+1 & t \\ t & 0 \end{matrix} \right] \mid t \in \c \},$$
one can compute the spherical amoeba of $Y$. It is plotted in Figure \ref{fig-GL(2)-amoeba}. 
\begin{figure}
\includegraphics[width=10cm]{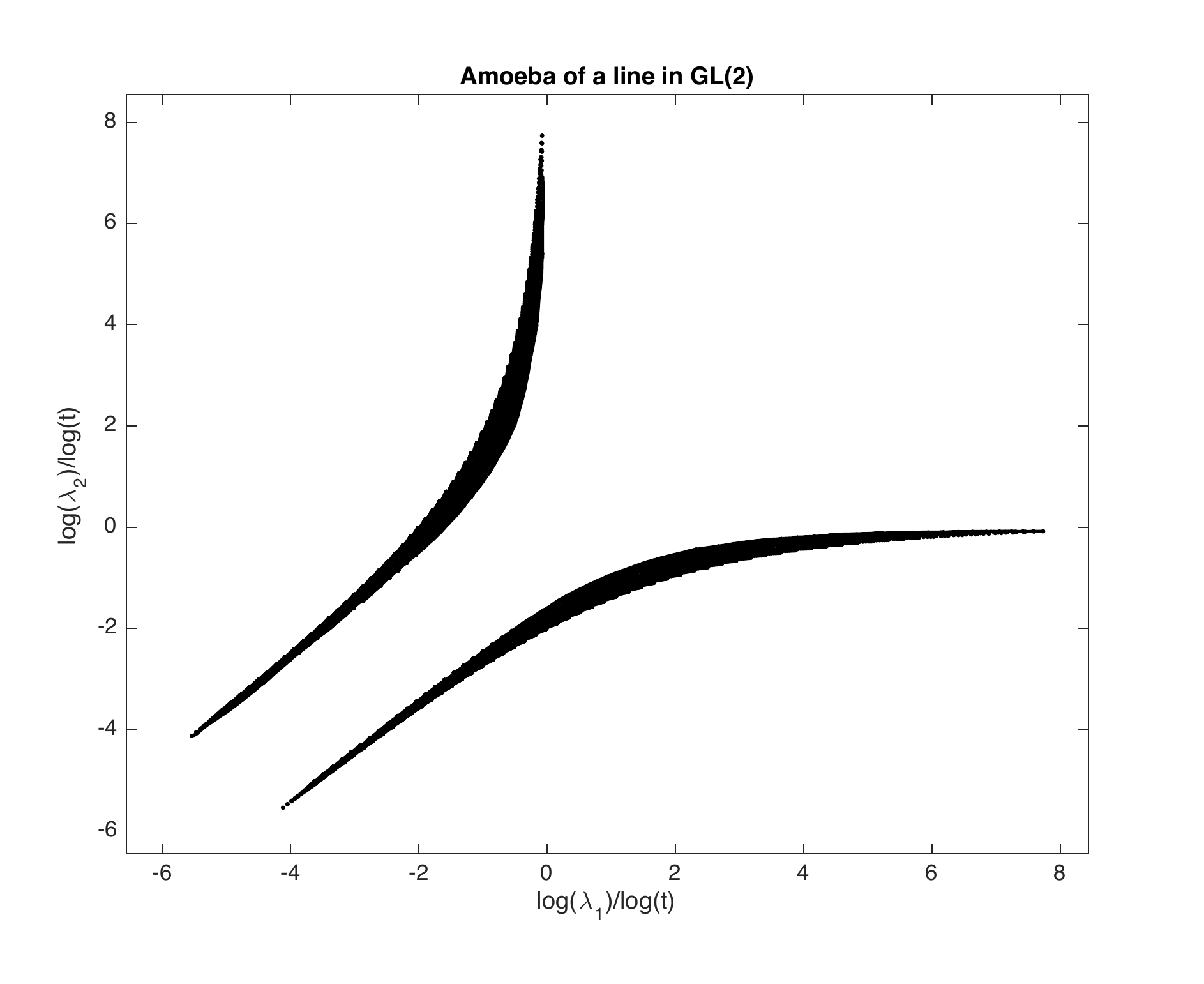} 
\caption{An approximate picture of the spherical amoeba of a line in $\GL(2, \c)$ (in fact, the picture shows the union of the images of the amoeba under the Weyl group of $\GL(2, \c)$).} 
\label{fig-GL(2)-amoeba} 
\end{figure} 
\end{Ex}

\end{document}